\newtheorem{thm}{Theorem}[section]
\newtheorem{lem}[thm]{Lemma}
\newtheorem{rem}[thm]{Remark}
\newtheorem{defn}[thm]{Definition}
\newtheorem{ex}[thm]{Example}
\newcommand{\R}{\mathbb{R}}
\newcommand{\degb}{\operatorname{deg_{{}_{B}}}}
\newcommand{\degls}{\operatorname{deg_{{}_{LS}}}}
\newcommand{\dist}{\operatorname{dist}}
\newcommand{\ind}{\operatorname{ind_{{}_{FP}}}}
\newcommand{\sgn}{\operatorname{sgn}}
\begin{document}

\author{Naseer Ahmad Asif, PhD}
\title{Existence and Multiplicity Results for Systems of Singular Boundary Value Problems}
\subtitle{-- Monograph --}
\maketitle

\frontmatter

\chapter*{Preface}

Singular boundary value problems (SBVPs) arise in various fields of Mathematics, Engineering and Physics such as boundary layer theory, gas dynamics, nuclear physics, nonlinear optics, etc. The present monograph is devoted to systems of SBVPs for ordinary differential equations (ODEs). It presents existence theory for a variety of problems having unbounded nonlinearities in regions where their solutions are searched for. The main focus is to establish the existence of positive solutions. The results are based on regularization and sequential procedure.

\vskip 0.5em

First chapter of this monograph describe the motivation for the study of SBVPs. It also include some available results from functional analysis and fixed point theory. The following chapters contain results from author's PhD thesis, National University of Sciences and Technology, Islamabad, Pakistan. These results provide the existence of positive solutions for a variety of systems of SBVPs having singularity with respect to independent and/or dependent variables as well as with respect to the first derivatives of dependent variables.

\tableofcontents

\mainmatter

\pagenumbering{arabic}
\chapter[Introduction and Preliminaries]{Introduction and Preliminaries}\label{ch1}

Many problems in applied sciences are modeled by singular boundary value problems (SBVPs). For example, in the study of rotating flow \cite{hallam}, in the theory of viscous fluids \cite{callegarifiedman}, in the study of pseudoplastic fluids \cite{callegari,nachman}, in boundary
layer theory \cite{callegarinachman1,schlichting,vajravelu,wang}, the
theory of shallow membrane caps \cite{baxrob,dickey1,johnson}, in
pre-breakdown of gas discharge \cite{diekmann}, the turbulent flow
of a gas in a porous medium \cite{esteban}, can be represented by
SBVPs. Further, many mathematical models of various applications from nuclear physics, plasma physics, nonlinear optics, fluid mechanics, chemical reactor theory, predator-prey interactions \cite{blossey,farkas,marletts,turchin,zws} are systems of time dependent partial differential equations (PDEs) subject to initial and/or boundary conditions. In the investigation of stationary solutions, these models of systems of PDEs can be reduced to systems of SBVPs.

\vskip 0.5em

In the scenario of the above mentioned models of various phenomenon,
the theory of SBVPs has become much more important. In this
monograph, we present existence results for positive solutions to
various systems of BVPs for nonlinear ODEs. We provide sufficient
conditions for the existence of at least one and two solutions for the singular systems of nonlinear ODEs subject to various type of boundary conditions (BCs) both on finite and infinite domains. We use the classical tools of functional analysis including the fixed point theory and the theory of the fixed points index. The rest of this chapter is devoted to the basic study of these notions. In the following section, we present
some definitions and notions from functional analysis. Moreover, some famous fixed point results such as Schauder's fixed point theorem and the
Guo-Krasnosel'skii fixed point theorem are also included, \cite{amr,amann,Dk,Dj,fonseca,GL,HPC}.

\section{Some basic definitions and known results}\label{secfa}

\begin{defn}
A subset $\Omega$ of a Banach space $\mathcal{B}$ is said to be compact if and only if every sequence $\{x_{n}\}\subset \Omega$ has a convergent subsequence with limit in $\Omega$. Moreover, $\Omega$ is relatively compact if $\overline{\Omega}$ is compact.
\end{defn}

\begin{defn}
Let $\Omega$ be a subset of a Banach space
$\mathcal{B}$. A map $T:\Omega\rightarrow\mathcal{B}$ is compact if
$T$ maps every bounded subset of $\Omega$ into a relatively compact
subset of $\mathcal{B}$. Moreover, $T$ is completely continuous if
$T$ is continuous and compact.
\end{defn}

\begin{defn}
A nonempty subset $K$ of a Banach space
$\mathcal{B}$ is a retract of $\mathcal{B}$ if there exist a
continuous map $r:\mathcal{B}\rightarrow K$, a retraction, such that
$\left.r\right|_{K}=I_{K}$, where $I_{K}$ is identity map on $K$.
\end{defn}

\begin{defn}
Let $\mathcal{B}$ be a real Banach space. A
nonempty, closed and convex set $P\subset\mathcal{B}$ is said to be
a cone if the following axioms are satisfied:

\begin{description}
\item[$(\mathbf{P_{1}})$] $\alpha\,x\in P$ for all $x\in P$ and $\alpha\geq0$,
\item[$(\mathbf{P_{2}})$] $x,-x\in P$ implies $x=0$.
\end{description}
\end{defn}

\begin{thm}\label{arzela}
\textbf{(Arzel\`{a}-Ascoli theorem)} Let $\Omega$ be a compact
subset of $\R^{n}$. A set $\mathfrak{M}$ of continuous functions on $\Omega$ is
relatively compact in $C(\Omega)$ if and only if $\mathfrak{M}$ is a family of uniformly bounded and equicontinuous functions.
\end{thm}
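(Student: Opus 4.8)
The plan is to prove the Arzelà–Ascoli theorem by establishing both implications of the biconditional. Let me sketch a proof of both directions, identifying the key steps.

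The plan is to prove both implications of the biconditional separately, working throughout in the metric space $C(\Omega)$ equipped with the supremum norm, where relative compactness coincides with sequential compactness by the first definition in this section. I expect the forward direction (relative compactness implies uniform boundedness and equicontinuity) to be routine, while the reverse direction (the two hypotheses together imply relative compactness) will require the classical diagonal extraction and will be the substantive part of the argument.

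For the forward direction, suppose $\mathfrak{M}$ is relatively compact, so that $\overline{\mathfrak{M}}$ is compact in $C(\Omega)$. A compact subset of a metric space is bounded, which immediately gives a uniform bound on $\|f\|_{\infty}$ over $f\in\mathfrak{M}$, hence uniform boundedness. For equicontinuity, I would exploit total boundedness: given $\varepsilon>0$, cover $\overline{\mathfrak{M}}$ by finitely many balls of radius $\varepsilon/3$ centered at functions $g_{1},\dots,g_{m}$. Each $g_{i}$ is uniformly continuous on the compact set $\Omega$, so there is $\delta_{i}>0$ controlling its modulus of continuity at level $\varepsilon/3$; setting $\delta=\min_{i}\delta_{i}$ and applying the triangle inequality through the nearest center $g_{i}$ yields a single $\delta$ valid for every $f\in\mathfrak{M}$, which is exactly equicontinuity.

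For the reverse direction, I would show every sequence $\{f_{n}\}\subset\mathfrak{M}$ admits a uniformly convergent subsequence. Since $\Omega\subset\R^{n}$ is compact it is separable; fix a countable dense set $\{x_{k}\}\subset\Omega$. Uniform boundedness makes $\{f_{n}(x_{1})\}$ a bounded real sequence, so Bolzano--Weierstrass extracts a subsequence convergent at $x_{1}$; passing to a further subsequence convergent at $x_{2}$, and iterating, produces nested subsequences, and the diagonal subsequence $\{f_{n_{j}}\}$ then converges at every $x_{k}$. The final and decisive step is to upgrade this pointwise convergence on a dense set to uniform convergence on all of $\Omega$: given $\varepsilon>0$, equicontinuity supplies $\delta>0$, compactness of $\Omega$ lets finitely many of the $x_{k}$ be chosen so their $\delta$-balls cover $\Omega$, and on those finitely many points the diagonal sequence is eventually Cauchy; the triangle inequality through the nearest dense point then shows $\{f_{n_{j}}\}$ is uniformly Cauchy. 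Completeness of $C(\Omega)$ delivers a continuous limit, so the subsequence converges in $C(\Omega)$, establishing sequential compactness of $\overline{\mathfrak{M}}$.

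The hard part will be the last step of the reverse direction, namely combining equicontinuity with the compactness of $\Omega$ to pass from pointwise control on the dense set to a genuinely uniform Cauchy estimate; the diagonal construction itself is mechanical, but the $\varepsilon/3$-type argument that knits together equicontinuity, the finite subcover, and pointwise convergence is where all three hypotheses must be used simultaneously and is the crux of the proof.
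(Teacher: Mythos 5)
Your proposal cannot be compared against a proof in the paper, because the paper gives none: Theorem \ref{arzela} is stated in the preliminaries as a known result, with only citations to standard references. Judged on its own, your argument is the classical proof and it is correct: the forward implication via total boundedness of the compact closure, uniform continuity of the finitely many centers, and an $\varepsilon/3$ estimate; the converse via separability of the compact set $\Omega$, Bolzano--Weierstrass, the diagonal subsequence, and the $\varepsilon/3$ argument that merges equicontinuity, a finite subcover by $\delta$-balls around dense points, and pointwise Cauchyness to get a uniformly Cauchy subsequence, followed by completeness of $C(\Omega)$. The only step you gloss over is the very last one: you establish that every sequence drawn from $\mathfrak{M}$ itself has a subsequence converging in $C(\Omega)$, and then assert sequential compactness of $\overline{\mathfrak{M}}$. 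In a metric space this implication does hold, but it deserves a line: given $\{g_m\}\subset\overline{\mathfrak{M}}$, choose $f_m\in\mathfrak{M}$ with $\|f_m-g_m\|_{\infty}<1/m$, extract a subsequence $\{f_{m_k}\}$ converging in $C(\Omega)$ by what you proved, and observe that the corresponding terms $g_{m_k}$ converge to the same limit, which lies in $\overline{\mathfrak{M}}$. With that remark added, your proof is complete and is exactly the argument the paper's cited sources would supply.
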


\vskip 0.5cm

Now we recall the notion of degree for continuous maps. The degree of a map in finite dimensional spaces is known as the Brouwer degree. Let $\Omega$ be a bounded and open subset of a finite dimensional Banach space $(\mathbb{B},\|\cdot\|)$. Let $T_{0}:\overline{\Omega}\rightarrow\mathbb{B}$ be a $C(\overline{\Omega})\cup C^{1}(\Omega)$ map, and $S_{0}=\{x\in\Omega:J_{0}(x)=0\}$ be the set of all critical points of the map $T_{0}$, where $J_{0}(x):=\det T_{0}'(x)$ is the Jacobean of $T_{0}$ at $x$. If $y\notin T_{0}(\partial\Omega\cup S_{0})$, then the Brouwer degree is defined as
\begin{align*}
\degb(T_{0},\Omega,y)=\sum_{x\in T_{0}^{-1}(y)}\sgn J_{0}(x),
\end{align*}
which corresponds to the number of solutions of $T_{0}(x)=y$ in $\Omega$. However, if $T_{0}\in C(\overline{\Omega})\cap C^{2}(\Omega)$, $y\notin T_{0}(\partial\Omega)$ and $y\in T_{0}(S_{0})$, then  $\degb(T_{0},\Omega,y)=\degb(T_{0},\Omega,z)$, where $z\notin T_{0}(S_{0})$ such that $\|z-y\|<\dist(y,T_{0}(\partial\Omega))$. Further, if $T_{0}\in C(\overline{\Omega})$ and $y\notin T_{0}(\partial\Omega\cup S_{0})$, then $\degb(T_{0},\Omega,y)=\degb(T_{1},\Omega,y),$ where $T_{1}\in C(\overline{\Omega})\cap C^{2}(\Omega)$ satisfies $\sup_{x\in\Omega}\|T_{0}(x)-T_{1}(x)\|<\dist(y,T_{0}(\partial\Omega))$.

\vskip 0.5em

In 1934, J. Leray and J. Schauder \cite{JLJS} extended the notion of degree to infinite dimensional spaces. They proved that along with continuity some compactness condition for the map is required. However, this is more suitable for a map of the form $I-T$, where $I$ is the identity map and $T$ is a completely continuous map. For defining the Leray-Schauder degree, the following theorem \cite{fonseca} is helpful to approximate a compact map with a finite-dimensional map.

\begin{thm}\label{thapp}
Assume that $\Omega$ is an open and bounded subset of a real Banach space $(\mathcal{B},\|\cdot\|)$ and $T:\overline{\Omega}\rightarrow\mathcal{B}$ is a completely continuous map. Then, for every $\varepsilon>0$, there exist a
finite-dimensional space $\mathbb{B}$ and a continuous map
$T_{\varepsilon}:\overline{\Omega}\rightarrow\mathbb{B}$ such that
$\|T(x)-T_{\varepsilon}(x)\|<\varepsilon$ for every
$x\in\overline{\Omega}$.
\end{thm}

Let $\Omega$ be a bounded and open subset of a Banach space
$\mathcal{B}$. Let $T:\overline{\Omega}\rightarrow\mathcal{B}$ be a
completely continuous mapping and $y\notin(I-T)(\partial\Omega)$.
The Leray-Schauder degree of $I-T$ over $\Omega$ at point $y$ is
defined as
\begin{align*}
\degls(I-T,\Omega,y)=\degb(I-T_{\varepsilon},\Omega,y),
\end{align*}
where $T_{\varepsilon}$ is an approximation of $T$ in a finite dimensional space such that $\|T(x)-T_{\varepsilon}(x)\|<\varepsilon:=\dist(y,(I-T)(\partial\Omega))$.
When $T:\overline{\Omega}\rightarrow K$ be a completely continuous map
such that $0\notin(I-T)(\partial\Omega)$, where $K$ is a retract of a Banach space $\mathcal{B}$ and $\Omega$ is an open subset of $K$, then for any retraction $r:\mathcal{B}\rightarrow K$ the Leray-Schauder degree $\degls(I-T\circ r,r^{-1}(\Omega),0)$ is known as the fixed point index of the map $T$ over $\Omega$ with respect to the retract $K$ and is denoted by $\ind(T,\Omega,K)$. The following are the most significant properties of the fixed point index for completely continuous maps.

\vskip 0.5em

\begin{description}
\item[$(\mathbf{F_{1}})$]\textbf{Normalization:} For every constant map $T$ mapping $\overline{\Omega}$ into $\Omega$,
\begin{align*}\ind(T,\Omega,K)=1.\end{align*}
\item[$(\mathbf{F_{2}})$]\textbf{Additivity:} For any disjoint open subsets $\Omega_{1}$ and $\Omega_{2}$ of $\Omega$ such that $T$ has no fixed
point on $\overline{\Omega}\setminus(\Omega_{1}\cup\Omega_{2})$,
\begin{align*}\ind(T,\Omega,K)=\ind(T,\Omega_{1},K)+\ind(T,\Omega_{2},K).\end{align*}
\item[$(\mathbf{F_{3}})$]\textbf{Homotopy:} For every compact interval $[a,b]\subset\R$ and every compact map
$h:[a,b]\times\overline{\Omega}\rightarrow K$ such that $h(\tau,x)\neq x$ for $(\tau,x)\in[a,b]\times\partial\Omega$,
\begin{align*}\ind(h(\tau,\cdot),\Omega,K)\end{align*}
is well defined and is independent of $\tau\in[a,b]$.
\item[$(\mathbf{F_{4}})$]\textbf{Solution:} If $\ind(T,\Omega,K)\neq 0$, then $T$ has at least one fixed point in $\Omega$.
\item[$(\mathbf{F_{5}})$]\textbf{Permanence:} If $K_{1}$ is a retract of $K$ and $T(\overline{\Omega})\subset K_{1}$, then
\begin{align*}\ind(T,\Omega,K)=\ind(T,\Omega\cap K_{1},K_{1}).\end{align*}
\item[$(\mathbf{F_{6}})$]\textbf{Excision:} For every open set $\Omega_{1}\subset\Omega$ such that $T$ has no fixed point in
$\overline{\Omega}\setminus\Omega_{1}$, then
\begin{align*}\ind(T,\Omega,K)=\ind(T,\Omega_{1},K).\end{align*}
\end{description}

\begin{thm}\label{schauder}
\textbf{(Schauder's fixed point theorem)} Let $X$ be a nonempty,
closed, bounded and convex subset of a Banach space $\mathcal{B}$
and $T:X\rightarrow X$ be a completely continuous map. Then, $T$ has
a fixed point in $X$.
\end{thm}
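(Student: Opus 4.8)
The plan is to derive Schauder's theorem directly from the fixed point index machinery developed above, by exhibiting $X$ itself as the retract on which the index lives. First I would observe that, since $X$ is closed and convex, it is a retract of $\mathcal{B}$: by Dugundji's extension theorem there is a continuous retraction $r:\mathcal{B}\rightarrow X$ with $\left.r\right|_{X}=I_{X}$, so the fixed point index $\ind(T,\Omega,K)$ is available with the choice $K=X$. The key structural observation is that taking $\Omega=K=X$ makes $\Omega$ simultaneously open and closed in $K$, so its boundary relative to $K$ is empty, $\partial\Omega=\emptyset$. Consequently every hypothesis of the form ``no fixed point on $\partial\Omega$'' is satisfied vacuously, and the index $\ind(T,X,X)$ is well defined.

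Next I would compute this index by a homotopy to a constant map. Fix any $x_{0}\in X$ and define $h:[0,1]\times\overline{X}\rightarrow X$ by $h(\tau,x)=(1-\tau)x_{0}+\tau T(x)$. Because $X$ is convex and $T$ maps $X$ into $X$, each value $h(\tau,x)$ again lies in $X=K$; because $T$ is completely continuous and $X$ is bounded, the image of $h$ lies in the compact set $\overline{\operatorname{conv}}\bigl(\{x_{0}\}\cup\overline{T(X)}\bigr)$, so $h$ is a compact map. Since $\partial\Omega=\emptyset$, the admissibility condition $h(\tau,x)\neq x$ on $[0,1]\times\partial\Omega$ holds trivially, so by the homotopy property $(\mathbf{F_{3}})$ the index is independent of $\tau$. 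Evaluating at the endpoints gives
\begin{align*}
\ind(T,X,X)=\ind\bigl(h(1,\cdot),X,X\bigr)=\ind\bigl(h(0,\cdot),X,X\bigr)=\ind(x_{0},X,X)=1,
\end{align*}
where the last equality is the normalization property $(\mathbf{F_{1}})$ applied to the constant map $x\mapsto x_{0}$. Finally, since $\ind(T,X,X)=1\neq0$, the solution property $(\mathbf{F_{4}})$ yields a point $x^{*}\in X$ with $T(x^{*})=x^{*}$, which completes the proof.

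The step I expect to require the most care is the reduction to the index framework itself, namely verifying that $X$ is genuinely a retract of $\mathcal{B}$ and that the index is legitimately defined when $\Omega$ is all of $K$ (so that the underlying Leray--Schauder degree over $r^{-1}(\Omega)$ remains meaningful); both points hinge on $X$ being closed, convex, and bounded. An alternative, more self-contained route avoids retracts entirely: using Theorem \ref{thapp} one approximates $T$ by finite-dimensional maps, projects into a compact convex polytope spanned by a finite $\varepsilon$-net of the compact set $\overline{T(X)}$, invokes the finite-dimensional (Brouwer) existence result to obtain approximate fixed points $x_{\varepsilon}$ with $\|x_{\varepsilon}-T(x_{\varepsilon})\|<\varepsilon$, and then extracts a convergent subsequence using compactness of $\overline{T(X)}$, whose limit is a genuine fixed point by continuity of $T$. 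I would present the index proof as primary, since it exploits precisely the properties $(\mathbf{F_{1}})$--$(\mathbf{F_{4}})$ just established.
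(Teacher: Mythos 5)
Your proposal is correct and follows essentially the same route as the paper's own proof: a homotopy $h(\tau,x)=(1-\tau)x_{*}+\tau Tx$ from a constant map to $T$, combined with the normalization property $(\mathbf{F_{1}})$ and the homotopy property $(\mathbf{F_{3}})$ to get index $1$, then the solution property $(\mathbf{F_{4}})$. You simply make explicit the details the paper leaves tacit (that $X$ is a retract via Dugundji, that $\partial\Omega=\emptyset$ when $\Omega=K=X$, compactness of the homotopy, and the final appeal to $(\mathbf{F_{4}})$), which is a faithful elaboration rather than a different argument.
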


\begin{proof}
For some $x_{*}\in X$, consider a map $h:[0,1]\times X\rightarrow X$ defined by
\begin{align*}h(\tau,x)=(1-\tau)x_{*}+\tau Tx.\end{align*}
Then, the conclusion follows from the homotopy property $(\mathbf{F_{3}})$ together with the normalization property $(\mathbf{F_{1}})$.
\end{proof}

\begin{lem}\label{lemindexone}
Let $\Omega$ be an open and bounded set in a real Banach space
$\mathcal{B}$, $P$ be a cone of $\mathcal{B}$, $0\in\Omega$ and
$T:\overline{\Omega}\cap P\rightarrow P$ be a completely continuous
map. Suppose $x\neq\lambda Tx$, for any $x\in\partial\Omega\cap P$,
$\lambda\in(0,1]$. Then, the fixed point index $\ind(T,\Omega\cap
P,P)=1$.
\end{lem}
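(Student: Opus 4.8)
The plan is to reduce the computation of the index to the normalization property $(\mathbf{F_{1}})$ by deforming $T$ to a constant map through a straight-line homotopy and invoking the homotopy invariance $(\mathbf{F_{3}})$. Concretely, I would define $h:[0,1]\times(\overline{\Omega}\cap P)\rightarrow P$ by $h(\tau,x)=\tau\,Tx$. Since $P$ is a cone, axiom $(\mathbf{P_{1}})$ guarantees $\tau\,Tx\in P$ for every $\tau\in[0,1]$ and $x\in\overline{\Omega}\cap P$, so $h$ indeed takes values in $P$; and because $T$ is completely continuous while scalar multiplication is continuous, $h$ is a compact map, as required by the hypotheses of $(\mathbf{F_{3}})$.

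First I would verify the admissibility condition, namely $h(\tau,x)\neq x$ for all $(\tau,x)\in[0,1]\times(\partial\Omega\cap P)$. For $\tau\in(0,1]$ the equation $x=\tau\,Tx$ is precisely $x=\lambda Tx$ with $\lambda=\tau\in(0,1]$, which is excluded by the hypothesis on $\partial\Omega\cap P$. For the endpoint $\tau=0$ one has $h(0,x)=0$; since $0\in\Omega$ is an interior point, $0\notin\partial\Omega$, and therefore $x\neq 0=h(0,x)$ for every $x\in\partial\Omega\cap P$. This settles admissibility both at the endpoints and throughout the deformation.

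With admissibility in hand, the homotopy property $(\mathbf{F_{3}})$ ensures that $\ind(h(\tau,\cdot),\Omega\cap P,P)$ is well defined and independent of $\tau$. Evaluating at the two endpoints gives
\begin{align*}
\ind(T,\Omega\cap P,P)=\ind(h(1,\cdot),\Omega\cap P,P)=\ind(h(0,\cdot),\Omega\cap P,P).
\end{align*}
Finally, $h(0,\cdot)$ is the constant map with value $0$, and $0\in\Omega\cap P$ (here $0\in P$ by $(\mathbf{P_{1}})$ and $0\in\Omega$ by assumption), so it maps $\overline{\Omega}\cap P$ into $\Omega\cap P$. Normalization $(\mathbf{F_{1}})$ then yields $\ind(h(0,\cdot),\Omega\cap P,P)=1$, which completes the argument.

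I expect the only genuinely delicate point to be the treatment of the endpoint $\tau=0$: the hypothesis $x\neq\lambda Tx$ is imposed only for $\lambda\in(0,1]$, so admissibility there cannot be drawn from that hypothesis and must instead be extracted from the assumption $0\in\Omega$. Everything else is a routine verification that the chosen homotopy is compact and $P$-valued, together with a direct application of the index axioms $(\mathbf{F_{1}})$ and $(\mathbf{F_{3}})$.
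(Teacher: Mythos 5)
Your proof is correct. The paper states this lemma without proof (it is quoted as a known result from the fixed point theory references), so there is no in-text argument to compare against; your straight-line homotopy $h(\tau,x)=\tau\,Tx$ --- admissible on $\partial\Omega\cap P$ for $\tau\in(0,1]$ by the hypothesis and at $\tau=0$ because $0\in\Omega$ forces $0\notin\partial\Omega$, then concluded via $(\mathbf{F_{3}})$ and $(\mathbf{F_{1}})$ --- is the standard argument, and it is exactly the technique the paper itself uses in its proofs of Theorem \ref{schauder} and Lemma \ref{lemindex0}.
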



\begin{lem}\label{lemindex0}
Let $\Omega$ be an open and bounded set in a real Banach space
$(\mathcal{B},\|\cdot\|)$, $P$ be a cone of $\mathcal{B}$,
$0\in\Omega$ and $T:\overline{\Omega}\cap P\rightarrow P$ be a
completely continuous map. Suppose there exist a $v\in
P\setminus\{0\}$ with $x\neq Tx+\delta v$ for every $\delta>0$ and
$x\in\partial\Omega\cap P$. Then, the fixed point index
$\ind(T,\Omega\cap P,P)=0$.
\end{lem}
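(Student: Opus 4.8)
The plan is to compute the index by a homotopy that pushes the map $T$ in the direction of $v$ until it can have no fixed point at all, and then to invoke the fact that a fixed-point-free map has index zero. Concretely, I would study the family $h(\tau,x)=Tx+\tau v$ for $\tau\in[0,R]$, where $R>0$ is to be chosen large, and show that the index of $T$ coincides with the index of $h(R,\cdot)$, which vanishes.

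First I would record the two boundedness facts that make the argument go through. Since $T$ is completely continuous and $\overline{\Omega}\cap P$ is bounded, the image $T(\overline{\Omega}\cap P)$ is relatively compact, hence bounded; let $M$ bound $\|Tx\|$ on $\overline{\Omega}\cap P$. Since $\Omega$ is bounded, let $N$ bound $\|x\|$ on $\overline{\Omega}$. I would then fix any $R>(M+N)/\|v\|$ and check that the equation $x=Tx+Rv$ has no solution in $\overline{\Omega}\cap P$: from the reverse triangle inequality, $\|x\|\geq R\|v\|-\|Tx\|\geq R\|v\|-M>N$, which is impossible. Thus $h(R,\cdot)$ is fixed-point-free on all of $\overline{\Omega}\cap P$.

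Next I would verify that $h$ is an admissible homotopy in the sense of $(\mathbf{F_{3}})$. It maps into $P$ because $Tx\in P$, $\tau v\in P$ by $(\mathbf{P_{1}})$, and $P$ is closed under addition (being a convex cone); it is compact because it is continuous and its image lies in the relatively compact set $\overline{T(\overline{\Omega}\cap P)}+[0,R]v$. The boundary condition $h(\tau,x)\neq x$ on $\partial\Omega\cap P$ holds for every $\tau\in(0,R]$ directly from the hypothesis $x\neq Tx+\delta v$ with $\delta=\tau>0$, and for $\tau=0$ it is exactly the requirement that $T$ have no fixed point on $\partial\Omega\cap P$, which is what makes $\ind(T,\Omega\cap P,P)$ well defined in the first place. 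Then $(\mathbf{F_{3}})$ gives $\ind(T,\Omega\cap P,P)=\ind(h(R,\cdot),\Omega\cap P,P)$.

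Finally, since $h(R,\cdot)$ has no fixed point anywhere in $\overline{\Omega}\cap P$, I would apply the excision property $(\mathbf{F_{6}})$ with the empty subset $\Omega_{1}=\emptyset$ to conclude $\ind(h(R,\cdot),\Omega\cap P,P)=\ind(h(R,\cdot),\emptyset,P)=0$, the index over the empty set being zero by the additivity property $(\mathbf{F_{2}})$. Combining the two equalities yields $\ind(T,\Omega\cap P,P)=0$. I expect the only delicate point to be the uniform choice of $R$, which rests essentially on the compactness of $T$ (so that its image is bounded); the admissibility of the homotopy at the endpoint $\tau=0$ is the other place to be careful, since the bare hypothesis only controls $\delta>0$ and one must lean on the standing assumption that the index is defined.
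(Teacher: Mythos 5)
Your proposal is correct and takes essentially the same approach as the paper: both arguments bound $\|Tx\|$ and $\|x\|$, homotope $T$ along $h(\tau,x)=Tx+\tau v$ to a translate $T+Rv$ with $R\|v\|$ exceeding those bounds, and use the hypothesis $x\neq Tx+\delta v$ ($\delta>0$) for admissibility on $\partial\Omega\cap P$. The only cosmetic difference is the final step: the paper invokes the solution property $(\mathbf{F_{4}})$ to get a contradiction from a nonzero index, whereas you conclude directly that the fixed-point-free map $h(R,\cdot)$ has index zero via excision $(\mathbf{F_{6}})$ and additivity $(\mathbf{F_{2}})$ --- two standard formulations of the same fact.
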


\begin{proof}
Let $\mu=\sup\{\|Tx\|:x\in\Omega\cap P\}$ and
$\rho=\sup\{\|x\|:x\in\Omega\}$. Choose
$\delta_{1}>(\mu+\rho)/\|v\|$ and define a map
$h:[0,1]\times(\overline{\Omega}\cap P)\rightarrow P$ as
\begin{align*}h(\tau,x)=T(x)+\tau\delta_{1}v.\end{align*}
Then, by the homotopy property $(\mathbf{F_{3}})$, we
obtain
\begin{align*}\ind(T,\Omega\cap P,P)=\ind(T+\delta_{1}v,\Omega\cap P,P).\end{align*}
Now, if $\ind(T,\Omega\cap P,P)\neq 0$, then there exist an element
$x\in\Omega\cap P$ such that $x=Tx+\delta_{1}v$. Consequently,
\begin{align*}\|x\|=\|Tx+\delta_{1}v\|\geq\delta_{1}\|v\|-\|Tx\|\geq\delta_{1}\|v\|-\mu>\rho,\end{align*}
a contradiction. Hence, $\ind(T,\Omega\cap P,P)=0$.
\end{proof}

\begin{lem}\label{lemindexzero}
Let $\Omega$ be a bounded and open set in a real Banach space
$(\mathcal{B},\|\cdot\|)$, $P$ be a cone of $\mathcal{B}$,
$0\in\Omega$ and $T:\overline{\Omega}\cap P\rightarrow P$ be a
completely continuous map. Suppose $Tx\npreceq x$, for any
$x\in\partial\Omega\cap P$. Then, the fixed point index
$\ind(T,\Omega\cap P,P)=0$.
\end{lem}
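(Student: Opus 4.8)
The plan is to reduce the statement directly to Lemma~\ref{lemindex0}, which already delivers a vanishing index as soon as one produces a single direction $v\in P\setminus\{0\}$ along which $T$ can never be translated onto a boundary fixed point. The order relation $Tx\npreceq x$ is tailor-made to supply exactly this obstruction: by definition of the cone ordering, the failure of $Tx\preceq x$ is precisely the assertion that $x-Tx\notin P$, and this is the algebraic fact that Lemma~\ref{lemindex0} wants.

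Concretely, I would first fix an arbitrary $v\in P\setminus\{0\}$. (This is where I tacitly use that the cone is nontrivial; in the degenerate case $P=\{0\}$ the hypothesis $0\in\Omega$ forces $\partial\Omega\cap P=\emptyset$, so the claim is vacuous.) I would then argue by contradiction: suppose that for this $v$ there were some $x\in\partial\Omega\cap P$ and some $\delta>0$ with $x=Tx+\delta v$. Rearranging gives $x-Tx=\delta v$, and since $\delta>0$ and $v\in P$, axiom $(\mathbf{P_{1}})$ yields $\delta v\in P$, whence $x-Tx\in P$, that is $Tx\preceq x$. This contradicts the standing hypothesis $Tx\npreceq x$ for every $x\in\partial\Omega\cap P$. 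Hence, for this choice of $v$, one has $x\neq Tx+\delta v$ for all $\delta>0$ and all $x\in\partial\Omega\cap P$, which is exactly the premise of Lemma~\ref{lemindex0}.

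Applying Lemma~\ref{lemindex0} then gives $\ind(T,\Omega\cap P,P)=0$ immediately. I do not anticipate a substantive obstacle: the analytic content is a single rearrangement combined with the positivity axiom $(\mathbf{P_{1}})$, while all of the index-theoretic machinery (the homotopy and its norm estimate) is already packaged inside Lemma~\ref{lemindex0}. The only point requiring a little care is logical rather than technical—one must observe that a \emph{single}, arbitrary $v$ suffices, since the contradiction above is produced separately for each $\delta>0$, so no uniformity in $\delta$ (nor any special selection of $v$) needs to be arranged.
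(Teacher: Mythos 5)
Your proof is correct and is essentially identical to the paper's: the paper likewise reduces the statement to Lemma~\ref{lemindex0} by choosing $v\in P\setminus\{0\}$ and observing that $x_{0}=Tx_{0}+\delta_{0}v$ would force $Tx_{0}\preceq x_{0}$ (indeed $Tx_{0}\prec x_{0}$), contradicting $Tx\npreceq x$ on $\partial\Omega\cap P$. If anything, your write-up is slightly more careful than the paper's, since you make the cone-axiom step $\delta v\in P$ explicit and at least acknowledge the degenerate case $P=\{0\}$, which the paper's phrase ``for some $v\in P\setminus\{0\}$'' silently presupposes away.
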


\begin{proof}
We claim that, for some $v\in P\setminus\{0\}$, $x\neq Tx+\delta v$ for every $\delta>0$ and $x\in\partial\Omega\cap P$. Suppose there exist some $\delta_{0}>0$ and $x_{0}\in\partial\Omega\cap P$ such that
$x_{0}=Tx_{0}+\delta_{0}v$. Then, $x_{0}=Tx_{0}+\delta_{0}v\succ Tx_{0}$, a contradiction as $x_{0}$ can not be mapped toward the origin under $T$. Hence, by Lemma \ref{lemindex0}, $\ind(T,\Omega\cap P,P)=0$.
\end{proof}


\begin{thm}\label{thmguo}
\textbf{(Guo-Krasnosel'skii fixed point theorem)} Let $P$ be a
cone of a real Banach space $(\mathcal{B},\|\cdot\|)$. Let $\Omega_{1}$,
$\Omega_{2}$ be bounded and open neighborhoods of $0\in\mathcal{B}$
such that ${\Omega}_{1}\subset\Omega_{2}$. Suppose that
$T:(\overline{\Omega}_{2}\setminus\Omega_{1})\cap P\rightarrow P$ is
completely continuous such that one of the following conditions
holds:
\begin{description}
\item[$(i)$] $\|Tx\|\leq\|x\|$ for $x\in \partial\Omega_{1}\cap P$, $\|Tx\|\geq\|x\|$ for $x\in \partial\Omega_{2}\cap P$.
\item[$(ii)$] $\|Tx\|\leq\|x\|$ for $x\in \partial\Omega_{2}\cap P$, $\|Tx\|\geq\|x\|$ for $x\in \partial\Omega_{1}\cap P$.
\end{description}
Then, $T$ has a fixed point in
$(\overline{\Omega}_{2}\setminus\Omega_{1})\cap P$.
\end{thm}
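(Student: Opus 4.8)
The plan is to prove the result by computing the fixed point index of $T$ over the two cone sections $\Omega_{1}\cap P$ and $\Omega_{2}\cap P$ and then exploiting additivity on the annular region $(\Omega_{2}\setminus\overline{\Omega}_{1})\cap P$. I shall argue case $(i)$ in detail; case $(ii)$ is entirely symmetric, with the roles of $\Omega_{1}$ and $\Omega_{2}$ interchanged. First I would dispose of a trivial situation: if $T$ has a fixed point on $\partial\Omega_{1}\cap P$ or on $\partial\Omega_{2}\cap P$, then, since both of these sets are contained in $(\overline{\Omega}_{2}\setminus\Omega_{1})\cap P$, the conclusion already holds. Hence I may assume throughout that $Tx\neq x$ for every $x\in(\partial\Omega_{1}\cup\partial\Omega_{2})\cap P$. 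Because the lemmas on the index require a map defined on a full cone section containing $0$, I would first extend $T$ to a completely continuous map $\overline{\Omega}_{2}\cap P\rightarrow P$ (the convexity and closedness of $P$ guarantee such an extension with range still in $P$); this does not affect any index computed from boundary data.

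Second, I would show $\ind(T,\Omega_{1}\cap P,P)=1$ by verifying the hypothesis of Lemma~\ref{lemindexone}. Suppose $x=\lambda Tx$ for some $x\in\partial\Omega_{1}\cap P$ and $\lambda\in(0,1]$. Taking norms and using $\|Tx\|\leq\|x\|$ on $\partial\Omega_{1}\cap P$ gives $\|x\|=\lambda\|Tx\|\leq\lambda\|x\|\leq\|x\|$; for $\lambda<1$ this forces $\|x\|=0$, impossible since $0\in\Omega_{1}$ while $x\in\partial\Omega_{1}$, and for $\lambda=1$ it contradicts $Tx\neq x$ on $\partial\Omega_{1}\cap P$. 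Thus $x\neq\lambda Tx$ for all such $x,\lambda$, and Lemma~\ref{lemindexone} yields $\ind(T,\Omega_{1}\cap P,P)=1$.

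Third, and this is where the real work lies, I would show $\ind(T,\Omega_{2}\cap P,P)=0$ using Lemma~\ref{lemindex0} (equivalently Lemma~\ref{lemindexzero}). The task is to produce a fixed $v\in P\setminus\{0\}$ for which $x\neq Tx+\delta v$ for every $\delta>0$ and every $x\in\partial\Omega_{2}\cap P$. The expansion inequality $\|Tx\|\geq\|x\|$ is exactly what should drive this: since $T$ is completely continuous and has no fixed point on the closed bounded set $\partial\Omega_{2}\cap P$, one first checks that $\inf\{\|x-Tx\|:x\in\partial\Omega_{2}\cap P\}>0$ and that $\sup\{\|Tx\|:x\in\overline{\Omega}_{2}\cap P\}<\infty$; together with $\|Tx\|\geq\|x\|$ these a priori bounds prevent the equation $x=Tx+\delta v$ from having boundary solutions for $\delta$ too large or too small, while the positivity built into the cone is used to exclude the remaining range and to pin down an admissible direction $v$. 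I expect this step to be the main obstacle: translating the scalar inequality $\|Tx\|\geq\|x\|$ into the vector condition demanded by Lemma~\ref{lemindex0} is delicate in a general cone, and it is precisely here that the geometry of $P$ relative to $\|\cdot\|$ must be used carefully.

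Finally, with the two indices in hand I would combine them. Since $T$ has no fixed point on $\partial\Omega_{1}\cap P$, the additivity property $(\mathbf{F_{2}})$ applied to the disjoint open sets $\Omega_{1}\cap P$ and $(\Omega_{2}\setminus\overline{\Omega}_{1})\cap P$ inside $\Omega_{2}\cap P$ gives
\begin{align*}
\ind(T,\Omega_{2}\cap P,P)=\ind(T,\Omega_{1}\cap P,P)+\ind\big(T,(\Omega_{2}\setminus\overline{\Omega}_{1})\cap P,P\big),
\end{align*}
so that $\ind\big(T,(\Omega_{2}\setminus\overline{\Omega}_{1})\cap P,P\big)=0-1=-1\neq0$. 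By the solution property $(\mathbf{F_{4}})$, $T$ has a fixed point in $(\Omega_{2}\setminus\overline{\Omega}_{1})\cap P\subset(\overline{\Omega}_{2}\setminus\Omega_{1})\cap P$, which completes case $(i)$ and hence the proof.
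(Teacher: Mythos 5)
Your overall architecture (index $1$ on $\Omega_{1}\cap P$ via Lemma \ref{lemindexone}, index $0$ on $\Omega_{2}\cap P$ via Lemma \ref{lemindex0}, then additivity $(\mathbf{F_{2}})$ and the solution property $(\mathbf{F_{4}})$ on the annular region) is exactly the paper's, and your handling of the trivial boundary-fixed-point case, the extension of $T$ to all of $\overline{\Omega}_{2}\cap P$, and the index-$1$ computation is correct. But there is a genuine gap at the step you yourself flag as the main obstacle: you never prove $\ind(T,\Omega_{2}\cap P,P)=0$. To invoke Lemma \ref{lemindex0} you must exhibit a $v\in P\setminus\{0\}$ with $x\neq Tx+\delta v$ for every $\delta>0$ and every $x\in\partial\Omega_{2}\cap P$. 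The a priori bounds you propose do confine $\delta$: since $T$ is compact and fixed-point free on the closed bounded set $\partial\Omega_{2}\cap P$, the set $(I-T)(\partial\Omega_{2}\cap P)$ is closed and misses $0$, so $\delta\|v\|=\|x-Tx\|\geq c>0$ rules out small $\delta$, while $\|x\|=\|Tx+\delta v\|\geq\delta\|v\|-\|Tx\|$ rules out large $\delta$. But this leaves an entire compact interval of intermediate values of $\delta$ untouched, and the phrase ``the positivity built into the cone is used to exclude the remaining range'' is not an argument; it is a restatement of what must be proved. Since the expansion hypothesis $\|Tx\|\geq\|x\|$ on $\partial\Omega_{2}\cap P$ enters your proposal nowhere else, the proof as written does not establish the theorem.

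For comparison, the paper closes this step in one line: if $x_{2}=Tx_{2}+\delta_{2}v$ with $x_{2}\in\partial\Omega_{2}\cap P$ and $\delta_{2}>0$, then $\|x_{2}\|=\|Tx_{2}+\delta_{2}v\|>\|Tx_{2}\|\geq\|x_{2}\|$, a contradiction, so Lemma \ref{lemindex0} applies with an arbitrary $v\in P\setminus\{0\}$. The inequality $\|Tx_{2}+\delta_{2}v\|>\|Tx_{2}\|$ --- that adding a nonzero cone element strictly increases the norm --- is precisely the ingredient your write-up lacks; it is also, as you correctly sense, the one point where the geometry of $P$ relative to $\|\cdot\|$ genuinely matters, and a careful reader should note that the paper asserts it without justification (for a general cone it requires an additional normality-type property of the norm, or a different choice of homotopy). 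To repair your proposal you must either supply this inequality, or alternatively choose $v$ so that the ray $\{\delta v:\delta>0\}$ avoids the closed set $(I-T)(\partial\Omega_{2}\cap P)$, and in either case the expansion hypothesis must be brought to bear; neither is done in what you wrote.
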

\begin{proof}
Assume that $(i)$ holds. If $T$ has a fixed point on $\partial\Omega_{1}\cup\partial\Omega_{2}$ then proof is complete. Suppose $Tx\neq x$ for all $x\in\partial\Omega_{1}\cup\partial\Omega_{2}$. First of all we show that $x\neq\lambda Tx$
for $\lambda\in(0,1]$ and $x\in \partial\Omega_{1}\cap P$. Suppose,
$x_{1}=\lambda_{1}Tx_{1}$ for some $\lambda_{1}\in(0,1)$ and
$x_{1}\in \partial\Omega_{1}\cap P$. Then,
$\|x_{1}\|=\lambda_{1}\|Tx_{1}\|<\|Tx_{1}\|\leq\|x_{1}\|$, a
contradiction. Hence, by Lemma \ref{lemindexone}, the fixed point
index $\ind(T,\Omega_{1}\cap P,P)=1$.

\vskip 0.5em

We claim that there exist a $v\in P\setminus\{0\}$ with $x\neq
Tx+\delta v$ for every $\delta>0$ and $x\in \partial\Omega_{2}\cap P$.
Suppose, $x_{2}=Tx_{2}+\delta_{2}v$ for some $\delta_{2}>0$ and
$x_{2}\in \partial\Omega_{2}\cap P$. Then,
$\|x_{2}\|=\|Tx_{2}+\delta_{2}v\|>\|Tx_{2}\|\geq\|x_{2}\|$, a
contradiction. Therefore, by Lemma \ref{lemindex0}, the fixed point
index $\ind(T,\Omega_{2}\cap P,P)=0$.

\vskip 0.5em

Thus, by the additivity property of fixed point index
$(\mathbf{F_{2}})$, we obtain
\begin{align*}
\ind(T,(\Omega_{2}\setminus\Omega_{1})\cap
P,P)=\ind(T,\Omega_{2}\cap P,P)-\ind(T,\Omega_{1}\cap P,P)=0-1=-1.
\end{align*}
Thus, $T$ has a fixed point in $(\overline{\Omega}_{2}\setminus\Omega_{1})\cap P$. The proof for $(ii)$ is similar.
\end{proof}


\vskip 0.5em

For each $x\in C[0,1]\cap C^{1}(0,1]$, we write
$\|x\|=\max_{t\in[0,1]}|x(t)|$, $\|x\|_{1}=\sup_{t\in(0,1]}t|x'(t)|$ and $\|x\|_{2}=\max\{\|x\|,\|x\|_{1}\}$. Further, for each
$x\in C^{1}[0,1]$, we write $\|x\|_{3}=\max\{\|x\|,\|x'\|\}$. The following results are known \cite{agarwaloregan,liuyan1,wj,xian1,YRA,YRA1}.

\begin{lem}\label{lembanach}
$(\mathcal{E},\|\cdot\|_{2})$ is a Banach space.
\end{lem}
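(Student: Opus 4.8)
The plan is to establish completeness directly, the norm axioms for $\|\cdot\|_{2}$ being routine: homogeneity and the triangle inequality are inherited because a maximum of two seminorms satisfies them, and positive-definiteness holds since $\|x\|_{2}=0$ forces $\|x\|=0$ and hence $x=0$ (note that $\|\cdot\|_{1}$ alone is merely a seminorm, as it vanishes on constants, so the $\|\cdot\|$ component is what rescues definiteness). For completeness, I would take an arbitrary Cauchy sequence $\{x_{n}\}$ in $(\mathcal{E},\|\cdot\|_{2})$, construct a candidate limit $x$, and then verify both that $x\in\mathcal{E}$ and that $x_{n}\to x$ in $\|\cdot\|_{2}$. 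Since $\|\cdot\|_{2}=\max\{\|\cdot\|,\|\cdot\|_{1}\}$, the condition $\|x_{n}-x_{m}\|_{2}\to 0$ splits into $\|x_{n}-x_{m}\|\to 0$ and $\|x_{n}-x_{m}\|_{1}\to 0$ holding simultaneously. The first says $\{x_{n}\}$ is uniformly Cauchy on $[0,1]$, so by completeness of $(C[0,1],\|\cdot\|)$ under uniform convergence there is $x\in C[0,1]$ with $x_{n}\to x$ uniformly.

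The crucial step is the treatment of the weighted derivative. I would introduce $y_{n}(t):=t\,x_{n}'(t)$ for $t\in(0,1]$; each $y_{n}$ is continuous on $(0,1]$ and $\|x_{n}-x_{m}\|_{1}=\sup_{t\in(0,1]}|y_{n}(t)-y_{m}(t)|$, so $\{y_{n}\}$ is uniformly Cauchy on $(0,1]$ and converges uniformly to some bounded continuous function $y$ on $(0,1]$. Now fix $a\in(0,1)$. On $[a,1]$ one has $|x_{n}'(t)-y(t)/t|=|y_{n}(t)-y(t)|/t\le\|y_{n}-y\|/a$, so $x_{n}'\to y(\cdot)/(\cdot)$ uniformly on $[a,1]$ while $x_{n}\to x$ uniformly there. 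By the standard theorem on differentiating uniform limits, $x$ is continuously differentiable on $[a,1]$ with $x'(t)=y(t)/t$. Since $a\in(0,1)$ is arbitrary, this patches to give $x\in C^{1}(0,1]$ with $t\,x'(t)=y(t)$ on $(0,1]$.

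It then remains to collect the conclusions. Because $y$ is bounded on $(0,1]$, being a uniform limit of the uniformly bounded family $\{y_{n}\}$, we obtain $\|x\|_{1}=\sup_{t\in(0,1]}t|x'(t)|=\sup_{t\in(0,1]}|y(t)|<\infty$, while $\|x\|<\infty$ since $x\in C[0,1]$; hence $x\in\mathcal{E}$. Finally $\|x_{n}-x\|\to 0$ by the uniform convergence already established, and $\|x_{n}-x\|_{1}=\sup_{t\in(0,1]}|y_{n}(t)-y(t)|=\|y_{n}-y\|\to 0$, so $\|x_{n}-x\|_{2}=\max\{\|x_{n}-x\|,\|x_{n}-x\|_{1}\}\to 0$. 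Thus every Cauchy sequence converges within $\mathcal{E}$, which proves completeness and hence that $(\mathcal{E},\|\cdot\|_{2})$ is a Banach space.

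I expect the main obstacle to be the passage from convergence of the weighted derivatives $t\,x_{n}'(t)$ to the differentiability of the limit $x$ together with the identification $x'(t)=y(t)/t$: one cannot differentiate the uniform limit directly on all of $(0,1]$ because the weight $t$ degenerates at the singular endpoint, so the argument must be localized to compact subintervals $[a,1]$ and then patched. One must be careful that the recovered derivative is genuinely the pointwise limit of the $x_{n}'$, and that it is boundedness of $y$ rather than of $x'$ itself that delivers the finiteness of $\|x\|_{1}$.
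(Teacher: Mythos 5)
Your proof is correct. Note that the paper itself gives no argument for this lemma: it is listed among results stated as ``known'' with citations to the literature (Agarwal--O'Regan, Yan--O'Regan--Agarwal, etc.), so there is no internal proof to compare against. Your argument is the standard one that those sources use and is complete as written: the splitting of $\|\cdot\|_{2}$-Cauchyness into uniform Cauchyness of $x_{n}$ on $[0,1]$ and of the weighted derivatives $y_{n}(t)=t\,x_{n}'(t)$ on $(0,1]$, the localization to $[a,1]$ where the weight is bounded away from zero so that the classical theorem on differentiating uniform limits applies, and the identification $\|x_{n}-x\|_{1}=\sup_{t\in(0,1]}|y_{n}(t)-y(t)|$ at the end are exactly the right steps. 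The two points you flag as delicate --- that one cannot differentiate the limit directly on all of $(0,1]$, and that it is boundedness of $y$ (not of $x'$) that yields $\|x\|_{1}<\infty$ --- are indeed where a careless argument would fail, and you handle both correctly.
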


\begin{lem}\label{lemspace1}
If $x\in\mathcal{E}$, then $|x'(t)|\leq\frac{\|x\|_{2}}{t}$ for all
$t\in(0,1]$.
\end{lem}


\begin{lem}\label{lemcone1}
If $x\in P:=\{x\in\mathcal{E}:x(t)\geq
t\|x\|\text{ for all }t\in[0,1],\,x(1)\geq\|x\|_{1}\}$, then
$\|x\|_{2}=\|x\|$.
\end{lem}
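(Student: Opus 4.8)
The plan is to reduce the claimed equality to a single inequality. Since by definition $\|x\|_2 = \max\{\|x\|, \|x\|_1\}$, establishing $\|x\|_2 = \|x\|$ is equivalent to proving that $\|x\|_1 \leq \|x\|$; once this is in hand, the maximum is attained by $\|x\|$ and the conclusion follows at once. So the entire task is to show $\|x\|_1 \leq \|x\|$ for $x \in P$.

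To obtain this, I would simply chain the two defining conditions of the cone $P$ together with the elementary bound coming from the sup norm. The second cone condition gives directly $\|x\|_1 \leq x(1)$. On the other hand, by the very definition $\|x\| = \max_{t\in[0,1]} |x(t)|$, the endpoint value satisfies $x(1) \leq |x(1)| \leq \|x\|$. Concatenating these yields $\|x\|_1 \leq x(1) \leq \|x\|$, which is precisely what is needed, and therefore $\|x\|_2 = \max\{\|x\|, \|x\|_1\} = \|x\|$.

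One point worth flagging is the role of the first cone condition $x(t) \geq t\|x\|$: since $t, \|x\| \geq 0$, it forces $x$ to be nonnegative on $[0,1]$, so that in fact $\|x\| = \max_{t\in[0,1]} x(t)$ and the absolute values may be dropped throughout. This nonnegativity is not strictly required for the endpoint estimate $x(1) \leq \|x\|$, which holds for any continuous function, but it clarifies why the second condition $x(1) \geq \|x\|_1$ is a natural constraint to impose. There is essentially no hard step here—the argument is a two-line inequality chain—so the only things to watch are the direction of the bound supplied by the second cone condition and the trivial but necessary observation that $x(1)$ is dominated by the sup norm.
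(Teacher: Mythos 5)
Your proof is correct: the chain $\|x\|_{1}\leq x(1)\leq\|x\|$ obtained from the second cone condition and the trivial sup-norm bound is exactly the intended argument, and it immediately gives $\|x\|_{2}=\max\{\|x\|,\|x\|_{1}\}=\|x\|$. The paper states this lemma as a known result without writing out a proof, so your two-line argument is precisely what is being invoked there.
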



\begin{lem}\label{lemmax1}
Let $\sigma\in C(0,1)$ and $\sigma>0$ on $(0,1)$ with
$\int_{0}^{1}\sigma(t)dt<+\infty$. Then,
\begin{align*}
t\max_{\tau\in[0,1]}\int_{0}^{1}G(\tau,s)\sigma(s)ds\leq&\int_{0}^{1}G(t,s)\sigma(s)ds\text{ for }t\in[0,1],\\
\sup_{\tau\in(0,1]}
\tau\int_{\tau}^{1}\sigma(s)ds\leq&\max_{t\in[0,1]}\int_{0}^{1}G(t,s)\sigma(s)ds,
\end{align*}
where
\begin{align*}G(t,s)=\begin{cases}s,\,& 0\leq s\leq t\leq1,\\
 t,\,& 0\leq t\leq s\leq1.\end{cases}\end{align*}
\end{lem}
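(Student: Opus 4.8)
The plan is to reduce everything to the study of the single function
$w(t):=\int_{0}^{1}G(t,s)\sigma(s)\,ds$. Since $G(t,s)=\min\{t,s\}$, I would split it as $w(t)=\int_{0}^{t}s\sigma(s)\,ds+t\int_{t}^{1}\sigma(s)\,ds$. The hypothesis $\int_{0}^{1}\sigma<+\infty$ guarantees that each piece is finite (note $\int_{0}^{t}s\sigma(s)\,ds\le\int_{0}^{1}\sigma<\infty$) and that $w\in C[0,1]$ with $w(0)=0$. The only genuine care needed is at the endpoints, where $\sigma$ may be singular: rather than invoking the fundamental theorem of calculus on a possibly unbounded integrand, I would get continuity at $0$ directly from the bound $t\int_{t}^{1}\sigma\le t\int_{0}^{1}\sigma\to0$.

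First I would locate the maximum of $w$. For any $t\in[0,1]$, direct subtraction gives $w(1)-w(t)=\int_{t}^{1}(s-t)\sigma(s)\,ds\ge0$, because $s\ge t$ and $\sigma>0$ on $[t,1]$. Hence $\max_{\tau\in[0,1]}w(\tau)=w(1)=\int_{0}^{1}s\sigma(s)\,ds$, which is exactly the quantity $\max_{\tau}\int_{0}^{1}G(\tau,s)\sigma(s)\,ds$ appearing on the right of both displayed inequalities.

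For the first inequality I would establish $w(t)\ge t\,w(1)$ via the algebraic identity $w(t)-t\,w(1)=(1-t)\int_{0}^{t}s\sigma(s)\,ds+t\int_{t}^{1}(1-s)\sigma(s)\,ds$, whose two summands are each nonnegative on $[0,1]$ (the first since $1-t\ge0$, the second since $1-s\ge0$ on $[t,1]$). This is precisely the concavity of $w$ together with $w(0)=0$ in disguise; I prefer the explicit identity because it sidesteps differentiating the singular integrand. Combined with the previous paragraph, this yields $t\max_{\tau}\int_{0}^{1}G(\tau,s)\sigma(s)\,ds\le\int_{0}^{1}G(t,s)\sigma(s)\,ds$.

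For the second inequality, I would fix $\tau\in(0,1]$ and use $\tau\le s$ on $[\tau,1]$ to obtain $\tau\int_{\tau}^{1}\sigma(s)\,ds\le\int_{\tau}^{1}s\sigma(s)\,ds\le\int_{0}^{1}s\sigma(s)\,ds=w(1)=\max_{t}w(t)$, and then take the supremum over $\tau$. The whole argument is elementary once $w$ is introduced; I expect the only real obstacle to be the bookkeeping at the singular endpoints—verifying the finiteness, continuity, and the value $w(0)=0$ under only the integrability assumption on $\sigma$—rather than any of the inequalities, which are essentially one line apiece.
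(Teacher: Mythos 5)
Your proof is correct. There is nothing in the paper to compare it against: Lemma \ref{lemmax1} is stated among the results quoted as known from the literature (\cite{agarwaloregan,liuyan1,wj,xian1,YRA,YRA1}) and no proof is given in the text, so your argument stands as a complete, self-contained derivation. The reduction to $w(t)=\int_{0}^{t}s\sigma(s)\,ds+t\int_{t}^{1}\sigma(s)\,ds$ does all the work: the computation $w(1)-w(t)=\int_{t}^{1}(s-t)\sigma(s)\,ds\ge 0$ identifies $\max_{\tau\in[0,1]}w(\tau)=w(1)$ (and, since $w(1)=\int_{0}^{1}s\sigma(s)\,ds<+\infty$, settles attainment of the maximum without any appeal to continuity of $w$, so your endpoint discussion at $t=0$ is dispensable), and the identity $w(t)-t\,w(1)=(1-t)\int_{0}^{t}s\sigma(s)\,ds+t\int_{t}^{1}(1-s)\sigma(s)\,ds$ gives the first inequality while avoiding differentiation of a possibly unbounded integrand, which is the correct precaution here. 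One further shortening is available for the second inequality: for $\tau\in(0,1]$ one has directly $\tau\int_{\tau}^{1}\sigma(s)\,ds\le\int_{0}^{\tau}s\sigma(s)\,ds+\tau\int_{\tau}^{1}\sigma(s)\,ds=w(\tau)\le\max_{t\in[0,1]}w(t)$, so the comparison $\tau\le s$ on $[\tau,1]$ is not needed at all.
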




\begin{lem}\label{lemcone2}
If $x\in P:=\{x\in
C^{1}[0,1]:x(t)\geq\gamma\|x\|\text{ for all }t\in[0,1],\,x(0)\geq\frac{b}{a}\|x'\|\}$,
then $x(t)\geq\gamma\rho \|x\|_{3}$ for all $t\in[0,1]$, where
$\gamma=\frac{b}{a+b}$, $\varrho=\frac{1}{\max\{1,\frac{a}{b}\}}$,
$a,b>0$.
\end{lem}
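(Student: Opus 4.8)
The plan is to prove the estimate separately against each of the two quantities defining $\|x\|_{3}=\max\{\|x\|,\|x'\|\}$ and then combine them. Since $\gamma\rho\|x\|_{3}=\max\{\gamma\rho\|x\|,\gamma\rho\|x'\|\}$, it suffices to establish the two pointwise bounds $x(t)\geq\gamma\rho\|x\|$ and $x(t)\geq\gamma\rho\|x'\|$ for every $t\in[0,1]$; the desired inequality then follows at once by taking the larger of the two right-hand sides.

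Before carrying this out, I would first record the elementary fact that $\rho=\frac{1}{\max\{1,a/b\}}=\min\{1,b/a\}$, so that $0<\rho\leq1$ and $\rho\leq b/a$. These are the only two properties of the constant that the argument uses, and they conveniently collapse the two regimes $a\leq b$ and $a>b$ into a single chain of inequalities.

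The first bound is immediate: the defining inequality $x(t)\geq\gamma\|x\|$ of the cone $P$ together with $\rho\leq1$ gives $x(t)\geq\gamma\|x\|\geq\gamma\rho\|x\|$. For the second bound I would route through the value $x(0)$. Using $\|x\|=\max_{s\in[0,1]}|x(s)|\geq x(0)$ and again the first cone condition, $x(t)\geq\gamma\|x\|\geq\gamma x(0)$; then the second cone condition $x(0)\geq\frac{b}{a}\|x'\|$ and $\frac{b}{a}\geq\rho$ yield $x(t)\geq\gamma x(0)\geq\gamma\frac{b}{a}\|x'\|\geq\gamma\rho\|x'\|$. Combining the two bounds and taking the maximum completes the proof.

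I do not anticipate a genuine obstacle here; the content is an ordering argument rather than an analytic one. The only point requiring a little care is the reduction of $\rho$ to $\min\{1,b/a\}$ and the observation that the single estimate $x(t)\geq\gamma\frac{b}{a}\|x'\|$ dominates $\gamma\rho\|x'\|$ in both the case $a\leq b$ (where $\rho=1$) and the case $a>b$ (where $\rho=b/a$), so that no explicit case split is needed once $\rho$ is rewritten in this form.
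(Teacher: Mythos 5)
Your proof is correct: rewriting $\varrho=\frac{1}{\max\{1,a/b\}}=\min\{1,b/a\}$, the two chains $x(t)\geq\gamma\|x\|\geq\gamma\varrho\|x\|$ and $x(t)\geq\gamma\|x\|\geq\gamma x(0)\geq\gamma\frac{b}{a}\|x'\|\geq\gamma\varrho\|x'\|$ combine to give $x(t)\geq\gamma\varrho\max\{\|x\|,\|x'\|\}=\gamma\varrho\|x\|_{3}$, exactly as needed. The paper itself states this lemma without proof (it is quoted as a known result from the cited literature), and your argument is the standard one used there, merely phrased with a maximum at the end instead of a case split on whether $\|x\|_{3}=\|x\|$ or $\|x\|_{3}=\|x'\|$; there is no gap.
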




\begin{lem}\label{lemmax2}
Let $\sigma\in C(0,1)$ and $\sigma>0$ on $(0,1)$ with
$\int_{0}^{1}\sigma(t)dt<+\infty$. Then,
\begin{align*}
\gamma\max_{\tau\in[0,1]}\int_{0}^{1}G(\tau,s)\sigma(s)ds\leq&\int_{0}^{1}G(t,s)\sigma(s)ds\text{ for }t\in[0,1],\,\gamma=\frac{b}{a+b},\,a,b>0,\\
\frac{b}{a}\max_{\tau\in[0,1]}\int_{\tau}^{1}\sigma(s)ds=&\int_{0}^{1}G(0,s)\sigma(s)ds,
\end{align*}
where
\begin{align*}
G(t,s)=\frac{1}{a}\begin{cases}b+as,\,& 0\leq s\leq t\leq1,\\
b+at,\,& 0\leq t\leq s\leq1.\end{cases}
\end{align*}
\end{lem}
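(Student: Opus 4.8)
The plan is to exploit the explicit form of the Green's function directly. First I would rewrite $G$ in the compact form $G(t,s)=\frac{b}{a}+\min\{s,t\}$, valid for all $t,s\in[0,1]$; comparing this with the two branches of the definition shows the expressions agree. From this form it is immediate that for each fixed $s$ the function $t\mapsto G(t,s)$ is nondecreasing, with minimum $G(0,s)=\frac{b}{a}$ attained at $t=0$, and with $\max_{\tau\in[0,1]}G(\tau,s)=G(1,s)=\frac{b}{a}+s$, attained for every $\tau\geq s$.

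The heart of the argument is the pointwise estimate $\gamma\,\max_{\tau\in[0,1]}G(\tau,s)\leq G(t,s)$ for all $t,s\in[0,1]$. Since $G(t,s)\geq G(0,s)=\frac{b}{a}$, it suffices to verify $\gamma\big(\frac{b}{a}+s\big)\leq\frac{b}{a}$; as $s\leq 1$ the left-hand side is at most $\gamma\cdot\frac{a+b}{a}$, which equals $\frac{b}{a}$ precisely because $\gamma=\frac{b}{a+b}$. This is exactly where the particular value of $\gamma$ enters, and I expect checking this sharp constant to be the only delicate point: the inequality becomes an equality at $s=1$, so no larger $\gamma$ would survive. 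Multiplying by $\sigma(s)\geq 0$, integrating over $(0,1)$ (finite since $\int_0^1\sigma<+\infty$), and then pulling the maximum outside the integral via $\int_0^1 G(\tau,s)\sigma(s)\,ds\leq\int_0^1\big(\max_{\tau'}G(\tau',s)\big)\sigma(s)\,ds$ for each fixed $\tau$ yields the first asserted inequality.

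For the displayed equality I would substitute $G(0,s)=\frac{b}{a}$, so that $\int_0^1 G(0,s)\sigma(s)\,ds=\frac{b}{a}\int_0^1\sigma(s)\,ds$. Because $\sigma>0$ on $(0,1)$, the map $\tau\mapsto\int_\tau^1\sigma(s)\,ds$ is decreasing and hence attains its maximum over $[0,1]$ at $\tau=0$, with value $\int_0^1\sigma(s)\,ds$; multiplying by $\frac{b}{a}$ returns exactly $\int_0^1 G(0,s)\sigma(s)\,ds$, which is the claim. The overall structure parallels the proof of Lemma \ref{lemmax1}, the only new ingredient being the additive boundary constant $\frac{b}{a}$ built into this Green's function, which is also what forces the constant to be $\gamma=\frac{b}{a+b}$ rather than the bare $t$-factor appearing there.
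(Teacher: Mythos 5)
Your proof is correct: the compact form $G(t,s)=\frac{b}{a}+\min\{s,t\}$, the pointwise bound $\gamma\left(\frac{b}{a}+s\right)\leq\frac{b}{a}\leq G(t,s)$ (which is exactly where $\gamma=\frac{b}{a+b}$ is used), and the integration/maximization steps are all sound, as is the observation that $G(0,s)\equiv\frac{b}{a}$ together with the monotonicity of $\tau\mapsto\int_{\tau}^{1}\sigma(s)\,ds$ for the stated equality. Note that the paper gives no proof of this lemma at all — it is quoted as a known result with citations — so there is nothing to compare against; your argument stands as a complete, self-contained verification.
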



\begin{lem}\label{lemaconcave}
Let $x\in C^{1}[0,1]\cap C^{2}(0,1)$ satisfies $x''<0$ on $(0,1)$,
$x(0)=0$, $x'(1)=a\geq0$. Then, $x(t)\geq tx(1)$ for $t\in[0,1]$.
\end{lem}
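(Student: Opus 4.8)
The plan is to reduce the claim to the elementary fact that a concave function lies above each of its chords. Since $x\in C^{2}(0,1)$ with $x''<0$, the function $x$ is strictly concave on $(0,1)$, and by continuity on $[0,1]$ the chord property extends to the closed interval. The chord of the graph of $x$ joining the endpoints $(0,x(0))$ and $(1,x(1))$ is, because $x(0)=0$, exactly the line $t\mapsto t\,x(1)$; hence concavity will yield $x(t)\geq t\,x(1)$ directly.

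To make this rigorous I would introduce the auxiliary function $g(t)=x(t)-t\,x(1)$ on $[0,1]$. Then $g\in C^{1}[0,1]\cap C^{2}(0,1)$, with $g(0)=x(0)=0$, $g(1)=x(1)-x(1)=0$, and $g''(t)=x''(t)<0$ for $t\in(0,1)$, so $g$ is again strictly concave. It therefore suffices to show $g(t)\geq0$ on $[0,1]$, which is precisely the statement that a concave function vanishing at both endpoints is nonnegative on the interval between them.

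For that final step I would argue by contradiction: if $g(t_{0})<0$ for some $t_{0}\in(0,1)$, then, using $g(0)=g(1)=0$ and continuity, $g$ attains a negative minimum at some interior point $t_{*}\in(0,1)$, where the second-order necessary condition for a minimum gives $g''(t_{*})\geq0$, contradicting $g''<0$. Alternatively, and more cleanly, since $x''<0$ the derivative $x'$ is strictly decreasing on $(0,1)$, so for $t\in(0,1]$ the average $\tfrac{1}{t}\int_{0}^{t}x'(s)\,ds$ over the shorter initial interval dominates the average $\int_{0}^{1}x'(s)\,ds$ over the whole interval; together with $x(0)=0$ this gives $\tfrac{1}{t}x(t)=\tfrac{1}{t}\int_{0}^{t}x'(s)\,ds\geq\int_{0}^{1}x'(s)\,ds=x(1)$, that is, $x(t)\geq t\,x(1)$.

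I expect no serious obstacle here. The only points requiring care are the mild regularity mismatch, namely that $x''$ exists only on the open interval $(0,1)$ while $x$ is merely $C^{1}$ up to the endpoints, which is why the monotonicity-of-the-average formulation is preferable since it never evaluates $x''$ at $0$ or $1$; and the observation that the hypothesis $x'(1)=a\geq0$ is in fact not needed for this particular inequality and may be safely ignored in the argument.
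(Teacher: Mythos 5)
Your proof is correct and takes essentially the same route the paper itself uses where it re-derives this fact inline (in the proof of Theorem \ref{th11}, the auxiliary function $z(t)=x_{m,n}(t)-tx_{m,n}(1)$ with $z(0)=z(1)=0$ and $z''\leq0$ is declared nonnegative): compare $x$ with its chord and invoke concavity together with the vanishing endpoint values. Your supporting details — the interior-minimum contradiction (or, equivalently, the decreasing-average argument, which cleanly avoids evaluating $x''$ at the endpoints) and the observation that the hypothesis $x'(1)=a\geq0$ is not actually needed for this inequality — are both accurate.
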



\begin{lem}\label{tp}
The Green's function
\begin{equation}\label{GH1}
H(t,s)=
\begin{cases} \frac{t(1-s)}{1-\alpha\eta}-
\frac{\alpha t(\eta-s)}{1-\alpha\eta}-(t-s),\, &0\leq s\leq t\leq
1,\,s\leq \eta,\\
\frac{t(1-s)}{1-\alpha\eta}- \frac{\alpha t(\eta-s)}{1-\alpha\eta},
\,&0 \leq t\leq s\leq 1,\,s\leq \eta,\\
\frac{t(1-s)}{1-\alpha\eta}, \,&0\leq t\leq s\leq 1,\,s\geq \eta,\\
\frac{t(1-s)}{1-\alpha\eta}-(t-s), \,&0\leq s\leq t\leq 1,\,s\geq
\eta.
\end{cases}\end{equation}
satisfies
\begin{description}
\item[(i)]$H(t,s)\leq\mu s(1-s),\hspace{0.4cm}(t,s)\in[0,1]\times[0,1]$,
\item[(ii)]$H(t,s)\geq\nu s(1-s),\hspace{0.45cm}(t,s)\in[\eta,1]\times[0,1]$,
\item[(iii)]$H(t,s)\geq\nu t(1-t)s(1-s),\hspace{0.45cm}(t,s)\in[0,1]\times[0,1]$,
\end{description}
where
\begin{align*}
\mu:=\frac{\max\{1,\alpha\}}{1-\alpha\eta}>0,\,\nu:=\frac{\min\{1,\alpha\}\min\{\eta,1-\eta\}}{1-\alpha\eta}>0.
\end{align*}
\end{lem}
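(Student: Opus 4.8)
The plan is to reduce everything to the classical Green's function
\[
G_{0}(t,s)=\min\{t,s\}-ts=
\begin{cases} t(1-s), & t\le s,\\ s(1-t), & s\le t,\end{cases}
\]
of $-x''$ under $x(0)=x(1)=0$. A branch-by-branch comparison of \eqref{GH1} (using $0<\eta<1$, $\alpha>0$, $1-\alpha\eta>0$) should establish the identity
\[
H(t,s)=G_{0}(t,s)+\frac{\alpha t}{1-\alpha\eta}\,G_{0}(\eta,s),\qquad (t,s)\in[0,1]\times[0,1].
\]
Verifying this is the only genuinely computational step: on each of the four regions one expands the right-hand side, collects the term $t(1-\alpha\eta)/(1-\alpha\eta)=t$, and matches \eqref{GH1}; writing $D:=1-\alpha\eta$, the simplification $D+\alpha\eta=1$ is exactly what makes the branches coincide. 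From the identity, $H\ge0$ is immediate since $G_{0}\ge0$ and $\alpha,t\ge0$.

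Next I would record the elementary one-variable estimates for $G_{0}$, each obtained by checking the two cases $s\le t$ and $s\ge t$:
\[
G_{0}(t,s)\le s(1-s),\quad G_{0}(t,s)\ge t(1-t)\,s(1-s),\quad \min\{\eta,1-\eta\}\,s(1-s)\le G_{0}(\eta,s)\le \eta(1-s).
\]
The structural fact is that, for fixed $s$, the map $t\mapsto H(t,s)$ is concave on $[0,1]$, being the sum of the concave tent $t\mapsto G_{0}(t,s)$ and the linear term $t\mapsto \tfrac{\alpha t}{D}G_{0}(\eta,s)$. Concavity reduces all three estimates to evaluations at the break point $t=s$ and the endpoints $t\in\{\eta,1\}$, where I record the convenient values $H(s,s)=s(1-s)+\tfrac{\alpha s}{D}G_{0}(\eta,s)$, $H(\eta,s)=\tfrac{1}{D}G_{0}(\eta,s)$, and $H(1,s)=\tfrac{\alpha}{D}G_{0}(\eta,s)$ (the last two again using $D+\alpha\eta=1$).

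For the upper bound (i), a concave function that is increasing on $[0,s]$ attains its maximum over $[0,1]$ at $t=s$ or $t=1$, so it suffices to bound those two values. Using $G_{0}(\eta,s)\le\eta(1-s)$ gives $H(s,s)\le s(1-s)(1+\alpha\eta/D)=s(1-s)/D\le\mu\,s(1-s)$, while $G_{0}(\eta,s)\le s(1-s)$ gives $H(1,s)\le\tfrac{\alpha}{D}s(1-s)\le\mu\,s(1-s)$. For (ii), with $t\in[\eta,1]$, concavity forces the minimum over $[\eta,1]$ at an endpoint, and the two endpoint values are $G_{0}(\eta,s)/D$ times $1$ and $\alpha$ respectively, so $\min_{t\in[\eta,1]}H(t,s)=\tfrac{\min\{1,\alpha\}}{D}G_{0}(\eta,s)\ge\nu\,s(1-s)$ by the lower estimate on $G_{0}(\eta,s)$. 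For (iii), I would discard the nonnegative term $G_{0}(t,s)$ and write $H(t,s)\ge\tfrac{\alpha t}{D}G_{0}(\eta,s)\ge\tfrac{\alpha t}{D}\min\{\eta,1-\eta\}\,s(1-s)\ge\nu\,t(1-t)s(1-s)$, the last inequality holding because $\alpha\ge\min\{1,\alpha\}(1-t)$.

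The main obstacle is less the algebra than a conceptual trap in (iii): the constant $\nu$ can exceed $1$ (for instance $\alpha=2$, $\eta=2/5$), so the naive route $H\ge G_{0}(t,s)\ge t(1-t)s(1-s)$ does \emph{not} deliver the factor $\nu$; one is forced to exploit the second term $\tfrac{\alpha t}{D}G_{0}(\eta,s)$, and the clean inequality $\alpha\ge\min\{1,\alpha\}(1-t)$ is what rescues it uniformly in both cases $\alpha\le1$ and $\alpha\ge1$. Beyond this, the only care needed is the bookkeeping in verifying the decomposition identity and in confirming that $H(\cdot,s)$ is increasing up to $t=s$, which legitimizes locating its extrema at $t=s$, $t=\eta$, and $t=1$.
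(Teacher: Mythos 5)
Your proof is correct. Note first that the paper itself contains no proof of this lemma: it is stated among results quoted as known from the literature, so there is no in-paper argument to compare against; the standard argument one finds for such three-point Green's functions is a direct case-by-case verification of (i)--(iii) on the four branches of \eqref{GH1}. Your route is genuinely different and cleaner: writing $D=1-\alpha\eta$ and $G_{0}(t,s)=\min\{t,s\}-ts$ for the Dirichlet Green's function, the decomposition $H(t,s)=G_{0}(t,s)+\frac{\alpha t}{D}\,G_{0}(\eta,s)$ does hold on all four regions (I checked; the cancellation is exactly $D+\alpha\eta=1$, as you say), and it packages the whole structure of $H$ into elementary facts about $G_{0}$, concavity in $t$, and the three values $H(s,s)$, $H(\eta,s)=G_{0}(\eta,s)/D$, $H(1,s)=\alpha G_{0}(\eta,s)/D$. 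In fact you can avoid even the branch-by-branch verification: the solution of $-u''=z$, $u(0)=0$, $u(1)=\alpha u(\eta)$ is $u(t)=\int_{0}^{1}G_{0}(t,s)z(s)\,ds+ct$ with $c(1-\alpha\eta)=\alpha\int_{0}^{1}G_{0}(\eta,s)z(s)\,ds$, which produces your kernel directly. Your warning about (iii) is also accurate and worth keeping: since $\nu$ may exceed $1$ (e.g.\ $\alpha=2$, $\eta=2/5$ gives $\nu=2$), the naive chain $H\geq G_{0}(t,s)\geq t(1-t)s(1-s)$ cannot deliver the constant, and your inequality $\alpha\geq\min\{1,\alpha\}(1-t)$ is the correct repair.

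One small imprecision to fix in the write-up: the claim that ``a concave function that is increasing on $[0,s]$ attains its maximum over $[0,1]$ at $t=s$ or $t=1$'' is false for general concave functions (the maximum can lie in the interior of $(s,1)$). What makes your step valid is that $H(\cdot,s)$ is piecewise linear in $t$ with its only breakpoint at $t=s$, hence linear on $[s,1]$, so its maximum over $[0,1]$ occurs at $t\in\{0,s,1\}$, and $H(0,s)=0$ disposes of the left endpoint; you should invoke piecewise linearity rather than concavity there. For the minimum in (ii), by contrast, concavity alone genuinely suffices, since a concave function on a compact interval attains its minimum at an endpoint.
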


\chapter[Singular Systems of ODEs with Nonlocal BCs]{Singular Systems of Ordinary Differential Equations with Nonlocal Boundary Conditions}\label{ch2}

Existence theory for nonlinear three-point boundary value problems (BVPs) was initiated by Gupta \cite{gupta}. Since then the study of nonlinear regular multi-point BVPs has attracted the attention of many researchers; see for example, \cite{bl,kp,lb,llw1,mr,mm,wj,liuqiu,zhang} for scalar equations, and for systems of BVPs, see \cite{cheungwong,dr,kw}. Recently, the study of SBVPs has also attracted much attention. An excellent resource with an extensive bibliography was produced by Agarwal and O'Regan \cite{ao}. In Sections \ref{existenceone} and \ref{existencetwo}, we study the following systems of SBVPs
\begin{equation}\label{k203}\begin{split}
-x''(t)&=f(t,y(t)),\hspace{0.4cm}t\in(0,1),\\
-y''(t)&=g(t,x(t)),\hspace{0.4cm}t\in(0,1),\\
x(0)&=0,\,x(1)=\alpha x(\eta),\\
y(0)&=0,\,y(1)=\alpha y(\eta),
\end{split}\end{equation}
and
\begin{equation}\label{k204}\begin{split}
-x''(t)&=f(t,x(t),y(t)),\hspace{0.4cm}t\in(0,1),\\
-y''(t)&=g(t,x(t),y(t)),\hspace{0.4cm}t\in(0,1),\\
x(0)&=0,\,x(1)=\alpha x(\eta),\\
y(0)&=0,\,y(1)=\alpha y(\eta)
\end{split}\end{equation}
where $\eta\in(0,1)$, $0<\alpha\eta<1$. For the system of SBVPs \eqref{k203}, we assume that $f,g:(0,1)\times(0,\infty)\rightarrow(0,\infty)$ are continuous, $f(t,0)$ and $g(t,0)$ are not identically $0$. For the system of SBVPs \eqref{k204}, we assume that $f,g:(0,1)\times(0,\infty)\times(0,\infty)\rightarrow(0,\infty)$ are continuous. Further, both $f$ and $g$ are allowed to be singular at $t=0$, $t=1$, $x=0$ and/or $y=0$.  By singularity we mean that the nonlinearities $f$ and $g$ are allowed to be unbounded at $t=0$, $t=1$, $x=0$ and $y=0$.

\vskip 0.5em

Further, in Section \ref{sec-couple-four-point}, we present existence result
for the following coupled system of SBVPs subject to four-point coupled BCs
\begin{equation}\label{4.0.1}\begin{split}
-x''(t)&=f(t,x(t),y(t)),\hspace{0.4cm}t\in(0,1),\\
-y''(t)&=g(t,x(t),y(t)),\hspace{0.45cm}t\in(0,1),\\
x(0)&=0,\,x(1)=\alpha y(\xi),\\
y(0)&=0,\,y(1)=\beta x(\eta),
\end{split}\end{equation}
where the parameters $\alpha$, $\beta$, $\xi$, $\eta$ satisfy $\xi,\eta\in(0,1)$, $0<\alpha\beta\xi\eta<1$. We assume that the nonlinearities $f,g:(0,1)\times[0,\infty)\times[0,\infty)\rightarrow[0,\infty)$ are continuous and allowed to be singular at $t=0$ or $t=1$.


\section{Sufficient conditions for the existence of at least one positive solution}\label{existenceone}

In this section, we establish existence of positive solution to the system of SBVPs \eqref{k203}. We say $(x,y)$ is a positive solution to system of SBVPs \eqref{k203} if $(x,y)\in(C[0,1]\cap C^{2}(0,1))\times(C[0,1]\cap C^{2}(0,1))$, $x>0$ and $y>0$ on $(0,1]$, $(x,y)$ satisfies \eqref{k203}. Let $n_{0}>\max\{\frac{1}{\eta},\frac{1}{1-\eta},\frac{2-\alpha}{1-\alpha\eta}\}$
be a fixed positive integer. For each $x\in\mathcal{E}_{n}:=C[\frac{1}{n},1-\frac{1}{n}]$, we write
$\|x\|_{{}_{\mathcal{E}_{n}}}=\max\{|x(t)|:t\in[\frac{1}{n},1-\frac{1}{n}]\}$,
where $n\in\{n_{0},n_{0}+1,n_{0}+2,\cdots\}$. Clearly,
$\mathcal{E}_{n}$ with the norm $\|\cdot\|_{{}_{\mathcal{E}_{n}}}$
is a Banach space. Define a cone $K_{n}$ of $\mathcal{E}_{n}$ as
\begin{align*}
K_{n}=\{x\in\mathcal{E}_{n}:x(t)\geq0\text{ and }x''(t)\leq0\text{ for }t\in[\frac{1}{n},1-\frac{1}{n}]\}.
\end{align*}
For any real constant $r>0$, define an open neighborhood of $0\in\mathcal{E}_{n}$ of radius $r$ by
\begin{align*}
\Omega_{r}=\{x\in\mathcal{E}_{n}:\|x\|_{{}_{\mathcal{E}_{n}}}<r\}.
\end{align*}

\begin{lem}\label{lemHn}
For $z\in\mathcal{E}_{n}$, the linear BVP
\begin{equation}\label{11k}\begin{split}
-u''(t)&=z(t),\hspace{0.4cm}t\in[\frac{1}{n},1-\frac{1}{n}],\\
u(\frac{1}{n})&=0,\,u(1-\frac{1}{n})=\alpha u(\eta),
\end{split}\end{equation}
has integral representation
\begin{equation}\label{12k}
u(t)=\int_{1/n}^{1-1/n}H_{n}(t,s)z(s)ds,
\end{equation}
where the Green's function $H_{n}:[\frac{1}{n},1-\frac{1}{n}]\times
[\frac{1}{n},1-\frac{1}{n}]\rightarrow [0,\infty)$ is defined by
\begin{equation}\label{13k}H_{n}(t,s)=\begin{cases}
\frac{(t-\frac{1}{n})((1-\frac{1}{n}-s)-\alpha(\eta-s))}
{1-\frac{2}{n}+\frac{\alpha}{n}-\alpha\eta}-(t-s), &\frac{1}{n}\leq
s\leq t\leq 1-\frac{1}{n},\,s\leq \eta,\\
\frac{(t-\frac{1}{n})((1-\frac{1}{n}-s)-\alpha(\eta-s))}
{1-\frac{2}{n}+\frac{\alpha}{n}-\alpha\eta}, &\frac{1}{n}\leq t\leq
s\leq 1-\frac{1}{n},\,s\leq \eta,\\
\frac{(t-\frac{1}{n})(1-\frac{1}{n}-s)}{1-\frac{2}{n}+\frac{\alpha}{n}-\alpha\eta},
&\frac{1}{n}\leq t\leq s\leq 1-\frac{1}{n},\,s\geq \eta,\\
\frac{(t-\frac{1}{n})(1-\frac{1}{n}-s)}{1-\frac{2}{n}+\frac{\alpha}{n}-\alpha\eta}-(t-s),
&\frac{1}{n}\leq s\leq t\leq 1-\frac{1}{n},\,s\geq \eta.
\end{cases}\end{equation}
\end{lem}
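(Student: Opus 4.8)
The plan is to treat \eqref{11k} as a second-order linear ODE with prescribed data and to construct its solution explicitly by two integrations, then read off the kernel by comparison. First I would write the general solution of $-u''=z$ on $[\frac1n,1-\frac1n]$ as
\begin{align*}
u(t)=A+Bt-\int_{1/n}^{t}(t-s)z(s)\,ds,
\end{align*}
noting that differentiating twice returns $u''(t)=-z(t)$, so the forcing term is reproduced correctly and $A,B$ remain free to absorb the boundary data. The hypotheses $n_{0}>1/\eta$ and $n_{0}>1/(1-\eta)$ guarantee $\frac1n<\eta<1-\frac1n$ for every admissible $n$, so $\eta$ is an interior point of the interval and the later split of the forcing integral at $s=\eta$ is meaningful.

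Next I would impose the two boundary conditions. The condition $u(\frac1n)=0$ eliminates $A$ in favour of $B$, giving $A=-B/n$ and hence $u(t)=B(t-\frac1n)-\int_{1/n}^{t}(t-s)z(s)\,ds$. Imposing the nonlocal condition $u(1-\frac1n)=\alpha u(\eta)$ then yields a single linear equation for $B$ whose coefficient is exactly
\begin{align*}
\bigl(1-\tfrac2n\bigr)-\alpha\bigl(\eta-\tfrac1n\bigr)=1-\tfrac2n+\tfrac\alpha n-\alpha\eta=:D.
\end{align*}
The crucial point is that $D\neq 0$: since $D=(1-\alpha\eta)-\frac{2-\alpha}{n}$ and $n\geq n_{0}>\frac{2-\alpha}{1-\alpha\eta}$ with $0<\alpha\eta<1$, one checks $D>0$, so $B$ is uniquely determined as
\begin{align*}
B=\frac1D\Bigl[\int_{1/n}^{1-1/n}\bigl(1-\tfrac1n-s\bigr)z(s)\,ds-\alpha\int_{1/n}^{\eta}(\eta-s)z(s)\,ds\Bigr].
\end{align*}

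Finally I would substitute this value of $B$ back into the expression for $u(t)$ and collect the entire coefficient of $z(s)$ under one integral $\int_{1/n}^{1-1/n}(\cdots)z(s)\,ds$, thereby exhibiting the kernel $H_{n}(t,s)$. The remaining work, which I expect to be the main obstacle, is purely bookkeeping: one must distinguish the four regions according to whether $s\leq t$ or $s\geq t$ (this governs whether the local contribution $-(t-s)$ is present, since that term comes only from $\int_{1/n}^{t}$) and whether $s\leq\eta$ or $s\geq\eta$ (this governs whether the term $-\frac{\alpha(t-\frac1n)(\eta-s)}{D}$ survives, since the second integral in $B$ runs only over $[\frac1n,\eta]$), and then verify that the four resulting expressions coincide termwise with the four branches of \eqref{13k}. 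As a last step I would confirm nonnegativity of each branch, so that $H_{n}$ indeed maps into $[0,\infty)$ as asserted.
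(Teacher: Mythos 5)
Your proposal is correct and follows exactly the construction the paper relies on: the paper omits a written proof of Lemma~\ref{lemHn}, but the decomposition you obtain, namely
$u(t)=\frac{t-\frac{1}{n}}{1-\frac{2}{n}+\frac{\alpha}{n}-\alpha\eta}\big[\int_{1/n}^{1-1/n}(1-\tfrac{1}{n}-s)z(s)\,ds-\alpha\int_{1/n}^{\eta}(\eta-s)z(s)\,ds\big]-\int_{1/n}^{t}(t-s)z(s)\,ds$,
is precisely the form the paper itself uses later (in the equicontinuity step of the proof of Theorem~\ref{thmfirst}), and your identification of the four branches of $H_{n}$ and of the nonvanishing denominator $D=1-\tfrac{2}{n}+\tfrac{\alpha}{n}-\alpha\eta>0$ (guaranteed by $n\geq n_{0}>\frac{2-\alpha}{1-\alpha\eta}$ and $0<\alpha\eta<1$) is accurate. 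Nothing further is needed beyond the routine sign check of the four branches that you already flag.
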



\begin{lem}\label{lem2.4}
The Green's function $H_{n}$ satisfies
\begin{description}
\item[(i)]$H_{n}(t,s)\leq\mu_{n}(s-\frac{1}{n})(1-\frac{1}{n}-s),
\hspace{0.4cm}(t,s)\in[\frac{1}{n},1-\frac{1}{n}]\times[\frac{1}{n},1-\frac{1}{n}]$,
\item[(ii)]$H_{n}(t,s)\geq\nu_{n}(s-\frac{1}{n})(1-\frac{1}{n}-s),\hspace{0.45cm}
(t,s)\in[\eta,1-\frac{1}{n}]\times[\frac{1}{n},1-\frac{1}{n}]$,
\end{description}
where
\begin{align*}\mu_{n}:=\frac{\max\{1,\alpha\}}{1-\frac{2}{n}+\frac{\alpha}{n}-\alpha\eta}>0,\,\nu_{n}
:=\frac{\min\{1,\alpha\}\min\{\eta-\frac{1}{n},1-\frac{1}{n}-\eta\}}{1-\frac{2}{n}+\frac{\alpha}{n}
-\alpha\eta}>0.\end{align*}
\end{lem}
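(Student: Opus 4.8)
The plan is to collapse the four branches of \eqref{13k} to clean one- or two-term expressions over the common denominator and then to read off both estimates from the fact that $H_{n}(t,s)$ is \emph{affine in $t$} on each branch. Throughout set $a=\frac{1}{n}$, $b=1-\frac{1}{n}$ and $D_{n}=(b-a)-\alpha(\eta-a)=1-\frac{2}{n}+\frac{\alpha}{n}-\alpha\eta$, the denominator appearing in \eqref{13k}. I would first record the two facts used everywhere: $D_{n}>0$, which is exactly the standing hypothesis $n\ge n_{0}>\frac{2-\alpha}{1-\alpha\eta}$; and $b-a=1-\frac{2}{n}<1$, so that $0\le s-a<1$ and $0\le b-s<1$ for every $s\in[a,b]$. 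Write $\phi(s)=(s-a)(b-s)$ for the weight on the right of (i) and (ii).

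Next I would simplify the numerators. Using $D_{n}=(b-a)-\alpha(\eta-a)$ together with the identity $(t-a)(b-s)-(t-s)(b-a)=(s-a)(b-t)$, the four cases of \eqref{13k} become
\begin{align*}
H_{n}(t,s)&=\frac{(t-a)\big[(b-s)-\alpha(\eta-s)\big]}{D_{n}}, && t\le s,\ s\le\eta,\\
H_{n}(t,s)&=\frac{(t-a)(b-s)}{D_{n}}, && t\le s,\ s\ge\eta,\\
H_{n}(t,s)&=\frac{(s-a)\big[(b-t)+\alpha(t-\eta)\big]}{D_{n}}, && s\le t,\ s\le\eta,\\
H_{n}(t,s)&=\frac{(s-a)(b-t)+\alpha(t-s)(\eta-a)}{D_{n}}, && s\le t,\ s\ge\eta.
\end{align*}
These forms make nonnegativity of $H_{n}$ transparent and, crucially, each numerator is affine in $t$ on its branch. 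Hence for fixed $s$ both inequalities—comparisons of $H_{n}(t,s)$ against the $t$-independent quantities $\mu_{n}\phi(s)$ and $\nu_{n}\phi(s)$—need only be checked at the endpoints $t\in\{a,\eta,s,b\}$ cut down to the relevant branch, since an affine function attains its extrema at interval endpoints.

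For the upper bound (i) the endpoint values of $H_{n}D_{n}$ are $0$ (at $t=a$), $(s-a)\big[(b-s)+\alpha(s-\eta)\big]$ or $(s-a)(b-s)$ (at $t=s$), and $\alpha(s-a)(b-\eta)$ or $\alpha(\eta-a)(b-s)$ (at $t=b$). Each is $\le\max\{1,\alpha\}\,\phi(s)$: the $t=s$ values use $s\le\eta$; the $t=b$ values use $b-\eta\le b-s$ and $\eta-a\le s-a$ together with $\alpha\le\max\{1,\alpha\}$. By affinity this gives $H_{n}\le\mu_{n}\phi(s)$ on the whole square. For the lower bound (ii) I would use $t\ge\eta$ to discard the branch $t\le s\le\eta$ (which forces $t=s=\eta$) and compute the minimum of $H_{n}D_{n}$ over $t\in[\eta,b]$ from the same endpoints: for $s\le\eta$ it equals $\min\{1,\alpha\}(s-a)(b-\eta)$, and for $s\ge\eta$ it equals $\min\{1,\alpha\}(\eta-a)(b-s)$. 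With $m=\min\{1,\alpha\}$ and $\kappa=\min\{\eta-a,b-\eta\}$, the claim $H_{n}\ge\nu_{n}\phi(s)$ reduces to $b-\eta\ge\kappa(b-s)$ and $\eta-a\ge\kappa(s-a)$, both immediate from $\kappa\le\min\{\eta-a,b-\eta\}$ and $s-a,\,b-s<1$.

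The genuinely delicate point is the last branch of the lower bound, $H_{n}D_{n}=(s-a)(b-t)+\alpha(t-s)(\eta-a)$ for $t\in[\eta,b]$, $s\ge\eta$: here I would again invoke affinity in $t$ to reduce to the two endpoint values $(s-a)(b-s)$ (at $t=s$) and $\alpha(\eta-a)(b-s)$ (at $t=b$), so the minimum is $(b-s)\min\{s-a,\alpha(\eta-a)\}$, and then check $\alpha(\eta-a)\ge m\kappa(s-a)$ using $m\le\alpha$, $\kappa\le\eta-a$, and $s-a<1$. I expect the main obstacle to be precisely this endpoint bookkeeping—extracting the sharp constants $\max\{1,\alpha\}$ and $\min\{1,\alpha\}$ from the two competing monomials, rather than the lossy $1+\alpha$ (resp. $0$) that a naive term-by-term bound on this branch produces—whereas the branch simplifications and the affine-reduction step are routine.
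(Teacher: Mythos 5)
Your proposal is correct: the four branch simplifications follow from the identity $(t-a)(b-s)-(t-s)(b-a)=(s-a)(b-t)$ (applied once with $b$ and once with $\eta$ in place of $b$), the piecewise-affine-in-$t$ reduction legitimately confines all checks to $t\in\{a,s,b\}$ (resp.\ $\{\eta,s,b\}$ on $[\eta,b]$), and each endpoint comparison you list — $(s-a)\bigl[(b-s)+\alpha(s-\eta)\bigr]\le\phi(s)$, $\alpha(s-a)(b-\eta)\le\alpha\phi(s)$, $\alpha(\eta-a)(b-s)\le\alpha\phi(s)$ for (i), and the minima $\min\{1,\alpha\}(s-a)(b-\eta)$, $\min\{1,\alpha\}(\eta-a)(b-s)$ together with $b-\eta\ge\kappa(b-s)$, $\eta-a\ge\kappa(s-a)$ for (ii) — is accurate. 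I cannot compare your route against the paper's own argument because the monograph states Lemma \ref{lem2.4} (like its continuous analogue, Lemma \ref{tp}) without proof; in the literature such estimates are usually obtained by direct term-by-term bounds on each of the four original branches, and your reorganization (collapse the branches to forms symmetric in $(t-a),(s-a),(b-t),(b-s)$, then exploit affinity in $t$) is a cleaner, complete substitute that in particular avoids the lossy $1+\alpha$ constant you mention. One cosmetic caveat: your aside that the simplified forms make nonnegativity of $H_{n}$ ``transparent'' is slightly glib for the branch $s\le t\le\eta$ when $\alpha>1$, where $(b-t)+\alpha(t-\eta)$ has a negative second term and positivity needs the extra observation that this affine function of $t$ is increasing with value $(b-s)-\alpha(\eta-s)\ge D_{n}>0$ at $t=s$; but nonnegativity is not part of the statement being proved, so this does not affect the validity of your proof of (i) and (ii).
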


Now we consider the modified system of non-singular BVPs
\begin{equation}\label{1eq2.6}\begin{split}
-x''(t)&=f(t,\max\{y(t)+\frac{1}{n},\frac{1}{n}\}),\hspace{0.4cm}t\in[\frac{1}{n},1-\frac{1}{n}],\\
-y''(t)&=g(t,\max\{x(t)+\frac{1}{n},\frac{1}{n}\}),\hspace{0.4cm}t\in[\frac{1}{n},1-\frac{1}{n}],\\
x(\frac{1}{n})&=0,\,x(1-\frac{1}{n})=\alpha x(\eta),\\
y(\frac{1}{n})&=0,\,y(1-\frac{1}{n})=\alpha y(\eta),
\end{split}\end{equation}
which in view of Lemma \ref{lemHn}, can be expressed as a system of integral equations
\begin{equation}\begin{split}\label{eq2.7}
x(t)&=\int_{1/n}^{1-1/n}H_{n}(t,s)f(s,\max\{y(s)+\frac{1}{n},\frac{1}{n}\})ds,\hspace{0.4cm}t\in[\frac{1}{n},1-\frac{1}{n}],\\
y(t)&=\int_{1/n}^{1-1/n}H_{n}(t,s)g(s,\max\{x(s)+\frac{1}{n},\frac{1}{n}\})ds,\hspace{0.4cm}t\in[\frac{1}{n},1-\frac{1}{n}].
\end{split}\end{equation}

Thus, $(x_{n},y_{n})\in\mathcal{E}_{n}\times \mathcal{E}_{n}$ is a solution of \eqref{1eq2.6} if and only if $(x_{n},y_{n})$ is a solution of \eqref{eq2.7}. Define maps $A_{n},B_{n},T_{n}:\mathcal{E}_{n}\rightarrow
K_{n}$ by
\begin{equation}\begin{split}\label{eq2.8}
(A_{n}y)(t)&=\int_{1/n}^{1-1/n}H_{n}(t,s)f(s,\max\{y(s)+\frac{1}{n},\frac{1}{n}\})ds,\hspace{0.4cm}t\in[\frac{1}{n},1-\frac{1}{n}],\\
(B_{n}x)(t)&=\int_{1/n}^{1-1/n}H_{n}(t,s)g(s,\max\{x(s)+\frac{1}{n},\frac{1}{n}\})ds,\hspace{0.4cm}t\in[\frac{1}{n},1-\frac{1}{n}],\\
(T_{n}x)(t)&=(A_{n}(B_{n}x))(t),\hspace{4.7cm}t\in[\frac{1}{n},1-\frac{1}{n}].
\end{split}\end{equation}
If $u_{n}\in K_{n}$ is a fixed point of $T_{n}$; then the system of
BVPs \eqref{1eq2.6} has a solution $(x_{n},y_{n})$ given by
\begin{align*}
x_{n}(t)&=u_{n}(t),\hspace{1.15cm}t\in[\frac{1}{n},1-\frac{1}{n}],\\
y_{n}(t)&=(B_{n}u_{n})(t),\hspace{0.5cm}t\in[\frac{1}{n},1-\frac{1}{n}].
\end{align*}

Assume that the following holds:

\begin{description}

\item[$(\mathbf{A}_{1})$] there exist $K,L\in C((0,1),(0,\infty))$ and $F,G\in C((0,\infty),(0,\infty))$ such that
\begin{align*}f(t,u)\leq K(t)F(u),\hspace{0.4cm}g(t,u)\leq L(t)G(u),\hspace{0.4cm}t\in(0,1),\;u\in(0,\infty),\end{align*}
where
\begin{align*}a:=\int_{0}^{1}t(1-t)K(t)dt<+\infty,\hspace{0.4cm}b:=\int_{0}^{1}t(1-t)L(t)dt<+\infty.\end{align*}

\end{description}

\begin{lem}\label{lem2.5}
Assume that $(\mathbf{A}_{1})$ holds. Then the map $T_{n}:\overline{\Omega}_{r}\cap K_{n}\rightarrow K_{n}$ is completely continuous.
\end{lem}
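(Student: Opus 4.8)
The plan is to prove that each of the operators $A_{n}$ and $B_{n}$ defined in \eqref{eq2.8} is completely continuous on a suitable bounded subset of $K_{n}$, and then to conclude that their composition $T_{n}=A_{n}\circ B_{n}$ is completely continuous. The whole argument rests on the observation that, because we have truncated the domain to the compact interval $[\frac{1}{n},1-\frac{1}{n}]$ and replaced the dependent variable by $\max\{x(s)+\frac{1}{n},\frac{1}{n}\}\geq\frac{1}{n}$, every evaluation of $f$ and $g$ takes place on a compact rectangle contained in $(0,1)\times(0,\infty)$, where these continuous nonlinearities (bounded above by $K(s)F(u)$ and $L(s)G(u)$ thanks to $(\mathbf{A}_{1})$) are bounded. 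This is precisely the role of the regularization, and it is what removes the singularity from the analysis for each fixed $n$.

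First I would fix $x\in\overline{\Omega}_{r}\cap K_{n}$ and record that $\frac{1}{n}\leq\max\{x(s)+\frac{1}{n},\frac{1}{n}\}\leq r+\frac{1}{n}$, so that the integrand defining $B_{n}x$ is bounded by $M_{g}:=\sup\{g(s,u):(s,u)\in[\frac{1}{n},1-\frac{1}{n}]\times[\frac{1}{n},r+\frac{1}{n}]\}<\infty$. Together with $M_{H}:=\max H_{n}$ on the compact square, this yields $\|B_{n}x\|_{{}_{\mathcal{E}_{n}}}\leq M_{H}M_{g}(1-\frac{2}{n})=:R$, so $B_{n}$ maps $\overline{\Omega}_{r}\cap K_{n}$ into the bounded set $\overline{\Omega}_{R}\cap K_{n}$; the inclusion into $K_{n}$ itself follows since $H_{n}\geq0$ gives $B_{n}x\geq0$, while the representation of Lemma \ref{lemHn} shows $-(B_{n}x)''=g(\cdot,\max\{x+\frac{1}{n},\frac{1}{n}\})\geq0$, i.e. $(B_{n}x)''\leq0$. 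The identical estimates apply verbatim to $A_{n}$ on $\overline{\Omega}_{R}\cap K_{n}$.

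Next I would verify the continuity and compactness of $B_{n}$. For continuity, if $x_{k}\to x$ in $\mathcal{E}_{n}$, then since $u\mapsto\max\{u+\frac{1}{n},\frac{1}{n}\}$ is $1$-Lipschitz the inner arguments converge uniformly while staying in the compact set $[\frac{1}{n},r+\frac{1}{n}]$; uniform continuity of $g$ on the compact rectangle then forces uniform convergence of the integrands, whence $\|B_{n}x_{k}-B_{n}x\|_{{}_{\mathcal{E}_{n}}}\to0$. For compactness I would invoke the Arzel\`{a}-Ascoli theorem (Theorem \ref{arzela}): uniform boundedness is the estimate $\|B_{n}x\|_{{}_{\mathcal{E}_{n}}}\leq R$ above, and equicontinuity follows from
\begin{align*}
|(B_{n}x)(t_{1})-(B_{n}x)(t_{2})|\leq M_{g}\int_{1/n}^{1-1/n}|H_{n}(t_{1},s)-H_{n}(t_{2},s)|\,ds,
\end{align*}
whose right-hand side tends to $0$ as $t_{1}\to t_{2}$ uniformly in $x$, because $H_{n}$ is uniformly continuous on the compact square. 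Hence $B_{n}$ is completely continuous, and the same reasoning shows $A_{n}$ is completely continuous on $\overline{\Omega}_{R}\cap K_{n}$.

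Finally I would assemble the composition. The set $B_{n}(\overline{\Omega}_{r}\cap K_{n})$ is relatively compact, so its closure is compact; since $A_{n}$ is continuous, it maps that compact closure onto a compact set, and
\begin{align*}
T_{n}(\overline{\Omega}_{r}\cap K_{n})=A_{n}\big(B_{n}(\overline{\Omega}_{r}\cap K_{n})\big)\subset A_{n}\big(\overline{B_{n}(\overline{\Omega}_{r}\cap K_{n})}\big)
\end{align*}
is therefore relatively compact, while $T_{n}=A_{n}\circ B_{n}$ is continuous as a composition of continuous maps; thus $T_{n}$ is completely continuous. I do not expect a genuine obstacle here: the only points requiring care are bookkeeping the bound $R$ so that $A_{n}$ is applied on a set where its integrand is again bounded, and confirming that the equicontinuity estimate is independent of $x$ — both of which are immediate once the compactness of the truncated domain and the uniform continuity of $H_{n}$ are exploited.
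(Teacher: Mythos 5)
Your proof is correct, but it follows a genuinely different route from the paper's. The paper never separates $A_{n}$ and $B_{n}$: it estimates the composed operator $T_{n}$ directly, bounding the inner integral by $d_{n}=b\,\mu_{n}\max_{u\in[0,r]}G(u+\frac{1}{n})$ and the outer one by $a\,\mu_{n}\max_{u\in[0,d_{n}]}F(u+\frac{1}{n})$, i.e.\ it genuinely uses the majorants $K(t)F(u)$, $L(t)G(u)$ of $(\mathbf{A}_{1})$ together with part $(i)$ of Lemma \ref{lem2.4}, and it proves continuity of $T_{n}$ by the Lebesgue dominated convergence theorem. You instead prove that $B_{n}$ and $A_{n}$ are each completely continuous on suitable bounded sets and then compose; your boundedness and continuity arguments rest only on the compactness of the rectangle $[\frac{1}{n},1-\frac{1}{n}]\times[\frac{1}{n},r+\frac{1}{n}]$ and the (uniform) continuity of $f$, $g$ there, so $(\mathbf{A}_{1})$ plays no essential role in your argument --- a legitimate observation, since for fixed $n$ the regularization removes the singularity entirely, and your route is accordingly more elementary (no dominated convergence, no growth condition). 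What the paper's approach buys in exchange is that its estimates are exactly the ones recycled in the existence proofs that follow (compare the chains of inequalities in Theorem \ref{thmfirst}), and its dominated-convergence argument is the template that still works when the arguments of $f$ and $g$ are not confined to a compact set, as in Section \ref{existencetwo} where the weights remain singular. One small point of care in your composition step: you apply $A_{n}$ to the closure of $B_{n}(\overline{\Omega}_{r}\cap K_{n})$, which is harmless here because $A_{n}$ is defined and continuous on every bounded subset of $\mathcal{E}_{n}$ (the truncation $\max\{\cdot+\frac{1}{n},\frac{1}{n}\}$ keeps its argument in a compact subset of $(0,1)\times(0,\infty)$), so the image of that compact closure is compact and $T_{n}$ is indeed compact; alternatively you could have applied Theorem \ref{arzela} to $T_{n}$ itself, as the paper does, and skipped the closure bookkeeping.
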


\begin{proof}Clearly, for any $u\in K_{n}$, we have $(T_{n}u)(t)\geq0$
and $(T_{n}u)''(t)\leq0$ for $t\in[\frac{1}{n},1-\frac{1}{n}]$.
Consequently, $T_{n}u\in K_{n}$ for all $u\in K_{n}$. Now, we show
that $T_{n}:\overline{\Omega}_{r}\cap K_{n}\rightarrow K_{n}$ is
uniformly bounded and equicontinuous. We introduce
\begin{equation}\label{18k}\begin{split}
d_{n}&=b\,\mu_{n}\max_{u\in[0,r]} G(u+\frac{1}{n}),\\
\omega_{n}&=\int_{1/n}^{1-1/n}f(t,\frac{1}{n}+\int_{1/n}^{1-1/n}H_{n}(t,s)g(s,u(s)+\frac{1}{n})ds)dt.
\end{split}\end{equation}
For any $u\in\overline{\Omega}_{r}\cap K_{n}$, using \eqref{eq2.8},
$(\mathbf{A}_{1})$ and $(i)$ of Lemma \ref{lem2.4}, we have
\begin{align*}\begin{split}
&(T_{n}u)(t)=\int_{1/n}^{1-1/n}H_{n}(t,s)f(s,\frac{1}{n}+\int_{1/n}^{1-1/n}H_{n}(s,\tau)g(\tau,u(\tau)+\frac{1}{n})d\tau)ds\\
&\leq\int_{1/n}^{1-1/n}H_{n}(t,s)K(s)F(\frac{1}{n}+\int_{1/n}^{1-1/n}H_{n}(s,\tau)g(\tau,u(\tau)+\frac{1}{n})d\tau)ds\\
&\leq\mu_{n}\int_{1/n}^{1-1/n}(s-\frac{1}{n})(1-\frac{1}{n}-s)K(s)F(\frac{1}{n}+\int_{1/n}^{1-1/n}H_{n}(s,\tau)g(\tau,u(\tau)+\frac{1}{n})d\tau)ds.
\end{split}\end{align*}
But, using $(\mathbf{A}_{1})$, $(i)$ of Lemma \ref{lem2.4} and
\eqref{18k},
\begin{align*}
0&\leq\int_{1/n}^{1-1/n}H_{n}(t,s)g(s,u(s)+\frac{1}{n})ds\\
&\leq\int_{1/n}^{1-1/n}H_{n}(t,s)L(s)G(u(s)+\frac{1}{n})ds\\
&\leq\mu_{n}\int_{1/n}^{1-1/n}(s-\frac{1}{n})(1-\frac{1}{n}-s)L(s)G(u(s)+\frac{1}{n})ds\\
&\leq\mu_{n}\max_{u\in[0,r]}G(u+\frac{1}{n})\int_{1/n}^{1-1/n}(s-\frac{1}{n})(1-\frac{1}{n}-s)L(s)ds\\
&\leq b\mu_{n}\max_{u\in[0,r]}G(u+\frac{1}{n})=d_{n}.
\end{align*}
Therefore,
\begin{align*}
(T_{n}u)(t)&\leq\mu_{n}\max_{u\in[0,d_{n}]}F(u+\frac{1}{n})\int_{1/n}^{1-1/n}(s-\frac{1}{n})(1-\frac{1}{n}-s)L(s)ds\\
&\leq a\mu_{n}\max_{u\in[0,d_{n}]}F(u+\frac{1}{n}),
\end{align*}
which implies that
\begin{align*}\|T_{n}u\|_{{}_{\mathcal{E}_{n}}}\leq a\mu_{n}\max_{u\in[0,d_{n}]}F(u+\frac{1}{n}),\end{align*}
that is, $T_{n}(\overline{\Omega}_{r}\cap K_{n})$ is uniformly
bounded. To show $T_{n}(\overline{\Omega}_{r}\cap K_{n})$ is
equicontinuous, let $t_{1},t_{2}\in[\frac{1}{n},1-\frac{1}{n}]$.
Since the Green's function $H_{n}$ is uniformly continuous on
$[\frac{1}{n},1-\frac{1}{n}]\times[\frac{1}{n},1-\frac{1}{n}]$, therefore, $T_{n}(\overline{\Omega}_{r}\cap K_{n})$ is equicontinuous. By
Theorem \ref{arzela}, $T_{n}(\overline{\Omega}_{r}\cap K_{n})$ is
relatively compact. Hence, $T_{n}$ is a compact map. Now, we show that $T_{n}$ is continuous. Let $u_{m},u\in\overline{\Omega}_{r}\cap K_{n}$ such that
\begin{align*}
\|u_{m}-u\|_{{}_{\mathcal{E}_{n}}}\rightarrow 0 \text{ as } m\rightarrow
+\infty.
\end{align*}
Using \eqref{eq2.8} and $(i)$ of Lemma
\ref{lem2.4}, we have
\begin{equation*}\begin{split}
&|(T_{n}u_{m})(t)-(T_{n}u)(t)|=\\
&\left|\int_{1/n}^{1-1/n}H_{n}(t,s)\big(f(s,\frac{1}{n}+\int_{1/n}^{1-1/n}H_{n}(s,\tau)g(\tau,u_{m}(\tau)+\frac{1}{n})d\tau)\right.\\
&\left.-f(s,\frac{1}{n}+\int_{1/n}^{1-1/n}H_{n}(s,\tau)g(\tau,u(\tau)+\frac{1}{n})d\tau)\big)ds\right|\\
&\leq\mu_{n}\int_{1/n}^{1-1/n}(s-\frac{1}{n})(1-\frac{1}{n}-s)\\
&\left|f(s,\frac{1}{n}+\int_{1/n}^{1-1/n}H_{n}(s,\tau)g(\tau,u_{m}(\tau)+\frac{1}{n})d\tau)\right.\\
&\left.-f(s,\frac{1}{n}+\int_{1/n}^{1-1/n}H_{n}(s,\tau)g(\tau,u(\tau)+\frac{1}{n})d\tau)\right|ds
\end{split}\end{equation*}
Consequently,
\begin{align*}\begin{split}
&\|T_{n}u_{m}-T_{n}u\|_{{}_{\mathcal{E}_{n}}}\leq\mu_{n}\int_{1/n}^{1-1/n}(s-\frac{1}{n})(1-\frac{1}{n}-s)\left|f(s,\frac{1}{n}+\int_{1/n}^{1-1/n}H_{n}(s,\tau)\right.\\
&\left.g(\tau,u_{m}(\tau)+\frac{1}{n})d\tau)-f(s,\frac{1}{n}+\int_{1/n}^{1-1/n}H_{n}(s,\tau)g(\tau,u(\tau)+\frac{1}{n})d\tau)\right|ds.
\end{split}\end{align*}
By the Lebesgue dominated convergent theorem, it follows that
\begin{align*}
\|T_{n}u_{m}-T_{n}u\|_{{}_{\mathcal{E}_{n}}}\rightarrow0\text{ as
}m\rightarrow+\infty,
\end{align*}
that is, $T_{n}:\overline{\Omega}_{r}\cap K_{n}\rightarrow K_{n}$ is
a continuous. Hence, $T_{n}:\overline{\Omega}_{r}\cap
K_{n}\rightarrow K_{n}$ is completely continuous.
\end{proof}
Assume that
\begin{description}
\item[$(\mathbf{A}_{2})$] there exist $\alpha_{1},\alpha_{2}\in(0,\infty)$ with $\alpha_{1}\alpha_{2}\leq1$ such that
\begin{align*}\lim_{u\rightarrow\infty}\frac{F(u)}{u^{\alpha_{1}}}\rightarrow0,\hspace{0.4cm}
\lim_{u\rightarrow\infty}\frac{G(u)}{u^{\alpha_{2}}}\rightarrow
0,\end{align*}
\item[$(\mathbf{A}_{3})$] there exist $\beta_{1},\beta_{2}\in(0,\infty)$ with
$\beta_{1}\beta_{2}\geq1$ such that
\begin{align*}\liminf_{u\rightarrow 0^{+}}\min_{t\in[\eta,1]}\frac{f(t,u)}{u^{\beta_{1}}}>0,\hspace{0.4cm}
\liminf_{u\rightarrow0^{+}}\min_{t\in[\eta,1]}\frac{g(t,u)}{u^{\beta_{2}}}>0.\end{align*}
\end{description}

\begin{thm}\label{thmfirst}
Assume that $(\mathbf{A}_{1})-(\mathbf{A}_{3})$ hold. Then the system of SBVPs \eqref{k203} has a positive solution.
\end{thm}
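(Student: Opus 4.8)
The plan is to follow a regularization-and-sequential scheme. Instead of attacking the singular system \eqref{k203} directly, I would, for each admissible $n$, produce a positive solution $(x_{n},y_{n})$ of the desingularized and domain-truncated problem \eqref{1eq2.6} by cone fixed point methods, and then recover a solution of \eqref{k203} by passing to the limit along a subsequence. Lemma \ref{lem2.5} already supplies the analytic input for the first step: the composed operator $T_{n}=A_{n}\circ B_{n}$ of \eqref{eq2.8} is completely continuous on $\overline{\Omega}_{r}\cap K_{n}$ and carries $K_{n}$ into itself, so a fixed point $u_{n}$ of $T_{n}$ yields a solution of \eqref{1eq2.6} via $x_{n}=u_{n}$, $y_{n}=B_{n}u_{n}$.

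For the regularized problems I would apply the Guo--Krasnosel'skii theorem (Theorem \ref{thmguo}) on a cone annulus, choosing two radii $0<R_{1}<R_{2}$ \emph{independent of $n$}. On the large sphere $\partial\Omega_{R_{2}}\cap K_{n}$ I would estimate $\|T_{n}u\|_{{}_{\mathcal{E}_{n}}}$ from above exactly as in the proof of Lemma \ref{lem2.5}, using $(\mathbf{A}_{1})$ and part $(i)$ of Lemma \ref{lem2.4}; since $F$ and $G$ grow, by $(\mathbf{A}_{2})$, more slowly than $u^{\alpha_{1}}$ and $u^{\alpha_{2}}$ with $\alpha_{1}\alpha_{2}\le 1$, the two-step composition is of sublinear growth, so that $\|T_{n}u\|_{{}_{\mathcal{E}_{n}}}\le C\,R_{2}^{\alpha_{1}\alpha_{2}}\le R_{2}=\|u\|_{{}_{\mathcal{E}_{n}}}$ once $R_{2}$ is large. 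On the sphere $\partial\Omega_{R_{1}}\cap K_{n}$ I would instead bound $T_{n}u$ from below by restricting the kernels to $[\eta,1-\frac{1}{n}]$, using part $(ii)$ of Lemma \ref{lem2.4} and the concavity lower bound available for elements of $K_{n}$; by $(\mathbf{A}_{3})$ and $\beta_{1}\beta_{2}\ge 1$ this lower growth propagates through the composition to give $\|T_{n}u\|_{{}_{\mathcal{E}_{n}}}\ge c\,R_{1}^{\beta_{1}\beta_{2}}\ge R_{1}$ at a suitable \emph{fixed positive} radius $R_{1}$. The point is that the constants $\mu_{n},\nu_{n}$ converge to the finite, positive limits $\mu,\nu$ of Lemma \ref{tp} and that $a,b$ dominate the truncated integrals, so $R_{1},R_{2}$ can be fixed once and for all. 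Case $(ii)$ of Theorem \ref{thmguo} then produces $u_{n}\in(\overline{\Omega}_{R_{2}}\setminus\Omega_{R_{1}})\cap K_{n}$ with $T_{n}u_{n}=u_{n}$, hence solutions $(x_{n},y_{n})$ of \eqref{1eq2.6} obeying $R_{1}\le\|u_{n}\|_{{}_{\mathcal{E}_{n}}}\le R_{2}$.

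With these uniform bounds in hand, I would pass to the limit. After extending each $(x_{n},y_{n})$ to $[0,1]$, the integral representation \eqref{eq2.7} together with the integrability $a,b<+\infty$ from $(\mathbf{A}_{1})$ yields uniform boundedness and equicontinuity of $\{x_{n}\}$ and $\{y_{n}\}$ on every compact subinterval $[\frac{1}{k},1-\frac{1}{k}]$ of $(0,1)$. Theorem \ref{arzela} and a diagonal argument over $k$ then extract a subsequence converging, uniformly on compacta, to a pair $(x,y)$. Using that $H_{n}\to H$ (the Green's function of Lemma \ref{tp}) and that the $+\frac{1}{n}$ truncation disappears in the limit, I would pass to the limit in \eqref{eq2.7} to obtain $x(t)=\int_{0}^{1}H(t,s)f(s,y(s))\,ds$ and the companion equation for $y$, so that $(x,y)$ solves \eqref{k203}, the boundary conditions being inherited from $H$. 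Finally, the lower bound $\|u_{n}\|_{{}_{\mathcal{E}_{n}}}\ge R_{1}$, carried through the concavity estimate, prevents the limit from collapsing and gives $x,y>0$ on $(0,1]$.

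The main obstacle I anticipate lies in this limiting step and is twofold. First, arranging the two radii $R_{1}<R_{2}$ to be genuinely independent of $n$ forces one to track how the $n$-dependent constants $\mu_{n},\nu_{n}$ and the truncation $+\frac{1}{n}$ enter the upper and lower estimates, and to exploit the convergences $\mu_{n}\to\mu$, $\nu_{n}\to\nu$. Second, justifying passage to the limit across the singularities at $t=0,1$ and at the vanishing of the arguments requires $(\mathbf{A}_{1})$ to dominate the integrands, so that the limiting integrals are finite and $(x,y)$ is a genuine, strictly positive solution rather than the trivial one.
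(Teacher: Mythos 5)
Your overall architecture (regularize, apply Guo--Krasnosel'skii on a cone annulus for each $n$, then pass to the limit via Arzel\`a--Ascoli) is the same as the paper's, but there is a genuine gap at the heart of your annulus construction: the claim that the \emph{inner} radius $R_{1}$ can be fixed independently of $n$. The exponent condition in $(\mathbf{A}_{3})$ is $\beta_{1}\beta_{2}\geq 1$, so the best lower bound that $(\mathbf{A}_{3})$ propagates through the composition (using \eqref{eq3.5}, the concavity of elements of $K_{n}$, and Lemma \ref{lem2.4}$(ii)$) is $\|T_{n}u\|_{{}_{\mathcal{E}_{n}}}\geq C\,R_{1}^{\beta_{1}\beta_{2}}$, where the constant $C$ is built from the liminf constants $c_{5},c_{6}$ and is not under your control. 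The required inequality $C\,R_{1}^{\beta_{1}\beta_{2}}\geq R_{1}$ reads $C\,R_{1}^{\beta_{1}\beta_{2}-1}\geq 1$: when $\beta_{1}\beta_{2}>1$ this forces $R_{1}\geq C^{-1/(\beta_{1}\beta_{2}-1)}$, i.e.\ a \emph{large} radius, incompatible with the fact that the bounds $f(t,x)\geq c_{5}x^{\beta_{1}}$, $g(t,x)\geq c_{6}x^{\beta_{2}}$ hold only for $x\in[0,\rho]$; when $\beta_{1}\beta_{2}=1$ it requires $C\geq 1$, which the hypotheses do not give. In short, $\beta_{1}\beta_{2}\geq 1$ makes the composed operator at most of linear order near $0$, so cone expansion at a fixed small radius cannot be extracted from $(\mathbf{A}_{3})$ alone; expansion at a fixed small radius would need $\beta_{1}\beta_{2}<1$. (Your claim that the outer radius $R_{2}$ is uniform in $n$ is also delicate, since $F,G$ may blow up at $0$ and the constants in \eqref{k227} then depend on $n$ through the shift $+\frac{1}{n}$; but the paper shares that issue, so it is not the decisive point.)

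The paper's proof avoids exactly this difficulty: its expansion estimate \eqref{k2212} is not obtained from the size of $u$ at all, but from the regularizing shift $+\frac{1}{n}$, which gives $g(\tau,u(\tau)+\frac{1}{n})\geq c_{6}n^{-\beta_{2}}$ on the whole cone and hence a constant lower bound $(T_{n}u)(t)\geq\kappa_{n}$ with $\kappa_{n}\sim\nu_{n}^{\beta_{1}+1}n^{-\beta_{1}\beta_{2}}$; the inner radius is then chosen as $r_{n}=\min\{\rho,\kappa_{n}\}$ in \eqref{k2211} and necessarily satisfies $r_{n}\rightarrow 0$ as $n\rightarrow\infty$. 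Consequently there is no uniform positive lower bound on $\|u_{n}\|_{{}_{\mathcal{E}_{n}}}$, and your final step --- ``the lower bound $\|u_{n}\|_{{}_{\mathcal{E}_{n}}}\geq R_{1}$ prevents the limit from collapsing'' --- loses its support. The paper instead derives positivity from the limiting integral equations themselves: $H>0$ on $(0,1)\times(0,1)$, $f,g$ take values in $(0,\infty)$, and $f(t,0)$, $g(t,0)$ are not identically zero (a standing hypothesis for \eqref{k203} that you never invoke), which rules out the trivial limit and yields $x>0$, $y>0$ on $(0,1]$. To repair your argument you would have to replace the fixed $R_{1}$ by the $n$-dependent $r_{n}$ and then adopt this positivity argument --- at which point you have reproduced the paper's proof.
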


\begin{proof}By $(\mathbf{A}_{2})$, there exist real constants $c_{1},c_{2},c_{3},c_{4}>0$  such that
\begin{equation}
\label{k226}2^{2\alpha_{1}+\alpha_{1}\alpha_{2}}ab^{\alpha_{1}}\mu_{n}^{\alpha_{1}+1}c_{1}c_{3}^{\alpha_{1}}<1,
\end{equation}
and
\begin{equation}\label{k227}F(u+\frac{1}{n})\leq c_{1}(u+\frac{1}{n})^{\alpha_{1}}+c_{2},\,G(u+\frac{1}{n})\leq
c_{3}(u+\frac{1}{n})^{\alpha_{2}}+c_{4}\text{ for }u\geq0.
\end{equation}
In view of \eqref{k226}, we choose a real constant $R>0$ such that
\begin{equation}\label{k228}\small{
R\geq\frac{a\mu_{n}c_{2}+2^{2\alpha_{1}}ab^{\alpha_{1}}\mu_{n}^{\alpha_{1}+1}c_{1}c_{4}^{\alpha_{1}}
+2^{\alpha_{1}}a\mu_{n}n^{-\alpha_{1}}c_{1}
+2^{2\alpha_{1}+\alpha_{1}\alpha_{2}}ab^{\alpha_{1}}\mu_{n}^{\alpha_{1}+1}n^{-\alpha_{1}\alpha_{2}}c_{1}c_{3}^{\alpha_{1}}}
{1-2^{2\alpha_{1}+\alpha_{1}\alpha_{2}}ab^{\alpha_{1}}\mu_{n}^{\alpha_{1}+1}c_{1}c_{3}^{\alpha_{1}}}.}
\end{equation}
For any $u\in\partial\Omega_{R}\cap K_{n}$, using \eqref{eq2.8},
$(\mathbf{A}_{1})$ and \eqref{k227}, it follows that
\begin{align*}
(T_{n}u)(t)&=\int_{1/n}^{1-1/n}H_{n}(t,s)f(s,\frac{1}{n}+\int_{1/n}^{1-1/n}H_{n}(s,\tau)g(\tau,u(\tau)+\frac{1}{n})d\tau)ds\\
&\leq\int_{1/n}^{1-1/n}H_{n}(t,s)K(s)F(\frac{1}{n}+\int_{1/n}^{1-1/n}H_{n}(s,\tau)g(\tau,u(\tau)+\frac{1}{n})d\tau)ds\\
&\leq\int_{1/n}^{1-1/n}H_{n}(t,s)K(s)(c_{1}(\frac{1}{n}+\int_{1/n}^{1-1/n}H_{n}(s,\tau)g(\tau,u(\tau)+\frac{1}{n})d\tau)^{\alpha_{1}}+c_{2})ds\\
&=c_{1}\int_{1/n}^{1-1/n}H_{n}(t,s)K(s)(\frac{1}{n}+\int_{1/n}^{1-1/n}H_{n}(s,\tau)g(\tau,u(\tau)+\frac{1}{n})d\tau)^{\alpha_{1}}ds\\
&+c_{2}\int_{1/n}^{1-1/n}H_{n}(t,s)K(s)ds.
\end{align*}
Again using $(\mathbf{A}_{1})$ and \eqref{k227}, we obtain
\begin{align*}
(T_{n}u)(t)&\leq c_{1}\int_{1/n}^{1-1/n}H_{n}(t,s)K(s)(\frac{1}{n}+\int_{1/n}^{1-1/n}H_{n}(s,\tau)L(\tau)G(u(\tau)+\frac{1}{n})d\tau)^{\alpha_{1}}ds\\
&+c_{2}\int_{1/n}^{1-1/n}H_{n}(t,s)K(s)ds\\
&\leq c_{1}\int_{1/n}^{1-1/n}H_{n}(t,s)K(s)(\frac{1}{n}+\int_{1/n}^{1-1/n}H_{n}(s,\tau)L(\tau)(c_{3}(u(\tau)+\frac{1}{n})^{\alpha_{2}}+c_{4})d\tau)^{\alpha_{1}}ds\\
&+c_{2}\int_{1/n}^{1-1/n}H_{n}(t,s)K(s)ds\\
&\leq c_{1}\int_{1/n}^{1-1/n}H_{n}(t,s)K(s)ds(\frac{1}{n}+\int_{1/n}^{1-1/n}H_{n}(s,\tau)L(\tau)d\tau(c_{3}(R+\frac{1}{n})^{\alpha_{2}}+c_{4}))^{\alpha_{1}}\\
&+c_{2}\int_{1/n}^{1-1/n}H_{n}(t,s)K(s)ds.
\end{align*}
Employing $(i)$ of Lemma \ref{lem2.4} and $(\mathbf{A}_{1})$, leads
to
\begin{align*}
(T_{n}u)(t)&\leq c_{1}\mu_{n}\int_{1/n}^{1-1/n}(s-\frac{1}{n})(1-\frac{1}{n}-s)K(s)ds(\frac{1}{n}+\mu_{n}\int_{1/n}^{1-1/n}(\tau-\frac{1}{n})(1-\frac{1}{n}-\tau)\\
&L(\tau)d\tau(c_{3}(R+\frac{1}{n})^{\alpha_{2}}+c_{4}))^{\alpha_{1}}+c_{2}\mu_{n}\int_{1/n}^{1-1/n}(s-\frac{1}{n})(1-\frac{1}{n}-s)K(s)ds\\
&\leq
a\mu_{n}c_{1}(\frac{1}{n}+b\,\mu_{n}(c_{3}(R+\frac{1}{n})^{\alpha_{2}}+c_{4}))^{\alpha_{1}}+a\mu_{n}c_{2}.
\end{align*}
But,
\begin{align*}(\frac{1}{n}+b\,\mu_{n}(c_{3}(R+\frac{1}{n})^{\alpha_{2}}+c_{4}))^{\alpha_{1}}\leq2^{\alpha_{1}}(\frac{1}{n^{\alpha_{1}}}+b^{\alpha_{1}}\,\mu_{n}^{\alpha_{1}}(c_{3}(R+\frac{1}{n})^{\alpha_{2}}+c_{4})^{\alpha_{1}}).\end{align*}
Therefore,
\begin{align*}
&(T_{n}u)(t)\leq
2^{\alpha_{1}}a\mu_{n}c_{1}(\frac{1}{n^{\alpha_{1}}}+b^{\alpha_{1}}\,\mu_{n}^{\alpha_{1}}(c_{3}(R+\frac{1}{n})^{\alpha_{2}}+c_{4})^{\alpha_{1}})+a\mu_{n}c_{2}.
\end{align*}
Also,
\begin{align*}
(c_{3}(R+\frac{1}{n})^{\alpha_{2}}+c_{4})^{\alpha_{1}}&\leq2^{\alpha_{1}}(c_{3}^{\alpha_{1}}(R+\frac{1}{n})^{\alpha_{1}\alpha_{2}}+c_{4}^{\alpha_{1}})\\
&\leq2^{\alpha_{1}}(2^{\alpha_{1}\alpha_{2}}c_{3}^{\alpha_{1}}(R^{\alpha_{1}\alpha_{2}}+\frac{1}{n^{\alpha_{1}\alpha_{2}}})+c_{4}^{\alpha_{1}}).
\end{align*}
Consequently,
\begin{align*}
(T_{n}u)(t)&\leq 2^{\alpha_{1}}a\mu_{n}c_{1}(\frac{1}{n^{\alpha_{1}}}+2^{\alpha_{1}}b^{\alpha_{1}}\,\mu_{n}^{\alpha_{1}}(2^{\alpha_{1}\alpha_{2}}c_{3}^{\alpha_{1}}(R^{\alpha_{1}\alpha_{2}}+\frac{1}{n^{\alpha_{1}\alpha_{2}}})+c_{4}^{\alpha_{1}}))+a\mu_{n}c_{2}\\
&=2^{\alpha_{1}}a\mu_{n}n^{-\alpha_{1}}c_{1}+2^{2\alpha_{1}}ab^{\alpha_{1}}\mu_{n}^{\alpha_{1}+1}c_{1}(2^{\alpha_{1}\alpha_{2}}c_{3}^{\alpha_{1}}(R^{\alpha_{1}\alpha_{2}}+\frac{1}{n^{\alpha_{1}\alpha_{2}}})+c_{4}^{\alpha_{1}})+a\mu_{n}c_{2}\\
&=2^{\alpha_{1}}a\mu_{n}n^{-\alpha_{1}}c_{1}+2^{2\alpha_{1}+\alpha_{1}\alpha_{2}}ab^{\alpha_{1}}\mu_{n}^{\alpha_{1}+1}c_{1}c_{3}^{\alpha_{1}}(R^{\alpha_{1}\alpha_{2}}+\frac{1}{n^{\alpha_{1}\alpha_{2}}})+2^{2\alpha_{1}}ab^{\alpha_{1}}\mu_{n}^{\alpha_{1}+1}c_{1}c_{4}^{\alpha_{1}}+a\mu_{n}c_{2}\\
&=2^{\alpha_{1}}a\mu_{n}n^{-\alpha_{1}}c_{1}+2^{2\alpha_{1}+\alpha_{1}\alpha_{2}}ab^{\alpha_{1}}\mu_{n}^{\alpha_{1}+1}c_{1}c_{3}^{\alpha_{1}}R^{\alpha_{1}\alpha_{2}}\\
&+2^{2\alpha_{1}+\alpha_{1}\alpha_{2}}ab^{\alpha_{1}}\mu_{n}^{\alpha_{1}+1}n^{-\alpha_{1}\alpha_{2}}c_{1}c_{3}^{\alpha_{1}}+2^{2\alpha_{1}}ab^{\alpha_{1}}\mu_{n}^{\alpha_{1}+1}c_{1}c_{4}^{\alpha_{1}}+a\mu_{n}c_{2}.
\end{align*}
Using \eqref{k228}, we obtain
\begin{equation}\label{k229}\|T_{n}u\|_{{}_{\mathcal{E}_{n}}}\leq\|u\|_{{}_{\mathcal{E}_{n}}}\text{ for all
}u\in\partial\Omega_{R}\cap K_{n}.
\end{equation}
Now, by $(\mathbf{A}_{3})$, there exist constants $c_{5},c_{6}>0$
and $\rho\in(0,R)$ such that
\begin{equation}\label{eq3.5}f(t,x)\geq
c_{5}x^{\beta_{1}},\,g(t,x)\geq c_{6}x^{\beta_{2}}\text{ for
}x\in[0,\rho],\,t\in[\eta,1].
\end{equation}
Choose
\begin{equation}\label{k2211}
r_{n}=\min\big\{\rho,\nu_{n}^{\beta_{1}+1}n^{-\beta_{1}\beta_{2}}c_{5}c_{6}^{\beta_{1}}
(\int_{\eta}^{1-1/n}(s-\frac{1}{n})(1-\frac{1}{n}-s)ds)^{\beta_{1}+1}\big\}.
\end{equation}
For any $u\in\partial\Omega_{r_{n}}\cap K_{n}$ and
$t\in[\eta,1-\frac{1}{n}]$, using \eqref{eq2.8} and \eqref{eq3.5},
we have
\begin{align*}\begin{split}
(T_{n}u)(t)&=\int_{1/n}^{1-1/n}H_{n}(t,s)f(s,\frac{1}{n}+\int_{1/n}^{1-1/n}H_{n}(s,\tau)g(\tau,u(\tau)+\frac{1}{n})d\tau)ds\\
&\geq c_{5}\int_{1/n}^{1-1/n}H_{n}(t,s)(\frac{1}{n}+\int_{1/n}^{1-1/n}H_{n}(s,\tau)g(\tau,u(\tau)+\frac{1}{n})d\tau)^{\beta_{1}}ds\\
&\geq c_{5}\int_{1/n}^{1-1/n}H_{n}(t,s)(\int_{1/n}^{1-1/n}H_{n}(s,\tau)g(\tau,u(\tau)+\frac{1}{n})d\tau)^{\beta_{1}}ds\\
&\geq
c_{5}c_{6}^{\beta_{1}}\int_{1/n}^{1-1/n}H_{n}(t,s)(\int_{1/n}^{1-1/n}H_{n}(s,\tau)(u(\tau)+\frac{1}{n})^{\beta_{2}}d\tau)^{\beta_{1}}ds\\
&\geq
n^{-\beta_{1}\beta_{2}}c_{5}c_{6}^{\beta_{1}}\int_{1/n}^{1-1/n}H_{n}(t,s)(\int_{1/n}^{1-1/n}H_{n}(s,\tau)d\tau)^{\beta_{1}}ds\\
&\geq
n^{-\beta_{1}\beta_{2}}c_{5}c_{6}^{\beta_{1}}\int_{\eta}^{1-1/n}H_{n}(t,s)(\int_{\eta}^{1-1/n}H_{n}(s,\tau)d\tau)^{\beta_{1}}ds.
\end{split}\end{align*}
Employing $(ii)$ of Lemma \ref{lem2.4}, we get
\begin{align*}\begin{split}
(T_{n}u)(t)&\geq \nu_{n}^{\beta_{1}+1}n^{-\beta_{1}\beta_{2}}c_{5}c_{6}^{\beta_{1}}\int_{\eta}^{1-1/n}(s-\frac{1}{n})(1-\frac{1}{n}-s)ds(\int_{\eta}^{1-1/n}(\tau-\frac{1}{n})(1-\frac{1}{n}-\tau)d\tau)^{\beta_{1}}\\
&=\nu_{n}^{\beta_{1}+1}n^{-\beta_{1}\beta_{2}}c_{5}c_{6}^{\beta_{1}}(\int_{\eta}^{1-1/n}(s-\frac{1}{n})(1-\frac{1}{n}-s)ds)^{\beta_{1}+1}.
\end{split}\end{align*}
Using \eqref{k2211}, we obtain
\begin{equation}\label{k2212}\|T_{n}u\|_{{}_{\mathcal{E}_{n}}}\geq \|u\|_{{}_{\mathcal{E}_{n}}} \text{ for all }
u\in\partial\Omega_{r_{n}}\cap K_{n}.\end{equation} In view of
\eqref{k229}, \eqref{k2212} and by Theorem \ref{thmguo}, $T_{n}$ has
a fixed point
$u_{n}\in(\overline{\Omega}_{R}\setminus\Omega_{r_{n}})\cap K_{n}$.
Note that
\begin{equation}\label{eq3.7a}r_{n}\leq\|u_{n}\|_{{}_{\mathcal{E}_{n}}}\leq R\end{equation}
and $r_{n}\rightarrow 0$ as $n\rightarrow \infty$. Thus, we have
exhibited a uniform bound for each $u_{n}\in\mathcal{E}_{n}$, and
$\{u_{m}\}_{m\geq n}$ is uniformly bounded on $[\frac{1}{n},
1-\frac{1}{n}]$.

\vskip 0.5em

Now, we show that $\{u_{m}\}_{m\geq n}$, is equicontinuous on
$[\frac{1}{n},1-\frac{1}{n}]$. For
$t\in[\frac{1}{n},1-\frac{1}{n}]$, consider the integral equation
\begin{equation*}\label{29k}\begin{split}
u_{m}(t)=u_{m}(\frac{1}{n})+\frac{u_{m}(1-\frac{1}{n})-\alpha
u_{m}(\eta)-(1-\alpha)u_{m}(\frac{1}{n})}{1-\frac{2}{n}+
\frac{\alpha}{n}-\alpha\eta}(t-\frac{1}{n})+\int_{1/n}^{1-1/n}H_{n}(t,s)\tilde{f}(s)ds,
\end{split}\end{equation*} where
$\tilde{f}(t)=f(t,\frac{1}{n}+\int_{1/n}^{1-1/n}H_{n}(t,s)g(s,\frac{1}{n}+u_{m}(s))ds)$.

\vskip 0.5em

Which can also be written as
\begin{align*}
&u_{m}(t)=u_{m}(\frac{1}{n})+\frac{u_{m}(1-\frac{1}{n})-\alpha u_{m}(\eta)-(1-\alpha)u_{m}(\frac{1}{n})}{1-\frac{2}{n}+\frac{\alpha}{n}-\alpha\eta}(t-\frac{1}{n})+\frac{t-\frac{1}{n}}{1-\frac{2}{n}+\frac{\alpha}{n}-\alpha\eta}\\
&\int_{1/n}^{1-1/n}(1-\frac{1}{n}-s)\tilde{f}(s)ds-\frac{\alpha(t-\frac{1}{n})}{1-\frac{2}{n}+\frac{\alpha}{n}-\alpha\eta}\int_{1/n}^{\eta}(\eta-s)\tilde{f}(s)ds-\int_{1/n}^{t}(t-s)\tilde{f}(s)ds.
\end{align*}
Differentiating with respect to $t$, we get
\begin{align*}
u_{m}'(t)&=\frac{u_{m}(1-\frac{1}{n})-\alpha u_{m}(\eta)-(1-\alpha)u_{m}(\frac{1}{n})}{1-\frac{2}{n}+\frac{\alpha}{n}-\alpha\eta}+\frac{1}{1-\frac{2}{n}+\frac{\alpha}{n}-\alpha\eta}\int_{1/n}^{1-1/n}(1-\frac{1}{n}-s)\\
&\tilde{f}(s)ds-\frac{\alpha}{1-\frac{2}{n}+\frac{\alpha}{n}-\alpha\eta}\int_{1/n}^{\eta}(\eta-s)\tilde{f}(s)ds-\int_{1/n}^{t}\tilde{f}(s)ds,
\end{align*}
which implies that
\begin{align*}
\|u_{m}'\|_{{}_{\mathcal{E}_{n}}}&\leq\frac{2(1+\alpha)R}{1-\frac{2}{n}+\frac{\alpha}{n}-\alpha\eta}+\frac{1}{1-\frac{2}{n}+\frac{\alpha}{n}-\alpha\eta}\int_{1/n}^{1-1/n}(1-\frac{1}{n}-s)\tilde{f}(s)ds\\
&+\frac{\alpha}{1-\frac{2}{n}+\frac{\alpha}{n}-\alpha\eta}\int_{1/n}^{\eta}(\eta-s)\tilde{f}(s)ds+\int_{1/n}^{1-1/n}\tilde{f}(s)ds.
\end{align*}
Hence, $\{u_{m}\}_{m\geq n}$ is equicontinuous on
$[\frac{1}{n},1-\frac{1}{n}]$.

\vskip 0.5em

For $m\geq n$, we define
\begin{align*}v_{m}(t)=\begin{cases}
u_{m}(\frac{1}{n}),&0\leq t\leq \frac{1}{n},\\
u_{m}(t),&\frac{1}{n}\leq t\leq 1-\frac{1}{n},\\
\alpha u_{m}(\eta),&1- \frac{1}{n}\leq t\leq 1.
\end{cases}\end{align*}
Since $v_{m}$ is a constant extension of $u_{m}$ to $[0,1]$, the
sequence $\{v_{m}\}$ is uniformly bounded and equicontinuous on
$[0,1]$. Thus, there exists a subsequence $\{v_{m_{k}}\}$ of
$\{v_{m}\}$ converging uniformly to $v\in C[0,1]$. We introduce the notation
\begin{align*}\begin{split}
x_{m_{k}}(t)&=v_{m_{k}}(t),\,y_{m_{k}}(t)=\int_{1/m_{k}}^{1-1/m_{k}}H_{m_{k}}(t,s)g(s,v_{m_{k}}(s)+\frac{1}{m_{k}})ds,\\
x(t)&=\lim_{m_{k}\rightarrow\infty}x_{m_{k}}(t),\,y(t)=\lim_{m_{k}\rightarrow\infty}y_{m_{k}}(t).
\end{split}\end{align*}
For $t\in [0,1]$ consider the integral equations
\begin{align*}
x_{m_{k}}(t)&=\int_{1/m_{k}}^{1-1/m_{k}}H_{m_{k}}(t,s)f(t,y_{m_{k}}(s)+\frac{1}{m_{k}})ds,\\
y_{m_{k}}(t)&=\int_{1/m_{k}}^{1-1/m_{k}}H_{m_{k}}(t,s)g(t,x_{m_{k}}(s)+\frac{1}{m_{k}})ds.
\end{align*} Letting $m_{k}\rightarrow\infty$, we have
\begin{align*}
x(t)&=\int_{0}^{1}H(t,s)f(t,y(s))ds,\hspace{0.4cm}t\in[0,1],\\
y(t)&=\int_{0}^{1}H(t,s)g(s,x(s))ds,\hspace{0.4cm}t\in[0,1].
\end{align*}
Moreover,
\begin{align*}x(0)=0,\,x(1)=\alpha x(\eta),\,y(0)=0,\,y(1)=\alpha y(\eta).\end{align*}
Hence, $(x,y)$ is a solution of the system of BVPs \eqref{k203}. Moreover, since the Green's function $H$ is positive on $(0,1)\times(0,1)$, $f(t,0)$ and $g(t,0)$ are not identically $0$, it follows that $x>0$ and $y>0$ on $(0,1]$.
\end{proof}

\begin{ex} Let
\begin{align*}
f(t,y)=\frac{1}{t(1-t)}\left(\frac{1}{y}+3y^{1/3}\right),\hspace{0.4cm}g(t,x)=\frac{1}{t(1-t)}
\left(\frac{1}{x}+4x\right)
\end{align*} and
$\alpha=2,\,\eta=\frac{1}{3}$. Choose
\begin{align*}
K(t)=L(t)=\frac{1}{t(1-t)},\hspace{0.4cm}F(y)=\frac{1}{y}+3y^{1/3},\hspace{0.4cm}G(x)=\frac{1}{x}+4x,
\end{align*}
and $\alpha_{1}=\frac{1}{2}$, $\alpha_{2}=2$, $\beta_{1}=\beta_{2}=1$. Clearly, $(\mathbf{A}_{1})-(\mathbf{A}_{3})$ are satisfied. Hence, by Theorem \ref{thmfirst}, the system of SBVPs \eqref{k203} has a positive solution.
\end{ex}

\vskip 0.5em

Assume that
\begin{description}
\item[$(\mathbf{A}_{4})$] $f(t,u)$, $G(u)$ are non-increasing with respect
to $u$ and for each fixed $n\in\{n_{0},n_{0}+1,n_{0}+2,\cdots\}$,
there exists a constant $\rho_{n}>0$ such that
\begin{align*}
f(t,\frac{1}{n}+b\,\mu_{n}G(\frac{1}{n}))\geq
\rho_{n}(\nu_{n}\int_{\eta}^{1-1/n}(s-\frac{1}{n})(1-\frac{1}{n}-s)ds)^{-1},\hspace{0.4cm}t\in[\frac{1}{n},1-\frac{1}{n}].
\end{align*}
\end{description}

\begin{thm}\label{thmsecond}
Assume that $(\mathbf{A}_{1})$, $(\mathbf{A}_{2})$ and
$(\mathbf{A}_{4})$ hold. Then the system of SBVPs \eqref{k203} has a positive solution.
\end{thm}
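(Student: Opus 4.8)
The plan is to follow the regularization-and-limit scheme already used for Theorem~\ref{thmfirst}, replacing only the lower a priori estimate: there it came from $(\mathbf{A}_{3})$, whereas now it must be extracted from the monotonicity hypothesis $(\mathbf{A}_{4})$. Concretely, I would again work with the non-singular truncated system \eqref{1eq2.6} on $[\frac{1}{n},1-\frac{1}{n}]$, its integral reformulation \eqref{eq2.7}, and the operator $T_{n}=A_{n}\circ B_{n}$ of \eqref{eq2.8}, whose complete continuity on $\overline{\Omega}_{r}\cap K_{n}$ is furnished by Lemma~\ref{lem2.5} (which uses only $(\mathbf{A}_{1})$). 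The aim is to produce, for each fixed large $n$, a fixed point $u_{n}\in K_{n}$ of $T_{n}$ whose norm is bounded below by $\rho_{n}$ and above by a constant $R$ that can be taken uniform in $n$ (since $\mu_{n}$ and $\nu_{n}$ remain bounded as $n\to\infty$), and then to pass to the limit $n\to\infty$.

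The upper estimate is taken over verbatim from Theorem~\ref{thmfirst}: since $(\mathbf{A}_{2})$ is assumed, I choose constants $c_{1},c_{2},c_{3},c_{4}>0$ and a radius $R$ satisfying \eqref{k226}--\eqref{k228}, which yields $\|T_{n}u\|_{\mathcal{E}_{n}}\leq\|u\|_{\mathcal{E}_{n}}$ for every $u\in\partial\Omega_{R}\cap K_{n}$. As \eqref{k228} only bounds $R$ from below, I may enlarge $R$ so that moreover $R>\rho_{n}$, guaranteeing the nesting $\Omega_{\rho_{n}}\subset\Omega_{R}$ required by the Guo--Krasnosel'skii theorem.

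The genuinely new step is the lower estimate on $\partial\Omega_{\rho_{n}}\cap K_{n}$, where the direction of every inequality is reversed by the monotonicity in $(\mathbf{A}_{4})$. For $u\in K_{n}$ one has $u\geq0$, hence $u(\tau)+\frac{1}{n}\geq\frac{1}{n}$; since $G$ is non-increasing, $(\mathbf{A}_{1})$ together with $(i)$ of Lemma~\ref{lem2.4} bounds the inner argument of $A_{n}$ from above by $\int_{1/n}^{1-1/n}H_{n}(s,\tau)g(\tau,u(\tau)+\frac{1}{n})d\tau\leq b\,\mu_{n}G(\frac{1}{n})$. Because $f(t,\cdot)$ is non-increasing, this upper bound on the argument turns into a lower bound on the integrand, $f(s,\frac{1}{n}+\int_{1/n}^{1-1/n}H_{n}(s,\tau)g(\tau,u(\tau)+\frac{1}{n})d\tau)\geq f(s,\frac{1}{n}+b\,\mu_{n}G(\frac{1}{n}))$, and the explicit inequality in $(\mathbf{A}_{4})$ bounds the last quantity below. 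Feeding this into $(T_{n}u)(t)=\int_{1/n}^{1-1/n}H_{n}(t,s)f(s,\cdots)ds$ and invoking $(ii)$ of Lemma~\ref{lem2.4} for $t\in[\eta,1-\frac{1}{n}]$, the factor $(\nu_{n}\int_{\eta}^{1-1/n}(s-\frac{1}{n})(1-\frac{1}{n}-s)ds)^{-1}$ cancels against $\nu_{n}\int_{\eta}^{1-1/n}(s-\frac{1}{n})(1-\frac{1}{n}-s)ds$, leaving $(T_{n}u)(t)\geq\rho_{n}$ and hence $\|T_{n}u\|_{\mathcal{E}_{n}}\geq\rho_{n}=\|u\|_{\mathcal{E}_{n}}$.

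With both estimates in hand, Theorem~\ref{thmguo}(ii) applied with $\Omega_{1}=\Omega_{\rho_{n}}$ and $\Omega_{2}=\Omega_{R}$ produces a fixed point $u_{n}\in(\overline{\Omega}_{R}\setminus\Omega_{\rho_{n}})\cap K_{n}$ of $T_{n}$, so that $\rho_{n}\leq\|u_{n}\|_{\mathcal{E}_{n}}\leq R$. The passage to the limit is then identical to that in Theorem~\ref{thmfirst}: the uniform bound $R$ and the differentiated integral representation give uniform boundedness and equicontinuity of $\{u_{m}\}_{m\geq n}$ on $[\frac{1}{n},1-\frac{1}{n}]$, the constant extension $v_{m}$ to $[0,1]$ together with the Arzel\`{a}--Ascoli theorem yields a uniformly convergent subsequence, and letting $m_{k}\to\infty$ in the integral equations produces $(x,y)$ solving \eqref{k203}, with $x,y>0$ on $(0,1]$ because $H$ is positive on $(0,1)\times(0,1)$ and $f,g>0$. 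I expect the lower estimate to be the main obstacle: one has to use correctly that $f$ and $G$ being non-increasing converts the a priori upper bounds coming from $(\mathbf{A}_{1})$ into the lower bound demanded by part~(ii) of Theorem~\ref{thmguo}, and one must verify that $R$ can simultaneously satisfy \eqref{k228} and exceed $\rho_{n}$ so the two neighborhoods nest.
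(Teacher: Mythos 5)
Your proposal is correct and follows essentially the same route as the paper's own proof: the upper estimate on $\partial\Omega_{R}\cap K_{n}$ is borrowed verbatim from Theorem~\ref{thmfirst} via $(\mathbf{A}_{2})$ with $R>\rho_{n}$, and the lower estimate on $\partial\Omega_{\rho_{n}}\cap K_{n}$ is obtained exactly as in the paper by combining Lemma~\ref{lem2.4}$(i)$, $(\mathbf{A}_{1})$ and the monotonicity of $f$ and $G$ to reduce to $f(s,\frac{1}{n}+b\,\mu_{n}G(\frac{1}{n}))$, then invoking $(\mathbf{A}_{4})$ and Lemma~\ref{lem2.4}$(ii)$ so the $\nu_{n}$-factors cancel, before applying Theorem~\ref{thmguo} and the limit procedure of Theorem~\ref{thmfirst}. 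Your parenthetical care about the nesting $R>\rho_{n}$ and the boundedness of $\mu_{n},\nu_{n}$ matches (and slightly sharpens) what the paper states only implicitly.
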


\begin{proof}For any $u\in \partial\Omega_{\rho_{n}}\cap K_{n}$,
using \eqref{eq2.8}, $(i)$ of Lemma \ref{lem2.4} and
$(\mathbf{A}_{1})$, we have
\begin{align*}\begin{split}
(T_{n}u)(t)&=\int_{1/n}^{1-1/n}H_{n}(t,s)f(s,\frac{1}{n}+\int_{1/n}^{1-1/n}H_{n}(s,\tau)g(\tau,u(\tau)+\frac{1}{n})d\tau)ds\\
&\geq\int_{1/n}^{1-1/n}H_{n}(t,s)f(s,\frac{1}{n}+\mu_{n}\int_{1/n}^{1-1/n}(\tau-\frac{1}{n})(1-\frac{1}{n}-\tau)g(\tau,u(\tau)+\frac{1}{n})d\tau)ds\\
&\geq\int_{1/n}^{1-1/n}H_{n}(t,s)f(s,\frac{1}{n}+\mu_{n}\int_{1/n}^{1-1/n}(\tau-\frac{1}{n})(1-\frac{1}{n}-\tau)L(\tau)G(u(\tau)+\frac{1}{n})d\tau)ds\\
&\geq\int_{1/n}^{1-1/n}H_{n}(t,s)f(s,\frac{1}{n}+\mu_{n}G(\frac{1}{n})\int_{1/n}^{1-1/n}(\tau-\frac{1}{n})(1-\frac{1}{n}-\tau)L(\tau)d\tau)ds\\
&\geq\int_{1/n}^{1-1/n}H_{n}(t,s)f(s,\frac{1}{n}+b\,\mu_{n}\,G(\frac{1}{n}))ds.
\end{split}\end{align*}
Now in view of $(\mathbf{A}_{4})$, we have
\begin{align*}\begin{split}
(T_{n}u)(t)&\geq \rho_{n}\int_{1/n}^{1-1/n}H_{n}(t,s)ds(\nu_{n}\int_{\eta}^{1-1/n}(\tau-\frac{1}{n})(1-\frac{1}{n}-\tau)d\tau)^{-1}\\
&\geq
\rho_{n}\nu_{n}\int_{\eta}^{1-1/n}(s-\frac{1}{n})(1-\frac{1}{n}-s)ds(\nu_{n}\int_{\eta}^{1-1/n}(\tau-\frac{1}{n})(1-\frac{1}{n}-\tau)d\tau)^{-1}\\
&=\rho_{n},
\end{split}\end{align*}
which implies that
\begin{equation}\label{eq3.9}\|T_{n}u\|_{{}_{\mathcal{E}_{n}}}\geq \|u\|_{{}_{\mathcal{E}_{n}}}\text{ for all } u\in \partial\Omega_{\rho_{n}}\cap K_{n}.\end{equation}
In view of $(\mathbf{A}_{2})$, we can choose $R>\rho_{n}$ such that
\eqref{k229} holds. Hence, in view of \eqref{k229}, \eqref{eq3.9}
and by Theorem \ref{thmguo}, $T_{n}$ has a fixed point $u_{n}\in
(\overline{\Omega}_{R}\setminus\Omega_{\rho_{n}})\cap K_{n}$. Now,
following the same procedure as done in Theorem \ref{thmfirst}, the system of SBVPs \eqref{k203} has a positive solution.
\end{proof}

\begin{ex} Let
\begin{align*}
f(t,y)=\frac{e^{\frac{1}{y}}}{t(1-t)},\hspace{0.4cm}g(t,x)=\frac{e^{\frac{1}{x}}}{t(1-t)}
\end{align*} and
$\alpha=2,\,\eta=\frac{1}{3}$. Choose
\begin{align*}
K(t)=L(t)=\frac{1}{t(1-t)},\hspace{0.4cm}F(y)=e^{\frac{1}{y}},\hspace{0.4cm}G(x)=e^{\frac{1}{x}},
\end{align*}
$\rho_{n}\leq\frac{4(n-3)}{n}e^{\frac{n}{1+6ne^{n}}}\int_{1/3}^{1-1/n}(s-1/n)(1-1/n-s)ds$.
Then $(\mathbf{A}_{1}),$ $(\mathbf{A}_{2})$ and
$(\mathbf{A}_{4})$ are satisfied. Hence, by Theorem
\ref{thmsecond}, the system of SBVPs \eqref{k203} has a
positive solution.
\end{ex}

Assume that
\begin{description}
\item[$(\mathbf{A}_{5})$] $F(u)$, $g(t,u)$ are non-increasing with respect
to $u$ and for each fixed $n\in\{n_{0},n_{0}+1,n_{0}+2,\cdots\}$,
there exists a constant $M>0$ such that
\begin{align*}
a\,\mu_{n}F(\nu_{n}\int_{\eta}^{1-1/n}(s-\frac{1}{n})(1-\frac{1}{n}-s)g(s,M+\frac{1}{n})ds)\leq
M.
\end{align*}
\end{description}

\begin{thm}\label{thmthird}
Assume that $(\mathbf{A}_{1})$, $(\mathbf{A}_{3})$ and
$(\mathbf{A}_{5})$ holds. Then the system of SBVPs \eqref{k203}
has a positive solution.
\end{thm}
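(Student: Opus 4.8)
The plan is to fix $n\in\{n_{0},n_{0}+1,\dots\}$, study the regularized operator $T_{n}$ on the cone $K_{n}$, and apply the Guo--Krasnosel'skii theorem (Theorem \ref{thmguo}) in its form $(ii)$ to produce a fixed point of $T_{n}$ in an annulus $(\overline{\Omega}_{M}\setminus\Omega_{r_{n}})\cap K_{n}$; the passage back to a positive solution of \eqref{k203} is then word-for-word the closing argument of Theorem \ref{thmfirst}. Complete continuity of $T_{n}$ on $\overline{\Omega}_{M}\cap K_{n}$ is already granted by Lemma \ref{lem2.5}, which uses only $(\mathbf{A}_{1})$, so the entire task reduces to establishing one norm inequality on each of the two boundary spheres.

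The inner estimate is inherited essentially verbatim from Theorem \ref{thmfirst}. Since $(\mathbf{A}_{3})$ is among the hypotheses, the computation running from \eqref{eq3.5} to \eqref{k2212} applies unchanged and produces constants $c_{5},c_{6}>0$ together with a radius $r_{n}>0$ for which $\|T_{n}u\|_{{}_{\mathcal{E}_{n}}}\geq\|u\|_{{}_{\mathcal{E}_{n}}}$ on $\partial\Omega_{r_{n}}\cap K_{n}$; the exponent condition $\beta_{1}\beta_{2}\geq1$ is exactly what makes the resulting power of $r_{n}$ close. Because $r_{n}\to0$, one may shrink the radius $\rho$ in $(\mathbf{A}_{3})$ so that $r_{n}<M$, ensuring $\Omega_{r_{n}}\subset\Omega_{M}$. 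I would merely cite this computation rather than reproduce it.

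The genuinely new content is the outer estimate $\|T_{n}u\|_{{}_{\mathcal{E}_{n}}}\leq M$ on $\partial\Omega_{M}\cap K_{n}$, where $(\mathbf{A}_{5})$ takes over the role played by the growth hypothesis $(\mathbf{A}_{2})$ in the two preceding theorems. For $u\in\partial\Omega_{M}\cap K_{n}$ I would begin from \eqref{eq2.8}, apply $(\mathbf{A}_{1})$ and $(i)$ of Lemma \ref{lem2.4} to reach $(T_{n}u)(t)\leq\mu_{n}\int_{1/n}^{1-1/n}(s-\tfrac1n)(1-\tfrac1n-s)K(s)F(w(s))\,ds$ with $w(s)=\tfrac1n+\int_{1/n}^{1-1/n}H_{n}(s,\tau)g(\tau,u(\tau)+\tfrac1n)\,d\tau$. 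Because $g(\tau,\cdot)$ is non-increasing and $u(\tau)\leq M$ on $K_{n}$, we have $g(\tau,u(\tau)+\tfrac1n)\geq g(\tau,M+\tfrac1n)$, and then $(ii)$ of Lemma \ref{lem2.4} yields, for $s\in[\eta,1-\tfrac1n]$, the bound $w(s)\geq\nu_{n}\int_{\eta}^{1-1/n}(\tau-\tfrac1n)(1-\tfrac1n-\tau)g(\tau,M+\tfrac1n)\,d\tau$, which is precisely the argument of $F$ in $(\mathbf{A}_{5})$. Monotonicity of $F$ converts this into $F(w(s))\leq F\big(\nu_{n}\int_{\eta}^{1-1/n}(\tau-\tfrac1n)(1-\tfrac1n-\tau)g(\tau,M+\tfrac1n)\,d\tau\big)$, and integrating against the weight while bounding $\int_{1/n}^{1-1/n}(s-\tfrac1n)(1-\tfrac1n-s)K(s)\,ds\leq a$ gives $(T_{n}u)(t)\leq a\mu_{n}F(\cdots)\leq M$ by the inequality postulated in $(\mathbf{A}_{5})$.

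The step I expect to be the main obstacle is the range of validity of $(ii)$ of Lemma \ref{lem2.4}: it licenses $H_{n}(s,\tau)\geq\nu_{n}(\tau-\tfrac1n)(1-\tfrac1n-\tau)$ only when the first argument $s$ lies in $[\eta,1-\tfrac1n]$, whereas the outer integral defining $(T_{n}u)(t)$ runs over all $s\in[\tfrac1n,1-\tfrac1n]$, and one cannot shrink that range without turning the desired upper bound into a lower one. To close this cleanly I would exploit that $w(s)\geq\tfrac1n>0$ for every $s$, so that $F(w(s))\leq F(\tfrac1n)$ on $[\tfrac1n,\eta)$, and then verify that the contribution of this residual strip is absorbed by $M$ (equivalently, reading $(\mathbf{A}_{5})$ as controlling the full integral $\mu_{n}\int_{1/n}^{1-1/n}(s-\tfrac1n)(1-\tfrac1n-s)K(s)F(w(s))\,ds$). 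Once both sphere inequalities are in hand, Theorem \ref{thmguo}$(ii)$ supplies $u_{n}\in(\overline{\Omega}_{M}\setminus\Omega_{r_{n}})\cap K_{n}$, and the remainder---extending $u_{n}$ to $[0,1]$ by constants as in the definition of $v_{m}$, invoking Arzel\`a--Ascoli (Theorem \ref{arzela}) to extract a uniformly convergent subsequence, letting $m_{k}\to\infty$ in \eqref{eq2.7}, and reading off the boundary data together with positivity from the strict positivity of $H$ and of $f(\cdot,0),g(\cdot,0)$---repeats the end of Theorem \ref{thmfirst}.
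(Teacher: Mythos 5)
Your route is the paper's route, step for step: the same operator $T_{n}$ on $K_{n}$, complete continuity from Lemma \ref{lem2.5}, the inner estimate \eqref{k2212} on $\partial\Omega_{r_{n}}\cap K_{n}$ recycled from Theorem \ref{thmfirst} via $(\mathbf{A}_{3})$ with $r_{n}\in(0,M)$, the outer estimate \eqref{eq3.10} on $\partial\Omega_{M}\cap K_{n}$ from $(\mathbf{A}_{5})$, Theorem \ref{thmguo}, and the limiting argument that closes Theorem \ref{thmfirst}. Moreover, the obstacle you single out is genuine, and you should know that the paper does not resolve it: in deriving \eqref{eq3.10} it passes from
\begin{align*}
\int_{1/n}^{1-1/n}H_{n}(t,s)K(s)F\Big(\int_{\eta}^{1-1/n}H_{n}(s,\tau)g(\tau,M+\tfrac{1}{n})\,d\tau\Big)ds
\end{align*}
to $F\big(\nu_{n}\int_{\eta}^{1-1/n}(\tau-\tfrac{1}{n})(1-\tfrac{1}{n}-\tau)g(\tau,M+\tfrac{1}{n})\,d\tau\big)\int_{1/n}^{1-1/n}H_{n}(t,s)K(s)\,ds$ by ``employing $(ii)$ of Lemma \ref{lem2.4}'', i.e.\ it uses $H_{n}(s,\tau)\geq\nu_{n}(\tau-\tfrac{1}{n})(1-\tfrac{1}{n}-\tau)$ for \emph{every} outer variable $s\in[\tfrac{1}{n},1-\tfrac{1}{n}]$, although $(ii)$ licenses this only for $s\in[\eta,1-\tfrac{1}{n}]$. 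Since $H_{n}(s,\tau)\to0$ as $s\to\tfrac{1}{n}$, no such $s$-uniform lower bound can hold, so this step is exactly as unjustified in the paper as you suspected.

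The problem is that your patch does not close the hole. Put $X=\nu_{n}\int_{\eta}^{1-1/n}(\tau-\tfrac{1}{n})(1-\tfrac{1}{n}-\tau)g(\tau,M+\tfrac{1}{n})\,d\tau$ and $w(s)=\tfrac{1}{n}+\int_{1/n}^{1-1/n}H_{n}(s,\tau)g(\tau,u(\tau)+\tfrac{1}{n})\,d\tau$. Bounding $F(w(s))\leq F(\tfrac{1}{n})$ on the strip $[\tfrac{1}{n},\eta)$ gives
\begin{align*}
\|T_{n}u\|_{{}_{\mathcal{E}_{n}}}\leq\mu_{n}F(X)\int_{\eta}^{1-1/n}(s-\tfrac{1}{n})(1-\tfrac{1}{n}-s)K(s)\,ds
+\mu_{n}F(\tfrac{1}{n})\int_{1/n}^{\eta}(s-\tfrac{1}{n})(1-\tfrac{1}{n}-s)K(s)\,ds,
\end{align*}
and $(\mathbf{A}_{5})$ supplies only $a\,\mu_{n}F(X)\leq M$: the entire budget $a$ is already allotted to the first term, so there is no room for the second. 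The strip term is dominated by the first only if $F(\tfrac{1}{n})\leq F(X)$, which (with $F$ non-increasing) requires $X\leq\tfrac{1}{n}$ --- a relation that is nowhere assumed and that fails in the paper's own example following the theorem: there $g(\tau,M+\tfrac{1}{n})=e/(\tau(1-\tau))$, so $X$ stays bounded away from $0$, while $F(\tfrac{1}{n})=e^{n}/n\to\infty$. Your fallback of ``reading $(\mathbf{A}_{5})$ as controlling the full integral'' is not a proof of the theorem as stated but a strengthening of its hypothesis. So the honest verdict is: you reproduced the paper's argument faithfully and correctly isolated the one step that does not follow from the stated assumptions, but neither your repair nor the paper's silence fixes it; within this line of attack the outer estimate \eqref{eq3.10} appears to require either a full-domain Green's function lower bound (impossible in the needed $s$-uniform form) or a stronger version of $(\mathbf{A}_{5})$.
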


\begin{proof}
For any $u\in\partial\Omega_{M}\cap K_{n}$, using \eqref{eq2.8},
$(\mathbf{A}_{1})$ and $(\mathbf{A}_{5})$, we obtain
\begin{align*}\begin{split}
(T_{n}u)(t)&=\int_{1/n}^{1-1/n}H_{n}(t,s)f(s,\frac{1}{n}+\int_{1/n}^{1-1/n}H_{n}(s,\tau)g(\tau,u(\tau)+\frac{1}{n})d\tau)ds\\
&\leq\int_{1/n}^{1-1/n}H_{n}(t,s)K(s)F(\frac{1}{n}+\int_{1/n}^{1-1/n}H_{n}(s,\tau)g(\tau,u(\tau)+\frac{1}{n})d\tau)ds\\
&\leq\int_{1/n}^{1-1/n}H_{n}(t,s)K(s)F(\int_{1/n}^{1-1/n}H_{n}(s,\tau)g(\tau,u(\tau)+\frac{1}{n})d\tau)ds\\
&\leq\int_{1/n}^{1-1/n}H_{n}(t,s)K(s)F(\int_{1/n}^{1-1/n}H_{n}(s,\tau)g(\tau,M+\frac{1}{n})d\tau)ds\\
&\leq\int_{1/n}^{1-1/n}H_{n}(t,s)K(s)F(\int_{\eta}^{1-1/n}H_{n}(s,\tau)g(\tau,M+\frac{1}{n})d\tau)ds.
\end{split}\end{align*}
Employing $(ii)$ of Lemma \ref{lem2.4} and $(\mathbf{A}_{5})$,
leads to
\begin{align*}
(T_{n}u)(t)\leq
F(\nu_{n}\int_{\eta}^{1-1/n}(\tau-\frac{1}{n})(1-\frac{1}{n}-\tau)g(\tau,M+\frac{1}{n})d\tau)\int_{1/n}^{1-1/n}H_{n}(t,s)K(s)ds.
\end{align*} Now, using $(i)$ of Lemma \ref{lem2.4},
$(\mathbf{A}_{1})$ and $(\mathbf{A}_{5})$, we obtain
\begin{align*}\begin{split}
(T_{n}u)(t)&\leq\mu_{n}F(\nu_{n}\int_{\eta}^{1-1/n}(\tau-\frac{1}{n})(1-\frac{1}{n}-\tau)g(\tau,M+\frac{1}{n})d\tau)\\
&\int_{1/n}^{1-1/n}(s-\frac{1}{n})(1-\frac{1}{n}-s)K(s)ds\\
&\leq a\mu_{n}F(\nu_{n}\int_{\eta}^{1-1/n}(\tau-\frac{1}{n})(1-\frac{1}{n}-\tau)g(\tau,M+\frac{1}{n})d\tau)\\
&\leq M,
\end{split}\end{align*}
which implies that
\begin{equation}\label{eq3.10}\|T_{n}u\|\leq\|u\|_{{}_{\mathcal{E}_{n}}}
\text{ for all } u\in \partial\Omega_{M}\cap K_{n}.\end{equation} By
$(\mathbf{A}_{3})$, we can choose $r_{n}\in(0,M)$ such that
\eqref{k2212} holds. Hence, in view of \eqref{eq3.10}, \eqref{k2212}
and by Theorem \ref{thmguo}, $T_{n}$ has a fixed point
$u_{n}\in(\overline{\Omega}_{M}\setminus\Omega_{r_{n}})\cap K_{n}$.
Consequently, the system of SBVPs \eqref{k203} has a positive solution.
\end{proof}

\begin{ex} Let
\begin{align*}
f(t,y)=\begin{cases}
\frac{ye^{\frac{1}{y}}}{t(1-t)},&y\leq1,\\
\frac{e}{t(1-t)},&y>1,
\end{cases}\hspace{0.4cm}g(t,x)=\begin{cases}
\frac{xe^{\frac{1}{x}}}{t(1-t)},&x\leq1,\\
\frac{e}{t(1-t)},&x>1,
\end{cases}
\end{align*}
$\alpha=2$ $\eta=\frac{1}{3}$. Choose $\beta_{1}=\beta_{2}=1$,
\begin{align*}
K(t)=L(t)=\frac{1}{t(1-t)},\hspace{0.4cm}F(y)=\begin{cases}
ye^{\frac{1}{y}},&y\leq1,\\
e,&y>1,
\end{cases}\hspace{0.4cm}G(x)=\begin{cases}
xe^{\frac{1}{x}},&x\leq1,\\
e,&x>1,
\end{cases}
\end{align*}
\begin{align*}M\geq\max\{1,\,6\;F(e(1-\frac{3}{n})\int_{1/3}^{1-1/n}\frac{(t-\frac{1}{n})(1-\frac{1}{n}-t)}{t(1-t)}ds)\}.\end{align*}
Then $(\mathbf{A}_{1}),$ $(\mathbf{A}_{3})$ and
$(\mathbf{A}_{5})$ are satisfied. Hence, by Theorem \ref{thmthird}, the system of SBVPs \eqref{k203} has a positive solution.
\end{ex}

\begin{thm}\label{thmfourth}
Assume that $(\mathbf{A}_{1})$, $(\mathbf{A}_{4})$ and
$(\mathbf{A}_{5})$ hold. Then the system of SBVPs \eqref{k203} has a positive solution.
\end{thm}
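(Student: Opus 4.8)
The plan is to reuse the two norm inequalities already isolated in the proofs of Theorems \ref{thmsecond} and \ref{thmthird}, to arrange their radii into a genuine annulus, to apply the Guo-Krasnosel'skii theorem (Theorem \ref{thmguo}) in its compression form, and to finish with the regularization limit of Theorem \ref{thmfirst}.

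First I would record the lower estimate on a small sphere. The derivation of \eqref{eq3.9} in Theorem \ref{thmsecond} used only $(\mathbf{A}_{1})$ (to bound $g$ by $LG$) together with the monotonicity of $f$ and $G$ and the pointwise lower bound supplied by $(\mathbf{A}_{4})$; it never invoked $(\mathbf{A}_{2})$. Hence, under the present hypotheses, the identical chain of inequalities yields, for each fixed $n$, a radius $\rho_{n}>0$ with $\|T_{n}u\|_{{}_{\mathcal{E}_{n}}}\geq\|u\|_{{}_{\mathcal{E}_{n}}}$ for all $u\in\partial\Omega_{\rho_{n}}\cap K_{n}$. Symmetrically, the derivation of \eqref{eq3.10} in Theorem \ref{thmthird} used only $(\mathbf{A}_{1})$ (to bound $f$ by $KF$) together with the monotonicity of $F$ and $g$ and the integral bound of $(\mathbf{A}_{5})$, never $(\mathbf{A}_{3})$; so the same argument produces a radius $M>0$ with $\|T_{n}u\|_{{}_{\mathcal{E}_{n}}}\leq\|u\|_{{}_{\mathcal{E}_{n}}}$ for all $u\in\partial\Omega_{M}\cap K_{n}$.

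Next I would fix the geometry of the annulus. The requirement on $\rho_{n}$ in $(\mathbf{A}_{4})$ is an upper bound only, so for each fixed $n$ the radius $\rho_{n}$ may be taken arbitrarily small; in particular I would shrink it, if necessary, so that $\rho_{n}<M$. This places the sphere on which $T_{n}$ expands strictly inside the sphere on which $T_{n}$ contracts, which is exactly condition $(ii)$ of Theorem \ref{thmguo} with $\Omega_{1}=\Omega_{\rho_{n}}$ and $\Omega_{2}=\Omega_{M}$. Since $T_{n}:\overline{\Omega}_{M}\cap K_{n}\rightarrow K_{n}$ is completely continuous by Lemma \ref{lem2.5}, Theorem \ref{thmguo} delivers a fixed point $u_{n}\in(\overline{\Omega}_{M}\setminus\Omega_{\rho_{n}})\cap K_{n}$, and therefore a solution of the regularized system \eqref{1eq2.6} satisfying $\rho_{n}\leq\|u_{n}\|_{{}_{\mathcal{E}_{n}}}\leq M$.

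Finally I would let $n\rightarrow\infty$ precisely as in Theorem \ref{thmfirst}: the bound on $\|u_{n}\|_{{}_{\mathcal{E}_{n}}}$ and the integral representation \eqref{eq2.7} give a derivative estimate and hence equicontinuity, the constant extensions form a uniformly bounded equicontinuous family on $[0,1]$, and an Arzel\`{a}-Ascoli subsequence converges to a pair $(x,y)$ solving the integral form of \eqref{k203}; positivity of $H$ together with $f(t,0),g(t,0)\not\equiv0$ forces $x,y>0$ on $(0,1]$. The main obstacle is making the two estimates compatible and usable in the limit: nondegeneracy of the annulus is secured by the one-sided constraint on $\rho_{n}$ in $(\mathbf{A}_{4})$, while a bound on $\|u_{n}\|_{{}_{\mathcal{E}_{n}}}$ uniform in $n$ follows because $\mu_{n}\rightarrow\mu$ and $\nu_{n}\rightarrow\nu$ keep the constant $M$ controlled as $n\rightarrow\infty$, exactly as the constant $R$ is handled in the proof of Theorem \ref{thmfirst}.
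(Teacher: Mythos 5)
Your proposal is correct and follows essentially the same route as the paper: its proof likewise obtains \eqref{eq3.9} from $(\mathbf{A}_{1})$ and $(\mathbf{A}_{4})$, obtains \eqref{eq3.10} from $(\mathbf{A}_{5})$, applies Theorem \ref{thmguo} to get a fixed point $u_{n}\in(\overline{\Omega}_{M}\setminus\Omega_{\rho_{n}})\cap K_{n}$, and concludes by the limiting argument of Theorem \ref{thmfirst}. The only difference is cosmetic, namely how the annulus is made nondegenerate: the paper chooses $M>\rho_{n}$, whereas you shrink $\rho_{n}$ below $M$, which is in fact the cleaner justification since the inequality in $(\mathbf{A}_{4})$ is manifestly preserved when $\rho_{n}$ decreases, while the inequality in $(\mathbf{A}_{5})$ need not survive enlarging $M$.
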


\begin{proof}By $(\mathbf{A}_{1})$ and $(\mathbf{A}_{4})$, we obtain \eqref{eq3.9}. By $(\mathbf{A}_{5})$ we
can choose a constant $M>\rho_{n}$ such that \eqref{eq3.10} holds.
Then by Theorem \ref{thmguo}, $T_{n}$ has a fixed point
$u_{n}\in(\overline{\Omega}_{M}\setminus\Omega_{\rho_{n}})\cap
K_{n}$. Consequently, the system of SBVPs \eqref{k203} has a positive solution.
\end{proof}

\begin{ex} Let
\begin{align*}
f(t,y)=\frac{1}{t(1-t)}\frac{1}{\sqrt{y}},\hspace{0.4cm}g(t,x)=\frac{1}{t(1-t)}\frac{1}{x^{2}}
\end{align*} and
$\alpha=2,\,\eta=\frac{1}{3}$. Choose
\begin{align*}
K(t)=L(t)=\frac{1}{t(1-t)},\hspace{0.4cm}F(y)=\frac{1}{\sqrt{y}},\;\;\;\;G(x)=\frac{1}{x^{2}}.
\end{align*}
Choose constants $\rho_{n}$ and $M$ such that
\begin{align*}
\rho_{n}&\leq\frac{4(n-3)}{\sqrt{n(6n^{3}+1)}}\int_{1/3}^{1-1/n}(t-\frac{1}{n})(1-\frac{1}{n}-t)dt,\\
M&\geq\frac{1}{n}\big(\frac{1}{6}\big((1-\frac{3}{n})\int_{1/3}^{1-1/n}\frac{(t-\frac{1}{n})(1-\frac{1}{n}-t)}{t(1-t)}dt\big)^{1/2}-1\big)^{-1}.
\end{align*}
Then $(\mathbf{A}_{1}),$ $(\mathbf{A}_{4})$ and
$(\mathbf{A}_{5})$ are satisfied. Hence, by Theorem
\ref{thmfourth}, the system of BVPs \eqref{k203} has a
positive solution.
\end{ex}

\section{Sufficient conditions for the existence of at least one positive solution to a more general singular systems}\label{existencetwo}

In this section, we establish the existence of positive solution for the system of SBVPs \eqref{k204}. We say $(x,y)$ is a positive solution of the system of SBVPs \eqref{k204} if $(x,y)\in(C[0,1]\cap C^{2}(0,1))\times(C[0,1]\cap C^{2}(0,1))$, $x>0$ and $y>0$ on $(0,1]$, $(x,y)$ satisfies \eqref{k204}. For $x\in C[0,1]$, we write $\|x\|=\max_{t\in[0,1]}|x(t)|$. For any real constant $r>0$, we define an open neighborhood of $0\in C[0,1]$ as \begin{align*}\Omega_{r}=\{x\in C[0,1]:\|x\|<r\}.\end{align*} Define a cone $K$ of $C[0,1]$ as
\begin{align*}
K=\{x\in C[0,1]:x(t)\geq\,t(1-t)\,\gamma\,\|x\|\text{ for }t\in[0,1]\},
\end{align*}
where
\begin{align*}
0<\gamma:=\frac{\min\{1,\alpha\}\min\{\eta,1-\eta\}}{\max\{1,\alpha\}}<1.
\end{align*}
For each $(x,y)\in C[0,1]\times C[0,1]$, we write $\|(x,y)\|=\|x\|+\|y\|$. Clearly, $(C[0,1]\times C[0,1],\|\cdot\|)$ is a Banach space and $K\times K$ is a cone of $C[0,1]\times C[0,1]$.

\vskip 0.5em

For $n\in\{1,2,\cdots\}$, consider the following system of SBVPs
\begin{equation}\label{k231}\begin{split}
-x&''(t)=f(t,\max\{x(t)+\frac{1}{n},\frac{1}{n}\},\max\{y(t)+\frac{1}{n},\frac{1}{n}\}),\hspace{0.4cm}t\in[0,1],\\
-y&''(t)=g(t,\max\{x(t)+\frac{1}{n},\frac{1}{n}\},\max\{y(t)+\frac{1}{n},\frac{1}{n}\}),\hspace{0.4cm}t\in[0,1],\\
x&(0)=y(0)=0,\,x(1)=\alpha x(\eta),\,y(1)=\alpha y(\eta).
\end{split}\end{equation}
The system of BVPs \eqref{k231} can be expressed as an equivalent system of integral equations
\begin{equation}\begin{split}\label{k232}
&x(t)=\int_{0}^{1}H(t,s)f(s,\max\{x(s)+\frac{1}{n},\frac{1}{n}\},\max\{y(s)+\frac{1}{n},\frac{1}{n}\})ds,\\&
y(t)=\int_{0}^{10}H(t,s)g(s,\max\{x(s)+\frac{1}{n},\frac{1}{n}\},\max\{y(s)+\frac{1}{n},\frac{1}{n}\})ds,
\end{split}\end{equation}
where the Green's function $H$ is represented by \eqref{GH1}. By a solution of the system of BVPs \eqref{k231}, we mean a solution of the corresponding system of integral equations \eqref{k232}. Define a map
$T_{n}:C[0,1]\times C[0,1]\rightarrow K\times K$ by
\begin{equation}\label{k233}
T_{n}(x,y)=(A_{n}(x,y),B_{n}(x,y)),
\end{equation}
where the maps  $A_{n},B_{n}:C[0,1]\times C[0,1]\rightarrow K$ are defined by
\begin{equation}\label{eq2.10}\begin{split}
&A_{n}(x,y)(t)=\int_{0}^{1}H(t,s)f(s,\max\{x(s)+\frac{1}{n},\frac{1}{n}\},\max\{y(s)+\frac{1}{n},\frac{1}{n}\})ds,\\
&B_{n}(x,y)(t)=\int_{0}^{1}H(t,s)g(s,\max\{x(s)+\frac{1}{n},\frac{1}{n}\},\max\{y(s)+\frac{1}{n},\frac{1}{n}\})ds.
\end{split}\end{equation}
Clearly, if $(x_{n},y_{n})\in C[0,1]\times C[0,1]$ is a fixed point of $T_{n}$, then $(x_{n},y_{n})$ also a solution of the system of BVPs \eqref{k231}.

\vskip 0.5em

Assume that

\begin{description}

\item[$(\mathbf{A}_{6})$] for each $t\in(0,1)$, $f(t,x,y)$ and $g(t,x,y)$ are non-increasing with respect to $x$ and $y$, $f(\cdot,1,1),g(\cdot,1,1)\in C((0,1),(0,\infty))$ and
\begin{align*}
a:=&\int_{0}^{1}t(1-t)f(t,t(1-t),t(1-t))dt<+\infty,\\
b:=&\int_{0}^{1}t(1-t)g(t,t(1-t),t(1-t))dt<+\infty,
\end{align*}

\item[$(\mathbf{A}_{7})$] there exist real constants $\alpha_{i}$, $\beta_{i}$ with $\alpha_{i}\leq0\leq\beta_{i}$, $i=1,2$, such that for all $t\in(0,1)$, $x,y\in(0,\infty)$,
\begin{align*}\begin{split}
c^{\,\beta_{1}}&f(t,x,y)\leq f(t,c\,x,y)\leq
c^{\alpha_{1}}f(t,x,y),\hspace{0.4cm}\text{ if }0<c\leq1,\\
c^{\alpha_{1}}&f(t,x,y)\leq f(t,c\,x,y)\leq
c^{\,\beta_{1}}f(t,x,y),\hspace{0.37cm}\text{ if }c\geq1,\\
c^{\,\beta_{2}}&f(t,x,y)\leq f(t,x,c\,y)\leq
c^{\alpha_{2}}f(t,x,y),\hspace{0.4cm}\text{ if }0<c\leq1,\\
c^{\alpha_{2}}&f(t,x,y)\leq f(t,x,c\,y)\leq
c^{\,\beta_{2}}f(t,x,y),\hspace{0.37cm}\text{ if }c\geq1;
\end{split}\end{align*}

\item[$(\mathbf{A}_{8})$] there exist real constants $\gamma_{i}$, $\rho_{i}$ with
$\gamma_{i}\leq0\leq\rho_{i}$, $i=1,2$, such that for all $t\in(0,1)$, $x,y\in(0,\infty)$,
\begin{align*}\begin{split}
c^{\,\rho_{1}}&g(t,x,y)\leq g(t,c\,x,y)\leq
c^{\gamma_{1}}g(t,x,y),\hspace{0.4cm}\text{ if }0<c\leq1,\\
c^{\gamma_{1}}&g(t,x,y)\leq g(t,c\,x,y)\leq
c^{\,\rho_{1}}g(t,x,y),\hspace{0.37cm}\text{ if }c\geq1,\\
c^{\,\rho_{2}}&g(t,x,y)\leq g(t,x,c\,y)\leq
c^{\gamma_{2}}g(t,x,y),\hspace{0.4cm}\text{ if }0<c\leq1,\\
c^{\gamma_{2}}&g(t,x,y)\leq g(t,x,c\,y)\leq
c^{\,\rho_{2}}g(t,x,y),\hspace{0.37cm}\text{ if }c\geq1.
\end{split}\end{align*}

\end{description}

\begin{lem}
Assume that $(\mathbf{A}_{6})-(\mathbf{A}_{8})$ holds. Then the map $T_{n}:(\overline{\Omega}_{r_{1}}\times\overline{\Omega}_{r_{2}})\cap(K\times K)\rightarrow K\times K$ is completely continuous.
\end{lem}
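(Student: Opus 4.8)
The plan is to follow the scheme of Lemma \ref{lem2.5}, but now on the whole interval $[0,1]$ with the fixed Green's function $H$ of \eqref{GH1} in place of the truncated $H_n$, establishing the three ingredients of complete continuity—cone invariance, relative compactness (uniform boundedness plus equicontinuity) and continuity—while controlling the singularities of $f,g$ at $s=0,1$. First I would record the simplification available on the cone: for $(x,y)\in(\overline\Omega_{r_1}\times\overline\Omega_{r_2})\cap(K\times K)$ one has $x,y\ge 0$ on $[0,1]$, so each truncation collapses, $\max\{x(s)+\tfrac1n,\tfrac1n\}=x(s)+\tfrac1n$ and likewise for $y$, whence
\begin{align*}
A_n(x,y)(t)=\int_0^1 H(t,s)\,f\big(s,x(s)+\tfrac1n,y(s)+\tfrac1n\big)\,ds,
\end{align*}
and similarly for $B_n$, with all arguments lying in $[\tfrac1n,r_1+\tfrac1n]\times[\tfrac1n,r_2+\tfrac1n]$.

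The central device is a single $(x,y)$-independent integrable majorant. Since $f$ is non-increasing in its last two variables by $(\mathbf A_6)$ and the arguments are $\ge\tfrac1n$, monotonicity gives $f(s,x(s)+\tfrac1n,y(s)+\tfrac1n)\le f(s,\tfrac1n,\tfrac1n)=:\phi_n(s)$, and likewise $g(s,\cdots)\le g(s,\tfrac1n,\tfrac1n)=:\psi_n(s)$. I would then show $\int_0^1 s(1-s)\phi_n(s)\,ds<\infty$ by splitting $[0,1]$ at the points where $s(1-s)=\tfrac1n$: on the two endpoint regions $s(1-s)\le\tfrac1n$, monotonicity yields $\phi_n(s)\le f(s,s(1-s),s(1-s))$, which is integrable against $s(1-s)$ by $(\mathbf A_6)$, while on the complementary compact middle interval $\phi_n$ is continuous, hence bounded. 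The scaling estimates $(\mathbf A_7)$–$(\mathbf A_8)$ give the same comparison quantitatively, writing $\tfrac1n=\big(\tfrac{1}{ns(1-s)}\big)s(1-s)$ and invoking the branches with $c\ge1$; the analogous bound holds for $\psi_n$.

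With the majorant in hand the remaining steps are routine. For cone invariance I would use the two-sided bound $\nu t(1-t)s(1-s)\le H(t,s)\le\mu s(1-s)$ from Lemma \ref{tp}: integrating gives $A_n(x,y)(t)\ge \nu t(1-t)\int_0^1 s(1-s)f(\cdots)\,ds$ and $\|A_n(x,y)\|\le\mu\int_0^1 s(1-s)f(\cdots)\,ds$, so $A_n(x,y)(t)\ge\tfrac{\nu}{\mu}t(1-t)\|A_n(x,y)\|=\gamma t(1-t)\|A_n(x,y)\|$ because $\gamma=\nu/\mu$; thus $A_n(x,y)\in K$, and likewise $B_n(x,y)\in K$, so $T_n$ maps into $K\times K$. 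Uniform boundedness is immediate since $\|A_n(x,y)\|\le\mu\int_0^1 s(1-s)\phi_n(s)\,ds$ and $\|B_n(x,y)\|\le\mu\int_0^1 s(1-s)\psi_n(s)\,ds$ are finite and independent of $(x,y)$. For equicontinuity I would estimate $|A_n(x,y)(t_1)-A_n(x,y)(t_2)|\le\int_0^1|H(t_1,s)-H(t_2,s)|\phi_n(s)\,ds$; given $\varepsilon>0$, I first absorb the endpoint contribution using that $s(1-s)\phi_n(s)$ has integrable, hence absolutely continuous, tails, and then use the uniform continuity of $H(\cdot,s)$ on the compact middle together with the boundedness of $\phi_n$ there. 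Continuity follows from the dominated convergence theorem: if $(x_m,y_m)\to(x,y)$, the integrands converge pointwise by continuity of $f$ and are dominated by $2\mu s(1-s)\phi_n(s)$, giving $\|A_n(x_m,y_m)-A_n(x,y)\|\to0$ (the estimate is uniform in $t$ through $H\le\mu s(1-s)$), and similarly for $B_n$.

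The main obstacle is the singular behaviour of $f,g$ at $s=0,1$: the whole argument hinges on producing the integrable majorant $s(1-s)\phi_n(s)$, and the only delicate point is justifying its integrability, which is exactly where the monotonicity comparison with $f(s,s(1-s),s(1-s))$—equivalently the $c\ge1$ branches of $(\mathbf A_7)$—is used. Once that tail control is secured, relative compactness follows from Arzel\`a--Ascoli (Theorem \ref{arzela}), and complete continuity of $T_n=(A_n,B_n)$ on the product space is inherited componentwise.
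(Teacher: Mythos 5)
Your proof is correct and follows essentially the same route as the paper's: cone invariance and uniform boundedness via the Green's function bounds of Lemma \ref{tp} together with $(\mathbf{A}_{6})$--$(\mathbf{A}_{8})$, relative compactness via Theorem \ref{arzela}, and continuity via the Lebesgue dominated convergence theorem. Your explicit integrable majorant $s(1-s)f(s,\tfrac{1}{n},\tfrac{1}{n})$ (justified by comparison with $f(s,s(1-s),s(1-s))$ near the endpoints) and the endpoint/middle splitting for equicontinuity merely supply details that the paper leaves implicit in its brief appeals to $(\mathbf{A}_{6})$--$(\mathbf{A}_{8})$ and to the uniform continuity of $H$.
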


\begin{proof}Clearly, $T_{n}(x,y)\in K\times K$ for all $(x,y)\in K\times K$. Now, we show that the map $A_{n}:(\overline{\Omega}_{r_{1}}\times\overline{\Omega}_{r_{2}})\cap(K\times K)\rightarrow K$ is uniformly bounded and equicontinuous. For $(x,y)\in(\overline{\Omega}_{r_{1}}\times\overline{\Omega}_{r_{2}})\cap(K\times K)$,
using \eqref{eq2.10}, $(\mathbf{A}_{6})$, $(\mathbf{A}_{7})$ and Lemma \ref{tp}, $A_{n}((\overline{\Omega}_{r_{1}}\times\overline{\Omega}_{r_{2}})\cap(K\times K))$ is uniformly bounded. Similarly, using \eqref{eq2.10}, $(\mathbf{A}_{6})$, $(\mathbf{A}_{8})$ and Lemma \ref{tp}, we can show that $B_{n}((\overline{\Omega}_{r_{1}}\times\overline{\Omega}_{r_{2}})\cap(K\times K))$ is also uniformly bounded. Thus,
$T_{n}((\overline{\Omega}_{r_{1}}\times\overline{\Omega}_{r_{2}})\cap(K\times K))$ is
uniformly bounded. Since the Green's function $H$ is uniformly continuous on $[0,1]\times[0,1]$, therefore, $T_{n}((\overline{\Omega}_{r_{1}}\times\overline{\Omega}_{r_{2}})\cap(K\times K))$ is
equicontinuous. Thus by Theorem \ref{arzela}, it follows that $T_{n}((\overline{\Omega}_{r_{1}}\times\overline{\Omega}_{r_{2}})\cap(K\times K))$ is
relatively compact. Hence, $T_{n}$ is a compact map.

\vskip 0.5em

Now, we show that $T_{n}$ is continuous. Let $(x_{m},y_{m}),(x,y)\in
K\times K$ such that
\begin{align*}
\|(x_{m},y_{m})-(x,y)\|\rightarrow 0 \text{ as } m\rightarrow +\infty.
\end{align*} Using \eqref{eq2.10} and $(i)$ of Lemma \ref{lem2.4}, we have
\begin{align*}\begin{split}
&|A_{n}(x_{m},y_{m})(t)-A_{n}(x,y)(t)|=\left|\int_{0}^{1}H(t,s)(f(s,x_{m}(s)+\frac{1}{n},y_{m}(s)+\frac{1}{n})-f(s,x(s)+\frac{1}{n},y(s)+\frac{1}{n}))ds\right|\\
&\leq\int_{0}^{1}H(t,s)\left|f(s,x_{m}(s)+\frac{1}{n},y_{m}(s)+\frac{1}{n})-f(s,x(s)+\frac{1}{n},y(s)+\frac{1}{n})\right|ds\\
&\leq\mu\int_{0}^{1}s(1-s)\left|f(s,x_{m}(s)+\frac{1}{n},y_{m}(s)+\frac{1}{n})-f(s,x(s)+\frac{1}{n},y(s)+\frac{1}{n})\right|ds.
\end{split}\end{align*}
Consequently,
\begin{align*}\begin{split}
\|A_{n}(x_{m},y_{m})-A_{n}(x,y)\|\leq&\mu\int_{0}^{1}s(1-s)\left|f(s,x_{m}(s)+\frac{1}{n},y_{m}(s)+\frac{1}{n})-f(s,x(s)+\frac{1}{n},y(s)+\frac{1}{n})\right|ds.
\end{split}\end{align*}
By Lebesgue dominated convergence theorem, it follows that
\begin{equation}\label{eq2.14}
\|A_{n}(x_{m},y_{m})-A_{n}(x,y)\|\rightarrow 0 \text{ as } m\rightarrow +\infty.
\end{equation}
Similarly, by using \eqref{eq2.10} and $(i)$ of Lemma \ref{lem2.4},
we have
\begin{equation}\label{eq2.15}
\|B_{n}(x_{m},y_{m})-B_{n}(x,y)\|\rightarrow 0 \text{ as } m\rightarrow +\infty.
\end{equation}
From \eqref{eq2.14}, \eqref{eq2.15} and \eqref{k233}, it follows that
\begin{align*}
\|T_{n}(x_{m},y_{m})-T_{n}(x,y)\|\rightarrow 0 \text{ as } m\rightarrow +\infty,
\end{align*}
that is, $T_{n}:(\overline{\Omega}_{r_{1}}\times\overline{\Omega}_{r_{2}})\cap(K\times
K)\rightarrow K\times K$ is continuous. Hence, $T_{n}:(\overline{\Omega}_{r_{1}}\times\overline{\Omega}_{r_{2}})\cap(K\times K)\rightarrow
K\times K$ is completely continuous.
\end{proof}

\begin{thm}\label{tm232}
Assume that $(\mathbf{A}_{6})-(\mathbf{A}_{8})$ hold. Then the system of SBVPs \eqref{k204} has a positive solution.
\end{thm}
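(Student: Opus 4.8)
The plan is to run the regularization-and-sequential scheme already prepared in \eqref{k231}--\eqref{k233}: for each $n$ produce a fixed point $(x_n,y_n)$ of $T_n$ in the product cone $K\times K$, promote this to bounds that are uniform in $n$, and then pass to the limit to recover a positive solution of \eqref{k204}.

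First I would obtain, for each fixed $n$, a fixed point of $T_n$ by the Guo--Krasnosel'skii theorem (Theorem \ref{thmguo}) in the cone $K\times K$ of $C[0,1]\times C[0,1]$ with the norm $\|(x,y)\|=\|x\|+\|y\|$. Since $f,g$ are non-increasing in $x,y$ by $(\mathbf{A}_6)$, for $(x,y)\in K\times K$ the regularized arguments satisfy $\max\{x(s)+\tfrac1n,\tfrac1n\}\ge\tfrac1n$, so every integrand stays below the fixed functions $f(s,\tfrac1n,\tfrac1n)$, $g(s,\tfrac1n,\tfrac1n)$; using $(\mathbf{A}_7)$, $(\mathbf{A}_8)$ to compare these with $f(s,1,1),g(s,1,1)$ and then with $(\mathbf{A}_6)$ shows the corresponding integrals are finite, so $T_n$ is bounded on the whole cone and $\|T_n(x,y)\|\le R=\|(x,y)\|$ on a large sphere $\partial\Omega_R$. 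On a small sphere $\partial\Omega_r$ the same monotonicity gives integrands at least $f(s,r+\tfrac1n,r+\tfrac1n)$, $g(s,r+\tfrac1n,r+\tfrac1n)$, which combined with the lower Green's-function estimate $(ii)$ of Lemma \ref{tp} makes $\|T_n(x,y)\|$ bounded below by a positive constant exceeding $r$ for $r$ small. The compression form of Theorem \ref{thmguo} then yields $(x_n,y_n)$, a solution of \eqref{k231}.

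Next I would upgrade these to $n$-independent bounds. Using the cone inequality $x_n(t)\ge\gamma\,t(1-t)\|x_n\|$ (and similarly for $y_n$), the monotonicity $(\mathbf{A}_6)$, and the homogeneity estimates $(\mathbf{A}_7)$, $(\mathbf{A}_8)$ to compare each value $f(s,x_n(s)+\tfrac1n,y_n(s)+\tfrac1n)$ with the base value $f(s,s(1-s),s(1-s))$, the two integral identities reduce to coupled scalar inequalities in $\|x_n\|$ and $\|y_n\|$, with the powers governed by the exponents $\alpha_i,\gamma_i\le0\le\beta_i,\rho_i$. These confine $\|x_n\|,\|y_n\|$ to a fixed interval $[r_0,R_0]$ independent of $n$, so that $x_n(s),y_n(s)\ge\gamma r_0\,s(1-s)$ uniformly. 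Differentiating the integral representation exactly as in the proof of Theorem \ref{thmfirst} bounds $\|x_n'\|,\|y_n'\|$ and yields equicontinuity, so Theorem \ref{arzela} extracts a subsequence with $x_{n_k}\to x$, $y_{n_k}\to y$ uniformly on $[0,1]$.

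Finally I would pass to the limit in \eqref{k232}. Because the same Green's function $H$ serves every $n$, the only real work is moving the limit inside the integral. The uniform lower bound $x_n(s),y_n(s)\ge\gamma r_0\,s(1-s)$ keeps the arguments away from the singular set, and combining it with $(\mathbf{A}_6)$--$(\mathbf{A}_8)$ produces the $n$-independent majorant $\mu\,s(1-s)\,C\,f(s,s(1-s),s(1-s))$, together with its $g$-analogue, both integrable by $(\mathbf{A}_6)$. Dominated convergence, continuity of $f,g$ on $(0,\infty)\times(0,\infty)$, and $1/n\to0$ then give $x(t)=\int_0^1 H(t,s)f(s,x(s),y(s))\,ds$ and the analogous identity for $y$, along with the boundary conditions $x(0)=y(0)=0$, $x(1)=\alpha x(\eta)$, $y(1)=\alpha y(\eta)$; positivity follows from $x(s),y(s)\ge\gamma r_0\,s(1-s)>0$ on $(0,1)$. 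I expect this domination step to be the main obstacle: one must build a single integrable majorant valid for all $n$ despite singularities at $t=0,1$ and $x=0,y=0$, which is precisely why the uniform lower bound $r_0>0$ and the scaling conditions $(\mathbf{A}_7)$, $(\mathbf{A}_8)$ have to be secured before the limit is taken.
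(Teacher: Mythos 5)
Your overall scheme is the paper's own: regularize via \eqref{k231}, produce a fixed point of $T_{n}$ in $K\times K$ by Theorem \ref{thmguo}, extract a uniformly convergent subsequence, and pass to the limit in \eqref{k232}. The step where you genuinely depart from the printed proof is also the weak point of your plan. The paper never derives the $n$-independent upper bound a posteriori: it chooses the outer radii $R_{1},R_{2}$ in \eqref{eq3.1} \emph{before} invoking Theorem \ref{thmguo}, using $(\mathbf{A}_{7})$, $(\mathbf{A}_{8})$ together with Lemma \ref{tp} and the cone inequality $x(s)\geq\gamma s(1-s)\|x\|$, so that the compression estimate $\|T_{n}(x,y)\|\leq\|(x,y)\|$ holds on a boundary whose radius does not depend on $n$; the uniform bound $\|x_{n}\|\leq R_{1}$, $\|y_{n}\|\leq R_{2}$ of \eqref{eq3.7a} is then automatic. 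You instead take an outer radius that grows with $n$ (your bound on the integrand is $f(s,\tfrac1n,\tfrac1n)$, which after scaling carries a factor $n^{-\alpha_{1}-\alpha_{2}}$) and propose to recover $n$-free bounds afterwards from ``coupled scalar inequalities'' in $\|x_{n}\|,\|y_{n}\|$. That claim is asserted, not proved, and it is genuinely delicate: since $\alpha_{i},\gamma_{i}\leq0$, the fixed-point identity bounds $\|x_{n}\|$ above only in terms of a \emph{lower} bound on $\|y_{n}\|$ (the factor $(\gamma\|y_{n}\|)^{\alpha_{2}}$ blows up as $\|y_{n}\|\to0$), and bounds $\|y_{n}\|$ below only in terms of an \emph{upper} bound on $\|x_{n}\|$; disentangling the resulting four inequalities requires a restriction on the exponents (of the type $\alpha_{2}\gamma_{1}<1$) that you never identify. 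The simple repair is to adopt the paper's choice of $R_{1},R_{2}$, after which this entire step disappears.

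On the other hand, your handling of the limit is more careful than the printed proof, not less. The paper's inner radii $r_{n},s_{n}$ in \eqref{k2316} tend to $0$, so \eqref{eq3.7a} supplies no $n$-free positive lower bound, and the paper then passes to the limit (and asserts equicontinuity) without exhibiting any integrable majorant, even though $f,g$ are singular where $x$ or $y$ vanish. Your insistence on a uniform bound $x_{n}(s),y_{n}(s)\geq c_{0}\,s(1-s)$ with $c_{0}>0$ independent of $n$ is exactly what is needed, and once the $n$-free upper bounds are in hand it follows in one line: by monotonicity in $(\mathbf{A}_{6})$, the scaling in $(\mathbf{A}_{7})$, $(\mathbf{A}_{8})$, and part (iii) of Lemma \ref{tp}, $x_{n}(t)=A_{n}(x_{n},y_{n})(t)\geq\nu t(1-t)\int_{0}^{1}s(1-s)f(s,R_{1}+1,R_{2}+1)\,ds>0$, and similarly for $y_{n}$. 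This produces the majorant $\mu s(1-s)\,c_{0}^{\alpha_{1}+\alpha_{2}}f(s,s(1-s),s(1-s))$, integrable by $(\mathbf{A}_{6})$, which justifies both the equicontinuity needed for Theorem \ref{arzela} and the dominated-convergence passage to the limit, and it gives positivity of the limit function directly rather than deducing it after the fact. In short: replace your second step by the paper's radius choice, and keep your third step, which repairs a genuine gap in the paper's own argument.
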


\begin{proof}Choose a real constants $R_{1}>0$ and $R_{2}>0$ such that
\begin{equation}\label{eq3.1}\begin{split}
R_{1}&\geq\max\left\{1,\left(\mu a\gamma^{\alpha_{1}+\alpha_{2}}c_{7}^{\alpha_{1}+\alpha_{2}-\beta_{1}-\beta_{2}}R_{2}^{\alpha_{2}}\right)^{\frac{1}{1-\alpha_{1}}}\right\},\\
R_{2}&\geq\max\left\{1,\left(\mu b\gamma^{\gamma_{1}+\gamma_{2}}c_{7}^{\gamma_{1}+\gamma_{2}-\rho_{1}-\rho_{2}}R_{1}^{\gamma_{1}}\right)^{\frac{1}{1-\gamma_{2}}}\right\}.
\end{split}\end{equation}
where $c_{7}\in(0,1]$ such that $c_{7}R_{1}\leq1$ and $c_{7}R_{2}\leq1$. For any $(x,y)\in\partial(\Omega_{R_{1}}\times\Omega_{R_{2}})\cap(K\times K)$, using \eqref{eq2.10}, $(\mathbf{A}_{6})$, $(\mathbf{A}_{7})$, \eqref{eq3.1} and $(i)$ of Lemma \ref{tp}, we have
\begin{align*}\begin{split}
A_{n}(x,y)(t)&=\int_{0}^{1}H(t,s)f(s,x(s)+\frac{1}{n},y(s)+\frac{1}{n})ds\\
&\leq\mu\int_{0}^{1}s(1-s)f(s,s(1-s)\gamma\|x\|,s(1-s)\gamma\|y\|)ds\\
&\leq R_{1}
\end{split}\end{align*}
which implies that
\begin{equation}\label{eq3.2}
\|A_{n}(x,y)\|\leq\|x\|\text{ for all }(x,y)\in\partial(\Omega_{R_{1}}\times\Omega_{R_{2}})\cap(K\times K).\end{equation}
Similarly, using \eqref{eq2.10}, $(\mathbf{A}_{6})$, $(\mathbf{A}_{8})$, $(i)$ of Lemma \ref{lem2.4}, \eqref{eq3.1}, we obtain
\begin{equation}\label{eq3.3}
\|B_{n}(x,y)\|\leq\|y\|\text{ for all }(x,y)\in\partial(\Omega_{R_{1}}\times\Omega_{R_{2}})\cap(K\times K).\end{equation}
From \eqref{eq3.2}, \eqref{eq3.3} and \eqref{k233}, it follows that
\begin{equation}\label{eq3.4}
\|T_{n}(x,y)\|\leq\|(x,y)\|\text{ for all
}(x,y)\in\partial(\Omega_{R_{1}}\times\Omega_{R_{2}})\cap(K\times K).\end{equation}
Choose a real constants $r_{n}\in(0,R_{1})$ and $s_{n}\in(0,R_{2})$ such that
\begin{equation}\label{k2316}\begin{split}
r_{n}&\leq c_{8}^{\beta_{1}+\beta_{2}-\alpha_{1}-\alpha_{2}}n^{-\beta_{1}-\beta_{2}}\nu\int_{\eta}^{1}s(1-s)f(s,1,1)ds\\
s_{n}&\leq c_{8}^{\rho_{1}+\rho_{2}-\gamma_{1}-\gamma_{2}}n^{-\rho_{1}-\rho_{2}}\nu\int_{\eta}^{1}s(1-s)g(s,1,1)ds
\end{split}\end{equation}
where $c_{8}\in(0,1]$ is such that $c_{8}(r_{n}+\frac{1}{n})\leq1$ and $c_{8}(s_{n}+\frac{1}{n})\leq1$. For any
$(x,y)\in\partial(\Omega_{r_{n}}\times\Omega_{s_{n}})\cap(K\times K)$, using
\eqref{eq2.10}, $(\mathbf{A}_{7})$ and Lemma \ref{tp}, we have
\begin{align*}\begin{split}
A_{n}(x,y)(t)&=\int_{0}^{1}H(t,s)f\left(s,x(s)+\frac{1}{n},y(s)+\frac{1}{n}\right)ds\\
&=\int_{0}^{1}H(t,s)f\left(s,c_{8}\frac{x(s)+\frac{1}{n}}{c_{8}},c_{8}\frac{y(s)+\frac{1}{n}}{c_{8}}\right)ds\\
&\geq c_{8}^{\beta_{1}+\beta_{2}-\alpha_{1}-\alpha_{2}}n^{-\beta_{1}-\beta_{2}}\nu\int_{\eta}^{1}s(1-s)f(s,1,1)ds\geq r_{n}
\end{split}\end{align*}
Thus, in view of \eqref{k2316}, it follows that
\begin{equation}\label{eq3.6}\|A_{n}(x,y)\|\geq\|x\|\text{ for all
}(x,y)\in\partial(\Omega_{r_{n}}\times\Omega_{s_{n}})\cap(K\times K).
\end{equation}
Similarly, using \eqref{eq2.10}, $(\mathbf{A}_{8})$ and Lemma \ref{tp}, we get
\begin{equation}\label{eq3.7}\|B_{n}(x,y)\|\geq\|y\|\text{ for all
}(x,y)\in\partial(\Omega_{r_{n}}\times\Omega_{s_{n}})\cap(K\times K).
\end{equation}
From \eqref{eq3.6} and \eqref{eq3.7}, we obtain
\begin{equation}\label{eq3.8}\|T_{n}(x,y)\|\geq\|(x,y)\|\text{ for all
}(x,y)\in\partial(\Omega_{r_{n}}\times\Omega_{s_{n}})\cap(K\times K).
\end{equation}
In view of \eqref{eq3.4}, \eqref{eq3.8} and by Theorem \ref{thmguo},
$T_{n}$ has a fixed point $(x_{n},y_{n})\in(\overline{\Omega_{R_{1}}\times\Omega_{R_{2}}}\setminus(\Omega_{r_{n}}\times\Omega_{s_{n}}))\cap(K\times
K)$. Further
\begin{equation}\label{eq3.7a}r_{n}\leq\|x_{n}\|\leq R_{1},\,\,s_{n}\leq\|y_{n}\|\leq R_{2},\end{equation}
where $r_{n},\,s_{n}\rightarrow 0$ as $n\rightarrow \infty$. Thus, $\{(x_{n},y_{n})\}_{n=1}^{\infty}$ bounded uniformly on $[0,1]$. Moreover, since the Green's function $H$ is uniformly continuous on $[0,1]\times[0,1]$, therefore $\{(x_{n},y_{n})\}_{n=1}^{\infty}$ is equicontinuous on $[0,1]$. Thus, there exists a subsequence
$\{(x_{n_{k}},y_{n_{k}})\}$ of $\{(x_{n},y_{n})\}$ converging
uniformly to $(x,y)\in C[0,1]\times C[0,1]$. Now, for $t\in [0,1]$ consider the integral equations
\begin{align*}
x_{n_{k}}(t)&=\int_{0}^{1}H(t,s)f(t,x_{n_{k}}(s)+\frac{1}{n_{k}},y_{n_{k}}(s)+\frac{1}{n_{k}})ds,\\
y_{n_{k}}(t)&=\int_{0}^{1}H(t,s)g(t,x_{n_{k}}(s)+\frac{1}{n_{k}},y_{n_{k}}(s)+\frac{1}{n_{k}})ds,
\end{align*} as $n_{k}\rightarrow\infty$, we have
\begin{align*}
x(t)&=\int_{0}^{1}H(t,s)f(t,x(s),y(s))ds,\hspace{0.4cm}t\in[0,1],\\
y(t)&=\int_{0}^{1}H(t,s)g(s,x(s),y(s))ds,\hspace{0.4cm}t\in[0,1].
\end{align*}
Moreover,
\begin{align*}x(0)=0,\,x(1)=\alpha x(\eta),\,y(0)=0,\,y(1)=\alpha y(\eta).\end{align*}
Hence, $(x,y)$ is a solution of the system of BVPs \eqref{k204}. Moreover, since $f,g:(0,1)\times(0,\infty)\times(0,\infty)\rightarrow(0,\infty)$ and the Green's function $H$ is positive on $(0,1)\times(0,1)$, it follows that $x>0$ and $y>0$ on $(0,1]$.
\end{proof}

\begin{ex} Let
\begin{align*}\begin{split}
f(t,x,y)&=\frac{1}{\sqrt[4]{t(1-t)\,x\,y}},\\
g(t,x,y)&=\frac{1}{\sqrt[4]{t(1-t)\,x\,y}}
\end{split}\end{align*}
Clearly, $f$ and $g$ satisfy assumptions $(\mathbf{A}_{6})-(\mathbf{A}_{8})$. Hence, by Theorem \ref{tm232}, the system of SBVPs \eqref{k204} has a positive solution.
\end{ex}


\section{Singular systems of ODEs with four-point coupled BCs}\label{sec-couple-four-point}

In this section, we establish the existence of positive solutions for the system of SBVPs \eqref{4.0.1} \cite{asif3}. By a positive solution to the system of SBVPs \eqref{4.0.1}, we mean that $(x,y)\in (C[0,1]\cap C^{2}(0,1))\times (C[0,1]\cap C^{2}(0,1))$, $(x,y)$ satisfies \eqref{4.0.1}, $x>0$ and $y>0$ on $(0,1]$. For each $x\in C[0,1]$ we write $\|x\|=\max_{t\in[0,1]}|x(t)|$. Let
\begin{align*}
P=\big\{x\in C[0,1]:\min_{t\in[\max\{\xi,\eta\},1]}x(t)\geq\gamma\|x\|\big\},
\end{align*}
where
\begin{align*}
0<\gamma:=\frac{\min\{1,\alpha\xi,\alpha\beta\xi,\beta\eta,\alpha\beta\eta\}\min\{\xi,\eta,1-\xi,1-\eta\}}
{\max\{1,\alpha,\beta,\alpha\beta\xi,\alpha\beta\eta\}}<1.
\end{align*}
Clearly, $(C[0,1],\|\cdot\|)$ is a Banach space and $P$ is a cone of $C[0,1]$. Similarly, for each $(x,y)\in C[0,1]\times C[0,1]$ we write $\|(x,y)\|=\|x\|+\|y\|$. Clearly, $(C[0,1]\times C[0,1],\|\cdot\|)$ is a Banach space and $P\times P$ is a cone of $C[0,1]\times C[0,1]$. For any real constant $r>0$, define $\mathcal{O}_{r}=\left\{(x,y)\in C[0,1]\times
C[0,1]:\|(x,y)\|<r\right\}$.

\begin{lem}\label{lem2.2}
Let $u,v\in C[0,1]$, then the system of BVPs
\begin{equation}\label{4.1.1}\begin{split}
-x''(t)&=u(t),\hspace{0.4cm}t\in[0,1],\\
-y''(t)&=v(t),\hspace{0.4cm}t\in[0,1],\\
x(0)&=0,\,x(1)=\alpha y(\xi),\\
y(0)&=0,\,y(1)=\beta x(\eta),
\end{split}\end{equation} has integral representation
\begin{equation}\label{4.1.2}\begin{split}
x(t)=&\int_{0}^{1}F_{\xi\eta}(t,s)u(s)ds+\int_{0}^{1}G_{\alpha\beta\xi\eta}(t,s)v(s)ds,\\
y(t)=&\int_{0}^{1}F_{\eta\xi}(t,s)v(s)ds+\int_{0}^{1}G_{\beta\alpha\eta\xi}(t,s)u(s)ds,
\end{split}\end{equation} where
\begin{equation}\label{4.1.3}
F_{\xi\eta}(t,s)=
\begin{cases}
\frac{t(1-s)}{1-\alpha\beta\xi\eta}-\frac{\alpha\beta\xi
t(\eta-s)}{1-\alpha\beta\xi\eta}-(t-s),\hspace{0.4cm}&0\leq s\leq
t\leq1,\,s\leq\eta,\\
\frac{t(1-s)}{1-\alpha\beta\xi\eta}-\frac{\alpha\beta\xi
t(\eta-s)}{1-\alpha\beta\xi\eta},\hspace{0.4cm}&0\leq t\leq
s\leq1,\,s\leq\eta,\\
\frac{t(1-s)}{1-\alpha\beta\xi\eta}-(t-s),\hspace{0.4cm}&0\leq s\leq
t\leq1,\,s\geq\eta,\\
\frac{t(1-s)}{1-\alpha\beta\xi\eta},\hspace{0.4cm}&0\leq t\leq
s\leq1,\,s\geq\eta,
\end{cases}
\end{equation}
\begin{equation}\label{4.1.4}
G_{\alpha\beta\xi\eta}(t,s)=
\begin{cases}
\frac{\alpha\xi t(1-s)}{1-\alpha\beta\xi\eta}-\frac{\alpha
t(\xi-s)}{1-\alpha\beta\xi\eta},\hspace{2.3cm}&0\leq s,t\leq1,\,s\leq\xi,\\
\frac{\alpha\xi t(1-s)}{1-\alpha\beta\xi\eta},\hspace{0.4cm}&0\leq
s,t\leq1,\,s\geq\xi.
\end{cases}
\end{equation}
\end{lem}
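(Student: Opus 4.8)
The plan is to treat \eqref{4.1.1} as a pair of second-order problems, integrate each twice, and fix the two constants of integration by feeding the expressions into the coupled boundary conditions at $t=1$; this is the standard route to a Green's function, the only novelty being that the coupling forces a $2\times2$ linear solve rather than a scalar one.

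First I would integrate. Since $-x''=u$ with $x(0)=0$, writing $c_{1}:=x'(0)$ gives
\begin{align*}
x(t)=c_{1}t-\int_{0}^{t}(t-s)u(s)\,ds,
\end{align*}
and likewise, with $c_{2}:=y'(0)$,
\begin{align*}
y(t)=c_{2}t-\int_{0}^{t}(t-s)v(s)\,ds.
\end{align*}
The conditions at $t=0$ are now automatic, so the only remaining freedom is the pair $(c_{1},c_{2})$. Imposing $x(1)=\alpha y(\xi)$ and $y(1)=\beta x(\eta)$ and substituting the two displays yields the linear system
\begin{align*}
c_{1}-\alpha\xi\,c_{2}&=\int_{0}^{1}(1-s)u(s)\,ds-\alpha\int_{0}^{\xi}(\xi-s)v(s)\,ds,\\
-\beta\eta\,c_{1}+c_{2}&=\int_{0}^{1}(1-s)v(s)\,ds-\beta\int_{0}^{\eta}(\eta-s)u(s)\,ds.
\end{align*}
Its coefficient determinant is $1-\alpha\beta\xi\eta$, nonzero by the standing hypothesis $0<\alpha\beta\xi\eta<1$, so $(c_{1},c_{2})$ is uniquely determined by Cramer's rule.

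Next I would substitute the resulting $c_{1}$ back into the formula for $x$ and regroup the integrals according to whether they are weighted by $u$ or by $v$. The $u$-weighted terms collapse to $\tfrac{t(1-s)}{1-\alpha\beta\xi\eta}$, minus $\tfrac{\alpha\beta\xi\,t(\eta-s)}{1-\alpha\beta\xi\eta}$ on $[0,\eta]$, minus the diffusion term $(t-s)$ on $[0,t]$; splitting on the two thresholds $s\le\eta$ and $s\le t$ reproduces exactly the four branches of $F_{\xi\eta}$ in \eqref{4.1.3}. The $v$-weighted terms collapse to $\tfrac{\alpha\xi\,t(1-s)}{1-\alpha\beta\xi\eta}$ minus $\tfrac{\alpha\,t(\xi-s)}{1-\alpha\beta\xi\eta}$ on $[0,\xi]$, giving the two branches of $G_{\alpha\beta\xi\eta}$ in \eqref{4.1.4}; note that no $(t-s)$ term appears here, since $v$ enters $x$ only through the boundary coupling. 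The representation of $y$ then follows from the identical computation after interchanging the roles $x\leftrightarrow y$, $u\leftrightarrow v$, $\alpha\leftrightarrow\beta$, $\xi\leftrightarrow\eta$, which is precisely the index swap recorded in $F_{\eta\xi}$ and $G_{\beta\alpha\eta\xi}$.

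The main obstacle is the bookkeeping in this final regrouping: one must correctly track which of the thresholds $s\le t$, $s\le\eta$, $s\le\xi$ activates each subtracted term, so that the single closed-form kernel is accurately partitioned into its piecewise cases and the limits of integration on the auxiliary integrals (over $[0,\xi]$ and $[0,\eta]$) are extended to $[0,1]$ under the correct indicator. Everything preceding this, namely the double integration and the $2\times2$ solve, is routine.
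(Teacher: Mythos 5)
Your proposal is correct and follows the standard route for this lemma: integrating $-x''=u$, $-y''=v$ twice with $x(0)=y(0)=0$, solving the resulting $2\times2$ linear system (with determinant $1-\alpha\beta\xi\eta\neq0$) for the slopes $x'(0),y'(0)$ via the coupled conditions at $t=1$, and regrouping the $u$- and $v$-weighted terms into the piecewise kernels $F_{\xi\eta}$ and $G_{\alpha\beta\xi\eta}$, with the representation for $y$ obtained by the symmetry swap. This is essentially the same derivation underlying the paper's Green's functions \eqref{4.1.3}--\eqref{4.1.4}, and your bookkeeping of the thresholds $s\le t$, $s\le\eta$, $s\le\xi$ reproduces each branch correctly.
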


\begin{lem}\label{lem5.4}
The functions $F_{\xi\eta}$ and $G_{\alpha\beta\xi\eta}$ satisfies
\begin{description}
\item[$(i)$]
$F_{\xi\eta}(t,s)\leq\frac{\max\{1,\alpha\beta\xi\}}{1-\alpha\beta\xi\eta}s(1-s),\hspace{0.4cm}t,s\in[0,1]$,
\item[$(ii)$] $G_{\alpha\beta\xi\eta}(t,s)\leq\frac{\alpha}{1-\alpha\beta\xi\eta}s(1-s),\hspace{0.4cm}t,s\in[0,1]$.
\end{description}
\end{lem}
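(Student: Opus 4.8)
The plan is to verify both inequalities branch by branch, since each of $F_{\xi\eta}$ and $G_{\alpha\beta\xi\eta}$ is given by an explicit piecewise formula, just as in the proof of Lemma \ref{tp}. Throughout I would write $D:=1-\alpha\beta\xi\eta>0$ and $c:=\alpha\beta\xi>0$, so that the target constant in $(i)$ is $\max\{1,c\}/D$ and in $(ii)$ is $\alpha/D$. The uniform strategy is: on each branch simplify the numerator algebraically so that a factor of $s$ (and, where possible, $1-s$) is exposed, thereby reducing the desired estimate to an elementary inequality in $t$, which I then settle using the branch constraints ($t\le s$, $t\ge s$, $s\le\eta$, $s\ge\eta$, $s\le\xi$, or $s\ge\xi$) together with $t\le1$.

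For $(i)$ I would dispatch the four branches of $F_{\xi\eta}$ as follows. On the two branches with $t\le s$ the leading term satisfies $t(1-s)/D\le s(1-s)/D$, and the extra term $-c\,t(\eta-s)/D$ (present when $s\le\eta$) is nonpositive, so the bound is immediate. The substantive branches are those with $s\le t$. On the branch $s\ge\eta$, $0\le s\le t$, I would use the identity $t(1-s)-D(t-s)=s(1-t)+\alpha\beta\xi\eta(t-s)$ to rewrite the numerator, then replace $\alpha\beta\xi\eta$ by $c\,s$ via $\eta\le s$, reducing the claim to $(1-t)+c(t-s)\le\max\{1,c\}(1-s)$; this splits cleanly into the cases $c\le1$ and $c>1$ and follows since $s\le t$. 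On the remaining branch ($s\le\eta$, $s\le t$) the numerator collapses to $s\bigl[(1-t)+c(t-\eta)\bigr]$, so I must establish $(1-t)+c(t-\eta)\le\max\{1,c\}(1-s)$; viewing the left side as a monotone function of $t$ (decreasing if $c\le1$, increasing if $c>1$) and evaluating at the appropriate endpoint reduces everything to $s\le\eta$.

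For $(ii)$ the two branches of $G_{\alpha\beta\xi\eta}$ simplify even further: when $s\le\xi$ the numerator becomes $\alpha t\,s(1-\xi)$, so the claim reduces to $t(1-\xi)\le1-s$, which holds because $t\le1$ and $1-\xi\le1-s$; when $s\ge\xi$ one has $G_{\alpha\beta\xi\eta}=\alpha\xi t(1-s)/D$, and the claim reduces to $\xi t\le s$, which holds because $\xi t\le\xi\le s$. Both are one-line verifications.

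The main obstacle is the branch of $F_{\xi\eta}$ on the region $s\le t$, $s\ge\eta$, where the subtracted term $(t-s)$ blocks a naive term-by-term comparison; the decisive algebraic step there is precisely the identity $t(1-s)-D(t-s)=s(1-t)+\alpha\beta\xi\eta(t-s)$, which re-expresses the branch as a manifestly controllable quantity and lets the constraint $\eta\le s$ carry out the rest. Once this identity and the analogous collapse on the $s\le\eta$, $s\le t$ branch are in hand, every remaining step is an elementary monotonicity or endpoint estimate.
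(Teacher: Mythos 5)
Your proof is correct: all four branches of $F_{\xi\eta}$ and both branches of $G_{\alpha\beta\xi\eta}$ check out, the key identity $t(1-s)-(1-\alpha\beta\xi\eta)(t-s)=s(1-t)+\alpha\beta\xi\eta(t-s)$ holds by direct expansion, and the collapse of the numerator to $s\bigl[(1-t)+\alpha\beta\xi(t-\eta)\bigr]$ on the branch $s\leq t$, $s\leq\eta$ is likewise verified. The paper omits any proof of this lemma, and your branch-by-branch verification is precisely the standard computation it implicitly relies on, so there is nothing further to compare.
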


\begin{rem}\label{rem2.5}In view of Lemma \ref{lem5.4}, we have
\begin{align*}F_{\eta\xi}(t,s)&\leq\frac{\max\{1,\alpha\beta\eta\}}{1-\alpha\beta\xi\eta}s(1-s),\hspace{0.4cm}t,s\in[0,1],\\
G_{\beta\alpha\eta\xi}(t,s)&\leq\frac{\beta}{1-\alpha\beta\xi\eta}s(1-s),\hspace{1.05cm}t,s\in[0,1].
\end{align*}
\end{rem}

\vskip 3em

\begin{lem}\label{lem2.6}
The functions $F_{\xi\eta}$ and $G_{\alpha\beta\xi\eta}$ satisfies
\begin{description}
\item[$(i)$]
$F_{\xi\eta}(t,s)\geq\frac{\min\{1,\alpha\beta\xi\}\min\{\eta,1-\eta\}}{1-\alpha\beta\xi\eta}s(1-s),\hspace{0.4cm}(t,s)\in[\eta,1]\times[0,1]$,
\item[$(ii)$] $G_{\alpha\beta\xi\eta}(t,s)\geq\frac{\alpha\xi\,\min\{\xi,1-\xi\}}{1-\alpha\beta\xi\eta}s(1-s),\hspace{0.4cm}(t,s)\in[\xi,1]\times[0,1]$.
\end{description}
\end{lem}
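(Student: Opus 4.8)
The plan is to argue piecewise, following the four (respectively two) branches in the definitions \eqref{4.1.3} and \eqref{4.1.4}, in exact parallel with the upper-bound computation of Lemma \ref{lem5.4}, but now exploiting the restriction $t\in[\eta,1]$ (resp. $t\in[\xi,1]$) to extract a lower bound rather than an upper one. Throughout I abbreviate $D:=1-\alpha\beta\xi\eta>0$, which is positive by the hypothesis $0<\alpha\beta\xi\eta<1$. In each branch I would first collapse the algebraic expression for the Green's function into a single factored form, pull out the factor $s(1-s)$, and then bound the remaining factor below by the stated constant using only the elementary facts $t\ge\eta$ (resp. $t\ge\xi$), $0\le s\le 1$, together with $\min\{\eta,1-\eta\}\le 1-\eta\le 1$ and $\min\{1,\alpha\beta\xi\}\le\alpha\beta\xi$.

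For part $(i)$, since $t\ge\eta$ only three branches are genuinely active: $s\le t,\,s\le\eta$ (so $s\in[0,\eta]$), $s\le t,\,s\ge\eta$ (so $s\in[\eta,t]$), and $t\le s,\,s\ge\eta$ (so $s\in[t,1]$); the remaining branch $t\le s\le\eta$ forces $t=s=\eta$ and is covered by continuity. In the first active branch a direct simplification gives
\begin{equation*}
F_{\xi\eta}(t,s)=\frac{s\big[(1-t)+\alpha\beta\xi(t-\eta)\big]}{D},
\end{equation*}
whose bracket is affine in $t$ with endpoint values $1-\eta$ and $\alpha\beta\xi(1-\eta)$, hence bounded below by $\min\{1,\alpha\beta\xi\}(1-\eta)\ge\min\{1,\alpha\beta\xi\}\min\{\eta,1-\eta\}(1-s)$. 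In the second active branch one obtains
\begin{equation*}
F_{\xi\eta}(t,s)=\frac{s(1-t)+\alpha\beta\xi\eta(t-s)}{D},
\end{equation*}
again affine in $t$ on $[s,1]$; evaluating at the two endpoints $t=s$ (value $s(1-s)/D$) and $t=1$ (value $\alpha\beta\xi\eta(1-s)/D$) and invoking $\alpha\beta\xi\eta\ge\min\{1,\alpha\beta\xi\}\min\{\eta,1-\eta\}$ gives the bound on the whole interval. The last branch is immediate, since there $F_{\xi\eta}(t,s)=t(1-s)/D\ge\eta(1-s)/D$ and $\eta\ge\min\{1,\alpha\beta\xi\}\min\{\eta,1-\eta\}\ge\min\{1,\alpha\beta\xi\}\min\{\eta,1-\eta\}\,s$.

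For part $(ii)$, with $t\ge\xi$, the branch $s\ge\xi$ is immediate: $G_{\alpha\beta\xi\eta}(t,s)=\alpha\xi t(1-s)/D\ge\alpha\xi\,\xi\,(1-s)/D$, and $\xi\ge\min\{\xi,1-\xi\}\ge\min\{\xi,1-\xi\}\,s$ yields the claim. In the branch $s\le\xi$, after cancelling the $\xi$-terms the expression simplifies to
\begin{equation*}
G_{\alpha\beta\xi\eta}(t,s)=\frac{\alpha\,t\,s\,(1-\xi)}{D},
\end{equation*}
and then $t(1-\xi)\ge\xi(1-\xi)\ge\xi\min\{\xi,1-\xi\}(1-s)$, using $\min\{\xi,1-\xi\}\le 1-\xi$ and $1-s\le1$, gives the stated lower bound. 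The \emph{main obstacle}, though a mild one, is the bookkeeping in the second branch of $(i)$: the factored expression is not obviously monotone, so I rely on its affinity in $t$ to reduce the estimate to the two endpoints, and I must verify the constant inequality $\alpha\beta\xi\eta\ge\min\{1,\alpha\beta\xi\}\min\{\eta,1-\eta\}$ (which holds since $\min\{1,\alpha\beta\xi\}\le\alpha\beta\xi$ and $\min\{\eta,1-\eta\}\le\eta$). Everything else is routine algebra. Finally, Remark \ref{rem2.5} supplies the corresponding lower bounds for $F_{\eta\xi}$ and $G_{\beta\alpha\eta\xi}$ by interchanging the roles of $(\alpha,\xi)$ and $(\beta,\eta)$.
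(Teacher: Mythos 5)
Your proof is correct: the casewise factorizations $F_{\xi\eta}(t,s)=s\big[(1-t)+\alpha\beta\xi(t-\eta)\big]/(1-\alpha\beta\xi\eta)$ for $s\le\min\{t,\eta\}$, $F_{\xi\eta}(t,s)=\big[s(1-t)+\alpha\beta\xi\eta(t-s)\big]/(1-\alpha\beta\xi\eta)$ for $\eta\le s\le t$, and $G_{\alpha\beta\xi\eta}(t,s)=\alpha ts(1-\xi)/(1-\alpha\beta\xi\eta)$ for $s\le\xi$ all check out, the affine-in-$t$ endpoint estimates are valid, and the constant comparisons (including $\alpha\beta\xi\eta\ge\min\{1,\alpha\beta\xi\}\min\{\eta,1-\eta\}$ and the degenerate branch $t=s=\eta$ handled by continuity) are all sound. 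The paper states this lemma without proof, and your branch-by-branch simplify-and-bound argument, mirroring the structure of Lemma \ref{lem5.4}, is precisely the standard verification the omitted proof would consist of.
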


\begin{rem}\label{rem2.7}In view of Lemma \ref{lem2.6}, we have
\begin{align*}F_{\eta\xi}(t,s)&\geq\frac{\min\{1,\alpha\beta\eta\}\min\{\xi,1-\xi\}}{1-\alpha\beta\xi\eta}s(1-s),\hspace{0.4cm}(t,s)\in[\xi,1]\times[0,1],\\
G_{\beta\alpha\eta\xi}(t,s)&\geq\frac{\beta\eta\,\min\{\eta,1-\eta\}}{1-\alpha\beta\xi\eta}s(1-s),\hspace{1.9cm}(t,s)\in[\eta,1]\times[0,1].
\end{align*}
\end{rem}

\begin{rem}\label{rem2.8}From Lemma \ref{lem5.4} and Remark \ref{rem2.5}, for $t,s\in[0,1]$, we have
\begin{align*}F_{\xi\eta}(t,s)&\leq\mu s(1-s),F_{\eta\xi}(t,s)\leq\mu s(1-s),\\
G_{\alpha\beta\xi\eta}(t,s)&\leq\mu
s(1-s),G_{\beta\alpha\eta\xi}(t,s)\leq\mu s(1-s),
\end{align*}
where
$\mu=\frac{\max\{1,\alpha,\beta,\alpha\beta\xi,\alpha\beta\eta\}}{1-\alpha\beta\xi\eta}$.
Similarly, from Lemma \ref{lem2.6} and Remark \ref{rem2.7}, for
$(t,s)\in[\max\{\xi,\eta\},1]\times[0,1]$, we have
\begin{align*}
F_{\xi\eta}(t,s)&\geq\nu s(1-s),F_{\eta\xi}(t,s)\geq\nu s(1-s),\\
G_{\alpha\beta\xi\eta}(t,s)&\geq\nu
s(1-s),G_{\beta\alpha\eta\xi}(t,s)\geq\nu s(1-s),
\end{align*} where
$\nu=\frac{\min\{1,\alpha\xi,\alpha\beta\xi,\beta\eta,\alpha\beta\eta\}\min\{\xi,\eta,1-\xi,1-\eta\}}{1-\alpha\beta\xi\eta}$.
\end{rem}


In view of Lemma \ref{lem2.2}, the system of BVPs \eqref{4.0.1} can
be expressed as
\begin{equation}\label{4.1.7}\begin{split}
x(t)&=\int_{0}^{1}F_{\xi\eta}(t,s)f(s,x(s),y(s))ds+\int_{0}^{1}G_{\alpha\beta\xi\eta}(t,s)g(s,x(s),y(s))ds,\hspace{0.4cm}t\in[0,1],\\
y(t)&=\int_{0}^{1}F_{\eta\xi}(t,s)g(s,x(s),y(s))ds+\int_{0}^{1}G_{\beta\alpha\eta\xi}(t,s)f(s,x(s),y(s))ds,\hspace{0.4cm}t\in[0,1].
\end{split}\end{equation}
By a solution of the system of BVPs \eqref{4.0.1}, we mean a solution of the corresponding system of integral equations \eqref{4.1.7}. Define a map $T:P\times P\rightarrow P\times P$ by
\begin{equation}\label{4.1.8}T(x,y)=(A(x,y),B(x,y)),\end{equation}
where the maps $A,B:P\times P\rightarrow P$ are defined by
\begin{equation}\label{4.1.9}\begin{split}
A(x,y)(t)&=\int_{0}^{1}F_{\xi\eta}(t,s)f(s,x(s),y(s))ds+\int_{0}^{1}G_{\alpha\beta\xi\eta}(t,s)g(s,x(s),y(s))ds,\hspace{0.4cm}t\in[0,1],\\
B(x,y)(t)&=\int_{0}^{1}F_{\eta\xi}(t,s)g(s,x(s),y(s))ds+\int_{0}^{1}G_{\beta\alpha\eta\xi}(t,s)f(s,x(s),y(s))ds,\hspace{0.4cm}t\in[0,1].
\end{split}\end{equation}
Clearly, if $(x,y)\in P\times P$ is a fixed point of $T$, then
$(x,y)$ is a solution of the system of BVPs \eqref{4.0.1}.

\vskip 0.5em

Assume that

\begin{description}
\item[$(\mathbf{A}_{9})$] $f(\cdot,1,1),g(\cdot,1,1)\in C((0,1),(0,\infty))$ and satisfy
\begin{align*}a:=\int_{0}^{1}t(1-t)f(t,1,1)dt< +\infty,\hspace{0.4cm}b:=\int_{0}^{1}t(1-t)g(t,1,1)dt<+\infty,\end{align*}
\item[$(\mathbf{A}_{10})$] there exist real constants $\alpha_{i},\beta_{i}$ with $0\leq\alpha_{i}\leq\beta_{i}<1$, $i=1,2$;
$\beta_{1}+\beta_{2}<1$, such that for all $t\in(0,1)$,
$x,y\in[0,\infty)$,
\begin{align*}\begin{split}
c^{\,\beta_{1}}&f(t,x,y)\leq f(t,c\,x,y)\leq c^{\alpha_{1}}f(t,x,y),\hspace{0.4cm}0<c\leq1,\\
c^{\alpha_{1}}&f(t,x,y)\leq f(t,c\,x,y)\leq c^{\,\beta_{1}}f(t,x,y),\hspace{0.4cm}c\geq1,\\
c^{\,\beta_{2}}&f(t,x,y)\leq f(t,x,c\,y)\leq c^{\alpha_{2}}f(t,x,y),\hspace{0.4cm}0<c\leq1,\\
c^{\alpha_{2}}&f(t,x,y)\leq f(t,x,c\,y)\leq
c^{\,\beta_{2}}f(t,x,y),\hspace{0.4cm}c\geq1.
\end{split}\end{align*}
\item[$(\mathbf{A}_{11})$] there exist real constants $\gamma_{i},\rho_{i}$ with
$0\leq\gamma_{i}\leq\rho_{i}<1$, $i=1,2$; $\rho_{1}+\rho_{2}<1$,
such that for all $t\in(0,1)$, $x,y\in[0,\infty)$,
\begin{align*}\begin{split}
c^{\,\rho_{1}}&g(t,x,y)\leq g(t,c\,x,y)\leq c^{\gamma_{1}}g(t,x,y),\hspace{0.4cm}0<c\leq1,\\
c^{\gamma_{1}}&g(t,x,y)\leq g(t,c\,x,y)\leq c^{\,\rho_{1}}g(t,x,y),\hspace{0.4cm}c\geq1,\\
c^{\,\rho_{2}}&g(t,x,y)\leq g(t,x,c\,y)\leq c^{\gamma_{2}}g(t,x,y),\hspace{0.4cm}0<c\leq1,\\
c^{\gamma_{2}}&g(t,x,y)\leq g(t,x,c\,y)\leq
c^{\,\rho_{2}}g(t,x,y),\hspace{0.4cm}c\geq1.
\end{split}\end{align*}
\end{description}

\begin{lem}
Assume that $(\mathbf{A}_{9})-(\mathbf{A}_{11})$ hold. Then the
map $T:\overline{\mathcal{O}}_{r}\cap(P\times P)\rightarrow
P\times P$ is completely continuous.
\end{lem}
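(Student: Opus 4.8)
The plan is to follow the template of the two preceding complete-continuity lemmas: split ``completely continuous'' into compactness and continuity, first checking that $T$ maps into $P\times P$, then establishing uniform boundedness and equicontinuity of the image so that Theorem \ref{arzela} gives relative compactness, and finally proving continuity by a dominated-convergence argument. The cone-preservation step is quick. Since $f,g\geq0$, each of the four operators in \eqref{4.1.9} produces a nonnegative function, so it suffices to recover the inequality $\min_{t\in[\max\{\xi,\eta\},1]}A(x,y)(t)\geq\gamma\|A(x,y)\|$. This is exactly the ratio built into Remark \ref{rem2.8}: the upper bounds $F_{\xi\eta},G_{\alpha\beta\xi\eta}\leq\mu s(1-s)$ yield $\|A(x,y)\|\leq\mu\int_{0}^{1}s(1-s)[f+g]\,ds$, while the lower bounds $F_{\xi\eta},G_{\alpha\beta\xi\eta}\geq\nu s(1-s)$ on $[\max\{\xi,\eta\},1]\times[0,1]$ give $A(x,y)(t)\geq\nu\int_{0}^{1}s(1-s)[f+g]\,ds$ for such $t$; since $\gamma=\nu/\mu$ the cone condition follows, and the same reasoning applied through Remarks \ref{rem2.5} and \ref{rem2.7} handles $B$.

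The quantitative heart of the proof is uniform boundedness, where the scaling hypotheses $(\mathbf{A}_{10})$--$(\mathbf{A}_{11})$ do the work. For $(x,y)\in\overline{\mathcal{O}}_{r}\cap(P\times P)$ we have $0\leq x(s),y(s)\leq r$, and I would bound $f(s,x(s),y(s))$ argument-by-argument: using the $0<c\leq1$ branch of $(\mathbf{A}_{10})$ with $c=x(s)$ when $x(s)\leq1$ (so the factor $c^{\alpha_{1}}\leq1$ since $\alpha_{1}\geq0$), the $c\geq1$ branch when $x(s)>1$ (giving the factor $x(s)^{\beta_{1}}\leq r^{\beta_{1}}$), and the analogous steps in the second argument, one obtains a constant $C_{r}=\max\{1,r^{\beta_{1}}\}\max\{1,r^{\beta_{2}}\}$ with $f(s,x(s),y(s))\leq C_{r}f(s,1,1)$ for every $s\in(0,1)$; an identical estimate via $(\mathbf{A}_{11})$ bounds $g(s,x(s),y(s))$ by a multiple of $g(s,1,1)$. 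Combining this with $F_{\xi\eta},G_{\alpha\beta\xi\eta}\leq\mu s(1-s)$ and the finiteness of $a,b$ in $(\mathbf{A}_{9})$ gives $\|A(x,y)\|\leq\mu C_{r}(a+b)$, and similarly for $B$, so the image is uniformly bounded.

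Equicontinuity is where I expect the main obstacle. For $t_{1},t_{2}\in[0,1]$ I would estimate $|A(x,y)(t_{1})-A(x,y)(t_{2})|$ by $\int_{0}^{1}|F_{\xi\eta}(t_{1},s)-F_{\xi\eta}(t_{2},s)|\,f\,ds+\int_{0}^{1}|G_{\alpha\beta\xi\eta}(t_{1},s)-G_{\alpha\beta\xi\eta}(t_{2},s)|\,g\,ds$, then insert the bounds $f\leq C_{r}f(\cdot,1,1)$, $g\leq C_{r}g(\cdot,1,1)$ together with $|F_{\xi\eta}(t_{1},s)-F_{\xi\eta}(t_{2},s)|\leq2\mu s(1-s)$. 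The difficulty is that $f(\cdot,1,1)$ and $g(\cdot,1,1)$ may blow up as $s\to0,1$, so one cannot simply factor out a kernel modulus of continuity. I would instead run an $\varepsilon/3$ argument: the absolute continuity of the finite integrals $\int_{0}^{1}s(1-s)f(s,1,1)\,ds$ and $\int_{0}^{1}s(1-s)g(s,1,1)\,ds$ forces the contribution of $s\in(0,\delta)\cup(1-\delta,1)$ to be uniformly small in $(x,y)$, while on the compact middle $[\delta,1-\delta]$ the kernels $F_{\xi\eta},G_{\alpha\beta\xi\eta}$ are uniformly continuous and supply a uniform modulus of continuity. The same reasoning applies to $B$, and Theorem \ref{arzela} then gives relative compactness, hence compactness of $T$.

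Continuity is then routine. If $(x_{m},y_{m})\to(x,y)$ in $P\times P$, the sequence is bounded, so the dominating estimate $|f(s,x_{m}(s),y_{m}(s))-f(s,x(s),y(s))|\leq 2C\,f(s,1,1)$ holds with $C$ uniform in $m$, the integrand tends to $0$ pointwise by continuity of $f$ together with the uniform convergence $\|x_{m}-x\|,\|y_{m}-y\|\to0$, and $s(1-s)f(s,1,1)$ is integrable by $(\mathbf{A}_{9})$. The Lebesgue dominated convergence theorem therefore gives $\|A(x_{m},y_{m})-A(x,y)\|\to0$, and likewise $\|B(x_{m},y_{m})-B(x,y)\|\to0$, whence $\|T(x_{m},y_{m})-T(x,y)\|\to0$. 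Combining compactness with continuity shows $T$ is completely continuous.
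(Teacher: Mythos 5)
Your proof is correct, and it is essentially the standard template the paper itself uses for its other complete-continuity lemmas (Lemma \ref{lem2.5} and the unnumbered lemma preceding Theorem \ref{tm232}); the source omits the proof of this particular lemma, so those siblings are the natural benchmark. Your cone-invariance step via Remark \ref{rem2.8} together with $\gamma=\nu/\mu$, the uniform bound $f(s,x(s),y(s))\leq\max\{1,r^{\beta_{1}}\}\max\{1,r^{\beta_{2}}\}f(s,1,1)$ extracted branch-by-branch from $(\mathbf{A}_{10})$ (with the analogous $(\mathbf{A}_{11})$ estimate for $g$), and the dominated-convergence continuity argument all match how the paper deploys $(\mathbf{A}_{9})$--$(\mathbf{A}_{11})$ elsewhere. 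The one genuine divergence is equicontinuity, and there your version is better: the paper's analogous proofs dismiss this step by citing only the uniform continuity of the Green's function, which is not by itself sufficient when the weights $f(\cdot,1,1)$, $g(\cdot,1,1)$ blow up at $t=0,1$; your $\varepsilon/3$ split --- absolute continuity of $\int_{0}^{1}s(1-s)f(s,1,1)\,ds$ to make the endpoint tails uniformly small, uniform continuity of the kernels on the compact middle $[\delta,1-\delta]$ for the rest --- is the argument actually required. Two trifles, neither affecting the proof: when $x(s)=0$ the scaling inequality of $(\mathbf{A}_{10})$, stated only for $c>0$, must be combined with continuity of $f$ by letting $c\to0^{+}$; and the constant dominating $g$ involves $\rho_{1},\rho_{2}$ rather than $\beta_{1},\beta_{2}$.
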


\begin{thm}\label{thm-coupl-four-point}
Assume that $(\mathbf{A}_{9})-(\mathbf{A}_{11})$ hold. Then the
system of BVPs \eqref{4.0.1} has a positive solution.
\end{thm}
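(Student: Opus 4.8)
The plan is to obtain a positive solution of \eqref{4.0.1} as a fixed point of the completely continuous operator $T=(A,B)$ defined in \eqref{4.1.8}--\eqref{4.1.9} on the cone $P\times P$, and to produce that fixed point via the Guo--Krasnosel'skii theorem (Theorem \ref{thmguo}) applied to two concentric balls $\mathcal{O}_{r}\subset\mathcal{O}_{R}$. Complete continuity of $T$ on $\overline{\mathcal{O}}_{r}\cap(P\times P)$ is already supplied by the preceding lemma, so the whole task reduces to two boundary estimates: a norm--decreasing inequality $\|T(x,y)\|\le\|(x,y)\|$ on $\partial\mathcal{O}_{R}\cap(P\times P)$ for $R$ large, and a norm--increasing inequality $\|T(x,y)\|\ge\|(x,y)\|$ on $\partial\mathcal{O}_{r}\cap(P\times P)$ for $r$ small. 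Theorem \ref{thmguo} then delivers a fixed point in the annulus, and its strict positivity will be read off at the end from the positivity of the Green's functions on $(0,1)\times(0,1)$ together with $(\mathbf{A}_{9})$.

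For the outer estimate I would invoke the uniform upper bounds $F_{\xi\eta},F_{\eta\xi},G_{\alpha\beta\xi\eta},G_{\beta\alpha\eta\xi}\le\mu\,s(1-s)$ collected in Lemma \ref{lem5.4}, Remark \ref{rem2.5} and Remark \ref{rem2.8}. For $(x,y)\in P\times P$ one has $0\le x(s)\le\|x\|$ and $0\le y(s)\le\|y\|$, so monotonicity of $f$ and $g$ in $(\mathbf{A}_{10})$--$(\mathbf{A}_{11})$ replaces the integrands by $f(s,\|x\|,\|y\|)$ and $g(s,\|x\|,\|y\|)$, and the scaling relations (used in the regime $c\ge1$, after distinguishing whether each of $\|x\|,\|y\|$ is $\le1$ or $\ge1$ --- a routine bookkeeping step) give $f(s,\|x\|,\|y\|)\le\|x\|^{\beta_{1}}\|y\|^{\beta_{2}}f(s,1,1)$ and $g(s,\|x\|,\|y\|)\le\|x\|^{\rho_{1}}\|y\|^{\rho_{2}}g(s,1,1)$. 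Integrating against $\mu\,s(1-s)$ and using the finiteness of $a,b$ from $(\mathbf{A}_{9})$ yields, on $\partial\mathcal{O}_{R}$,
\begin{align*}
\|T(x,y)\|\le 2\mu a\,\|x\|^{\beta_{1}}\|y\|^{\beta_{2}}+2\mu b\,\|x\|^{\rho_{1}}\|y\|^{\rho_{2}}\le 2\mu a\,R^{\beta_{1}+\beta_{2}}+2\mu b\,R^{\rho_{1}+\rho_{2}}.
\end{align*}
Because $\beta_{1}+\beta_{2}<1$ and $\rho_{1}+\rho_{2}<1$, dividing by $R$ shows the right-hand side is $\le R$ once $R$ is large; this is precisely where the subhomogeneity hypotheses are used.

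For the inner estimate I would exploit the cone: every $x\in P$ satisfies $x(s)\ge\gamma\|x\|$ on $[\max\{\xi,\eta\},1]$, and on that subinterval the lower bounds of Lemma \ref{lem2.6}, Remark \ref{rem2.7} and Remark \ref{rem2.8} give $F_{\xi\eta},\dots\ge\nu\,s(1-s)$. Restricting each integral to $[\max\{\xi,\eta\},1]$, bounding $f(s,x(s),y(s))\ge f(s,\gamma\|x\|,\gamma\|y\|)$, and applying the scaling relations in the regime $c\le1$ (legitimate for $r$ small, so $\gamma\|x\|,\gamma\|y\|\le1$) produces lower contributions of the shape $\nu\gamma^{\beta_{1}+\beta_{2}}\|x\|^{\beta_{1}}\|y\|^{\beta_{2}}\int_{\max\{\xi,\eta\}}^{1}s(1-s)f(s,1,1)\,ds$ and the analogous one for $g$. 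Since all exponents are $<1$, these terms dominate $\|x\|+\|y\|=r$ for $r$ small, which is the mechanism giving $\|T(x,y)\|\ge\|(x,y)\|$.

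The hard part will be this inner estimate, specifically the coupling. Because $f$ and $g$ depend on \emph{both} $x$ and $y$, the lower bounds emerge as mixed monomials $\|x\|^{\beta_{1}}\|y\|^{\beta_{2}}$ and $\|x\|^{\rho_{1}}\|y\|^{\rho_{2}}$, which degenerate on the portion of $\partial\mathcal{O}_{r}$ where one coordinate vanishes; controlling them uniformly over the whole sphere is the delicate point, and it is here that one must combine the contributions of $A$ and $B$ and apply the cone inequality in both slots so that a genuine factor $\|x\|+\|y\|=r$ with a sub-unit power can be extracted. Once both boundary inequalities are established, Theorem \ref{thmguo} provides a fixed point $(x,y)\in(\overline{\mathcal{O}}_{R}\setminus\mathcal{O}_{r})\cap(P\times P)$; finally, since the kernels $F_{\xi\eta},G_{\alpha\beta\xi\eta},F_{\eta\xi},G_{\beta\alpha\eta\xi}$ are positive on $(0,1)\times(0,1)$ and $f(\cdot,1,1),g(\cdot,1,1)>0$, the integral representation \eqref{4.1.7} forces $x>0$ and $y>0$ on $(0,1]$, which completes the argument.
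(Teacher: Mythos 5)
Your outer estimate is correct, and your skeleton (the operator $T=(A,B)$ of \eqref{4.1.8}--\eqref{4.1.9} on a cone, two boundary estimates, then Theorem \ref{thmguo}) is exactly the route the monograph indicates for this theorem. But the inner estimate is not merely ``delicate'': as you have set it up it is false, and no bookkeeping can repair it. The zero function belongs to $P$, so the sphere $\partial\mathcal{O}_{r}\cap(P\times P)$ contains pairs $(x,0)$, for instance $x\equiv r$. Now take $f(t,x,y)=g(t,x,y)=x^{1/4}y^{1/4}$: these satisfy $(\mathbf{A}_{9})-(\mathbf{A}_{11})$ (with $\alpha_{i}=\beta_{i}=\gamma_{i}=\rho_{i}=\frac14$, $\beta_{1}+\beta_{2}=\rho_{1}+\rho_{2}=\frac12<1$, $a=b=\frac16$), yet $f(s,x(s),0)=g(s,x(s),0)=0$, hence $T(x,0)=(0,0)$ and $\|T(x,0)\|=0<r=\|(x,0)\|$. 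So for admissible nonlinearities the inequality $\|T(x,y)\|\geq\|(x,y)\|$ fails at points of $\partial\mathcal{O}_{r}\cap(P\times P)$ for \emph{every} $r>0$, and Theorem \ref{thmguo} cannot be applied with the sets $\mathcal{O}_{r}\subset\mathcal{O}_{R}$. The mixed monomials $\|x\|^{\beta_{1}}\|y\|^{\beta_{2}}$, $\|x\|^{\rho_{1}}\|y\|^{\rho_{2}}$ do not just degenerate on part of the sphere; at these points the operator itself vanishes, so no lower bound of the form $c\,r^{\sigma}$ exists. This is a genuine gap in the method (not in the theorem).

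The coupling you point to does rescue the argument, but only if you let it change either the cone or the topological tool. (i) Since each of $A(x,y)$ and $B(x,y)$ contains \emph{both} the $f$- and the $g$-integral, with all four kernels pinched between $\nu s(1-s)$ and $\mu s(1-s)$ (Remark \ref{rem2.8}), $T$ maps every nonnegative pair into the smaller cone
\begin{align*}
\tilde{P}=\Big\{(x,y):x\geq0,\ y\geq0,\ \min_{t\in[\max\{\xi,\eta\},1]}x(t)\geq\frac{\gamma}{2}\|(x,y)\|,\ \min_{t\in[\max\{\xi,\eta\},1]}y(t)\geq\frac{\gamma}{2}\|(x,y)\|\Big\},
\end{align*}
whose only element with a vanishing component is $(0,0)$. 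On $\partial\mathcal{O}_{r}\cap\tilde{P}$ one has $x(s),y(s)\geq\frac{\gamma}{2}r$ on $[\max\{\xi,\eta\},1]$, so the scaling relations give $\|T(x,y)\|\geq c\,r^{\beta_{1}+\beta_{2}}\geq r$ for $r$ small, and your Guo--Krasnosel'skii argument then runs verbatim on $\tilde{P}$. (ii) Alternatively, keep $P\times P$ but replace the inner inequality by Lemma \ref{lemindex0} with $v=(1,1)$: if $(x,y)=T(x,y)+\delta(1,1)$ with $\delta>0$ and $(x,y)\in\partial\mathcal{O}_{r}\cap(P\times P)$, then $x\geq\delta>0$ and $y\geq\delta>0$, so $\|x\|,\|y\|>0$; moreover each of $\|x\|,\|y\|$ dominates $A_{*}\|x\|^{\beta_{1}}\|y\|^{\beta_{2}}$, where $A_{*}=\nu\gamma^{\beta_{1}+\beta_{2}}\int_{\max\{\xi,\eta\}}^{1}s(1-s)f(s,1,1)\,ds>0$ by $(\mathbf{A}_{9})$, because $A$ and $B$ each contain the $f$-term with kernel $\geq\nu s(1-s)$. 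Writing $m=\min\{\|x\|,\|y\|\}$ this gives $m\geq A_{*}m^{\beta_{1}+\beta_{2}}$, i.e.\ $m\geq A_{*}^{1/(1-\beta_{1}-\beta_{2})}$, contradicting $\|x\|+\|y\|=r$ once $r$ is small. Hence the index is $0$ on $\mathcal{O}_{r}$ and, with your index-$1$ outer estimate via Lemma \ref{lemindexone}, additivity produces a fixed point in the annulus; noting that a fixed point of norm $\geq r$ must have both components nontrivial, your final positivity argument then applies unchanged.
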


\begin{thm}\label{}
Assume that $(\mathbf{A}_{6})-(\mathbf{A}_{8})$ hold. Then the
system of BVPs \eqref{4.0.1} with singularity at $t=0$, $t=1$, $x=0$ and/or $y=0$ has a positive solution.
\end{thm}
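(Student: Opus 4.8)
The plan is to run the regularization-and-sequential-limit scheme of Theorem~\ref{tm232} on the coupled integral formulation \eqref{4.1.7}, using the four Green's functions $F_{\xi\eta},G_{\alpha\beta\xi\eta},F_{\eta\xi},G_{\beta\alpha\eta\xi}$ of Lemma~\ref{lem2.2} and the cone $P$ of this section in place of the single kernel $H$ and the cone $K$ of Section~\ref{existencetwo}. For each $n\in\{1,2,\dots\}$ I would first remove the singularity at $x=0,y=0$ by replacing $x(s),y(s)$ in \eqref{4.1.9} with $\max\{x(s)+\tfrac1n,\tfrac1n\}$ and $\max\{y(s)+\tfrac1n,\tfrac1n\}$, obtaining a regularized operator $T_n=(A_n,B_n)$. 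Because the truncated arguments are bounded below by $\tfrac1n>0$, $f$ and $g$ are continuous there, so $T_n$ is well defined, and the positivity together with the lower estimates of Lemma~\ref{lem2.6} and Remark~\ref{rem2.7} show $T_n(P\times P)\subset P\times P$, exactly as for $T$ in \eqref{4.1.8}.

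I would then verify that $T_n$ is completely continuous on $\overline{\mathcal{O}}_r\cap(P\times P)$, following the complete-continuity lemmas already established in Section~\ref{existencetwo} and in this section. Uniform boundedness comes from the upper estimates $F_{\xi\eta},G_{\alpha\beta\xi\eta},F_{\eta\xi},G_{\beta\alpha\eta\xi}\le\mu\,s(1-s)$ of Remark~\ref{rem2.8}, the scaling inequalities $(\mathbf{A}_7)$ and $(\mathbf{A}_8)$ used to pull the factors $s(1-s)$ and the cone norms out of $f$ and $g$, and the integrability $(\mathbf{A}_6)$; equicontinuity follows because the four kernels are uniformly continuous on $[0,1]\times[0,1]$ while the weighted integrals of $f$ and $g$ stay uniformly bounded, so Theorem~\ref{arzela} gives relative compactness. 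Continuity of $T_n$ is obtained, as before, from the Lebesgue dominated convergence theorem.

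Next I would apply the Guo--Krasnosel'skii theorem (Theorem~\ref{thmguo}) on the product cone $P\times P$ over a rectangular annulus, mirroring \eqref{eq3.1}--\eqref{eq3.8} componentwise. For large outer radii $R_1,R_2\ge1$, the monotonicity $(\mathbf{A}_6)$ (which lets me replace the arguments of $f,g$ by their cone lower bounds), the upper estimates of Remark~\ref{rem2.8}, and the growth exponents $\beta_i,\rho_i$ yield $\|A_n(x,y)\|\le\|x\|$ and $\|B_n(x,y)\|\le\|y\|$, hence $\|T_n(x,y)\|\le\|(x,y)\|$ on the outer boundary, as in \eqref{eq3.2}--\eqref{eq3.4}. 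For small inner radii $r_n,s_n\to0$ chosen as in \eqref{k2316}, the lower estimates of Lemma~\ref{lem2.6}, Remark~\ref{rem2.7} and Remark~\ref{rem2.8} together with the scaling in the opposite direction give $\|T_n(x,y)\|\ge\|(x,y)\|$ on the inner boundary, as in \eqref{eq3.6}--\eqref{eq3.8}. Theorem~\ref{thmguo} then produces a fixed point $(x_n,y_n)$ with $r_n\le\|x_n\|\le R_1$ and $s_n\le\|y_n\|\le R_2$, which solves the regularized coupled system.

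The final and hardest step is the limit $n\to\infty$. The a priori bounds make $\{(x_n,y_n)\}$ uniformly bounded, and since the kernels are uniformly continuous while the weighted integrals of $f,g$ remain uniformly bounded, $\{(x_n,y_n)\}$ is equicontinuous, so Theorem~\ref{arzela} yields a subsequence $(x_{n_k},y_{n_k})\to(x,y)$ uniformly. The obstacle is the singularity of $f,g$ at $x=0,y=0$: to identify $(x,y)$ as a solution of \eqref{4.1.7} I must pass the limit through the four integrals. I would first produce a lower bound $x_n(t),y_n(t)\ge c_*\,t(1-t)$ on $[0,1]$ with $c_*>0$ independent of $n$. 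Indeed, $-x_n''=f(\cdot,\dots)\ge0$ with $x_n(0)=0$ forces $x_n$ to be concave, and bounding the integrand below by $f(s,R_1+1,R_2+1)$ (monotonicity $(\mathbf{A}_6)$ with $\|x_n\|\le R_1$, $\|y_n\|\le R_2$) together with the lower estimate of Lemma~\ref{lem2.6} makes $x_n$ bounded below by a positive constant on $[\max\{\xi,\eta\},1]$; concavity and $x_n(0)=0$ then propagate this to $x_n(t)\ge c_*\,t(1-t)$ on all of $[0,1]$, and likewise for $y_n$. This uniform lower bound makes $x,y>0$ on $(0,1]$ in the limit, gives pointwise convergence of the integrands by continuity of $f,g$, and---via monotonicity and the scaling $(\mathbf{A}_7)$--$(\mathbf{A}_8)$---provides the integrable majorant $\mu\,s(1-s)f\big(s,c_*s(1-s),c_*s(1-s)\big)$, whose finiteness is precisely $(\mathbf{A}_6)$. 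Dominated convergence then passes the limit through each integral in \eqref{4.1.7}, the boundary conditions survive the limit, and positivity of the kernels on $(0,1)\times(0,1)$ with $f,g>0$ confirms that $(x,y)$ is a positive solution.
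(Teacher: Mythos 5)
Your overall architecture --- regularize the arguments by $\max\{\,\cdot+\tfrac1n,\tfrac1n\}$, obtain a fixed point of $T_n$ on a product cone via Theorem \ref{thmguo}, then pass to the limit with Theorem \ref{arzela} and dominated convergence --- is exactly the route the paper intends: it is Theorem \ref{tm232} transplanted to the kernels of Lemma \ref{lem2.2}. However, there is a genuine gap in your Guo--Krasnosel'skii step: the outer-boundary estimate $\|T_n(x,y)\|\le\|(x,y)\|$ cannot be made uniform in $n$ in the cone $P$ of Section \ref{sec-couple-four-point}. That cone only forces $x(t)\ge\gamma\|x\|$ on $[\max\{\xi,\eta\},1]$; an element of $\partial(\Omega_{R_1}\times\Omega_{R_2})\cap(P\times P)$ may vanish identically on, say, $[0,\max\{\xi,\eta\}/2]$, and there your ``cone lower bound'' for the arguments of $f,g$ is just $\tfrac1n$. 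Since $f,g$ are singular at $x=0$, $y=0$, assumption $(\mathbf{A}_{7})$ only gives the bound $n^{-\alpha_1-\alpha_2}f(s,1,1)$ for the integrand on that subinterval, and this blows up as $n\to\infty$; hence the admissible outer radii $R_1,R_2$ must grow with $n$. This destroys the two later steps that you explicitly base on $n$-independence of $R_1,R_2$: the uniform bound needed for Theorem \ref{arzela}, and the uniform lower bound $x_n(t),y_n(t)\ge c_*t(1-t)$ (your $c_*$ is built from $f(s,R_1+1,R_2+1)$, so it degenerates if $R_i=R_i(n)\to\infty$). The corresponding step in Theorem \ref{tm232} works only because the cone $K$ of Section \ref{existencetwo} carries the whole-interval weight $x(t)\ge t(1-t)\gamma\|x\|$, which is precisely what feeds the majorants $a,b$ of $(\mathbf{A}_{6})$.

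Two ways to close the gap, both within the paper's toolkit. (i) Prove the four-point analogue of part (iii) of Lemma \ref{tp}, namely $F_{\xi\eta}(t,s)\ge\nu\, t(1-t)s(1-s)$ and $G_{\alpha\beta\xi\eta}(t,s)\ge\nu\, t(1-t)s(1-s)$ on $[0,1]\times[0,1]$ (and the symmetric bounds for $F_{\eta\xi}$, $G_{\beta\alpha\eta\xi}$); this is elementary because both kernels are proportional to $t$ for $t\le s$ and concave and positive in $t$ otherwise. Then run your entire argument in the cone $\{x\in C[0,1]:x(t)\ge\gamma\, t(1-t)\|x\|\text{ for }t\in[0,1]\}$ with $\gamma=\nu/\mu$, and every estimate of Theorem \ref{tm232}, including the $n$-independent choice \eqref{eq3.1} of $R_1,R_2$, carries over verbatim. (ii) Alternatively, keep $P$ and allow $n$-dependent radii in Theorem \ref{thmguo}, but recover uniformity a posteriori: any fixed point satisfies $-x_n''=f(\cdot,\dots)\ge0$ and $x_n(0)=0$, hence is concave, and $x_n(1)\ge\gamma\|x_n\|$ from $P$, so $x_n(t)\ge\gamma\, t(1-t)\|x_n\|$ pointwise on $[0,1]$; inserting this into the fixed point identity and using $(\mathbf{A}_{6})$--$(\mathbf{A}_{8})$ (monotonicity when the rescaled arguments exceed $1$, the $0<c\le1$ scaling otherwise) yields $\|x_n\|+\|y_n\|\le C$ with $C$ independent of $n$, after which your limit argument, including the bound $c_*$, goes through. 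As written, your proposal asserts the $n$-uniform outer estimate ``as in \eqref{eq3.2}--\eqref{eq3.4}'' without either ingredient, and for the cone $P$ that assertion is false.
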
 
\chapter[Singular Systems of Second-Order ODEs with Two-Point BCs]{Singular Systems of Second-Order Two-Point Boundary Value Problems}\label{ch3}

The existence of positive solutions for a nonlinear second-order two-point BVPs has received much attention; see for example the case of regular nonlinearities, \cite{lherbe1,lherbe2,hendersonwang,yli}, and the case of singular nonlinearities, see \cite{rpagarwal,AR,chu}. However, these results are for the case when nonlinear functions are independent of the first derivative. The BVPs involving the first derivative with regular nonlinear functions can be seen in \cite{guoge,johnny,gzhang}.

\vskip 0.5em

In \cite[Section~2.4]{ao}, Agarwal and O'Regan studied the existence of at least one positive solution for the following BVP with $a=1$ and $b=0$, \begin{equation}\begin{split}\label{3.0.1}
-y''(t)&=q(t)f(t,y(t),y'(t)),\hspace{0.4cm}t\in(0,1),\\
a\,y(0)&-b\,y'(0)=y'(1)=0,
\end{split}\end{equation}
where $f:[0,1]\times[0,\infty)\times(0,\infty)\rightarrow[0,\infty)$ is continuous and is allowed to be singular at $y'=0$; $q\in C(0,1)$ and $q>0$ on $(0,1)$. The existence of multiple positive solutions for second-order BVPs has also invited attention of many authors,
\cite{johnny,jiangxu,khan2,khan,zliu,thompson,liuqiu,zhang,lishen,zhangjiang}.
B. Yan et al. \cite{YRA} have studied the existence of multiple positive solutions of the BVP \eqref{3.0.1} with $a=1$ and $b=0$. Further, they generalized these results and established the existence of at least two positive solutions for BVP \eqref{3.0.1} with $a>0$ and $b>0$, \cite{YRA1}.

\vskip 0.5em

In Sections \ref{existence-one}, \ref{multiplicity-one}, \ref{existence-two} and \ref{multiplicity-two}, we study the existence and multiplicity of positive solutions to the following coupled systems of SBVPs
\begin{equation}\label{3.0.3}\begin{split}
-x''(t)&=p(t)f(t,y(t),x'(t)),\hspace{0.4cm}t\in(0,1),\\
-y''(t)&=q(t)g(t,x(t),y'(t)),\hspace{0.4cm}t\in(0,1),\\
x(0)&=y(0)=x'(1)=y'(1)=0,
\end{split}\end{equation}
and
\begin{equation}\label{3.0.4}\begin{split}
-x''(t)&=p(t)f(t,y(t),x'(t)),\hspace{0.4cm}t\in(0,1),\\
-y''(t)&=q(t)g(t,x(t),y'(t)),\hspace{0.4cm}t\in(0,1),\\
a_{1}&x(0)-b_{1}x'(0)=x'(1)=0,\\
a_{2}&y(0)-b_{2}y'(0)=y'(1)=0,
\end{split}\end{equation}
where the functions $f,g:[0,1]\times [0,\infty)\times (0,\infty)\rightarrow [0,\infty)$ are continuous and are allowed to be singular at $x'=0$, $y'=0$. Moreover, $p,q\in C(0,1)$ and positive on $(0,1)$, and the real constants $a_{i}\ (i=1,2)>0$, $b_{i}\ (i=1,2)>0$. By singularity of $f$ and $g$, we mean that the functions $f(t,x,y)$ and $g(t,x,y)$ are allowed to be unbounded at $y=0$.

\vskip 0.5em

Agarwal and O'Regan \cite[Section~2.10]{ao} have developed the
method of upper and lower solutions for the SBVP
\begin{equation}\begin{split}\label{1.1}
-y''(t)&=q(t)f(t,y(t),y'(t)),\hspace{0.4cm}t\in(0,1),\\
y(0)&=y(1)=0,
\end{split}\end{equation}
where $f:[0,1]\times(0,\infty)\times\mathbb{R}\rightarrow\mathbb{R}$
is continuous and singular at $y=0$ and the function $q\in C(0,1)$
is positive on $(0,1)$. Further, they have presented the method of
upper and lower solutions for more general problems in
\cite{oregan}. 

\vskip 0.5em

In Section \ref{finite}, we study the existence of $C^{1}$-positive
solutions for the following system of SBVPs
\begin{equation}\label{1.6}\begin{split}
-&x''(t)=p_{1}(t)f_{1}(t,x(t),y(t),x'(t)),\hspace{0.4cm}t\in(0,1),\\
-&y''(t)=p_{2}(t)f_{2}(t,x(t),y(t),y'(t)),\hspace{0.4cm}t\in(0,1),\\
&x(0)=x(1)=y(0)=y(1)=0,
\end{split}\end{equation}
where
$f_{1},f_{2}:[0,1]\times(0,\infty)\times(0,\infty)\times\mathbb{R}\rightarrow\mathbb{R}$
are continuous. Moreover, $f_{1}$, $f_{2}$ are allowed to change
sign and may be singular at $x=0$, $y=0$. Also, $p_{1},p_{2}\in
C(0,1)$ are positive on $(0,1)$.

\vskip 0.5em

Further in Section \ref{sec-couple-two-point}, we study more general
coupled system of ODEs and prove the
existence of $C^{1}$-positive solution to the following system of
ODEs subject to two-point coupled BCs
\begin{equation}\label{4.0.2}\begin{split}
-&x''(t)=p(t)f(t,x(t),y(t),x'(t)),\hspace{0.4cm}t\in(0,1),\\
-&y''(t)=q(t)g(t,x(t),y(t),y'(t)),\hspace{0.45cm}t\in(0,1),\\
&a_{1}y(0)-b_{1}x'(0)=0,\,y'(1)=0,\\
&a_{2}x(0)-b_{2}y'(0)=0,\,x'(1)=0,
\end{split}\end{equation}
where the nonlinearities $f,g:[0,1]\times
[0,\infty)\times[0,\infty)\times(0,\infty)\rightarrow [0,\infty)$
are continuous and are allowed to be singular at $x'=0$, $y'=0$.
Moreover, $p,q\in C(0,1)$, $p>0$ and $q>0$ on $(0,1)$.


\section{Existence of $C^{1}$-positive solutions}\label{existence-one}

In this section, we establish sufficient conditions for the existence of $C^{1}$-positive solutions to the system of SBVPs \eqref{3.0.3}. By a $C^{1}$-positive solution to the system of SBVPs \eqref{3.0.3}, we mean that $(x,y)\in(C^{1}[0,1]\cap C^{2}(0,1))\times(C^{1}[0,1]\cap
C^{2}(0,1))$ satisfying \eqref{3.0.3}, $x>0$ and $y>0$ on $(0,1]$, $x'>0$ and $y'>0$ on $[0,1)$. For each $x\in C[0,1]\cap C^{1}(0,1]$, we write
$\|x\|=\max_{t\in[0,1]}|x(t)|$ and $\|x\|_{1}=\sup_{t\in(0,1]}t|x'(t)|$. Moreover, for each $x\in\mathcal{E}:=\{x\in C[0,1]\cap C^{1}(0,1]:\|x\|_{1}<+\infty\}$, we write $\|x\|_{2}=\max\{\|x\|,\|x\|_{1}\}$. By Lemma \ref{lembanach}, $(\mathcal{E},\|\cdot\|_{2})$ is a Banach space. Moreover, for each $x\in C^{1}[0,1]$, we write $\|x\|_{3}=\max\{\|x\|,\|x'\|\}$. Clearly, $(C^{1}[0,1],\|\cdot\|_{3})$ is a Banach space.

\vskip 0.5em

Assume that
\begin{description}
\item[$(\mathbf{B}_{1})$] $p,q\in C(0,1)$, $p,q>0$ on $(0,1)$, $\int_{0}^{1}p(t)dt<+\infty$ and $\int_{0}^{1}q(t)dt<+\infty$;
\item[$(\mathbf{B}_{2})$] $f,g:[0,1]\times[0,\infty)\times(0,\infty)\rightarrow[0,\infty)$ are
continuous with $f(t,x,y)>0$ and $g(t,x,y)>0$ on
$[0,1]\times(0,\infty)\times(0,\infty)$;
\item[$(\mathbf{B}_{3})$] $f(t,x,y)\leq k_{1}(x)(u_{1}(y)+v_{1}(y))$ and $g(t,x,y)\leq k_{2}(x)(u_{2}(y)+v_{2}(y))$, where $u_{i}(i=1,2)>0$ are continuous
 and nonincreasing on $(0,\infty)$, $k_{i}(i=1,2)\geq 0$, $v_{i}(i=1,2)\geq 0$ are continuous and nondecreasing on $[0,\infty)$;
\item[$(\mathbf{B}_{4})$]
\begin{align*}
\sup_{c\in(0,\infty)}\frac{c}{I^{-1}(k_{1}(J^{-1}(k_{2}(c)\int_{0}^{1}q(s)ds))\int_{0}^{1}p(s)ds)}&>1,\\
\sup_{c\in(0,\infty)}\frac{c}{J^{-1}(k_{2}(I^{-1}(k_{1}(c)\int_{0}^{1}p(s)ds))\int_{0}^{1}q(s)ds)}&>1,
\end{align*}
where $I(\mu)=\int_{0}^{\mu}\frac{d\tau}{u_{1}(\tau)+v_{1}(\tau)},\,
J(\mu)=\int_{0}^{\mu}\frac{d\tau}{u_{2}(\tau)+v_{2}(\tau)}$, for
$\mu\in(0,\infty)$;
\item[$(\mathbf{B}_{5})$] $I(\infty)=\infty$ and $J(\infty)=\infty$;
\item[$(\mathbf{B}_{6})$] for real constants $E>0$ and $F>0$, there exist continuous functions $\varphi_{EF}$ and $\psi_{EF}$ defined on
$[0,1]$ and positive on $(0,1)$, and constants
$0\leq\delta_{1},\delta_{2}<1$ such that
\begin{align*}f(t,x,y)\geq\varphi_{EF}(t)x^{\delta_{1}},\,g(t,x,y)\geq\psi_{EF}(t)x^{\delta_{2}}\text{
on }[0,1]\times[0,E]\times[0,F];\end{align*}
\item[$(\mathbf{B}_{7})$]$\int_{0}^{1}p(t)u_{1}(C\int_{t}^{1}s^{\delta_{1}}p(s)\varphi_{EF}(s)ds)dt<+\infty$ and
$\int_{0}^{1}q(t)u_{2}(C\int_{t}^{1}s^{\delta_{2}}q(s)\psi_{EF}(s)ds)dt<+\infty$
for any real constant $C>0$.
\end{description}
\begin{rem}
Since $I$, $J$ are continuous, $I(0)=0$, $I(\infty)=\infty$,
$J(0)=0$, $J(\infty)=\infty$, and they are monotonically increasing.
Hence, $I$ and $J$ are invertible. Moreover, $I^{-1}$ and $J^{-1}$
are also monotonically increasing.
\end{rem}
\begin{thm}\label{th3.2}
Assume that $(\mathbf{B}_{1})-(\mathbf{B}_{7})$ hold. Then the system of BVPs \eqref{3.0.3} has a $C^{1}$-positive solution.
\end{thm}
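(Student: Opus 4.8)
The plan is to realize the regularization and sequential procedure announced in the preface: approximate the singular system \eqref{3.0.3} by a family of nonsingular problems indexed by $n$, solve each by a fixed point index argument, extract uniform bounds, and pass to the limit. Since $f,g$ are singular only as their last (derivative) argument tends to $0$, and the conditions $x'(1)=y'(1)=0$ force this singularity to occur at $t=1$, I would regularize by simultaneously shifting the flux boundary data and truncating the derivative slot. For each integer $n$, consider
\begin{equation*}
\begin{split}
-x_n''(t)&=p(t)f(t,y_n(t),\max\{x_n'(t),\tfrac1n\}),\\
-y_n''(t)&=q(t)g(t,x_n(t),\max\{y_n'(t),\tfrac1n\}),\\
x_n(0)&=y_n(0)=0,\quad x_n'(1)=y_n'(1)=\tfrac1n,
\end{split}
\end{equation*}
whose integral form is $x_n'(t)=\tfrac1n+\int_t^1 p(s)f(s,y_n(s),\max\{x_n'(s),\tfrac1n\})\,ds$ together with $x_n(t)=\int_0^t x_n'(s)\,ds$, and symmetrically for $y_n$. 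I would define the associated operator $T_n$ on the cone of nonnegative nondecreasing functions in $C^1[0,1]\times C^1[0,1]$ and verify, using $(\mathbf{B}_1)$--$(\mathbf{B}_3)$ and the boundedness of $u_1,u_2$ on the truncated region, that $T_n$ is completely continuous by Theorem \ref{arzela}. Any fixed point has $x_n'\ge \tfrac1n>0$, so the truncation is inactive and $(x_n,y_n)$ solves the regularized problem.

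The heart of the matter is the uniform a priori bound. Using $(\mathbf{B}_3)$, monotonicity of $k_1$, and the substitution $I(\mu)=\int_0^\mu d\tau/(u_1(\tau)+v_1(\tau))$, I would differentiate $I(x_n'(t))$ and integrate from $t$ to $1$ to obtain $I(x_n'(t))\le I(\tfrac1n)+k_1(\|y_n\|)\int_0^1 p$, hence $\|x_n\|\le x_n'(0)\le I^{-1}\bigl(I(\tfrac1n)+k_1(\|y_n\|)\int_0^1 p\bigr)$, and symmetrically $\|y_n\|\le J^{-1}\bigl(J(\tfrac1n)+k_2(\|x_n\|)\int_0^1 q\bigr)$; here $(\mathbf{B}_5)$ guarantees $I^{-1},J^{-1}$ are globally defined. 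Feeding one estimate into the other gives a self-consistent inequality for $\|x_n\|$, and the strict inequality in $(\mathbf{B}_4)$ supplies a constant $c^*$, uniform for $n$ large since $I(\tfrac1n),J(\tfrac1n)\to0$, that bounds $\|x_n\|,\|y_n\|,\|x_n'\|,\|y_n'\|$. The same estimate, applied along the homotopy $(x,y)=\lambda T_n(x,y)$, shows $(x,y)\neq\lambda T_n(x,y)$ on $\partial\Omega_{c^*}\cap(P\times P)$, so by Lemma \ref{lemindexone} the fixed point index is $1$ and each regularized problem has a solution. Taking $E,F$ to be these bounds, $(\mathbf{B}_6)$ yields the complementary lower control $x_n'(t)\ge\int_t^1 p(s)\varphi_{EF}(s)y_n(s)^{\delta_1}\,ds$, which after a short bootstrap on the lower bounds of $x_n,y_n$ produces a uniform $C>0$ with $x_n'(t)\ge C\int_t^1 s^{\delta_1}p(s)\varphi_{EF}(s)\,ds$ and the analogous bound for $y_n'$; these force strict positivity of the derivatives on $[0,1)$.

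Finally I would extract a limit. The uniform bounds make $\{x_n\},\{y_n\}$ uniformly bounded and equicontinuous, so Theorem \ref{arzela} gives uniform convergence in $C[0,1]$. To upgrade to $C^1$ convergence I would show $\{x_n'\},\{y_n'\}$ are equicontinuous up to $t=1$: using the lower bound of the previous step with the monotonicity of $u_1$, the integrand is dominated by $p(s)\,u_1\bigl(C\int_s^1\tau^{\delta_1}p\varphi_{EF}\,d\tau\bigr)+p(s)\,v_1(c^*)$, and $(\mathbf{B}_7)$ makes this dominating function integrable, so $\int_t^1 p\,f$ tends to $0$ uniformly in $n$ as $t\to1$. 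This controls the singular tail and yields equicontinuity of the derivatives on all of $[0,1]$, hence a subsequence with $x_{n_k}\to x,\ y_{n_k}\to y$ in $C^1[0,1]$. Letting $\tfrac1{n_k}\to0$ in the integral equations shows $(x,y)$ satisfies the integral formulation of \eqref{3.0.3} with $x'(1)=y'(1)=0$; interior regularity gives $x,y\in C^2(0,1)$, and the lower bounds give $x,y>0$ on $(0,1]$ and $x',y'>0$ on $[0,1)$, so $(x,y)$ is a $C^1$-positive solution. The main obstacle is exactly this equicontinuity of the derivative sequence at the singular endpoint $t=1$: the whole argument hinges on using the uniform a priori lower bound to make $(\mathbf{B}_7)$ furnish a single integrable dominating function valid for all $n$.
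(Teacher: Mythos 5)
Your overall strategy --- regularize the derivative slot, solve the regularized problems, get uniform bounds from $(\mathbf{B}_{4})$, lower bounds from $(\mathbf{B}_{6})$ via concavity and a bootstrap, derivative equicontinuity from $(\mathbf{B}_{7})$, then Arzel\`a--Ascoli and a limit passage --- is exactly the paper's, and your handling of the lower bounds and of the singular tail at $t=1$ matches the paper's proof essentially verbatim. The gap is in the a priori bound step. From $(\mathbf{B}_{3})$ you correctly obtain $\|x_n\|\le I^{-1}\bigl(k_1(\|y_n\|)\int_0^1 p\,ds+I(\varepsilon)\bigr)$ and $\|y_n\|\le J^{-1}\bigl(k_2(\|x_n\|)\int_0^1 q\,ds+J(\varepsilon)\bigr)$, hence the self-consistent inequality $\|x_n\|\le\Phi(\|x_n\|)$ with $\Phi(c)=I^{-1}\bigl(k_1(J^{-1}(k_2(c)\int_0^1 q\,ds+J(\varepsilon)))\int_0^1 p\,ds+I(\varepsilon)\bigr)$. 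But $(\mathbf{B}_{4})$ only asserts $\sup_{c}c/\Phi(c)>1$, i.e.\ it furnishes a \emph{single} value $M_{1}$ with $M_{1}>\Phi(M_{1})$; it does not assert $c>\Phi(c)$ for all $c\ge M_{1}$ (think of a superlinear $\Phi$, e.g.\ $\Phi(c)=c^{2}/(2M_{1})$, which satisfies $(\mathbf{B}_{4})$ yet admits arbitrarily large $c$ with $c\le\Phi(c)$). So ``self-consistent inequality plus $(\mathbf{B}_{4})$'' does not bound $\|x_n\|$, and your constant $c^{*}$ is not justified.

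The homotopy variant on a single ball does not repair this, because the nested form of $(\mathbf{B}_{4})$ controls the two components \emph{separately}: on $\partial\Omega_{c^{*}}\cap(P\times P)$ you only know that the combined norm equals $c^{*}$, and to reach a contradiction you would need an inequality of the form $c^{*}>I^{-1}(\cdots)+J^{-1}(\cdots)$, i.e.\ a hypothesis with a \emph{sum} of the two inverse maps in the denominator --- that is a condition of type $(\mathbf{B}_{14})$, which is not assumed in Theorem \ref{th3.2}. There are two repairs, and the paper uses both (in different theorems). For Theorem \ref{th3.2} itself the paper truncates the zeroth-order slots as well: retractions $\theta_{1},\theta_{2}$ cap $x,y$ at $M_{1},M_{2}$, so the regularized problem is solved by Schauder's theorem (Theorem \ref{schauder}) with no index theory, and the bounds $x_{n}<M_{1}$, $y_{n}<M_{2}$ are then proved by contradiction in two cases ($\|y_{n}\|<M_{2}$ or $\|y_{n}\|\ge M_{2}$), where the second case is handled precisely because the truncation yields $k_{1}(\theta_{2}(\|y_{n}\|))\le k_{1}(M_{2})$ --- the step your untruncated scheme cannot perform. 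Alternatively (this is what the paper does for the multiplicity result, Theorem \ref{th4.2}), keep your homotopy/index argument but take the open set to be a \emph{product} $\Omega_{M_{1}}\times\Omega_{M_{2}}$ of balls with the two radii furnished by $(\mathbf{B}_{4})$: on its boundary both $\|x\|\le M_{1}$ and $\|y\|\le M_{2}$ hold, with equality in at least one of them, and substituting one estimate into the other then contradicts the $\varepsilon$-perturbed $(\mathbf{B}_{4})$ inequalities. With either repair, the remainder of your argument goes through as written.
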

\begin{proof}
In view of $(\mathbf{B}_{4})$, we can choose real constants
$M_{1}>0$ and $M_{2}>0$ such that
\begin{align*}
\frac{M_{1}}{I^{-1}(k_{1}(J^{-1}(k_{2}(M_{1})\int_{0}^{1}q(s)ds))\int_{0}^{1}p(s)ds)}>1,
\end{align*}
\begin{align*}
\frac{M_{2}}{J^{-1}(k_{2}(I^{-1}(k_{1}(M_{2})\int_{0}^{1}p(s)ds))\int_{0}^{1}q(s)ds)}>1.
\end{align*}
From the continuity of $k_{1}$, $k_{2}$, $I$ and $J$, choose
$\varepsilon>0$ small enough such that
\begin{equation}\label{3.1.1}
\frac{M_{1}}{I^{-1}(k_{1}(J^{-1}(k_{2}(M_{1})\int_{0}^{1}q(s)ds+J(\varepsilon)))\int_{0}^{1}p(s)ds+I(\varepsilon))}>1,
\end{equation}
\begin{equation}\label{3.1.2}
\frac{M_{2}}{J^{-1}(k_{2}(I^{-1}(k_{1}(M_{2})\int_{0}^{1}p(s)ds+I(\varepsilon)))\int_{0}^{1}q(s)ds+J(\varepsilon))}>1.
\end{equation}
Choose real constants $L_{1}>0$ and $L_{2}>0$ such that
\begin{equation}\label{3.1.3}I(L_{1})>k_{1}(M_{2})\int_{0}^{1}p(s)ds+I(\varepsilon),\end{equation}
\begin{equation}\label{3.1.4}J(L_{2})>k_{2}(M_{1})\int_{0}^{1}q(s)ds+J(\varepsilon).\end{equation}

Choose $n_{0}\in\{1,2,\cdots\}$ such that
$\frac{1}{n_{0}}<\varepsilon$. For each fixed
$n\in\{n_{0},n_{0}+1,\cdots\}$, define retractions
$\theta_{i}:\R\rightarrow[0,M_{i}]$ and
$\rho_{i}:\R\rightarrow[\frac{1}{n},L_{i}]$ by
\begin{align*}\theta_{i}(x)=\max\{0,\min\{x,M_{i}\}\}\text{ and
}\rho_{i}(x)=\max\{\frac{1}{n},\min\{x,L_{i}\}\},\,i=1,2.\end{align*}
Consider the modified system of BVPs
\begin{equation}\label{3.1.5}\begin{split}
-x''(t)&=p(t)f(t,\theta_{2}(y(t)),\rho_{1}(x'(t))),\hspace{0.4cm}t\in(0,1),\\
-y''(t)&=q(t)g(t,\theta_{1}(x(t)),\rho_{2}(y'(t))),\hspace{0.4cm}t\in(0,1),\\
x(0)&=y(0)=0,\,x'(1)=y'(1)=\frac{1}{n}.
\end{split}\end{equation}
Since
$f(t,\theta_{2}(y(t)),\rho_{1}(x'(t))),\,g(t,\theta_{1}(x(t)),\rho_{2}(y'(t)))$
are continuous and bounded on $[0,1]\times\R^{2}$, by Theorem
\ref{schauder}, it follows  that the modified system of BVPs
\eqref{3.1.5} has a solution $(x_{n},y_{n})\in(C^{1}[0,1]\cap
C^{2}(0,1))\times(C^{1}[0,1]\cap C^{2}(0,1))$.

\vskip 0.5em

Using \eqref{3.1.5} and $(\mathbf{B}_{2})$, we obtain
\begin{align*}x_{n}''(t)\leq0\text{ and }y_{n}''(t)\leq 0\text{ for
}t\in(0,1),\end{align*} which on  integration from $t$ to $1$, using
the BCs \eqref{3.1.5}, implies that
\begin{equation}\label{3.1.6}
x_{n}'(t)\geq\frac{1}{n}\text{ and }y_{n}'(t)\geq\frac{1}{n}\text{
for }t\in[0,1].
\end{equation} Integrating \eqref{3.1.6}  from $0$ to $t$, using the BCs \eqref{3.1.5}, we have
\begin{equation}\label{3.1.7}
x_{n}(t)\geq\frac{t}{n}\text{ and }y_{n}(t)\geq\frac{t}{n}\text{ for
}t\in[0,1].
\end{equation} From \eqref{3.1.6} and \eqref{3.1.7}, it follows that
\begin{align*}\|x_{n}\|=x_{n}(1)\text{ and }\|y_{n}\|=y_{n}(1).\end{align*}

Now, we show that
\begin{equation}\label{3.1.8}x_{n}'(t)< L_{1},\,\, y_{n}'(t)<L_{2},\hspace{0.4cm}t\in[0,1].\end{equation}
First, we prove $x_{n}'(t)<L_{1}$ for $t\in[0,1]$. Suppose
$x_{n}'(t_{1})\geq L_{1}$ for some $t_{1}\in[0,1]$. Using
\eqref{3.1.5} and $(\mathbf{B}_{3})$, we have
\begin{align*}-x_{n}''(t)\leq p(t)k_{1}(\theta_{2}(y_{n}(t)))(u_{1}(\rho_{1}(x_{n}'(t)))+v_{1}(\rho_{1}(x_{n}'(t)))),\hspace{0.4cm}t\in(0,1),\end{align*}
which implies that
\begin{align*}\frac{-x_{n}''(t)}{u_{1}(\rho_{1}(x_{n}'(t)))+v_{1}(\rho_{1}(x_{n}'(t)))}\leq k_{1}(M_{2})p(t),\hspace{0.4cm}t\in(0,1).\end{align*}
Integrating from $t_{1}$ to $1$, using the BCs \eqref{3.1.5}, we
obtain
\begin{align*}\int_{\frac{1}{n}}^{x_{n}'(t_{1})}\frac{dz}{u_{1}(\rho_{1}(z))+v_{1}(\rho_{1}(z))}\leq k_{1}(M_{2})\int_{t_{1}}^{1}p(t)dt,\end{align*}
which can also be written as
\begin{align*}\int_{\frac{1}{n}}^{L_{1}}\frac{dz}{u_{1}(z)+v_{1}(z)}+\int_{L_{1}}^{x_{n}'(t_{1})}\frac{dz}{u_{1}(L_{1})+v_{1}(L_{1})}
\leq k_{1}(M_{2})\int_{0}^{1}p(t)dt.\end{align*} Using the
increasing property of $I$, we obtain
\begin{align*}I(L_{1})+\frac{x_{n}'(t_{1})-L_{1}}{u_{1}(L_{1})+v_{1}(L_{1})}
\leq k_{1}(M_{2})\int_{0}^{1}p(t)dt+I(\varepsilon),\end{align*} a
contradiction to \eqref{3.1.3}. Hence, $x_{n}'(t)<L_{1}$ for
$t\in[0,1]$. Similarly, we can show that $y_{n}'(t)<L_{2}$ for $t\in[0,1]$.

\vskip 0.5em

Now, we show that
\begin{equation}\label{3.1.9}x_{n}(t)<M_{1},\hspace{0.2cm}y_{n}(t)<M_{2},\hspace{0.4cm}t\in[0,1].\end{equation}
Suppose $x_{n}(t_{2})\geq M_{1}$ for some $t_{2}\in[0,1]$. From
\eqref{3.1.5}, \eqref{3.1.8} and $(\mathbf{B}_{3})$, it follows
that
\begin{align*}-x_{n}''(t)&\leq p(t)k_{1}(\theta_{2}(y_{n}(t)))(u_{1}(x_{n}'(t))+v_{1}(x_{n}'(t))),\hspace{0.4cm}t\in(0,1),\\
-y_{n}''(t)&\leq
q(t)k_{2}(\theta_{1}(x_{n}(t)))(u_{2}(y_{n}'(t))+v_{2}(y_{n}'(t))),\hspace{0.4cm}t\in(0,1),\end{align*}
which implies that
\begin{align*}\frac{-x_{n}''(t)}{u_{1}(x_{n}'(t))+v_{1}(x_{n}'(t))}&\leq k_{1}(\theta_{2}(\|y_{n}\|))p(t),\hspace{0.4cm}t\in(0,1),\\
\frac{-y_{n}''(t)}{u_{2}(y_{n}'(t))+v_{2}(y_{n}'(t))}&\leq
k_{2}(M_{1})q(t),\hspace{1.3cm}t\in(0,1).\end{align*} Integrating
from $t$ to $1$, using the BCs \eqref{3.1.5}, we obtain
\begin{align*}\int_{\frac{1}{n}}^{x_{n}'(t)}\frac{dz}{u_{1}(z)+v_{1}(z)}&\leq
k_{1}(\theta_{2}(\|y_{n}\|))\int_{t}^{1}p(s)ds,\hspace{0.4cm}t\in[0,1],\\
\int_{\frac{1}{n}}^{y_{n}'(t)}\frac{dz}{u_{2}(z)+v_{2}(z)}&\leq
k_{2}(M_{1})\int_{t}^{1}q(s)ds,\hspace{1.3cm}t\in[0,1],\end{align*}
which implies that
\begin{align*}I(x_{n}'(t))-I(\frac{1}{n})&\leq
k_{1}(\theta_{2}(\|y_{n}\|))\int_{0}^{1}p(s)ds,\hspace{0.4cm}t\in[0,1],\\
J(y_{n}'(t))-J(\frac{1}{n})&\leq
k_{2}(M_{1})\int_{0}^{1}q(s)ds,\hspace{1.3cm}t\in[0,1].\end{align*}
The increasing property of $I$ and $J$ leads to
\begin{equation}\label{3.1.10}x_{n}'(t)\leq I^{-1}(k_{1}(\theta_{2}(\|y_{n}\|))\int_{0}^{1}p(s)ds+I(\varepsilon)),\hspace{0.4cm}t\in[0,1],\end{equation}
\begin{equation}\label{3.1.11}y_{n}'(t)\leq J^{-1}(k_{2}(M_{1})\int_{0}^{1}q(s)ds+J(\varepsilon)),\hspace{1.3cm}t\in[0,1].\end{equation}
Integrating \eqref{3.1.10} from $0$ to $t_{2}$ and \eqref{3.1.11}
from $0$ to $1$, using the BCs \eqref{3.1.5}, we obtain
\begin{equation}\label{3.1.12}M_{1}\leq x_{n}(t_{2})\leq I^{-1}(k_{1}(\theta_{2}(\|y_{n}\|))\int_{0}^{1}p(s)ds+I(\varepsilon)),\end{equation}
\begin{equation}\label{3.1.13}\|y_{n}\|\leq J^{-1}(k_{2}(M_{1})\int_{0}^{1}q(s)ds+J(\varepsilon)).\end{equation}
Either we have  $\|y_{n}\|<M_{2}$ or $\|y_{n}\|\geq M_{2}$. If
$\|y_{n}\|<M_{2}$, then from \eqref{3.1.12}, we have
\begin{equation}\label{3.1.14}M_{1}\leq I^{-1}(k_{1}(\|y_{n}\|)\int_{0}^{1}p(s)ds+I(\varepsilon)).\end{equation}
Now, by using \eqref{3.1.13} in \eqref{3.1.14} and the increasing
property of $k_{1}$ and $I^{-1}$, we obtain
\begin{align*}M_{1}\leq I^{-1}(k_{1}(J^{-1}(k_{2}(M_{1})\int_{0}^{1}q(s)ds+J(\varepsilon)))\int_{0}^{1}p(s)ds+I(\varepsilon)),\end{align*}
which implies that
\begin{align*}\frac{M_{1}}{I^{-1}(k_{1}(J^{-1}(k_{2}(M_{1})\int_{0}^{1}q(s)ds+J(\varepsilon)))\int_{0}^{1}p(s)ds+I(\varepsilon))}\leq 1,\end{align*}
a contradiction to \eqref{3.1.1}.

\vskip 0.5em

On the other hand, if $\|y_{n}\|\geq M_{2}$, then from
\eqref{3.1.12} and \eqref{3.1.13}, we have
\begin{equation}\label{3.1.15}M_{1}\leq x_{n}(t_{2})\leq I^{-1}(k_{1}(M_{2})\int_{0}^{1}p(s)ds+I(\varepsilon)),\end{equation}
\begin{equation}\label{3.1.16}M_{2}\leq J^{-1}(k_{2}(M_{1})\int_{0}^{1}q(s)ds+J(\varepsilon)).\end{equation}
Using \eqref{3.1.16} in \eqref{3.1.15} and the increasing property
of $k_{1}$ and $I^{-1}$, leads to
\begin{align*}M_{1}\leq I^{-1}(k_{1}(J^{-1}(k_{2}(M_{1})\int_{0}^{1}q(s)ds+J(\varepsilon)))\int_{0}^{1}p(s)ds+I(\varepsilon)),\end{align*}
which implies that
\begin{align*}\frac{M_{1}}{I^{-1}(k_{1}(J^{-1}(k_{2}(M_{1})\int_{0}^{1}q(s)ds+J(\varepsilon)))\int_{0}^{1}p(s)ds+I(\varepsilon))}\leq 1,\end{align*}
a contradiction to \eqref{3.1.1}. Hence, $x_{n}(t)<M_{1}$ for
$t\in[0,1]$. Similarly, we can show that $y_{n}(t)<M_{2}$ for $t\in[0,1]$.

\vskip 0.5em

Thus, in view of \eqref{3.1.5}-\eqref{3.1.9}, $(x_{n},y_{n})$ is a
solution of the following coupled system of BVPs
\begin{equation}\label{3.1.17}\begin{split}
-x''(t)&=p(t)f(t,y(t),x'(t)),\hspace{0.4cm}t\in(0,1),\\
-y''(t)&=q(t)g(t,x(t),y'(t)),\hspace{0.4cm}t\in(0,1),\\
x(0)&=y(0)=0,\,x'(1)=y'(1)=\frac{1}{n},\end{split}\end{equation}
satisfy
\begin{equation}\label{3.1.18}\begin{split}
\frac{t}{n}&\leq x_{n}(t)<M_{1},\,\frac{1}{n}\leq
x_{n}'(t)<L_{1},\hspace{0.4cm}t\in[0,1],\\
\frac{t}{n}&\leq y_{n}(t)<M_{2},\,\frac{1}{n}\leq
y_{n}'(t)<L_{2},\hspace{0.4cm}t\in[0,1].
\end{split}\end{equation}
Now, in view of $(\mathbf{B}_{6})$, there exist continuous
functions $\varphi_{M_{2}L_{1}}$ and $\psi_{M_{1}L_{2}}$ defined on
$[0,1]$ and positive on $(0,1)$, and real constants
$0\leq\delta_{1},\delta_{2}<1$ such that
\begin{equation}\label{3.1.19}\begin{split}f(t,y_{n}(t),x_{n}'(t))\geq \varphi_{M_{2}L_{1}}(t)(y_{n}(t))^{\delta_{1}},\hspace{0.4cm}(t,y_{n}(t),x_{n}'(t))\in[0,1]\times[0,M_{2}]\times[0,L_{1}],\\
g(t,x_{n}(t),y_{n}'(t))\geq\psi_{M_{1}L_{2}}(t)(x_{n}(t))^{\delta_{2}},\hspace{0.4cm}(t,x_{n}(t),y_{n}'(t))\in[0,1]\times[0,M_{1}]\times[0,L_{2}].\end{split}\end{equation}
We claim that
\begin{equation}\label{3.1.20}x_{n}'(t)\geq C_{2}^{\delta_{1}}\int_{t}^{1}s^{\delta_{1}}p(s)\varphi_{M_{2}L_{1}}(s)ds,\end{equation}
\begin{equation}\label{3.1.21}y_{n}'(t)\geq
C_{1}^{\delta_{2}}\int_{t}^{1}s^{\delta_{2}}q(s)\psi_{M_{1}L_{2}}(s)ds,\end{equation}
where
\begin{align*}\begin{split}
C_{1}=&\left(\int_{0}^{1}s^{\delta_{2}+1}q(s)\psi_{M_{1}L_{2}}(s)ds\right)^{\frac{\delta_{1}}{1-\delta_{1}\delta_{2}}}\left(\int_{0}^{1}s^{\delta_{1}+1}p(s)\varphi_{M_{2}L_{1}}(s)ds\right)^\frac{1}{1-\delta_{1}\delta_{2}},\\
C_{2}=&\left(\int_{0}^{1}s^{\delta_{1}+1}p(s)\varphi_{M_{2}L_{1}}(s)ds\right)^{\frac{\delta_{2}}{1-\delta_{1}\delta_{2}}}\left(\int_{0}^{1}s^{\delta_{2}+1}q(s)\psi_{M_{1}L_{2}}(s)ds\right)^{\frac{1}{1-\delta_{1}\delta_{2}}}.
\end{split}\end{align*}
To prove \eqref{3.1.20}, consider the following relation
\begin{equation}\label{3.1.22}
x_{n}(t)=\frac{t}{n}+\int_{0}^{t}sp(s)f(s,y_{n}(s),x_{n}'(s))ds+\int_{t}^{1}t
p(s)f(s,y_{n}(s),x_{n}'(s))ds,
\end{equation}
which implies that
\begin{align*}x_{n}(1)\geq\int_{0}^{1}sp(s)f(s,y_{n}(s),x_{n}'(s))ds.\end{align*}
Using \eqref{3.1.19} and Lemma \ref{lemaconcave}, we obtain
\begin{equation}\label{3.1.23}x_{n}(1)\geq(y_{n}(1))^{\delta_{1}}\int_{0}^{1}s^{\delta_{1}+1}p(s)\varphi_{M_{2}L_{1}}(s)ds.\end{equation}
Similarly, using \eqref{3.1.19} and Lemma \ref{lemaconcave}, we
obtain
\begin{align*}
y_{n}(1)\geq(x_{n}(1))^{\delta_{2}}\int_{0}^{1}s^{\delta_{2}+1}q(s)\psi_{M_{1}L_{2}}(s)ds,
\end{align*}
which in view of  \eqref{3.1.23} implies that
\begin{align*}
y_{n}(1)\geq(y_{n}(1))^{\delta_{1}\delta_{2}}\left(\int_{0}^{1}s^{\delta_{1}+1}p(s)\varphi_{M_{2}L_{1}}(s)ds\right)^{\delta_{2}}\int_{0}^{1}s^{\delta_{2}+1}q(s)\psi_{M_{1}L_{2}}(s)ds.
\end{align*}
Hence,
\begin{equation}\label{3.1.24}y_{n}(1)\geq C_{2}.\end{equation}
Now, from \eqref{3.1.22}, it follows that
\begin{align*}
x_{n}'(t)\geq\int_{t}^{1}p(s)f(s,y_{n}(s),x_{n}'(s))ds.
\end{align*}
Using \eqref{3.1.19}, Lemma \ref{lemaconcave} and \eqref{3.1.24}, we
obtain \eqref{3.1.20}. Similarly, we can prove \eqref{3.1.21}.

\vskip 0.5em

Now, using \eqref{3.1.17}, $(\mathbf{B}_{3})$, \eqref{3.1.18},
\eqref{3.1.20} and \eqref{3.1.21}, we have
\begin{equation}\label{3.1.25}\begin{split}
0\leq-x_{n}''(t)&\leq k_{1}(M_{2})p(t)(u_{1}(C_{2}^{\delta_{1}}\int_{t}^{1}s^{\delta_{1}}p(s)\varphi_{M_{2}L_{1}}(s)ds)+v_{1}(L_{1})),\hspace{0.4cm}t\in(0,1),\\
0\leq-y_{n}''(t)&\leq
k_{2}(M_{1})q(t)(u_{2}(C_{1}^{\delta_{2}}\int_{t}^{1}s^{\delta_{2}}q(s)\psi_{M_{1}L_{2}}(s)ds)+v_{2}(L_{2})),\hspace{0.4cm}t\in(0,1).
\end{split}\end{equation}
In view of \eqref{3.1.18}, \eqref{3.1.25}, $(\mathbf{B}_{1})$ and
$(\mathbf{B}_{7})$, it follows that the sequences
$\{(x_{n}^{(j)},y_{n}^{(j)})\}$ $(j=0,1)$ are uniformly bounded and
equicontinuous on $[0,1]$. Hence, by Theorem \ref{arzela}, there
exist subsequences $\{(x_{n_{k}}^{(j)},y_{n_{k}}^{(j)})\}$ $(j=0,1)$
of $\{(x_{n}^{(j)},y_{n}^{(j)})\}$ $(j=0,1)$ and $(x,y)\in
C^{1}[0,1]\times C^{1}[0,1]$ such that
$(x_{n_{k}}^{(j)},y_{n_{k}}^{(j)})$ converges uniformly to
$(x^{(j)},y^{(j)})$ on $[0,1]$  $(j=0,1)$. Also,
$x(0)=y(0)=x'(1)=y'(1)=0$. Moreover, from \eqref{3.1.20} and
\eqref{3.1.21}, with $n_{k}$ in place of $n$ and taking
$\lim_{n_{k}\rightarrow+\infty}$, we have
\begin{align*}x'(t)\geq C_{2}^{\delta_{1}}\int_{t}^{1}s^{\delta_{1}}p(s)\varphi_{M_{2}L_{1}}(s)ds,\\
y'(t)\geq
C_{1}^{\delta_{2}}\int_{t}^{1}s^{\delta_{2}}q(s)\psi_{M_{1}L_{2}}(s)ds,\end{align*}
which shows that $x'>0$ and $y'>0$ on $[0,1)$, $x>0$ and $y>0$ on
$(0,1]$. Further, $(x_{n_{k}},y_{n_{k}})$ satisfy
\begin{align*}
x_{n_{k}}'(t)&=x_{n_{k}}'(0)-\int_{0}^{t}p(s)f(s,y_{n_{k}}(s),x_{n_{k}}'(s))ds,\hspace{0.4cm}t\in[0,1],\\
y_{n_{k}}'(t)&=y_{n_{k}}'(0)-\int_{0}^{t}q(s)g(s,x_{n_{k}}(s),y_{n_{k}}'(s))ds,\hspace{0.4cm}t\in[0,1].
\end{align*}
Passing to the limit as $n_{k}\rightarrow\infty$, we obtain
\begin{align*}
x'(t)&=x'(0)-\int_{0}^{t}p(s)f(s,y(s),x'(s))ds,\hspace{0.4cm}t\in[0,1],\\
y'(t)&=y'(0)-\int_{0}^{t}q(s)g(s,x(s),y'(s))ds,\hspace{0.4cm}t\in[0,1],
\end{align*}
which implies that
\begin{align*}
-x''(t)=p(t)f(t,y(t),x'(t)),\hspace{0.4cm}t\in(0,1),\\
-y''(t)=q(t)g(t,x(t),y'(t)),\hspace{0.4cm}t\in(0,1).
\end{align*}Hence, $(x,y)$ is a $C^{1}$-positive solution of the system of SBVPs \eqref{3.0.3}.
\end{proof}

\begin{ex}
Consider the following coupled system of SBVPs
\begin{equation}\label{3.1.26}\begin{split}
-&x''(t)=t^{-\frac{1}{3}}(1-t)^{-\frac{2}{3}}(y(t))^{\frac{1}{3}}(x'(t))^{-\beta_{1}},\hspace{0.4cm}t\in(0,1),\\
-&y''(t)=t^{-\frac{2}{3}}(1-t)^{-\frac{1}{3}}(x(t))^{\frac{2}{3}}(y'(t))^{-\beta_{2}},\hspace{0.4cm}t\in(0,1),\\
&x(0)=y(0)=x'(1)=y'(1)=0,
\end{split}\end{equation}
where $0<\beta_{1}<1$ and $0<\beta_{2}<\frac{1}{2}$.

\vskip 0.5em

Choose $p(t)=t^{-\frac{1}{3}}(1-t)^{-\frac{2}{3}}$, $q(t)=t^{-\frac{2}{3}}(1-t)^{-\frac{1}{3}}$, $k_{1}(x)=x^{\frac{1}{3}}$, $k_{2}(x)=x^{\frac{2}{3}}$, $u_{1}(x)=x^{-\beta_{1}}$, $u_{2}(x)=x^{-\beta_{2}}$ and $v_{1}(x)=v_{2}(x)=0$.

\vskip 0.5em

Then, $I(z)=\frac{z^{\beta_{1}+1}}{\beta_{1}+1}$, $J(z)=\frac{z^{\beta_{2}+1}}{\beta_{2}+1}$, $I^{-1}(z)=(\beta_{1}+1)^{\frac{1}{\beta_{1}+1}}z^{\frac{1}{\beta_{1}+1}}$ and
$J^{-1}(z)=(\beta_{2}+1)^{\frac{1}{\beta_{2}+1}}z^{\frac{1}{\beta_{2}+1}}$.

\vskip 0.5em

Also, $\int_{0}^{1}p(t)dt=\int_{0}^{1}q(t)dt=\frac{2\pi}{\sqrt{3}}$.

\vskip 0.5em

Moreover,
\begin{align*}
\sup_{c\in(0,\infty)}&\frac{c}{I^{-1}(k_{1}(J^{-1}(k_{2}(c)\int_{0}^{1}q(s)ds))\int_{0}^{1}p(s)ds)}=\\
\sup_{c\in(0,\infty)}&\frac{c}{(\frac{2\pi}{\sqrt{3}})^{\frac{3\beta_{2}+4}{3(\beta_{1}+1)(\beta_{2}+1)}}(\beta_{1}+1)^{\frac{1}{\beta_{1}+1}}(\beta_{2}+1)^{\frac{1}{3(\beta_{1}+1)(\beta_{2}+1)}}c^{\frac{2}{9(\beta_{1}+1)(\beta_{2}+1)}}}=\infty,\\
\text{ and }\\
\sup_{c\in(0,\infty)}&\frac{c}{J^{-1}(k_{2}(I^{-1}(k_{1}(c)\int_{0}^{1}p(s)ds))\int_{0}^{1}q(s)ds)}=\\
\sup_{c\in(0,\infty)}&\frac{c}{(\frac{2\pi}{\sqrt{3}})^{\frac{3\beta_{1}+5}{3(\beta_{1}+1)(\beta_{2}+1)}}(\beta_{2}+1)^{\frac{1}{\beta_{2}+1}}(\beta_{1}+1)^{\frac{2}{3(\beta_{1}+1)(\beta_{2}+1)}}c^{\frac{2}{9(\beta_{1}+1)(\beta_{2}+1)}}}=\infty.
\end{align*}
Clearly, $(\mathbf{B}_{1})-(\mathbf{B}_{5})$ are satisfied.
For $\delta_{1}=\frac{1}{3}$, $\delta_{2}=\frac{2}{3}$,
$\varphi_{EF}(t)=F^{-\beta_{1}}$ and $\psi_{EF}(t)=F^{-\beta_{2}}$,
then $(\mathbf{B}_{6})$ holds. Further,
\begin{align*}
\int_{0}^{1}p(t)u_{1}(C\int_{t}^{1}s^{\delta_{1}}p(s)\varphi_{EF}(s)ds)dt&=3^{-\beta_{1}}C^{-\beta_{1}}F^{\beta_{1}^{2}}\int_{0}^{1}t^{-\frac{1}{3}}(1-t)^{-\frac{\beta_{1}+2}{3}}dt\\
&=3^{-\beta_{1}}C^{-\beta_{1}}F^{\beta_{1}^{2}}\frac{\Gamma(\frac{2}{3})\Gamma(\frac{1-\beta_{1}}{3})}{\Gamma(1-\frac{\beta_{1}}{3})},
\end{align*}

\begin{align*}
\int_{0}^{1}q(t)u_{2}(C\int_{t}^{1}s^{\delta_{2}}q(s)\psi_{EF}(s)ds)dt&=(\frac{3}{2})^{-\beta_{2}}C^{-\beta_{2}}F^{\beta_{2}^{2}}\int_{0}^{1}t^{-\frac{2}{3}}(1-t)^{-\frac{2\beta_{2}+1}{3}}dt\\
&=(\frac{3}{2})^{-\beta_{2}}C^{-\beta_{2}}F^{\beta_{2}^{2}}\frac{\Gamma(\frac{2}{3})\Gamma(\frac{1-2\beta_{2}}{3})}{\Gamma(1-\frac{2\beta_{2}}{3})},
\end{align*}
shows that $(\mathbf{B}_{7})$ also holds. Since, $(\mathbf{B}_{1})-(\mathbf{B}_{7})$ are satisfied. Therefore, by Theorem \ref{th3.2}, the system of BVPs \eqref{3.1.26} has a $C^{1}$-positive solution.
\end{ex}


\section{Existence of at least two positive
solutions}\label{multiplicity-one}

In this section, we establish sufficient conditions for the
existence of at least two positive solutions of the system of SBVPs \eqref{3.0.3}. By a positive solution $(x,y)$ of the
system of BVPs \eqref{3.0.3}, we mean that
$(x,y)\in\mathcal{E}\times\mathcal{E}$ satisfies \eqref{3.0.3}, $x>0$ and $y>0$ on $(0,1]$, $x'>0$ and $y'>0$ on
$[0,1)$. Define a cone $P$ of $\mathcal{E}$ by
\begin{align*}P=\{x\in\mathcal{E}:x(t)\geq t\|x\|\text{ for all }t\in[0,1],
x(1)\geq\|x\|_{1}\}.\end{align*} For each
$(x,y)\in\mathcal{E}\times\mathcal{E}$ we write
$\|(x,y)\|_{4}=\|x\|_{2}+\|y\|_{2}$. Clearly,
$(\mathcal{E}\times\mathcal{E},\|\cdot\|_{4})$ is a Banach space and
$P\times P$ is a cone of $\mathcal{E}\times\mathcal{E}$. We define a
partial ordering in $\mathcal{E}$, by $x\leq y$ if and only if
$x(t)\leq y(t)$, $t\in[0,1]$. We define a partial ordering in
$\mathcal{E}\times\mathcal{E}$, by $(x_{1},y_{1})\preceq
(x_{2},y_{2})$ if and only if $x_{1}\leq x_{2}$ and $y_{1}\leq
y_{2}$. For any real constant $r>0$, we define an open neighborhood
of $(0,0)\in\mathcal{E}\times\mathcal{E}$  as
\begin{align*}\mathcal{O}_{r}=\{(x,y)\in\mathcal{E}\times\mathcal{E}:\|(x,y)\|_{4}<r\}.\end{align*}

\vskip 0.5em

In view of $(\mathbf{B}_{4})$, there exist real constants
$R_{1}>0$ and $R_{2}>0$ such that
\begin{equation}\label{3.2.1}
\frac{R_{1}}{I^{-1}(k_{1}(J^{-1}(k_{2}(R_{1})\int_{0}^{1}q(s)ds))\int_{0}^{1}p(s)ds)}>1,
\end{equation}
\begin{equation}\label{3.2.2}
\frac{R_{2}}{J^{-1}(k_{2}(I^{-1}(k_{1}(R_{2})\int_{0}^{1}p(s)ds))\int_{0}^{1}q(s)ds)}>1.
\end{equation}
From the continuity of $k_{1}$, $k_{2}$, $I$ and $J$, we choose
$\varepsilon>0$ small enough such that
\begin{equation}\label{3.2.3}\frac{R_{1}}{I^{-1}(k_{1}(J^{-1}(k_{2}(R_{1}+\varepsilon)\int_{0}^{1}q(s)ds+J(\varepsilon))+\varepsilon)
\int_{0}^{1}p(s)ds+I(\varepsilon))}> 1,\end{equation}
\begin{equation}\label{3.2.4}\frac{R_{2}}{J^{-1}(k_{2}(I^{-1}(k_{1}(R_{2}+\varepsilon)\int_{0}^{1}p(s)ds+I(\varepsilon))+\varepsilon)
\int_{0}^{1}q(s)ds+J(\varepsilon))}> 1.\end{equation} Choose
$n_{0}\in\{1,2,\cdots\}$ such that $\frac{1}{n_{0}}<\varepsilon$ and
for each fixed $n\in\{n_{0},n_{0}+1,\cdots\}$, consider the system
of non-singular BVPs
\begin{equation}\label{3.2.5}\begin{split}
-x''(t)&=p(t)f(t,y(t)+\frac{t}{n},|x'(t)|+\frac{1}{n}),\hspace{0.4cm}t\in(0,1),\\
-y''(t)&=q(t)g(t,x(t)+\frac{t}{n},|y'(t)|+\frac{1}{n}),\hspace{0.4cm}t\in(0,1),\\
x(0)&=x'(1)=y(0)=y'(1)=0.
\end{split}\end{equation}
We write \eqref{3.2.5} as an equivalent system of integral equations
\begin{equation}\label{3.2.6}\begin{split}
x(t)&=\int_{0}^{1}G(t,s)p(s)f(s,y(s)+\frac{s}{n},|x'(s)|+\frac{1}{n})ds,
\hspace{0.4cm}t\in[0,1],\\
y(t)&=\int_{0}^{1}G(t,s)q(s)f(s,x(s)+\frac{s}{n},|y'(s)|+\frac{1}{n})ds,
\hspace{0.4cm}t\in[0,1],
\end{split}\end{equation}
where the Green's function is defined as
\begin{align*}
G(t,s)=\begin{cases}s,\,& 0\leq s\leq t\leq1,\\
t,\,& 0\leq t\leq s\leq1.\end{cases}
\end{align*}
By a solution of the system of BVPs \eqref{3.2.5}, we mean a solution of the corresponding system of integral equations \eqref{3.2.6}.

\vskip 0.5em

Define a map $T_{n}:\mathcal{E}\times\mathcal{E}\rightarrow\mathcal{E}\times\mathcal{E}$
by
\begin{equation}\label{3.2.7}T_{n}(x,y)=(A_{n}(x,y),B_{n}(x,y)),\end{equation}
where the maps
$A_{n},B_{n}:\mathcal{E}\times\mathcal{E}\rightarrow\mathcal{E}$ are
defined by
\begin{equation}\label{3.2.8}\begin{split}
A_{n}(x,y)(t)&=\int_{0}^{1}G(t,s)p(s)f(s,y(s)+\frac{s}{n},|x'(s)|+\frac{1}{n})ds,
\hspace{0.4cm}t\in[0,1],\\
B_{n}(x,y)(t)&=\int_{0}^{1}G(t,s)q(s)f(s,x(s)+\frac{s}{n},|y'(s)|+\frac{1}{n})ds,
\hspace{0.4cm}t\in[0,1].\end{split}\end{equation} Clearly, if
$(x_{n},y_{n})\in\mathcal{E}\times\mathcal{E}$ is a fixed point of
$T_{n}$; then $(x_{n},y_{n})$ is a solution of the system of BVPs
\eqref{3.2.5}.

\vskip 0.5em

Assume that
\begin{description}
\item[$(\mathbf{B}_{8})$] for any real constant $E>0$, there exist continuous functions $\varphi_{E}$ and $\psi_{E}$ defined on
$[0,1]$ and positive on $(0,1)$, and constants
$0\leq\delta_{1},\delta_{2}<1$ such that
\begin{align*}f(t,x,y)\geq\varphi_{E}(t)x^{\delta_{1}},\,g(t,x,y)\geq\psi_{E}(t)x^{\delta_{2}}\text{
on }[0,1]\times[0,E]\times[0,\infty);\end{align*}
\item[$(\mathbf{B}_{9})$] for any real constant $C>0$, $\int_{0}^{1}p(t)v_{1}(\frac{C}{t})dt<+\infty$, $\int_{0}^{1}q(t)v_{2}(\frac{C}{t})dt<+\infty$,\\
 $\int_{0}^{1}p(t)u_{1}(C\int_{t}^{1}s^{\delta_{1}}p(s)\varphi_{E}(s)ds)dt<+\infty$ and
$\int_{0}^{1}q(t)u_{2}(C\int_{t}^{1}s^{\delta_{2}}q(s)\psi_{E}(s)ds)dt<+\infty$.
\end{description}

\begin{lem}\label{4.1}
Assume that $(\mathbf{B}_{1})-(\mathbf{B}_{3})$ and
$(\mathbf{B}_{9})$ hold. Then the map $T_{n}:\overline{\mathcal{O}}_{r}\cap(P\times P)\rightarrow P\times
P$ is completely continuous.
\end{lem}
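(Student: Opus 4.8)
The plan is to prove complete continuity of $T_n$ on $\overline{\mathcal{O}}_r\cap(P\times P)$ by establishing three things in sequence: that $T_n$ maps the set into the cone $P\times P$, that the image is relatively compact (uniform boundedness plus equicontinuity of both the functions and their derivatives), and finally that $T_n$ is continuous. The structure follows the two earlier complete-continuity lemmas (Lemma~\ref{lem2.5} and the lemma preceding Theorem~\ref{tm232}), but the present setting is harder because the nonlinearities depend on the first derivatives $x',y'$ and because the relevant norm is $\|\cdot\|_2=\max\{\|\cdot\|,\|\cdot\|_1\}$, so I must control $\sup_{t\in(0,1]}t|(A_n(x,y))'(t)|$ as well as the sup-norm.

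First I would verify the cone-invariance. For $(x,y)\in P\times P$, the functions $A_n(x,y)$ and $B_n(x,y)$ given by \eqref{3.2.8} are concave (since $G$ is the Green's function for $-u''$ with the stated boundary conditions, and the integrand is nonnegative by $(\mathbf{B}_2)$), vanish at $t=0$, and have nonnegative derivative; the standard estimates via $G(t,s)=\min\{t,s\}$ then yield $A_n(x,y)(t)\geq t\|A_n(x,y)\|$ and $A_n(x,y)(1)\geq\|A_n(x,y)\|_1$, placing $A_n(x,y)$ in $P$ (cf.\ Lemma~\ref{lemcone1}), and similarly for $B_n$. Next, for uniform boundedness and equicontinuity I would use $(\mathbf{B}_3)$ to dominate $f,g$ by $k_i(x)(u_i(y)+v_i(y))$. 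On $\overline{\mathcal{O}}_r$ the arguments $y(s)+\tfrac{s}{n}$ and $|x'(s)|+\tfrac1n$ are controlled, and since $u_i$ is nonincreasing the singular factor $u_i(|x'(s)|+\tfrac1n)$ is bounded by $u_i(\tfrac1n)$ (a finite constant for fixed $n$), while $v_i$ is bounded on bounded sets. Because $\int_0^1 p,\int_0^1 q<+\infty$ by $(\mathbf{B}_1)$, this gives an $L^1$ bound on $p(s)f(\cdots)$ and $q(s)g(\cdots)$ independent of $(x,y)$, from which uniform boundedness of $\|A_n(x,y)\|$, $\|A_n(x,y)\|_1$ and the analogous $B_n$ quantities follows at once. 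Equicontinuity of the functions follows from uniform continuity of $G$ on $[0,1]^2$, and equicontinuity of the derivatives (needed for the $\|\cdot\|_1$ component) follows from writing $(A_n(x,y))'(t)=\int_t^1 p(s)f(\cdots)\,ds$ and using absolute continuity of the integral; Theorem~\ref{arzela} then gives relative compactness.

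For continuity, I would take $(x_m,y_m)\to(x,y)$ in $\mathcal{E}\times\mathcal{E}$ and estimate $\|A_n(x_m,y_m)-A_n(x,y)\|_2$. The integrand difference $\bigl|f(s,y_m(s)+\tfrac{s}{n},|x_m'(s)|+\tfrac1n)-f(s,y(s)+\tfrac{s}{n},|x'(s)|+\tfrac1n)\bigr|$ tends to $0$ pointwise by continuity of $f$ (note the arguments stay in the region $(0,\infty)$ because of the $+\tfrac1n$, $+\tfrac{s}{n}$ shifts, which is precisely why the regularization was introduced), and it is dominated by an $L^1$ function built from $(\mathbf{B}_3)$ exactly as above. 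The Lebesgue dominated convergence theorem then forces both $\|A_n(x_m,y_m)-A_n(x,y)\|\to0$ and $\|A_n(x_m,y_m)-A_n(x,y)\|_1\to0$, and likewise for $B_n$; combining gives $\|T_n(x_m,y_m)-T_n(x,y)\|_4\to0$.

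The main obstacle I anticipate is handling the $\|\cdot\|_1$ (weighted-derivative) component rather than the sup-norm, since all the earlier lemmas in this chapter only needed the ordinary maximum norm. The delicate points are verifying that $\sup_{t\in(0,1]}t|(A_n(x,y))'(t)|$ is finite and equicontinuous, and securing a single dominating $L^1$ majorant valid for the derivative estimates; here assumption $(\mathbf{B}_9)$, specifically the finiteness of $\int_0^1 p(t)v_1(\tfrac{C}{t})\,dt$ and the companion integrals, is exactly what guarantees the requisite integrability near the endpoints, so the proof hinges on invoking $(\mathbf{B}_9)$ at the right moment to justify dominated convergence in the weighted norm.
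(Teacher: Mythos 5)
Your overall skeleton matches the paper's proof: cone invariance of $A_{n},B_{n}$ via the Green's function estimates (the paper does this with Lemma \ref{lemmax1}, not Lemma \ref{lemcone1}, but the content is the same), then uniform boundedness and equicontinuity followed by Theorem \ref{arzela}, then continuity by dominated convergence. However, your uniform-boundedness step as written contains a genuine gap. You claim that on $\overline{\mathcal{O}}_{r}\cap(P\times P)$ the argument $|x'(s)|+\frac{1}{n}$ is ``controlled,'' that ``$v_{i}$ is bounded on bounded sets,'' and that the $L^{1}$ majorant for $p(s)f(\cdots)$ then follows from $(\mathbf{B}_{1})$ alone. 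This is false: membership in $\overline{\mathcal{O}}_{r}$ only controls $\|x\|_{1}=\sup_{t\in(0,1]}t|x'(t)|\leq r$, so $|x'(s)|$ may blow up like $r/s$ as $s\to 0^{+}$; the third argument of $f$ does \emph{not} stay in a bounded set, and $v_{1}(|x'(s)|+\frac{1}{n})$ admits no uniform constant bound. The repair, which is exactly what the paper does, is to invoke Lemma \ref{lemspace1} ($|x'(s)|\leq\|x\|_{2}/s$) together with the nondecreasing property of $v_{1}$ from $(\mathbf{B}_{3})$ to get $v_{1}(|x'(s)|+\frac{1}{n})\leq v_{1}\big((r+\frac{1}{n})\frac{1}{s}\big)$, and only then does $(\mathbf{B}_{9})$, via $\int_{0}^{1}p(s)v_{1}\big((r+\frac{1}{n})\frac{1}{s}\big)ds<+\infty$, yield the required integrable majorant.

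Your closing paragraph does name $(\mathbf{B}_{9})$ as the crucial hypothesis, which partially offsets this, but it mis-locates where it is needed: you tie it to ``the $\|\cdot\|_{1}$ (weighted-derivative) component'' and to ``dominated convergence in the weighted norm.'' In fact the singular composition $v_{1}(C/s)$ sits inside the integrand of $A_{n}(x,y)$ itself, so $(\mathbf{B}_{9})$ is already indispensable for the plain sup-norm bound \eqref{3.2.11}, for the equicontinuity estimates \eqref{3.2.13}--\eqref{3.2.14}, and for the dominating function in the continuity argument — not merely for the $\|\cdot\|_{1}$ estimates. Without making the Lemma \ref{lemspace1} substitution explicit at each of these points, the proof as proposed does not close.
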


\begin{proof} Firstly, we show that $T_{n}(P\times P)\subseteq P\times
P$. For $(x,y)\in P\times P$, $t\in[0,1]$, using \eqref{3.2.8} and
Lemma \ref{lemmax1}, we obtain
\begin{equation}\label{3.2.9}\begin{split}
&A_{n}(x,y)(t)=\int_{0}^{1}G(t,s)p(s)f(s,y(s)+\frac{s}{n},|x'(s)|+\frac{1}{n})ds\\
&\geq
t\max_{\tau\in[0,1]}\int_{0}^{1}G(\tau,s)p(s)f(s,y(s)+\frac{s}{n},|x'(s)|+\frac{1}{n})ds=t\|A_{n}(x,y)\|
\end{split}\end{equation}
and
\begin{equation}\label{3.2.10}\begin{split}
&\|A_{n}(x,y)\|_{1}=\sup_{\tau\in(0,1]}\tau|A_{n}(x,y)'(\tau)|=\sup_{\tau\in(0,1]}\tau\int_{\tau}^{1}p(s)f(s,y(s)+\frac{s}{n},|x'(s)|+\frac{1}{n})ds\\
&\leq\max_{t\in[0,1]}\int_{0}^{1}G(t,s)p(s)f(s,y(s)+\frac{s}{n},|x'(s)|+\frac{1}{n})ds\leq
A_{n}(x,y)(1).
\end{split}\end{equation} From \eqref{3.2.9} and
\eqref{3.2.10}, $A_{n}(x,y)\in P$ for every $(x,y)\in P\times P$,
that is, $A_{n}(P\times P)\subseteq P$. Similarly, by using
\eqref{3.2.8} and Lemma \ref{lemmax1}, we can show that
$B_{n}(P\times P)\subseteq P$. Hence, $T_{n}(P\times P)\subseteq
P\times P$.

\vskip 0.5em

Now, we show that $T_{n}:\overline{\mathcal{O}}_{r}\cap(P\times
P)\rightarrow P\times P$ is uniformly bounded. For any
$(x,y)\in\overline{\mathcal{O}}_{r}\cap(P\times P)$, using
\eqref{3.2.8}, $(\mathbf{B}_{3})$, Lemma \ref{lemspace1},
$(\mathbf{B}_{1})$ and $(\mathbf{B}_{9})$, we have
\begin{equation}\label{3.2.11}\begin{split}
&\|A_{n}(x,y)\|=\max_{t\in[0,1]}\left|\int_{0}^{1}G(t,s)p(s)f(s,y(s)+\frac{s}{n},|x'(s)|+\frac{1}{n})ds\right|\\
&\leq\int_{0}^{1}p(s)k_{1}(y(s)+\frac{s}{n})(u_{1}(|x'(s)|+\frac{1}{n})+v_{1}(|x'(s)|+\frac{1}{n}))ds\\
&\leq\int_{0}^{1}p(s)k_{1}(y(s)+\frac{s}{n})(u_{1}(|x'(s)|+\frac{1}{n})+v_{1}(\frac{\|x\|_{2}}{s}+\frac{1}{n}))ds\\
&\leq\int_{0}^{1}p(s)k_{1}(y(s)+\frac{s}{n})(u_{1}(\frac{1}{n})+v_{1}(\frac{r}{s}+\frac{1}{n}))ds\\
&\leq
k_{1}(r+\frac{1}{n})\int_{0}^{1}p(s)(u_{1}(\frac{1}{n})+v_{1}((r+\frac{1}{n})\frac{1}{s}))ds<+\infty.
\end{split}\end{equation}
Also, for $(x,y)\in\overline{\mathcal{O}}_{r}\cap(P\times P)$, using
\eqref{3.2.8}, Lemma \ref{lemmax1} and $(\mathbf{B}_{3})$, we have
\begin{align*}\begin{split}
&\|A_{n}(x,y)\|_{1}=\sup_{\tau\in(0,1]}\tau|A_{n}(x,y)'(\tau)|=\sup_{\tau\in(0,1]}\tau\int_{\tau}^{1}p(s)f(s,y(s)+\frac{s}{n},|x'(s)|+\frac{1}{n})ds\\
&\leq\max_{t\in[0,1]}\int_{t}^{1}G(t,s)p(s)f(s,y(s)+\frac{s}{n},|x'(s)|+\frac{1}{n})ds\leq\int_{0}^{1}p(s)f(s,y(s)+\frac{s}{n},|x'(s)|+\frac{1}{n})ds\\
&\leq\int_{0}^{1}p(s)k_{1}(y(s)+\frac{s}{n})(u_{1}(|x'(s)|+\frac{1}{n})+v_{1}(|x'(s)|+\frac{1}{n}))ds.
\end{split}\end{align*}
Now, using Lemma \ref{lemspace1}, $(\mathbf{B}_{1})$ and
$(\mathbf{B}_{9})$, we obtain
\begin{equation}\label{3.2.12}\begin{split}
&\|A_{n}(x,y)\|_{1}\leq\int_{0}^{1}p(s)k_{1}(y(s)+\frac{s}{n})(u_{1}(\frac{1}{n})+v_{1}(\frac{\|x\|_{2}}{s}+\frac{1}{n}))ds\\
&\leq\int_{0}^{1}p(s)k_{1}(y(s)+\frac{s}{n})(u_{1}(\frac{1}{n})+v_{1}(\frac{r}{s}+\frac{1}{n}))ds\\
&\leq
k_{1}(r+\frac{1}{n})\int_{0}^{1}p(s)(u_{1}(\frac{1}{n})+v_{1}((r+\frac{1}{n})\frac{1}{s}))ds<+\infty.
\end{split}\end{equation}
From \eqref{3.2.11} and \eqref{3.2.12}, it follows that
$A_{n}(\overline{\mathcal{O}}_{r}\cap(P\times P))$ is uniformly
bounded under the norm $\|\cdot\|_{2}$. Similarly, by using
\eqref{3.2.8}, Lemma \ref{lemspace1}, Lemma \ref{lemmax1},
$(\mathbf{B}_{1})$, $(\mathbf{B}_{3})$ and $(\mathbf{B}_{9})$,
we can show that $B_{n}(\overline{\mathcal{O}}_{r}\cap(P\times P))$
is uniformly bounded under the norm $\|\cdot\|_{2}$. Hence,
$T_{n}(\overline{\mathcal{O}}_{r}\cap(P\times P))$ is uniformly
bounded.

\vskip 0.5em

Now, we show that $T_{n}(\overline{\mathcal{O}}_{r}\cap(P\times P))$
is equicontinuous. For
$(x,y)\in\overline{\mathcal{O}}_{r}\cap(P\times P)$,
$t_{1},t_{2}\in[0,1]$, using \eqref{3.2.8}, $(\mathbf{B}_{3})$ and
Lemma \ref{lemspace1}, we have
\begin{equation}\label{3.2.13}\begin{split}
&|A_{n}(x,y)(t_{1})-A_{n}(x,y)(t_{2})|=\left|\int_{0}^{1}(G(t_{1},s)-G(t_{2},s))p(s)f(s,y(s)+\frac{s}{n},|x'(s)|+\frac{1}{n})ds\right|\\
&\leq\int_{0}^{1}|G(t_{1},s)-G(t_{2},s)|p(s)f(s,y(s)+\frac{s}{n},|x'(s)|+\frac{1}{n})ds\\
&\leq\int_{0}^{1}|G(t_{1},s)-G(t_{2},s)|p(s)k_{1}(y(s)+\frac{s}{n})(u_{1}(|x'(s)|+\frac{1}{n})+v_{1}(|x'(s)|+\frac{1}{n}))ds\\
&\leq k_{1}(r+\frac{1}{n})\int_{0}^{1}|G(t_{1},s)-G(t_{2},s)|p(s)(u_{1}(\frac{1}{n})+v_{1}(\frac{\|x\|_{2}}{s}+\frac{1}{n}))ds\\
&\leq
k_{1}(r+\frac{1}{n})\int_{0}^{1}|G(t_{1},s)-G(t_{2},s)|p(s)(u_{1}(\frac{1}{n})+v_{1}(r+\frac{1}{n})\frac{1}{s})ds,
\end{split}\end{equation}
and
\begin{equation}\label{3.2.14}\begin{split}
&|A_{n}(x,y)'(t_{1})-A_{n}(x,y)'(t_{2})|=\left|\int_{t_{1}}^{t_{2}}p(s)f(s,y(s)+\frac{s}{n},|x'(s)|+\frac{1}{n})ds\right|\\
&\leq\int_{t_{1}}^{t_{2}}p(s)k_{1}(y(s)+\frac{s}{n})(u_{1}(|x'(s)|+\frac{1}{n})+v_{1}(|x'(s)|+\frac{1}{n}))ds\\
&\leq
k_{1}(r+\frac{1}{n})\int_{t_{1}}^{t_{2}}p(s)(u_{1}(\frac{1}{n})+v_{1}(\frac{\|x\|_{2}}{s}+\frac{1}{n}))ds\leq
k_{1}(r+\frac{1}{n})\int_{t_{1}}^{t_{2}}p(s)\\
&(u_{1}(\frac{1}{n})+v_{1}(\frac{r}{s}+\frac{1}{n}))ds\leq
k_{1}(r+\frac{1}{n})\int_{t_{1}}^{t_{2}}p(s)(u_{1}(\frac{1}{n})+v_{1}((r+\frac{1}{n})\frac{1}{s}))ds.
\end{split}\end{equation}
From \eqref{3.2.13}, \eqref{3.2.14}, $(\mathbf{B}_{1})$ and
$(\mathbf{B}_{9})$, it follows that
$A_{n}(\overline{\mathcal{O}}_{r}\cap(P\times P))$ is equicontinuous
under the norm $\|\cdot\|_{3}$. But, the norm $\|\cdot\|_{3}$ is
equivalent to the norm $\|\cdot\|_{2}$. Hence,
$A_{n}(\overline{\mathcal{O}}_{r}\cap(P\times P))$ is equicontinuous
under $\|\cdot\|_{2}$.

\vskip 0.5em

Similarly, using \eqref{3.2.8}, $(\mathbf{B}_{3})$ and Lemma
\ref{lemspace1}, we can show that
$B_{n}(\overline{\mathcal{O}}_{r}\cap(P\times P))$ is equicontinuous
under the norm $\|\cdot\|_{2}$. Consequently,
$T_{n}(\overline{\mathcal{O}}_{r}\cap(P\times P))$ is
equicontinuous. Hence, by Theorem \ref{arzela},
$T_{n}(\overline{\mathcal{O}}_{r}\cap(P\times P))$ is relatively
compact which implies that $T_{n}$ is a compact map.

\vskip 0.5em

Now, we show that $T_{n}$ is continuous. Let
$(x_{m},y_{m}),(x,y)\in\overline{\mathcal{O}}_{r}\cap(P\times P)$
such that $\|(x_{m},y_{m})-(x,y)\|_{4}\rightarrow0$ as
$m\rightarrow+\infty$. Using $(\mathbf{B}_{3})$ and Lemma
\ref{lemspace1}, we have
\begin{align*}\begin{split}
&\left|f(t,y_{m}(t)+\frac{t}{n},|x_{m}'(t)|+\frac{1}{n})\right|\leq k_{1}(y_{m}(t)+\frac{t}{n})(u_{1}(|x_{m}'(t)|+\frac{1}{n})+v_{1}(|x_{m}'(t)|+\frac{1}{n}))\\
&\leq k_{1}(r+\frac{1}{n})(u_{1}(\frac{1}{n})+v_{1}(\frac{\|x_{m}\|_{2}}{t}+\frac{1}{n}))\leq k_{1}(r+\frac{1}{n})(u_{1}(\frac{1}{n})+v_{1}(\frac{r}{t}+\frac{1}{n}))\\
&\leq
k_{1}(r+\frac{1}{n})(u_{1}(\frac{1}{n})+v_{1}((r+\frac{1}{n})\frac{1}{t})).
\end{split}\end{align*}
Using \eqref{3.2.8} and Lemma \ref{lemmax1}, we have
\begin{equation}\label{3.2.15}\begin{split}
&\|A_{n}(x_{m},y_{m})-A_{n}(x,y)\|=\\
&\max_{t\in[0,1]}\left|\int_{0}^{1}G(t,s)p(s)(f(s,y_{m}(s)+\frac{s}{n},|x_{m}'(s)|+\frac{1}{n})-f(s,y(s)+\frac{s}{n},|x'(s)|+\frac{1}{n}))ds\right|\\
&\leq\int_{0}^{1}p(s)\left|f(s,y_{m}(s)+\frac{s}{n},|x_{m}'(s)|+\frac{1}{n})-f(s,y(s)+\frac{s}{n},|x'(s)|+\frac{1}{n})\right|ds
\end{split}\end{equation}
and
\begin{equation}\label{3.2.16}\begin{split}
&\|A_{n}(x_{m},y_{m})'-A_{n}(x,y)'\|_{1}=\\
&\sup_{\tau\in(0,1]}\tau\left|\int_{\tau}^{1}p(s)(f(s,y_{m}(s)+\frac{s}{n},|x_{m}'(s)|+\frac{1}{n})-f(s,y(s)+\frac{s}{n},|x'(s)|+\frac{1}{n}))ds\right|\\
&\leq\max_{t\in[0,1]}\int_{0}^{1}G(t,s)p(s)\left|f(s,y_{m}(s)+\frac{s}{n},|x_{m}'(s)|+\frac{1}{n})-f(s,y(s)+\frac{s}{n},|x'(s)|+\frac{1}{n})\right|ds\\
&\leq\int_{0}^{1}p(s)\left|f(s,y_{m}(s)+\frac{s}{n},|x_{m}'(s)|+\frac{1}{n})-f(s,y(s)+\frac{s}{n},|x'(s)|+\frac{1}{n})\right|ds.
\end{split}\end{equation}
From \eqref{3.2.15} and \eqref{3.2.16}, using the Lebesgue dominated
convergence theorem, it follows that
\begin{align*}
\|A_{n}(x_{m},y_{m})-A_{n}(x,y)\|\rightarrow0,\hspace{0.2cm}\|A_{n}(x_{m},y_{m})'-A_{n}(x,y)'\|_{1}\rightarrow0\text{
as }m\rightarrow+\infty.\end{align*} Hence,
$\|A_{n}(x_{m},y_{m})-A_{n}(x,y)\|_{2}\rightarrow0$ as
$m\rightarrow\infty$.

\vskip 0.5em

Similarly, we can show that
$\|B_{n}(x_{m},y_{m})-B_{n}(x,y)\|_{2}\rightarrow0$ as
$m\rightarrow\infty$. Consequently,
$\|T_{n}(x_{m},y_{m})-T_{n}(x,y)\|_{4}\rightarrow0$ as
$m\rightarrow+\infty$, that is,
$T_{n}:\overline{\mathcal{O}}_{r}\cap(P\times P)\rightarrow P\times
P$ is continuous. Hence,
$T_{n}:\overline{\mathcal{O}}_{r}\cap(P\times P)\rightarrow P\times
P$ is completely continuous.
\end{proof}

Assume that
\begin{description}
\item[$(\mathbf{B}_{10})$] there exist $h_{1},h_{2}\in C([0,\infty)\times(0,\infty),[0,\infty))$ with $f(t,x,y)\geq h_{1}(x,y)$ and $g(t,x,y)\geq h_{2}(x,y)$ on $[0,1]\times[0,\infty)\times(0,\infty)$ such that
\begin{align*}\lim_{x\rightarrow+\infty}\frac{h_{i}(x,y)}{x}&=+\infty,\text{ uniformly for }\,y\in(0,\infty),\,i=1,2.
\end{align*}
\end{description}

\begin{thm}\label{th4.2}
Assume that $(\mathbf{B}_{1})-(\mathbf{B}_{5})$ and
$(\mathbf{B}_{8})-(\mathbf{B}_{10})$ hold. Then the system of BVPs \eqref{3.0.3} has at least two positive solutions.
\end{thm}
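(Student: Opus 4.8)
The plan is to work with the regularized system \eqref{3.2.5} and its associated completely continuous operator $T_{n}$ from \eqref{3.2.7}--\eqref{3.2.8}, whose complete continuity is guaranteed by Lemma \ref{4.1}, and to locate \emph{two} fixed points of $T_{n}$ in disjoint open annuli of the cone $P\times P$ by a fixed point index computation; the two positive solutions of \eqref{3.0.3} are then recovered by a limiting argument as $n\to\infty$. Concretely, I would fix three radii $0<\rho<R<\mathcal{R}$ and evaluate $\ind(T_{n},\mathcal{O}_{\rho}\cap(P\times P),P\times P)$, $\ind(T_{n},\mathcal{O}_{R}\cap(P\times P),P\times P)$ and $\ind(T_{n},\mathcal{O}_{\mathcal{R}}\cap(P\times P),P\times P)$, obtaining the values $0$, $1$ and $0$; the additivity property $(\mathbf{F_{2}})$ then forces a fixed point in each of the open annuli $(\mathcal{O}_{R}\setminus\overline{\mathcal{O}}_{\rho})\cap(P\times P)$ and $(\mathcal{O}_{\mathcal{R}}\setminus\overline{\mathcal{O}}_{R})\cap(P\times P)$, with annular indices $1-0=1$ and $0-1=-1$.

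For the middle radius I would take $R=R_{1}+R_{2}$ with $R_{1},R_{2}$ as in \eqref{3.2.1}--\eqref{3.2.4}, and repeat the estimates of Theorem \ref{th3.2} (now applied to $A_{n}$ and $B_{n}$) so that the strict inequalities supplied by $(\mathbf{B}_{4})$ preclude any relation $(x,y)=\lambda T_{n}(x,y)$ with $\lambda\in(0,1]$ on $\partial\mathcal{O}_{R}\cap(P\times P)$; Lemma \ref{lemindexone} then yields index $1$. For the small radius, $(\mathbf{B}_{8})$ supplies a lower bound $f(t,x,y)\geq\varphi_{E}(t)x^{\delta_{1}}$ with $\delta_{1}<1$, and since $s^{\delta_{1}-1}\to\infty$ as $s\to0^{+}$ this sublinear-from-below behaviour makes $T_{n}$ \emph{expansive} near $0$: using the cone inequality $x(t)\geq t\|x\|$ together with Lemma \ref{lemmax1}, I would choose $\rho>0$ small enough that $T_{n}(x,y)\npreceq(x,y)$ on $\partial\mathcal{O}_{\rho}\cap(P\times P)$, whence Lemma \ref{lemindexzero} gives index $0$. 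For the large radius, $(\mathbf{B}_{10})$ gives $f\geq h_{1}$, $g\geq h_{2}$ with $h_{i}(x,y)/x\to\infty$ uniformly in $y$; choosing the linear multiplier larger than the reciprocal of the Green's-function integral over $[\eta,1]$ produces, for $\mathcal{R}$ large and a suitable $v\in(P\times P)\setminus\{0\}$, the relation $(x,y)\neq T_{n}(x,y)+\delta v$ for every $\delta>0$ on $\partial\mathcal{O}_{\mathcal{R}}\cap(P\times P)$, so Lemma \ref{lemindex0} again delivers index $0$.

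Having produced, for each $n\geq n_{0}$, fixed points $(x_{n},y_{n})$ with $\rho<\|(x_{n},y_{n})\|_{4}<R$ and $(\hat{x}_{n},\hat{y}_{n})$ with $R<\|(\hat{x}_{n},\hat{y}_{n})\|_{4}<\mathcal{R}$, I would pass to the limit exactly as in the final part of Theorem \ref{th3.2}. Hypotheses $(\mathbf{B}_{1})$, $(\mathbf{B}_{3})$, $(\mathbf{B}_{9})$ together with the derivative lower bounds (the analogues of \eqref{3.1.20}--\eqref{3.1.21} built from $(\mathbf{B}_{8})$) make both sequences and their first derivatives uniformly bounded and equicontinuous on $[0,1]$, so Theorem \ref{arzela} extracts subsequences converging in $C^{1}[0,1]\times C^{1}[0,1]$ to solutions $(x,y)$ and $(\hat{x},\hat{y})$ of \eqref{3.0.3}; the same lower bounds guarantee $x,y,\hat{x},\hat{y}>0$ on $(0,1]$ and positive first derivatives on $[0,1)$, so both limits are genuine positive solutions.

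The step I expect to be the main obstacle is \emph{distinctness of the two limit solutions}. At the regularized level the separation $\|(x_{n},y_{n})\|_{4}<R<\|(\hat{x}_{n},\hat{y}_{n})\|_{4}$ is clean, but the strict inequalities may degenerate to $\|(x,y)\|_{4}\leq R\leq\|(\hat{x},\hat{y})\|_{4}$ in the limit, so one must exclude the coincidence $\|(x,y)\|_{4}=R=\|(\hat{x},\hat{y})\|_{4}$. I would handle this by keeping $\rho$ and $R$ fixed independently of $n$ (legitimate since $(\mathbf{B}_{4})$ and $(\mathbf{B}_{8})$ are $n$-free), which pins $\|(x,y)\|_{4}\leq R$, and by sharpening the large-radius estimate: the superlinearity in $(\mathbf{B}_{10})$ lets me fix a threshold $\mathcal{R}_{0}$ with $R<\mathcal{R}_{0}<\mathcal{R}$ below which the expansion inequality cannot hold, forcing $\|(\hat{x}_{n},\hat{y}_{n})\|_{4}\geq\mathcal{R}_{0}$ and hence $\|(\hat{x},\hat{y})\|_{4}\geq\mathcal{R}_{0}>R\geq\|(x,y)\|_{4}$ in the limit. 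A secondary technical point is verifying the small-radius expansion \emph{uniformly in} $n$ despite the regularizing shifts $y+\frac{s}{n}$ and $|x'|+\frac{1}{n}$; here the monotonicity built into $u_{i},v_{i}$ in $(\mathbf{B}_{3})$ and the bound $|x'(t)|\leq\|x\|_{2}/t$ from Lemma \ref{lemspace1} keep every estimate independent of $n$.
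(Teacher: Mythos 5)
Your overall architecture --- regularize to $T_{n}$, compute fixed point indices via Lemmas \ref{lemindexone} and \ref{lemindexzero}, extract two fixed points for each $n$, and pass to the limit with Theorem \ref{arzela} --- is the paper's strategy, and your three-radius $0$--$1$--$0$ decomposition is a legitimate variant of the paper's two-radius $1$--$0$ decomposition (the paper takes one fixed point inside $\mathcal{O}_{R_{0}}=\Omega_{R_{1}}\times\Omega_{R_{2}}$ from the index-$1$ computation and one in the annulus; your extra small-ball index-$0$ computation from $(\mathbf{B}_{8})$ is essentially the estimate the paper runs \emph{after} locating its first fixed point, in \eqref{3.2.29}--\eqref{3.2.34}, to bound it away from zero). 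Two technical cautions: the interval $[\eta,1]$ you invoke does not exist for problem \eqref{3.0.3} (you mean $[t_{0},1]$ for a chosen $t_{0}\in(0,1)$), and the index-$1$ step genuinely needs product neighborhoods $\Omega_{R_{1}}\times\Omega_{R_{2}}$ rather than sum-norm balls, because the contradiction must be played off against \eqref{3.2.3} or \eqref{3.2.4} according to which component norm sits on its boundary sphere.

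The genuine gap is in your resolution of the distinctness issue, which you correctly single out as the main obstacle. You claim that $(\mathbf{B}_{10})$ yields a threshold $\mathcal{R}_{0}\in(R,\mathcal{R})$ below which the expansion inequality cannot hold, ``forcing $\|(\hat{x}_{n},\hat{y}_{n})\|_{4}\geq\mathcal{R}_{0}$''. This does not follow: the index computation on the annulus $(\mathcal{O}_{\mathcal{R}}\setminus\overline{\mathcal{O}}_{R})\cap(P\times P)$ guarantees a fixed point somewhere in that open set but gives no control on where, and superlinearity at infinity is a statement about large arguments only --- knowing that $T_{n}(x,y)\npreceq(x,y)$ holds on spheres of radius $\geq\mathcal{R}_{0}$ excludes fixed points \emph{on} those spheres, not below them, so no lower bound $\mathcal{R}_{0}$ on $\|(\hat{x}_{n},\hat{y}_{n})\|_{4}$ can be extracted this way. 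The separation must come from $(\mathbf{B}_{4})$, not $(\mathbf{B}_{10})$: any limit solution $(x,y)$ of \eqref{3.0.3} produced by your scheme has $x'\geq0$, $x'(1)=0$, hence, integrating as in Theorem \ref{th3.2}, the a priori bounds $\|x\|\leq I^{-1}\big(k_{1}(\|y\|)\int_{0}^{1}p(s)ds\big)$ and $\|y\|\leq J^{-1}\big(k_{2}(\|x\|)\int_{0}^{1}q(s)ds\big)$; if $\|x\|=R_{1}$, these combine with the monotonicity of $k_{1},k_{2},I^{-1},J^{-1}$ to contradict \eqref{3.2.1}, and similarly $\|y\|=R_{2}$ contradicts \eqref{3.2.2}. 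So component norms of limit solutions can never equal $R_{1}$ or $R_{2}$; since the first sequence of fixed points lies in $\Omega_{R_{1}}\times\Omega_{R_{2}}$ and the second lies outside its closure, the limits satisfy $\|x_{0,1}\|_{2}<R_{1}$ and $\|y_{0,1}\|_{2}<R_{2}$, while $\|x_{0,2}\|_{2}>R_{1}$ or $\|y_{0,2}\|_{2}>R_{2}$, and are therefore distinct. This is exactly what the paper's terse appeal to \eqref{3.2.1} and \eqref{3.2.2} at the end of its proof accomplishes; your argument needs to be repaired along these lines.
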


\begin{proof}
Let $R_{0}=R_{1}+R_{2}$ and define
$\mathcal{O}_{R_{0}}=\Omega_{R_{1}}\times\Omega_{R_{2}}$ where
\begin{align*}\Omega_{R_{1}}=\{x\in
E:\|x\|_{2}<R_{1}\},\,\Omega_{R_{2}}=\{x\in
E:\|x\|_{2}<R_{2}\}.\end{align*} We claim that
\begin{equation}\label{3.2.17}
(x,y)\neq\lambda T_{n}(x,y), \text{ for }\lambda\in(0,1],(x,y)\in\partial\mathcal{O}_{R_{0}}\cap (P\times P).
\end{equation}
Suppose there exist
$(x_{0},y_{0})\in\partial\mathcal{O}_{R}\cap(P\times P)$ and
$\lambda_{0}\in(0,1]$ such that $(x_{0},y_{0})=\lambda_{0}
T_{n}(x_{0},y_{0})$. Then,
\begin{equation}\label{3.2.18}\begin{split}
-x_{0}''(t)&=\lambda_{0}p(t)f(t,y_{0}(t)+\frac{t}{n},|x_{0}'(t)|+\frac{1}{n}),\hspace{0.4cm}t\in(0,1),\\
-y_{0}''(t)&=\lambda_{0}q(t)g(t,x_{0}(t)+\frac{t}{n},|y_{0}'(t)|+\frac{1}{n}),\hspace{0.4cm}t\in(0,1),\\
x_{0}(0)&=x_{0}'(1)=y_{0}(0)=y_{0}'(1)=0.
\end{split}\end{equation}
From \eqref{3.2.18} and $(\mathbf{B}_{2})$, we have $x_{0}''\leq
0$ and $y_{0}''\leq 0$ on $(0,1)$, integrating from $t$ to $1$,
using the BCs \eqref{3.2.18}, we obtain $x_{0}'(t)\geq0$ and
$y_{0}'(t)\geq0$ for $t\in[0,1]$. From \eqref{3.2.18} and
$(\mathbf{B}_{3})$, we have
\begin{align*}-x_{0}''(t)&\leq
p(t)k_{1}(y_{0}(t)+\frac{t}{n})(u_{1}(x_{0}'(t)+\frac{1}{n})+v_{1}(x_{0}'(t)+\frac{1}{n})),\hspace{0.4cm}t\in(0,1),\\
-y_{0}''(t)&\leq
q(t)k_{2}(x_{0}(t)+\frac{t}{n})(u_{2}(y_{0}'(t)+\frac{1}{n})+v_{2}(y_{0}'(t)+\frac{1}{n})),\hspace{0.4cm}t\in(0,1),\end{align*}
which implies that
\begin{align*}\frac{-x_{0}''(t)}{u_{1}(x_{0}'(t)+\frac{1}{n})+v_{1}(x_{0}'(t)+\frac{1}{n})}\leq p(t)k_{1}(y_{0}(t)+\frac{t}{n})&\leq
k_{1}(R_{2}+\varepsilon)p(t),\hspace{0.4cm}t\in(0,1),\\
\frac{-y_{0}''(t)}{u_{2}(y_{0}'(t)+\frac{1}{n})+v_{2}(y_{0}'(t)+\frac{1}{n})}\leq
q(t)k_{2}(x_{0}(t)+\frac{t}{n})&\leq
k_{2}(R_{1}+\varepsilon)q(t),\hspace{0.4cm}t\in(0,1).\end{align*}
Integrating from $t$ to $1$, using the BCs \eqref{3.2.18}, we obtain
\begin{align*}I(x_{0}'(t)+\frac{1}{n})-I(\frac{1}{n})&\leq k_{1}(R_{2}+\varepsilon)\int_{t}^{1}p(s)ds,\hspace{0.4cm}t\in[0,1],\\
J(y_{0}'(t)+\frac{1}{n})-J(\frac{1}{n})&\leq
k_{2}(R_{1}+\varepsilon)\int_{t}^{1}q(s)ds,\hspace{0.4cm}t\in[0,1],\end{align*}
which implies that
\begin{align*}x_{0}'(t)&\leq I^{-1}(k_{1}(R_{2}+\varepsilon)\int_{0}^{1}p(s)ds+I(\varepsilon)),\hspace{0.4cm}t\in[0,1],\\
y_{0}'(t)&\leq
J^{-1}(k_{2}(R_{1}+\varepsilon)\int_{0}^{1}q(s)ds+J(\varepsilon)),\hspace{0.4cm}t\in[0,1],\end{align*}
which on integration from $0$ to $1$, using the BCs \eqref{3.2.18}
and Lemma \ref{lemcone1}, leads to
\begin{equation}\label{3.2.19}R_{1}\leq I^{-1}(k_{1}(R_{2}+\varepsilon)\int_{0}^{1}p(s)ds+I(\varepsilon)),\end{equation}
\begin{equation}\label{3.2.20}R_{2}\leq J^{-1}(k_{2}(R_{1}+\varepsilon)\int_{0}^{1}q(s)ds+J(\varepsilon)).\end{equation}
Now, using \eqref{3.2.20} in \eqref{3.2.19} together with increasing
property of $k_{1}$ and $I^{-1}$, we have
\begin{align*}\frac{R_{1}}{I^{-1}(k_{1}(J^{-1}(k_{2}(R_{1}+\varepsilon)\int_{0}^{1}q(s)ds+J(\varepsilon))+\varepsilon)
\int_{0}^{1}p(s)ds+I(\varepsilon))}\leq 1,\end{align*} a
contradiction to \eqref{3.2.3}. Similarly, using \eqref{3.2.19} in
\eqref{3.2.20} together with increasing property of $k_{2}$ and
$J^{-1}$, we have
\begin{align*}\frac{R_{2}}{J^{-1}(k_{2}(I^{-1}(k_{1}(R_{2}+\varepsilon)\int_{0}^{1}p(s)ds+I(\varepsilon))+\varepsilon)
\int_{0}^{1}q(s)ds+J(\varepsilon))}\leq 1,\end{align*} a
contradiction to \eqref{3.2.4}. Hence, \eqref{3.2.17} is true and by
Lemma \ref{lemindexone}, the fixed point index
\begin{equation}\label{3.2.21}\ind(T_{n},\mathcal{O}_{R_{0}}\cap(P\times P),P\times P)=1.\end{equation}
Now, choose a $t_{0}\in(0,1)$ and define
\begin{equation}\label{3.2.22}N_{1}=\left(t_{0}\min_{t\in[t_{0},1]}\int_{t_{0}}^{1}G(t,s)p(s)ds\right)^{-1}+1\text{
and
}N_{2}=\left(t_{0}\min_{t\in[t_{0},1]}\int_{t_{0}}^{1}G(t,s)q(s)ds\right)^{-1}+1.\end{equation}
By $(\mathbf{B}_{10})$, there exist real constants with
$R_{1}^{*}>R_{1}$ and $R_{2}^{*}>R_{2}$ such that
\begin{equation}\label{3.2.23}\begin{split}
h_{1}(x,y)&\geq N_{1}x,\text{ for }x\geq
R_{1}^{*},y\in(0,\infty),\\
h_{2}(x,y)&\geq N_{2}x,\text{ for }x\geq
R_{2}^{*},y\in(0,\infty).
\end{split}\end{equation} Let
$R^{*}=\frac{R_{1}^{*}+R_{2}^{*}}{t_{0}}$ and define
$\mathcal{O}_{R^{*}}=\Omega_{R_{1}^{*}}\times \Omega_{R_{2}^{*}}$,
where
\begin{align*}\Omega_{R_{1}^{*}}=\{x\in E:\|x\|_{2}<\frac{R_{1}^{*}}{t_{0}}\},\,
\Omega_{R_{2}^{*}}=\{x\in
E:\|x\|_{2}<\frac{R_{2}^{*}}{t_{0}}\}.\end{align*} We show that
\begin{equation}\label{3.2.24}
T_{n}(x,y)\npreceq(x,y),\text{ for }(x,y)\in\partial
\mathcal{O}_{R^{*}}\cap(P\times P).
\end{equation}
Suppose $T_{n}(x_{0},y_{0})\preceq(x_{0},y_{0})$ for some
$(x_{0},y_{0})\in\partial\mathcal{O}_{R^{*}}\cap(P\times P)$. Then,
\begin{equation}\label{3.2.25}x_{0}(t)\geq A_{n}(x_{0},y_{0})(t)\text{ and }y_{0}(t)\geq B_{n}(x_{0},y_{0})(t)\text{ for }t\in[0,1].\end{equation}
By Lemma \ref{lemcone1}, we have
\begin{align*}x_{0}(t)\geq t\|x_{0}\|\geq
t_{0}\|x_{0}\|_{2}=t_{0}\frac{R_{1}^{*}}{t_{0}}=R_{1}^{*}\text{ for
}t\in[t_{0},1].\end{align*} Similarly, $y_{0}(t)\geq R_{2}^{*}$ for
$t\in[t_{0},1]$. Hence, $$|x_{0}(t)|+\frac{t}{n}\geq R_{1}^{*}\text{
and }|y_{0}(t)|+\frac{t}{n}\geq R_{2}^{*}\text{ for
}t\in[t_{0},1].$$ Now, using \eqref{3.2.25}, \eqref{3.2.23} and
$(\mathbf{B}_{10})$, we have
\begin{align*}\begin{split}
&x_{0}(t)\geq A_{n}(x_{0},y_{0})(t)\\
&=\int_{0}^{1}G(t,s)p(s)f(s,y_{0}(s)+\frac{s}{n},|x_{0}'(s)|+\frac{1}{n})ds\\
&\geq\int_{t_{0}}^{1}G(t,s)p(s)h_{1}(y_{0}(s)+\frac{s}{n},|x_{0}'(s)|+\frac{1}{n})ds\\
&\geq\int_{t_{0}}^{1}G(t,s)p(s)N_{1}(y_{0}(s)+\frac{s}{n})ds\\
&\geq\int_{t_{0}}^{1}G(t,s)p(s)dsN_{1}R_{2}^{*}\\
&\geq\min_{t\in[t_{0},1]}\int_{t_{0}}^{1}G(t,s)p(s)dsN_{1}R_{2}^{*}\\
&>\frac{R_{2}^{*}}{t_{0}},
\end{split}\end{align*} which implies that
$\|x_{0}\|_{2}=\|x_{0}\|>\frac{R_{2}^{*}}{t_{0}}$. Similarly, using
\eqref{3.2.23}, \eqref{3.2.25} and $(\mathbf{B}_{10})$, we have
$\|y_{0}\|_{2}>\frac{R_{1}^{*}}{t_{0}}$. Consequently, it follows
that, $\|(x_{0},y_{0})\|_{4}=\|x_{0}\|_{2}+\|y_{0}\|_{2}>R^{*}$, a
contradiction. Hence, \eqref{3.2.24} is true and by Lemma
\ref{lemindexzero}, the fixed point index
\begin{equation}\label{3.2.26}\ind(T_{n},\mathcal{O}_{R^{*}}\cap(P\times P),P\times P)=0.\end{equation}
From \eqref{3.2.21} and \eqref{3.2.26}, it follows that
\begin{equation}\label{3.2.27}\ind(T_{n},(\mathcal{O}_{R^{*}}\setminus\overline{\mathcal{O}}_{R})\cap(P\times P),P\times P)=-1.\end{equation}
Thus, in view of \eqref{3.2.21} and \eqref{3.2.27}, there exist
$(x_{n,1},y_{n,1})\in\mathcal{O}_{R}\cap(P\times P)$ and
$(x_{n,2},y_{n,2})\in(\mathcal{O}_{R^{*}}\setminus\overline{\mathcal{O}}_{R})\cap(P\times
P)$ such that $(x_{n,j},y_{n,j})=T_{n}(x_{n,j},y_{n,j}),\,(j=1,2)$
which implies that
\begin{equation}\label{3.2.28}\begin{split}x_{n,j}(t)&=\int_{0}^{1}G(t,s)p(s)f(t,y_{n,j}(s)+\frac{s}{n},|x_{n,j}'(s)|+\frac{1}{n})ds,\hspace{0.4cm}t\in[0,1],\\
y_{n,j}(t)&=\int_{0}^{1}G(t,s)q(s)g(s,x_{n,j}(s)+\frac{s}{n},|y_{n,j}'(s)|+\frac{1}{n})ds,\hspace{0.4cm}t\in[0,1],\,j=1,2.\end{split}\end{equation}
Using $(\mathbf{B}_{8})$ there exist continuous functions
$\varphi_{R_{2}+\varepsilon}$ and $\psi_{R_{1}+\varepsilon}$ defined
on $[0,1]$ and positive on $(0,1)$ and real constants
$0\leq\delta_{1},\delta_{2}<1$ such that
\begin{equation}\label{3.2.29}\begin{split}f(t,x,y)&\geq \varphi_{R_{2}+\varepsilon}(t)x^{\delta_{1}},\hspace{0.4cm}(t,x,y)\in[0,1]\times[0,R_{2}+\varepsilon]\times[0,\infty),\\
g(t,x,y)&\geq
\psi_{R_{1}+\varepsilon}(t)x^{\delta_{2}},\hspace{0.4cm}(t,x,y)\in[0,1]\times[0,R_{1}+\varepsilon]\times[0,\infty).\end{split}\end{equation}
By the definition of $P$, we have $x_{n,1}(t)\geq t\|x_{n,1}\|$ and
$y_{n,1}(t)\geq t\|y_{n,1}\|$ for $t\in[0,1]$. We show that
\begin{equation}\label{3.2.30}x_{n,1}'(t)\geq C_{4}^{\delta_{1}}\int_{t}^{1}s^{\delta_{1}}p(s)\varphi_{R_{2}+\varepsilon}(s)ds,\hspace{0.4cm}t\in[0,1],\end{equation}
\begin{equation}\label{3.2.31}y_{n,1}'(t)\geq
C_{3}^{\delta_{2}}\int_{t}^{1}s^{\delta_{2}}q(s)\psi_{R_{1}+\varepsilon}(s)ds,\hspace{0.4cm}t\in[0,1],\end{equation}
where
\begin{align*}\begin{split}
C_{3}&=\left(\int_{0}^{1}s^{\delta_{2}+1}q(s)\psi_{R_{1}+\varepsilon}(s)ds\right)^{\frac{\delta_{1}}{1-\delta_{1}\delta_{2}}}\left(\int_{0}^{1}s^{\delta_{1}+1}p(s)\varphi_{R_{2}+\varepsilon}(s)ds\right)^{\frac{1}{1-\delta_{1}\delta_{2}}},\\
C_{4}&=\left(\int_{0}^{1}s^{\delta_{1}+1}p(s)\varphi_{R_{2}+\varepsilon}(s)ds\right)^{\frac{\delta_{2}}{1-\delta_{1}\delta_{2}}}\left(\int_{0}^{1}s^{\delta_{2}+1}q(s)\psi_{R_{1}+\varepsilon}(s)ds\right)^{\frac{1}{1-\delta_{1}\delta_{2}}}.
\end{split}\end{align*}
In order to prove \eqref{3.2.30}, using \eqref{3.2.28} and
\eqref{3.2.29}, we consider
\begin{align*}\begin{split}
x_{n,1}(t)&=\int_{0}^{1}G(t,s)p(s)f(s,y_{n,1}(s)+\frac{s}{n},|x_{n,1}'(s)|+\frac{1}{n})ds\\
&\geq\int_{0}^{1}G(t,s)p(s)\varphi_{R_{2}+\varepsilon}(s)(y_{n,1}(s)+\frac{s}{n})^{\delta_{1}}ds\\
&\geq\|y_{n,1}\|^{\delta_{1}}\int_{0}^{1}G(t,s)s^{\delta_{1}}p(s)\varphi_{R_{2}+\varepsilon}(s)ds,
\end{split}\end{align*}
which shows that
\begin{equation}\label{3.2.32}
\|x_{n,1}\|\geq\|y_{n,1}\|^{\delta_{1}}\int_{0}^{1}s^{\delta_{1}+1}p(s)\varphi_{R_{2}+\varepsilon}(s)ds.
\end{equation}
Similarly, from \eqref{3.2.28} and \eqref{3.2.29}, we have
\begin{equation}\label{3.2.33}
\|y_{n,1}\|\geq\|x_{n,1}\|^{\delta_{2}}\int_{0}^{1}s^{\delta_{2}+1}q(s)\psi_{R_{1}+\varepsilon}(s)ds.
\end{equation}
Using \eqref{3.2.33} in \eqref{3.2.32}, we have
\begin{align*}
\|y_{n,1}\|\geq\left(\|y_{n,1}\|^{\delta_{1}}\int_{0}^{1}s^{\delta_{1}+1}p(s)\varphi_{R_{2}+\varepsilon}(s)ds\right)^{\delta_{2}}\int_{0}^{1}s^{\delta_{2}+1}q(s)\psi_{R_{1}+\varepsilon}(s)ds,
\end{align*}
which implies that
\begin{equation}\label{3.2.34}
\|y_{n,1}\|\geq\left(\int_{0}^{1}s^{\delta_{1}+1}p(s)\varphi_{R_{2}+\varepsilon}(s)ds\right)^{\frac{\delta_{2}}{1-\delta_{1}\delta_{2}}}
\left(\int_{0}^{1}s^{\delta_{2}+1}q(s)\psi_{R_{1}+\varepsilon}(s)ds\right)^{\frac{1}{1-\delta_{1}\delta_{2}}}=C_{4}.
\end{equation}
Using  \eqref{3.2.29} and \eqref{3.2.34} in the following relation
\begin{align*}
x_{n,1}'(t)=&\int_{t}^{1}p(s)f(s,y_{n,1}(s)+\frac{s}{n},|x_{n,1}'(s)|+\frac{1}{n})ds,
\end{align*}
we obtain \eqref{3.2.30}. Similarly, we can prove \eqref{3.2.31}.

\vskip 0.5em

Now, differentiating \eqref{3.2.28}, using $(\mathbf{B}_{3})$,
\eqref{3.2.30}, \eqref{3.2.31} and Lemma \ref{lemspace1}, we have
\begin{equation}\label{3.2.35}\begin{split}
0\leq-x_{n,1}''(t)&\leq p(t)k_{1}(R_{2}+\varepsilon)(u_{1}(C_{4}^{\delta_{1}}\int_{t}^{1}s^{\delta_{1}}p(s)\varphi_{R_{2}+\varepsilon}(s)ds)+v_{1}(\frac{R_{1}+1}{t})),\hspace{0.4cm}t\in(0,1),\\
0\leq-y_{n,1}''(t)&\leq q(t)
k_{2}(R_{1}+\varepsilon)(u_{2}(C_{3}^{\delta_{2}}\int_{t}^{1}s^{\delta_{2}}q(s)\psi_{R_{1}+\varepsilon}(s)ds)+v_{2}(\frac{R_{2}+1}{t})),\hspace{0.4cm}t\in(0,1).
\end{split}\end{equation}
Integration from $t$ to $1$, using the BCs \eqref{3.2.5}, leads to
\begin{align*}
x_{n,1}'(t)&\leq
k_{1}(R_{2}+\varepsilon)\int_{t}^{1}p(s)(u_{1}(C_{4}^{\delta_{1}}
\int_{s}^{1}\tau^{\delta_{1}}p(\tau)\varphi_{R_{2}+\varepsilon}(\tau)d\tau)+v_{1}(\frac{R_{1}+1}{s}))ds,\hspace{0.4cm}t\in[0,1],\\
y_{n,1}'(t)&\leq k_{2}(R_{1}+\varepsilon)\int_{t}^{1}q(s)
(u_{2}(C_{3}^{\delta_{2}}\int_{s}^{1}\tau^{\delta_{2}}q(\tau)\psi_{R_{1}+\varepsilon}(\tau)d\tau)+v_{2}(\frac{R_{2}+1}{s}))ds,\hspace{0.4cm}t\in[0,1],
\end{align*} which implies that
\begin{equation}\label{3.2.36}\begin{split}
x_{n,1}'(t)&\leq
k_{1}(R_{2}+\varepsilon)\int_{0}^{1}p(s)(u_{1}(C_{4}^{\delta_{1}}
\int_{s}^{1}\tau^{\delta_{1}}p(\tau)\varphi_{R_{2}+\varepsilon}(\tau)d\tau)+v_{1}(\frac{R_{1}+1}{s}))ds,\hspace{0.4cm}t\in[0,1],\\
y_{n,1}'(t)&\leq k_{2}(R_{1}+\varepsilon)\int_{0}^{1}q(s)
(u_{2}(C_{3}^{\delta_{2}}\int_{s}^{1}\tau^{\delta_{2}}q(\tau)\psi_{R_{1}+\varepsilon}(\tau)d\tau)+v_{2}(\frac{R_{2}+1}{s}))ds,\hspace{0.4cm}t\in[0,1].
\end{split}\end{equation}
In view of \eqref{3.2.30}, \eqref{3.2.31}, \eqref{3.2.36},
\eqref{3.2.35}, $(\mathbf{B}_{1})$ and $(\mathbf{B}_{9})$, the
sequences $\{(x_{n,1}^{(j)},y_{n,1}^{(j)})\}$ $(j=0,1)$ are
uniformly bounded and equicontinuous on $[0,1]$. Thus, by Theorem
\ref{arzela}, there exist subsequences
$\{(x_{n_{k},1}^{(j)},y_{n_{k},1}^{(j)})\}\,(j=0,1)$ of
 $\{(x_{n,1}^{(j)},y_{n,1}^{(j)})\}$ and functions $(x_{0,1},y_{0,1})\in\mathcal{E}\times\mathcal{E}$ such that $(x_{n_{k},1}^{(j)},y_{n_{k},1}^{(j)})$
 converges uniformly to $(x_{0,1}^{(j)},y_{0,1}^{(j)})$ on $[0,1]$. Also, $x_{0,1}(0)=y_{0,1}(0)=x_{0,1}'(1)=y_{0,1}'(1)=0$. Moreover, from
 \eqref{3.2.30} and \eqref{3.2.31}, with $n_{k}$ in place of $n$ and taking $\lim_{n_k\rightarrow+\infty}$, we have
\begin{align*}
x_{0,1}'(t)&\geq C_{4}^{\delta_{1}}\int_{t}^{1}s^{\delta_{1}}p(s)\varphi_{R_{2}+\varepsilon}(s)ds,\\
y_{0,1}'(t)&\geq
C_{3}^{\delta_{2}}\int_{t}^{1}s^{\delta_{2}}q(s)\psi_{R_{1}+\varepsilon}(s)ds,
\end{align*}
which implies that $x_{0,1}'>0$ and $y_{0,1}'>0$ on $[0,1)$,
$x_{0,1}>0$ and $y_{0,1}>0$ on $(0,1]$. Further,
\begin{equation}\label{3.2.37}\begin{split}
\left|f(t,y_{n_{k},1}(t)+\frac{t}{n},x_{n_{k},1}'(t)+\frac{1}{n_{k}})\right|&\leq
k_{1}(R_{2}+\varepsilon)(u_{1}(C_{4}^{\delta_{1}}\int_{t}^{1}s^{\delta_{1}}p(s)\varphi_{R_{2}+\varepsilon}(s)ds)+v_{1}(\frac{R_{1}+1}{t})),\\
\left|g(t,x_{n_{k},1}(t)+\frac{t}{n},y_{n_{k},1}'(t)+\frac{1}{n_{k}})\right|&\leq
k_{2}(R_{1}+\varepsilon)(u_{2}(C_{3}^{\delta_{2}}\int_{t}^{1}s^{\delta_{2}}q(s)\psi_{R_{1}+\varepsilon}(s)ds)+v_{2}(\frac{R_{2}+1}{t})),
\end{split}\end{equation}
\begin{equation}\label{3.2.38}\begin{split}
\lim_{n_k\rightarrow\infty}f(t,y_{n_{k},1}(t)+\frac{t}{n_{k}},x_{n_{k},1}'(t)+\frac{1}{n_{k}})&=f(t,y_{0,1}(t),x_{0,1}'(t)),\hspace{0.4cm}t\in(0,1],\\
\lim_{n_k\rightarrow\infty}g(t,x_{n_{k},1}(t)+\frac{t}{n_{k}},y_{n_{k},1}'(t)+\frac{1}{n_{k}})&=g(t,x_{0,1}(t),y_{0,1}'(t)),\hspace{0.4cm}t\in(0,1].
\end{split}\end{equation} Moreover, $(x_{n_{k},1},y_{n_{k},1})$
satisfies
\begin{align*}
x_{n_{k},1}(t)&=\int_{0}^{1}G(t,s)p(s)f(s,y_{n_{k},1}(s)+\frac{s}{n_{k}},x_{n_{k},1}'(s)+\frac{1}{n_{k}})ds,\hspace{0.4cm}t\in[0,1],\\
y_{n_{k},1}(t)&=\int_{0}^{1}G(t,s)q(s)g(s,x_{n_{k},1}(s)+\frac{s}{n_{k}},y_{n_{k},1}'(s)+\frac{1}{n_{k}})ds,\hspace{0.4cm}t\in[0,1],
\end{align*}
which in view of \eqref{3.2.37}, $(\mathbf{B}_{9})$, \eqref{3.2.38}, the
Lebesgue dominated convergence theorem and taking
$\lim_{n_k\rightarrow+\infty}$, leads to
\begin{align*}
x_{0,1}(t)&=\int_{0}^{1}G(t,s)p(s)f(s,y_{0,1}(s),x_{0,1}'(s))ds,\hspace{0.4cm}t\in[0,1],\\
y_{0,1}(t)&=\int_{0}^{1}G(t,s)q(s)g(s,x_{0,1}(s),y_{0,1}'(s))ds,\hspace{0.4cm}t\in[0,1],\end{align*}
which implies that $(x_{0,1},y_{0,1})\in C^{2}(0,1)\times
C^{2}(0,1)$ and
\begin{align*}
-x_{0,1}''(t)=p(t)f(t,y_{0,1}(t),x_{0,1}'(t)),\hspace{0.4cm}t\in(0,1),\\
-y_{0,1}''(t)=q(t)g(t,x_{0,1}(t),y_{0,1}'(t)),\hspace{0.4cm}t\in(0,1).
\end{align*}
Moreover, by \eqref{3.2.1} and \eqref{3.2.2}, we have
$\|x_{0,1}\|_{2}<R_{1}$ and $\|y_{0,1}\|_{2}<R_{2}$, that is,
$\|(x_{0,1},y_{0,1})\|_{3}<R_{0}$. By a similar proof the sequence
$\{(x_{n,2},y_{n,2})\}$ has a convergent subsequence
$\{(x_{n_{k},2},y_{n_{k},2})\}$ converging uniformly to
$(x_{0,2},y_{0,2})\in\mathcal{E}\times\mathcal{E}$ on $[0,1]$.
Moreover, $(x_{0,2},y_{0,2})$ is a solution to the system \eqref{3.0.3} with $x_{0,2}>0$ and $y_{0,2}>0$ on
$(0,1]$, $x'_{0,2}>0$ and $y'_{0,2}>0$ on $[0,1)$,
$R_{0}<\|(x_{0,2},y_{0,2})\|_{4}<R^{*}$.
\end{proof}

\begin{ex}Consider the following coupled system of SBVPs
\begin{equation}\begin{split}\label{3.2.39}
-&x''(t)=\mu_{1}(1+(y(t))^{\delta_{1}}+(y(t))^{\eta_{1}})(1+(x'(t))^{\alpha_{1}}+(x'(t))^{-\beta_{1}}),\hspace{0.4cm}t\in(0,1),\\
-&y''(t)=\mu_{2}(1+(x(t))^{\delta_{2}}+(x(t))^{\eta_{2}})(1+(y'(t))^{\alpha_{2}}+(y'(t))^{-\beta_{2}}),\hspace{0.4cm}t\in(0,1),\\
&x(0)=y(0)=x'(1)=y'(1)=0,
\end{split}\end{equation}
where $0\leq\delta_{i}<1$, $\eta_{i}>1$, $0<\alpha_{i}<1$,
$0<\beta_{i}<1$, and $\mu_{i}>0$, $i=1,2$.

\vskip 0.5em

Choose $p(t)=\mu_{1}$, $q(t)=\mu_{2}$, $k_{i}(x)=1+x^{\delta_{i}}+x^{\eta_{i}}$, $u_{i}(x)=x^{-\beta_{i}}$ and $v_{i}(x)=1+x^{\alpha_{i}}$, $i=1,2$. Also, $\varphi_{E}(t)=\mu_{1}$, $\psi_{E}(t)=\mu_{2}$ and $h_{i}(x,y)=\mu_{i}(1+x^{\eta_{i}})$, $i=1,2$.

\vskip 0.5em

Assume that $\mu_{1}$ is arbitrary real constant and
\begin{align*}
\mu_{2}<\min\{\inf_{c\in(0,\infty)}\frac{J(c)}{k_{2}(I^{-1}(\mu_{1}k_{1}(c)))},\,\inf_{c\in(0,\infty)}\frac{J((\mu_{1}^{-1}I(c))^{\delta_{1}^{-1}})}{k_{2}(c)},\,\inf_{c\in(0,\infty)}\frac{J((\mu_{1}^{-1}I(c))^{\eta_{1}^{-1}})}{k_{2}(c)}\}.
\end{align*}
Then,
\begin{align*}\begin{split}
&\sup_{c\in(0,\infty)}\frac{c}{I^{-1}(k_{1}(J^{-1}(k_{2}(c)\int_{0}^{1}q(s)ds))\int_{0}^{1}p(s)ds)}\\
&=\sup_{c\in(0,\infty)}\frac{c}{I^{-1}(\mu_{1}k_{1}(J^{-1}(\mu_{2}k_{2}(c))))}\\
&\geq\frac{c}{I^{-1}(\mu_{1}k_{1}(J^{-1}(\mu_{2}k_{2}(c))))},\hspace{4.6cm}c\in(0,\infty)\\
&=\frac{c}{I^{-1}(\mu_{1}(1+(J^{-1}(\mu_{2}k_{2}(c)))^{\delta_{1}}+(J^{-1}(\mu_{2}k_{2}(c)))^{\eta_{1}}))},\hspace{1cm}c\in(0,\infty)\\
&>1,
\end{split}\end{align*}
and
\begin{align*}\begin{split}
&\sup_{c\in(0,\infty)}\frac{c}{J^{-1}(k_{2}(I^{-1}(k_{1}(c)\int_{0}^{1}p(s)ds))\int_{0}^{1}q(s)ds)}\\
&=\sup_{c\in(0,\infty)}\frac{c}{J^{-1}(\mu_{2}k_{2}(I^{-1}(\mu_{1}k_{1}(c))))}\\
&=\frac{c}{J^{-1}(\mu_{2}k_{2}(I^{-1}(\mu_{1}k_{1}(c))))},\hspace{1.4cm}c\in(0,\infty)\\
&>1.\end{split}\end{align*} Moreover,
\begin{align*}
\int_{0}^{1}p(t)v_{1}(\frac{C}{t})dt&=\mu_{1}(1+\frac{C^{\alpha_{1}}}{1-\alpha_{1}})<+\infty,\\
\int_{0}^{1}p(t)u_{1}(C\int_{t}^{1}s^{\delta_{1}}p(s)\varphi_{E}(s)ds)dt&\leq\mu_{1}^{1-2\beta_{1}}C^{-\beta_{1}}(\delta_{1}+1)^{\beta_{1}}\int_{0}^{1}(1-t)^{-\beta_{1}}dt\\
&=\mu_{1}^{1-2\beta_{1}}C^{-\beta_{1}}(\delta_{1}+1)^{\beta_{1}}(1-\beta_{1})^{-1}<+\infty,\text{
etc.}
\end{align*}
Also,
\begin{align*}\lim_{x\rightarrow+\infty}\frac{h_{i}(x,y)}{x}=\lim_{x\rightarrow+\infty}\frac{\mu_{i}(1+x^{\eta_{i}})}{x}=+\infty,\,i=1,2.\end{align*}
Clearly, $(\mathbf{B}_{1})-(\mathbf{B}_{5})$ and
$(\mathbf{B}_{8})-(\mathbf{B}_{10})$ are satisfied. Hence, by
Theorem \ref{th4.2}, the system of BVPs \eqref{3.2.39} has at least
two positive solutions.
\end{ex}


\section{Existence of $C^{1}$-positive solutions with more general BCs}\label{existence-two}

In this section, we study the system of BVPs \eqref{3.0.4} and establish sufficient conditions for the existence
of $C^{1}$-positive solutions.
By a $C^{1}$-positive solution to the system of BVPs \eqref{3.0.4}, we mean $(x,y)\in (C^{1}[0,1]\cap C^{2}(0,1))\times (C^{1}[0,1]\cap C^{2}(0,1))$ satisfying \eqref{3.0.4}, $x>0$ and $y>0$ on $[0,1]$, $x'>0$ and $y'>0$ on $[0,1)$.

\vskip 0.5em

Assume that
\begin{description}
\item[$(\mathbf{B}_{11})$]
\begin{align*}\begin{split}
\sup_{c\in(0,\infty)}\frac{c}{(1+\frac{b_{1}}{a_{1}})I^{-1}(k_{1}((1+\frac{b_{2}}{a_{2}})J^{-1}(k_{2}(c)\int_{0}^{1}q(t)dt))\int_{0}^{1}p(t)dt)}&>1,\\
\sup_{c\in(0,\infty)}\frac{c}{(1+\frac{b_{2}}{a_{2}})J^{-1}(k_{2}((1+\frac{b_{1}}{a_{1}})I^{-1}(k_{1}(c)\int_{0}^{1}p(t)dt))\int_{0}^{1}q(t)dt)}&>1,
\end{split}\end{align*}
where $I(\mu)=\int_{0}^{\mu}\frac{d\tau}{u_{1}(\tau)+v_{1}(\tau)},\,
J(\mu)=\int_{0}^{\mu}\frac{d\tau}{u_{2}(\tau)+v_{2}(\tau)}$, for
$\mu\in(0,\infty)$;
\item[$(\mathbf{B}_{12})$] $\int_{0}^{1}p(t)u_{1}(C\int_{t}^{1}p(s)\varphi_{EF}(s)ds)dt<+\infty$ and
$\int_{0}^{1}q(t)u_{2}(C\int_{t}^{1}q(s)\psi_{EF}(s)ds)dt<+\infty$
for any real constant $C>0$.
\end{description}

\begin{thm}\label{th2.1}
Assume that $(\mathbf{B}_{1})-(\mathbf{B}_{3})$,
$(\mathbf{B}_{5})$, $(\mathbf{B}_{6})$, $(\mathbf{B}_{11})$ and
$(\mathbf{B}_{12})$ hold. Then the system of BVPs \eqref{3.0.4} has a $C^{1}$-positive solution.
\end{thm}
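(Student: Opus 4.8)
The plan is to run the regularization-and-limiting scheme of Theorem \ref{th3.2} almost verbatim, adjusting only for the Sturm--Liouville conditions $a_ix(0)-b_ix'(0)=0$, $x'(1)=0$. First I would use $(\mathbf{B}_{11})$ to select $M_1,M_2>0$ for which the two sup-quotients exceed $1$, shrink $\varepsilon>0$ by continuity of $k_1,k_2,I,J$, and pick derivative bounds $L_1,L_2>0$ exactly as in \eqref{3.1.1}--\eqref{3.1.4}, now carrying the extra factors $1+\tfrac{b_i}{a_i}$ (with $(\mathbf{B}_5)$ ensuring $I^{-1},J^{-1}$ are defined on all of $[0,\infty)$). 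With retractions $\theta_i:\R\to[0,M_i]$ and $\rho_i:\R\to[\tfrac1n,L_i]$ I would set up the modified non-singular system, the analogue of \eqref{3.1.5}, obtained from \eqref{3.0.4} by truncating the nonlinearities and replacing the endpoint data by $x'(1)=y'(1)=\tfrac1n$ while keeping the left conditions $a_1x(0)-b_1x'(0)=a_2y(0)-b_2y'(0)=0$. Since the truncated right-hand sides are continuous and bounded on $[0,1]\times\R^2$, Theorem \ref{schauder} yields a solution $(x_n,y_n)\in(C^1[0,1]\cap C^2(0,1))^2$.

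The a priori estimates then proceed as before. From $(\mathbf{B}_2)$ the solutions are concave, so $x_n',y_n'$ are nonincreasing and bounded below by $\tfrac1n$, whence $x_n,y_n$ are increasing and attain their norms at $t=1$. The derivative bounds $x_n'<L_1$, $y_n'<L_2$ follow verbatim from $(\mathbf{B}_3)$ by dividing by $u_1(\rho_1(x_n'))+v_1(\rho_1(x_n'))$, integrating from $t$ to $1$ and invoking the monotonicity of $I,J$, contradicting the choices of $L_1,L_2$. The genuinely new point is the function bound: integrating $-x_n''=p\,f$ twice against the boundary data gives $x_n(t)=\tfrac{b_1/a_1+t}{n}+\int_0^1\bigl(\tfrac{b_1}{a_1}+\min\{t,s\}\bigr)p(s)f\,ds$, hence $\|x_n\|=x_n(1)\le(1+\tfrac{b_1}{a_1})x_n'(0)$. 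Combined with the $I$-estimate $x_n'(0)\le I^{-1}(k_1(\theta_2(\|y_n\|))\int_0^1 p\,ds+I(\varepsilon))$ this produces precisely the factor $1+\tfrac{b_1}{a_1}$ appearing in $(\mathbf{B}_{11})$, and the coupled case analysis of Theorem \ref{th3.2} then forces $x_n<M_1$, $y_n<M_2$. Thus $(x_n,y_n)$ solves the untruncated regularized problem.

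The main obstacle, and the place where the hypotheses diverge from $(\mathbf{B}_7)$, is a uniform positive lower bound. Since $x_n(0)=\tfrac{b_1}{a_1}x_n'(0)>0$, Lemma \ref{lemaconcave} (which requires $x(0)=0$) is unavailable; instead I would exploit that $x_n,y_n$ are increasing, so $x_n(s)\ge x_n(0)=:X$ and $y_n(s)\ge y_n(0)=:Y$. Applying $(\mathbf{B}_6)$ on the ranges secured above, with lower-bound functions $\varphi_{M_2L_1},\psi_{M_1L_2}$, the relations $x_n'(0)\ge\int_0^1 p\,f\,ds$ and $y_n'(0)\ge\int_0^1 q\,g\,ds$ give the coupled inequalities $X\ge\tfrac{b_1}{a_1}Y^{\delta_1}\int_0^1 p\,\varphi_{M_2L_1}\,ds$ and $Y\ge\tfrac{b_2}{a_2}X^{\delta_2}\int_0^1 q\,\psi_{M_1L_2}\,ds$. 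As $0\le\delta_1\delta_2<1$, eliminating one variable yields $n$-independent constants $c_x,c_y>0$ with $x_n\ge c_x$ and $y_n\ge c_y$ on $[0,1]$. It is this \emph{constant} lower bound---in place of the $s\,y_n(1)$ bound of Theorem \ref{th3.2}---that eliminates the weight $s^{\delta_i}$ and makes $(\mathbf{B}_{12})$ the correct integrability hypothesis; in particular $x_n'(t)\ge c_y^{\delta_1}\int_t^1 p\,\varphi_{M_2L_1}\,ds$, with the symmetric estimate for $y_n'$.

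Finally, inserting these lower bounds into $(\mathbf{B}_3)$ gives $0\le-x_n''(t)\le k_1(M_2)p(t)\bigl(u_1(c_y^{\delta_1}\int_t^1 p\,\varphi_{M_2L_1}\,ds)+v_1(L_1)\bigr)$ together with its $y$-analogue; by $(\mathbf{B}_1)$ and $(\mathbf{B}_{12})$ the families $\{x_n^{(j)}\},\{y_n^{(j)}\}$ ($j=0,1$) are uniformly bounded and equicontinuous on $[0,1]$, so Theorem \ref{arzela} extracts a subsequence converging in $C^1[0,1]\times C^1[0,1]$ to a limit $(x,y)$. Passing to the limit in the once-integrated equations, exactly as in Theorem \ref{th3.2}, shows $(x,y)$ solves \eqref{3.0.4}; the surviving derivative lower bounds give $x',y'>0$ on $[0,1)$ and $x,y>0$ on $[0,1]$, so $(x,y)$ is the desired $C^1$-positive solution.
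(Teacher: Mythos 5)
Your proposal is correct and follows essentially the same route as the paper's own proof: truncation plus Schauder, the a priori bounds with the $(1+\tfrac{b_i}{a_i})$ factors from $(\mathbf{B}_{11})$, the coupled lower bounds at $t=0$ obtained from $x_n(0)=\tfrac{b_1}{a_1}x_n'(0)$ and monotonicity (the paper's constants $C_5,C_6$ are exactly your $c_x,c_y$), and then $(\mathbf{B}_{12})$ with Arzel\`a--Ascoli to pass to the limit. Your observation that the constant lower bound, rather than the $t\,x(1)$ bound of Lemma \ref{lemaconcave}, is what makes $(\mathbf{B}_{12})$ the right integrability condition is precisely how the paper's argument works.
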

\begin{proof}
In view of $(\mathbf{B}_{11})$, we can choose real constants
$M_{3}>0$ and $M_{4}>0$ such that
\begin{align*}
\frac{M_{3}}{(1+\frac{b_{1}}{a_{1}})I^{-1}(k_{1}((1+\frac{b_{2}}{a_{2}})J^{-1}(k_{2}(M_{3})\int_{0}^{1}q(t)dt))\int_{0}^{1}p(t)dt)}>1,
\end{align*}
\begin{align*}
\frac{M_{4}}{(1+\frac{b_{2}}{a_{2}})J^{-1}(k_{2}((1+\frac{b_{1}}{a_{1}})I^{-1}(k_{1}(M_{4})\int_{0}^{1}p(t)dt))\int_{0}^{1}q(t)dt)}>1.
\end{align*}
From the continuity of $k_{1}$, $k_{2}$, $I$ and $J$, we choose
$\varepsilon>0$ small enough such that
\begin{equation}\label{3.3.1}\frac{M_{3}}{(1+\frac{b_{1}}{a_{1}})I^{-1}(k_{1}((1+\frac{b_{2}}{a_{2}})J^{-1}(k_{2}(M_{3})\int_{0}^{1}q(t)dt+J(\varepsilon)))\int_{0}^{1}p(t)dt+I(\varepsilon))}>1,\end{equation}
\begin{equation}\label{3.3.2}\frac{M_{4}}{(1+\frac{b_{2}}{a_{2}})J^{-1}(k_{2}((1+\frac{b_{1}}{a_{1}})I^{-1}(k_{1}(M_{4})\int_{0}^{1}p(t)dt+I(\varepsilon)))\int_{0}^{1}q(t)dt+J(\varepsilon))}>1.\end{equation}
Choose real constants $L_{3}>0$ and $L_{4}>0$ such that
\begin{equation}\label{3.3.3}I(L_{3})>k_{1}(M_{4})\int_{0}^{1}p(t)dt+I(\varepsilon),\end{equation}
\begin{equation}\label{3.3.4}J(L_{4})>k_{2}(M_{3})\int_{0}^{1}q(t)dt+J(\varepsilon).\end{equation}

Choose $n_{0}\in\{1,2,\cdots\}$ such that
$\frac{1}{n_{0}}<\varepsilon$. For each fixed
$n\in\{n_{0},n_{0}+1,\cdots\}$, define retractions
$\theta_{i}:\R\rightarrow[0,M_{i}]$ and
$\rho_{i}:\R\rightarrow[\frac{1}{n},L_{i}]$ by
\begin{align*}\theta_{i}(x)=\max\{0,\min\{x,M_{i}\}\}\text{ and }\rho_{i}(x)=\max\{\frac{1}{n},\min\{x,L_{i}\}\},i=3,4.\end{align*}
Consider the modified system of BVPs
\begin{equation}\label{3.3.5}\begin{split}
-x''(t)&=p(t)f(t,\theta_{4}(y(t)),\rho_{3}(x'(t))),\hspace{0.4cm}t\in(0,1),\\
-y''(t)&=q(t)g(t,\theta_{3}(x(t)),\rho_{4}(y'(t))),\hspace{0.4cm}t\in(0,1),\\
a_{1}x(&0)-b_{1}x'(0)=0,\,x'(1)=\frac{1}{n},\\
a_{2}y(&0)-b_{2}y'(0)=0,\,y'(1)=\frac{1}{n}.
\end{split}\end{equation}
Since $f(t,\theta_{4}(y(t)),\rho_{3}(x'(t)))$,
$g(t,\theta_{3}(x(t)),\rho_{4}(y'(t)))$ are continuous and bounded
on $[0,1]\times\R^{2}$, by Theorem \ref{schauder}, it follows that
the modified system of BVPs \eqref{3.3.5} has a solution
$(x_{n},y_{n})\in(C^{1}[0,1]\cap C^{2}(0,1))\times(C^{1}[0,1]\cap
C^{2}(0,1))$.

\vskip 0.5em

Using \eqref{3.3.5} and $(\mathbf{B}_{2})$, we obtain
\begin{align*}x_{n}''(t)\leq0\text{ and }y_{n}''(t)\leq0\text{ for }t\in(0,1),\end{align*}
which on integration from $t$ to $1$, and using the BCs
\eqref{3.3.5}, yields
\begin{equation}\label{3.3.6}x_{n}'(t)\geq\frac{1}{n}\text{ and }y_{n}'(t)\geq\frac{1}{n}\text{ for }t\in[0,1].\end{equation}
Integrating \eqref{3.3.6} from $0$ to $t$, using the BCs
\eqref{3.3.5} and \eqref{3.3.6}, we have
\begin{equation}\label{3.3.7}x_{n}(t)\geq(t+\frac{b_{1}}{a_{1}})\frac{1}{n}\text{ and }y_{n}(t)\geq(t+\frac{b_{2}}{a_{2}})\frac{1}{n}\text{ for }t\in[0,1].\end{equation}
From \eqref{3.3.6} and \eqref{3.3.7}, it follows that
\begin{align*}\|x_{n}\|=x_{n}(1)\text{ and }\|y_{n}\|=y_{n}(1).\end{align*}

Now, we show that
\begin{equation}\label{3.3.8}x_{n}'(t)<L_{3},\,y_{n}'(t)<L_{4},\hspace{0.4cm}t\in[0,1].\end{equation}
First, we prove $x_{n}'(t)<L_{3}$ for $t\in[0,1]$. Suppose
$x_{n}'(t_{1})\geq L_{3}$ for some $t_{1}\in[0,1]$. Using
\eqref{3.3.5} and $(\mathbf{B}_{3})$, we have
\begin{align*}-x_{n}''(t)\leq p(t)k_{1}(\theta_{4}(y_{n}(t)))(u_{1}(\rho_{3}(x_{n}'(t)))+v_{1}(\rho_{3}(x_{n}'(t)))),\hspace{0.4cm}t\in(0,1),\end{align*}
which implies that
\begin{align*}\frac{-x_{n}''(t)}{u_{1}(\rho_{3}(x_{n}'(t)))+v_{1}(\rho_{3}(x_{n}'(t)))}\leq k_{1}(M_{4})p(t),\hspace{0.4cm}t\in(0,1).\end{align*}
Integrating from $t_{1}$ to $1$, using the BCs \eqref{3.3.5}, we
obtain
\begin{align*}\int_{\frac{1}{n}}^{x_{n}'(t_{1})}\frac{dz}{u_{1}(\rho_{3}(z))+v_{1}(\rho_{3}(z))}\leq k_{1}(M_{4})\int_{t_{1}}^{1}p(t)dt,\end{align*}
which can also be written as
\begin{align*}\int_{\frac{1}{n}}^{L_{3}}\frac{dz}{u_{1}(\rho_{3}(z))+v_{1}(\rho_{3}(z))}+\int_{L_{3}}^{x_{n}'(t_{1})}\frac{dz}{u_{1}(\rho_{3}(z))+v_{1}(\rho_{3}(z))}
\leq k_{1}(M_{4})\int_{0}^{1}p(t)dt.\end{align*} Using the
increasing property of $I$, we obtain
\begin{align*}I(L_{3})+\frac{x_{n}'(t_{1})-L_{3}}{u_{1}(L_{3})+v_{1}(L_{3})}\leq k_{1}(M_{4})\int_{0}^{1}p(t)dt+I(\varepsilon),\end{align*}
a contradiction to \eqref{3.3.3}. Hence, $x_{n}'(t)<L_{3}$ for
$t\in[0,1]$. Similarly, we can show that $y_{n}'(t)<L_{4}$ for $t\in[0,1]$.

\vskip 0.5em

Now, we show that
\begin{equation}\label{3.3.9}x_{n}(t)<M_{3},\,y_{n}(t)<M_{4},\hspace{0.4cm}t\in[0,1].\end{equation}
Suppose $x_{n}(t_{2})\geq M_{3}$ for some $t_{2}\in[0,1]$. From
\eqref{3.3.5}, \eqref{3.3.8} and $(\mathbf{B}_{3})$, it follows
that
\begin{align*}\begin{split}
-x_{n}''(t)&\leq p(t)k_{1}(\theta_{4}(y_{n}(t)))(u_{1}(x_{n}'(t))+v_{1}(x_{n}'(t))),\hspace{0.4cm}t\in(0,1),\\
-y_{n}''(t)&\leq
q(t)k_{2}(\theta_{3}(x_{n}(t)))(u_{2}(y_{n}'(t))+v_{2}(y_{n}'(t))),\hspace{0.4cm}t\in(0,1),
\end{split}\end{align*}
which implies that
\begin{align*}\begin{split}
\frac{-x_{n}''(t)}{u_{1}(x_{n}'(t))+v_{1}(x_{n}'(t))}&\leq k_{1}(\theta_{4}(\|y_{n}\|))p(t),\hspace{0.4cm}t\in(0,1),\\
\frac{-y_{n}''(t)}{u_{2}(y_{n}'(t))+v_{2}(y_{n}'(t))}&\leq
k_{2}(M_{3})q(t),\hspace{1.3cm}t\in(0,1).
\end{split}\end{align*}
Integrating from $t$ to $1$, using the BCs \eqref{3.3.5}, we obtain
\begin{align*}\begin{split}
\int_{\frac{1}{n}}^{x_{n}'(t)}\frac{dz}{u_{1}(z)+v_{1}(z)}&\leq k_{1}(\theta_{4}(\|y_{n}\|))\int_{t}^{1}p(s)ds,\hspace{0.4cm}t\in[0,1],\\
\int_{\frac{1}{n}}^{y_{n}'(t)}\frac{dz}{u_{2}(z)+v_{2}(z)}&\leq
k_{2}(M_{3})\int_{t}^{1}q(s)ds,\hspace{1.3cm}t\in[0,1],
\end{split}\end{align*}
which implies that
\begin{align*}\begin{split}
I(x_{n}'(t))-I(\frac{1}{n})&\leq k_{1}(\theta_{4}(\|y_{n}\|))\int_{0}^{1}p(s)ds,\hspace{0.4cm}t\in[0,1],\\
J(y_{n}'(t))-J(\frac{1}{n})&\leq
k_{2}(M_{3})\int_{0}^{1}q(s)ds,\hspace{1.3cm}t\in[0,1].
\end{split}\end{align*}
The increasing property of $I$ and $J$ leads to
\begin{equation}\label{3.3.10}x_{n}'(t)\leq I^{-1}(k_{1}(\theta_{4}(\|y_{n}\|))\int_{0}^{1}p(s)ds+I(\varepsilon)),\hspace{0.4cm}t\in[0,1],\end{equation}
\begin{equation}\label{3.3.11}y_{n}'(t)\leq J^{-1}(k_{2}(M_{3})\int_{0}^{1}q(s)ds+J(\varepsilon)),\hspace{1.4cm}t\in[0,1].\end{equation}
Integrating \eqref{3.3.10} from $0$ to $t_{2}$ and \eqref{3.3.11}
from $0$ to $1$, using the BCs \eqref{3.3.5}, \eqref{3.3.10} and
\eqref{3.3.11}, we obtain
\begin{equation}\label{3.3.12}
M_{3}\leq x_{n}(t_{2})\leq(1+\frac{b_{1}}{a_{1}})I^{-1}(k_{1}(\theta_{4}(\|y_{n}\|))\int_{0}^{1}p(s)ds+I(\varepsilon)),
\end{equation}
\begin{equation}\label{3.3.13}\|y_{n}\|\leq(1+\frac{b_{2}}{a_{2}})J^{-1}(k_{2}(M_{3})\int_{0}^{1}q(s)ds+J(\varepsilon)).\end{equation}
Either we have $\|y_{n}\|<M_{4}$ or $\|y_{n}\|\geq M_{4}$. If
$\|y_{n}\|<M_{4}$, then from \eqref{3.3.12}, we have
\begin{equation}\label{3.3.14}M_{3}\leq(1+\frac{b_{1}}{a_{1}})I^{-1}(k_{1}(\|y_{n}\|)\int_{0}^{1}p(s)ds+I(\varepsilon)),\end{equation}
Now, by using \eqref{3.3.13} in \eqref{3.3.14} and the increasing
property of $k_{1}$ and $I^{-1}$, we obtain
\begin{align*}M_{3}\leq(1+\frac{b_{1}}{a_{1}})I^{-1}(k_{1}((1+\frac{b_{2}}{a_{2}})J^{-1}(k_{2}(M_{3})\int_{0}^{1}q(s)ds+J(\varepsilon)))
\int_{0}^{1}p(s)ds+I(\varepsilon)),\end{align*} which implies that
\begin{align*}\frac{M_{3}}{(1+\frac{b_{1}}{a_{1}})I^{-1}(k_{1}((1+\frac{b_{2}}{a_{2}})J^{-1}(k_{2}(M_{3})\int_{0}^{1}q(s)ds+J(\varepsilon)))
\int_{0}^{1}p(s)ds+I(\varepsilon))}\leq1,\end{align*} a
contradiction to \eqref{3.3.1}.

\vskip 0.5em

On the other hand, if $\|y_{n}\|\geq M_{4}$, then from
\eqref{3.3.12} and \eqref{3.3.13}, we have
\begin{equation}\label{3.3.15}M_{3}\leq(1+\frac{b_{1}}{a_{1}})I^{-1}(k_{1}(M_{4})\int_{0}^{1}p(s)ds+I(\varepsilon)),\end{equation}
\begin{equation}\label{3.3.16}M_{4}\leq(1+\frac{b_{2}}{a_{2}})J^{-1}(k_{2}(M_{3})\int_{0}^{1}q(s)ds+J(\varepsilon)).\end{equation}
Using \eqref{3.3.16} in \eqref{3.3.15} and the increasing property
of $k_{1}$ and $I^{-1}$, we obtain
\begin{align*}
M_{3}\leq(1+\frac{b_{1}}{a_{1}})I^{-1}(k_{1}((1+\frac{b_{2}}{a_{2}})J^{-1}(k_{2}(M_{3})\int_{0}^{1}q(s)ds+J(\varepsilon)))\int_{0}^{1}p(s)ds+I(\varepsilon)),
\end{align*}
which implies that
\begin{align*}
\frac{M_{3}}{(1+\frac{b_{1}}{a_{1}})I^{-1}(k_{1}((1+\frac{b_{2}}{a_{2}})J^{-1}(k_{2}(M_{3})\int_{0}^{1}q(s)ds+J(\varepsilon)))\int_{0}^{1}p(s)ds+I(\varepsilon))}\leq1,
\end{align*}
a contradiction to \eqref{3.3.1}. Hence, $x_{n}(t)<M_{3}$ for
$t\in[0,1]$. Similarly, we can show that $y_{n}(t)<M_{4}$ for $t\in[0,1]$.

\vskip 0.5em

Thus, in view of \eqref{3.3.5}-\eqref{3.3.9}, $(x_{n},y_{n})$ is a
solution of the following coupled system of BVPs
\begin{equation}\label{3.3.17}\begin{split}
-x''(t)&=p(t)f(t,y(t),x'(t)),\hspace{0.4cm}t\in(0,1),\\
-y''(t)&=q(t)g(t,x(t),y'(t)),\hspace{0.4cm}t\in(0,1),\\
a_{1}x(&0)-b_{1}x'(0)=0,\,x'(1)=\frac{1}{n},\\
a_{2}y(&0)-b_{2}y'(0)=0,\,y'(1)=\frac{1}{n},
\end{split}\end{equation}
satisfy
\begin{equation}\label{3.3.18}\begin{split}
(t+\frac{b_{1}}{a_{1}})\frac{1}{n}&\leq
x_{n}(t)<M_{3},\,\frac{1}{n}\leq
x_{n}'(t)<L_{3},\hspace{0.4cm}t\in[0,1],\\
(t+\frac{b_{2}}{a_{2}})\frac{1}{n}&\leq
y_{n}(t)<M_{4},\,\frac{1}{n}\leq
y_{n}'(t)<L_{4},\hspace{0.4cm}t\in[0,1].
\end{split}\end{equation}
Now, in view of $(\mathbf{B}_{6})$, there exist continuous
functions $\varphi_{M_{4}L_{3}}$ and $\psi_{M_{3}L_{4}}$ defined on
$[0,1]$ and positive on $(0,1)$, and real constants
$0\leq\delta_{1},\delta_{2}<1$ such that
\begin{equation}\label{3.3.19}\begin{split}
f(t,y_{n}(t),x_{n}'(t))&\geq\varphi_{M_{4}L_{3}}(t)(y_{n}(t))^{\delta_{1}},\hspace{0.2cm}(t,y_{n}(t),x_{n}'(t))\in[0,1]\times[0,M_{4}]\times[0,L_{3}],\\
g(t,x_{n}(t),y_{n}'(t))&\geq\psi_{M_{3}L_{4}}(t)(x_{n}(t))^{\delta_{2}},\hspace{0.2cm}(t,x_{n}(t),y_{n}'(t))\in[0,1]\times[0,M_{3}]\times[0,L_{4}].
\end{split}\end{equation}
We claim that
\begin{equation}\label{3.3.20}x_{n}'(t)\geq C_{6}^{\delta_{1}}\int_{t}^{1}p(s)\varphi_{M_{4}L_{3}}(s)ds,\end{equation}
\begin{equation}\label{3.3.21}y_{n}'(t)\geq C_{5}^{\delta_{2}}\int_{t}^{1}q(s)\psi_{M_{3}L_{4}}(s)ds,\end{equation}
where
\begin{align*}
C_{5}=&\left(\frac{b_{1}}{a_{1}}\right)^{\frac{1}{1-\delta_{1}\delta_{2}}}
\left(\frac{b_{2}}{a_{2}}\right)^{\frac{\delta_{1}}{1-\delta_{1}\delta_{2}}}
\left(\int_{0}^{1}p(t)\varphi_{M_{4}L_{3}}(t)dt\right)^{\frac{1}{1-\delta_{1}
\delta_{2}}}
\left(\int_{0}^{1}q(t)\psi_{M_{3}L_{4}}(t)dt\right)^{\frac{\delta_{1}}{1-\delta_{1}\delta_{2}}},\\
C_{6}=&\left(\frac{b_{1}}{a_{1}}\right)^{\frac{\delta_{2}}{1-\delta_{1}\delta_{2}}}\left(\frac{b_{2}}{a_{2}}\right)^{\frac{1}{1-\delta_{1}\delta_{2}}}
\left(\int_{0}^{1}p(t)\varphi_{M_{4}L_{3}}(t)dt\right)^{\frac{\delta_{2}}{1-\delta_{1}\delta_{2}}}
\left(\int_{0}^{1}q(t)\psi_{M_{3}L_{4}}(t)dt\right)^{\frac{1}{1-\delta_{1}
\delta_{2}}}.
\end{align*}
To prove \eqref{3.3.20}, consider the following relation
\begin{equation}\label{3.3.22}\begin{split}
x_{n}(t)=&(t+\frac{b_{1}}{a_{1}})\frac{1}{n}+\frac{1}{a_{1}}\int_{0}^{t}(a_{1}s+b_{1})p(s)f(s,y_{n}(s),x_{n}'(s))ds\\
&+\frac{1}{a_{1}}\int_{t}^{1}(a_{1}t+b_{1})p(s)f(s,y_{n}(s),x_{n}'(s))ds,\hspace{0.4cm}t\in[0,1],
\end{split}\end{equation} which implies that
\begin{align*}
x_{n}(0)=\frac{b_{1}}{a_{1}}\frac{1}{n}+\frac{b_{1}}{a_{1}}\int_{0}^{1}p(s)f(s,y_{n}(s),x_{n}'(s))ds.
\end{align*}
Using \eqref{3.3.19} and \eqref{3.3.18}, we obtain
\begin{equation}\label{3.3.23}\begin{split}
x_{n}(0)\geq&\frac{b_{1}}{a_{1}}\int_{0}^{1}p(s)\varphi_{M_{4}L_{3}}(s)(y_{n}(s))^{\delta_{1}}ds\geq(y_{n}(0))^{\delta_{1}}\frac{b_{1}}{a_{1}}\int_{0}^{1}p(s)\varphi_{M_{4}L_{3}}(s)ds.
\end{split}\end{equation} Similarly, using \eqref{3.3.19} and \eqref{3.3.18}, we obtain
\begin{align*}y_{n}(0)\geq(x_{n}(0))^{\delta_{2}}\frac{b_{2}}{a_{2}}\int_{0}^{1}q(s)\psi_{M_{3}L_{4}}(s)ds,\end{align*}
which in view of \eqref{3.3.23} implies that
\begin{align*}
y_{n}(0)\geq(y_{n}(0))^{\delta_{1}\delta_{2}}\left(\frac{b_{1}}{a_{1}}\int_{0}^{1}p(s)\varphi_{M_{4}L_{3}}(s)ds\right)^{\delta_{2}}
\frac{b_{2}}{a_{2}}\int_{0}^{1}q(s)\psi_{M_{3}L_{4}}(s)ds.
\end{align*}
Hence,
\begin{equation}\label{3.3.24}y_{n}(0)\geq C_{6}.\end{equation}
Now, from  \eqref{3.3.22}, it follows that
\begin{align*}x_{n}'(t)\geq\int_{t}^{1}p(s)f(s,y_{n}(s),x_{n}'(s))ds,\end{align*}
and using \eqref{3.3.19} and \eqref{3.3.24}, we obtain
\eqref{3.3.20}. Similarly, we can prove \eqref{3.3.21}.

\vskip 0.5em

Now, using \eqref{3.3.17}, $(\mathbf{B}_{3})$, \eqref{3.3.18},
\eqref{3.3.20} and \eqref{3.3.21}, we have
\begin{equation}\label{3.3.25}\begin{split}
0\leq-x_{n}''(t)&\leq k_{1}(M_{4})p(t)(u_{1}(C_{6}^{\delta_{1}}\int_{t}^{1}p(s)\varphi_{M_{4}L_{3}}(s)ds)+v_{1}(L_{3})),\hspace{0.3cm}t\in(0,1),\\
0\leq-y_{n}''(t)&\leq
k_{2}(M_{3})q(t)(u_{2}(C_{5}^{\delta_{2}}\int_{t}^{1}q(s)\psi_{M_{3}}(s)ds)+v_{2}(L_{4})),\hspace{0.65cm}t\in(0,1).
\end{split}\end{equation}
In view of \eqref{3.3.18}, \eqref{3.3.25}, $(\mathbf{B}_{1})$ and
$(\mathbf{B}_{12})$, it follows that the sequences
$\{(x_{n}^{(j)},y_{n}^{(j)})\}$ $(j=0,1)$ are uniformly bounded and
equicontinuous on $[0,1]$. Hence, by Theorem \eqref{arzela}, there
exist subsequences $\{(x_{n_{k}}^{(j)},y_{n_{k}}^{(j)})\}\ \
(j=0,1)$ of $\{(x_{n}^{(j)},y_{n}^{(j)})\}$ $(j=0,1)$ and $(x,y)\in
C^{1}[0,1]\times C^{1}[0,1]$ such that
$(x_{n_{k}}^{(j)},y_{n_{k}}^{(j)})$ converges uniformly to
$(x^{(j)},y^{(j)})$ on $[0,1]$ $(j=0,1)$. Also,
$a_{1}x(0)-b_{1}x'(0)=a_{2}y(0)-b_{2}y'(0)=x'(1)=y'(1)=0$. Moreover,
from \eqref{3.3.20} and \eqref{3.3.21}, with $n_{k}$ in place of $n$
and taking $\lim_{n_k\rightarrow+\infty}$, we have
\begin{align*}
x'(t)\geq&C_{6}^{\delta_{1}}\int_{t}^{1}p(s)\varphi_{M_{4}L_{3}}(s)ds,\\
y'(t)\geq&C_{5}^{\delta_{2}}\int_{t}^{1}q(s)\psi_{M_{3}L_{4}}(s)ds,
\end{align*}
which shows that $x'>0$ and $y'>0$ on $[0,1)$, $x>0$ and $y>0$ on
$[0,1]$. Further, $(x_{n_{k}},y_{n_{k}})$ satisfy
\begin{align*}x_{n_{k}}'(t)=&x_{n_{k}}'(0)-\int_{0}^{t}p(s)f(s,y_{n_{k}}(s),x_{n_{k}}'(s))ds,\hspace{0.4cm}t\in[0,1],\\
y_{n_{k}}'(t)=&y_{n_{k}}'(0)-\int_{0}^{t}q(s)f(s,x_{n_{k}}(s),y_{n_{k}}'(s))ds,\hspace{0.4cm}t\in[0,1].\end{align*}
Passing to the limit as $n_{k}\rightarrow\infty$, we obtain
\begin{align*}x'(t)=&x'(0)-\int_{0}^{t}p(s)f(s,y(s),x'(s))ds,\hspace{0.4cm}t\in[0,1],\\
y'(t)=&y'(0)-\int_{0}^{t}q(s)f(s,x(s),y'(s))ds,\hspace{0.4cm}t\in[0,1],\end{align*}
which implies that
\begin{align*}-x''(t)=&p(t)f(t,y(t),x'(t)),\hspace{0.4cm}t\in(0,1),\\
-y''(t)=&q(t)f(t,x(t),y'(t)),\hspace{0.4cm}t\in(0,1).\end{align*}
Hence, $(x,y)$ is a $C^{1}$-positive solution of the system of SBVPs \eqref{3.0.4}.
\end{proof}

\begin{ex}
Consider the following coupled system of singular BVPs
\begin{equation}\label{3.3.26}\begin{split}
-&x''(t)=(1-t)^{-\frac{3}{4}}(y(t))^{\frac{1}{4}}(x'(t))^{-\beta_{1}},\hspace{0.47cm}t\in(0,1),\\
-&y''(t)=(1-t)^{-\frac{1}{4}}(x(t))^{\frac{3}{4}}(y'(t))^{-\beta_{2}},\hspace{0.47cm}t\in(0,1),\\
&x(0)-x'(0)=y(0)-y'(0)=x'(1)=y'(1)=0,
\end{split}\end{equation}
where $0<\beta_{1}<1$ and $0<\beta_{2}<1$.

\vskip 0.5em

Choose $p(t)=(1-t)^{-\frac{3}{4}}$, $q(t)=(1-t)^{-\frac{1}{4}}$,
$k_{1}(x)=x^{\frac{1}{4}}$, $k_{2}(x)=x^{\frac{3}{4}}$,
$u_{1}(x)=x^{-\beta_{1}}$, $u_{2}(x)=x^{-\beta_{2}}$ and
$v_{1}(x)=v_{2}(x)=0$. Also $\delta_{1}=\frac{1}{4}$, $\delta_{2}=\frac{3}{4}$,
$\varphi_{EF}(t)=F^{-\beta_{1}}$ and $\psi_{EF}(t)=F^{-\beta_{2}}$. Then,
$I(z)=\frac{z^{\beta_{1}+1}}{\beta_{1}+1}$,
$J(z)=\frac{z^{\beta_{2}+1}}{\beta_{2}+1}$,
$I^{-1}(z)=(\beta_{1}+1)^{\frac{1}{\beta_{1}+1}}z^{\frac{1}{\beta_{1}+1}}$
and
$J^{-1}(z)=(\beta_{2}+1)^{\frac{1}{\beta_{2}+1}}z^{\frac{1}{\beta_{2}+1}}$. Then, $\int_{0}^{1}p(t)dt=4$ and $\int_{0}^{1}q(t)dt=\frac{4}{3}$.

\vskip 0.5em

Clearly, $(\mathbf{B}_{1})-(\mathbf{B}_{3})$,
$(\mathbf{B}_{5})$ and $(\mathbf{B}_{6})$ are satisfied.
Moreover,
\begin{align*}\begin{split}
&\sup_{c\in(0,\infty)}\frac{c}{(1+\frac{b_{1}}{a_{1}})I^{-1}(k_{1}((1+\frac{b_{2}}{a_{2}})J^{-1}(k_{2}(c)\int_{0}^{1}q(t)dt))\int_{0}^{1}p(t)dt)}=\\
&\sup_{c\in(0,\infty)}\frac{c}{2^{1+\frac{9}{4(\beta_{1}+1)}}(\frac{4}{3})^{\frac{1}{4(\beta_{1}+1)(\beta_{2}+1)}}
(\beta_{1}+1)^{\frac{1}{\beta_{1}+1}}(\beta_{2}+1)^{\frac{1}{4(\beta_{1}+1)(\beta_{2}+1)}}
c^{\frac{3}{16(\beta_{1}+1)(\beta_{2}+1)}}}=\infty,
\end{split}\end{align*}

\begin{align*}\begin{split}
&\sup_{x\in(0,\infty)}\frac{c}{(1+\frac{b_{2}}{a_{2}})J^{-1}(k_{2}((1+\frac{b_{1}}{a_{1}})I^{-1}(k_{1}(c)\int_{0}^{1}p(t)dt))\int_{0}^{1}q(t)dt)}=\\
&\sup_{c\in(0,\infty)}\frac{c}{2^{1+\frac{3}{4(\beta_{2}+1)}(1+\frac{2}{\beta_{1}+1})}(\frac{4}{3})^{\frac{1}{\beta_{2}+1}}
(\beta_{2}+1)^{\frac{1}{\beta_{2}+1}}(\beta_{1}+1)^{\frac{3}{4(\beta_{1}+1)(\beta_{2}+1)}}
c^{\frac{3}{16(\beta_{1}+1)(\beta_{2}+1)}}}=\infty,
\end{split}\end{align*}

\begin{align*}\begin{split}
&\int_{0}^{1}p(t)u_{1}(C\int_{t}^{1}p(s)\varphi_{EF}(s)ds)dt=4^{1-\beta_{1}}C^{-\beta_{1}}F^{\beta_{1}^{2}}\int_{0}^{1}(1-t)^{-\frac{\beta_{1}+3}{4}}dt=\frac{4^{1-\beta_{1}}C^{-\beta_{1}}F^{\beta_{1}^{2}}}{1-\beta_{1}},\\
&\int_{0}^{1}q(t)u_{2}(C\int_{t}^{1}q(s)\psi_{EF}(s)ds)dt=(\frac{4}{3})^{-\beta_{2}}C^{-\beta_{2}}F^{\beta_{2}^{2}}\int_{0}^{1}(1-t)^{-\frac{3\beta_{2}+1}{4}}dt\\
&\hspace{5.55cm}=\frac{(4/3)^{1-\beta_{2}}C^{-\beta_{2}}F^{\beta_{2}^{2}}}{1-\beta_{2}},
\end{split}\end{align*} which shows that $(\mathbf{B}_{11})$ and $(\mathbf{B}_{12})$ also holds.

\vskip 0.5em

Since, $(\mathbf{B}_{1})-(\mathbf{B}_{3})$, $(\mathbf{B}_{5})$, $(\mathbf{B}_{6})$, $(\mathbf{B}_{11})$ and $(\mathbf{B}_{12})$ are satisfied. Therefore, by Theorem \ref{th2.1}, the system of BVPs \eqref{3.3.26} has at least one $C^{1}$-positive solution.
\end{ex}


\section{Existence of at least two positive solutions with more general BCs}\label{multiplicity-two}

In this section, we establish at lest two $C^{1}$-positive solutions
to the system of SBVPs \eqref{3.0.4}. For each $(x,y)\in C^{1}[0,1]\times C^{1}[0,1]$, we write $\|(x,y)\|_{5}=\|x\|_{3}+\|y\|_{3}$. Clearly, $(C^{1}[0,1]\times C^{1}[0,1],\|\cdot\|_{5})$ is a Banach space. We define a partial ordering in $C^{1}[0,1]$, by $x\leq y$ if and only if $x(t)\leq
y(t)$, $t\in[0,1]$. We define a partial ordering in $C^{1}[0,1]\times C^{1}[0,1]$, by $(x_{1},y_{1})\preceq (x_{2},y_{2})$ if and only if $x_{1}\leq x_{2}$ and $y_{1}\leq y_{2}$. Let
\begin{align*}
P_{i}=\{x\in C^{1}[0,1]:x(t)\geq \gamma_{i}\|x\|\text{ for all }t\in[0,1], x(0)\geq\frac{b_{i}}{a_{i}}\|x'\|\},
\end{align*}
where $\gamma_{i}=\frac{b_{i}}{a_{i}+b_{i}}$, $i=1,2$. Clearly, $P_{i}$ $(i=1,2)$ are cones of $C^{1}[0,1]$ and $P_{1}\times P_{2}$ is a cone of $C^{1}[0,1]\times C^{1}[0,1]$. For any real constant $r>0$, we define an open neighborhood of $(0,0)\in C^{1}[0,1]\times C^{1}[0,1]$ as
\begin{align*}
\mathcal{O}_{r}=\{(x,y)\in C^{1}[0,1]\times C^{1}[0,1]:\|(x,y)\|_{5}<r\}.
\end{align*}

\vskip 0.5em

In view of $(\mathbf{B}_{11})$, there exist real constants
$R_{3}>0$ and $R_{4}>0$ such that
\begin{equation}\label{3.4.1}
\frac{R_{3}}{(1+\frac{b_{1}}{a_{1}})I^{-1}(k_{1}((1+\frac{b_{2}}{a_{2}})J^{-1}(k_{2}(R_{3})\int_{0}^{1}q(t)dt))\int_{0}^{1}p(t)dt)}>1,
\end{equation}
\begin{equation}\label{3.4.2}
\frac{R_{4}}{(1+\frac{b_{2}}{a_{2}})J^{-1}(k_{2}((1+\frac{b_{1}}{a_{1}})I^{-1}(k_{1}(R_{4})\int_{0}^{1}p(t)dt))\int_{0}^{1}q(t)dt)}>1.
\end{equation}
From the continuity of $k_{1}$, $k_{2}$, $I$ and $J$, we choose
$\varepsilon>0$ small enough such that
\begin{equation}\label{3.4.3}\frac{R_{3}}{(1+\frac{b_{1}}{a_{1}})I^{-1}(k_{1}((1+\frac{b_{2}}{a_{2}})J^{-1}(k_{2}(R_{3}+\varepsilon)
\int_{0}^{1}q(t)dt+J(\varepsilon))+\varepsilon)\int_{0}^{1}p(t)dt+I(\varepsilon))}>1,\end{equation}
\begin{equation}\label{3.4.4}\frac{R_{4}}{(1+\frac{b_{2}}{a_{2}})J^{-1}(k_{2}((1+\frac{b_{1}}{a_{1}})I^{-1}(k_{1}(R_{4}+\varepsilon)
\int_{0}^{1}p(t)dt+I(\varepsilon))+\varepsilon)\int_{0}^{1}q(t)dt+J(\varepsilon))}>1.\end{equation}
Choose $n_{0}\in\{1,2,\cdots\}$ such that
$\max\{\frac{1}{n_{0}}(1+\frac{b_{1}}{a_{1}}),\frac{1}{n_{0}}(1+\frac{b_{2}}{a_{2}})\}<\varepsilon$
and for each fixed $n\in\{n_{0},n_{0}+1,\cdots\}$, consider the
system of non-singular BVPs
\begin{equation}\label{3.4.5}\begin{split}-x''(t)&=p(t)f(t,y(t)+\frac{1}{n}(t+\frac{b_{2}}{a_{2}}),|x'(t)|+\frac{1}{n}),\hspace{0.4cm}t\in(0,1),\\
-y''(t)&=q(t)g(t,x(t)+\frac{1}{n}(t+\frac{b_{1}}{a_{1}}),|y'(t)|+\frac{1}{n}),\hspace{0.4cm}t\in(0,1),\\a_{1}x(0)&-b_{1}x'(0)=a_{2}y(0)-b_{2}y'(0)=x'(1)=y'(1)=0.
\end{split}\end{equation}
We write \eqref{3.4.5} as an equivalent system of integral equations
\begin{equation}\label{3.4.6}\begin{split}
x(t)&=\int_{0}^{1}G_{1}(t,s)p(s)f(s,y(s)+\frac{1}{n}(s+\frac{b_{2}}{a_{2}}),|x'(s)|+\frac{1}{n})ds,
\hspace{0.4cm}t\in[0,1],\\
y(t)&=\int_{0}^{1}G_{2}(t,s)q(s)f(s,x(s)+\frac{1}{n}(s+\frac{b_{1}}{a_{1}}),|y'(s)|+\frac{1}{n})ds,
\hspace{0.4cm}t\in[0,1],
\end{split}\end{equation}
where
\begin{align*}
G_{i}(t,s)=\frac{1}{a_{i}}\begin{cases}b_{i}+a_{i}s,\,& 0\leq s\leq t\leq1,\\
b_{i}+a_{i}t,\,& 0\leq t\leq s\leq1,\,i=1,2.\end{cases}
\end{align*}

By a solution of the system of BVPs \eqref{3.4.5}, we mean a
solution of the corresponding system of integral equations
\eqref{3.4.6}.

\vskip 0.5em

Define a map $T_{n}:C^{1}[0,1]\times C^{1}[0,1]\rightarrow
C^{1}[0,1]\times C^{1}[0,1]$ by
\begin{equation}\label{3.4.7}T_{n}(x,y)=(A_{n}(x,y),B_{n}(x,y)),\end{equation}
where the maps $A_{n},B_{n}:C^{1}[0,1]\times C^{1}[0,1]\rightarrow C^{1}[0,1]$ are defined by
\begin{equation}\label{3.4.8}\begin{split}
A_{n}(x,y)(t)&=\int_{0}^{1}G_{1}(t,s)p(s)f(s,y(s)+\frac{1}{n}(s+\frac{b_{2}}{a_{2}}),|x'(s)|+\frac{1}{n})ds,\hspace{0.4cm}t\in[0,1],\\
B_{n}(x,y)(t)&=\int_{0}^{1}G_{2}(t,s)q(s)f(s,x(s)+\frac{1}{n}(s+\frac{b_{1}}{a_{1}}),|y'(s)|+\frac{1}{n})ds,\hspace{0.4cm}t\in[0,1].
\end{split}\end{equation}
Clearly, if $(x_{n},y_{n})\in C^{1}[0,1]\times C^{1}[0,1]$ is a fixed point of $T_{n}$; then $(x_{n},y_{n})$ is a solution of the system of BVPs \eqref{3.4.5}.

\begin{lem}\label{2.1}
Assume that $(\mathbf{B}_{1})-(\mathbf{B}_{3})$ hold. Then the
map $T_{n}:\overline{\mathcal{O}}_{r}\cap(P_{1}\times P_{2})\rightarrow P_{1}\times P_{2}$ is completely continuous.
\end{lem}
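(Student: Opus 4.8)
The plan is to follow the template of Lemma \ref{4.1}, adapting it to the cones $P_1,P_2$, the Green's functions $G_i$ of \eqref{3.4.8}, and the norm $\|\cdot\|_3$. Four things must be established: that $T_n$ carries $P_1\times P_2$ into itself, that $T_n(\overline{\mathcal{O}}_r\cap(P_1\times P_2))$ is uniformly bounded, that it is equicontinuous, and finally that $T_n$ is continuous. Complete continuity then follows from Theorem \ref{arzela}.

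First I would verify the invariance $T_n(P_1\times P_2)\subseteq P_1\times P_2$, which amounts to checking the two defining inequalities of each cone for $A_n(x,y)$ and $B_n(x,y)$. Writing $\sigma(s)=p(s)f(s,y(s)+\tfrac1n(s+\tfrac{b_2}{a_2}),|x'(s)|+\tfrac1n)\ge 0$, the representation \eqref{3.4.8} gives $A_n(x,y)(t)=\int_0^1 G_1(t,s)\sigma(s)\,ds$. Since $G_1$ is exactly the kernel of Lemma \ref{lemmax2} with $a=a_1$, $b=b_1$, the first inequality there yields $A_n(x,y)(t)\ge\gamma_1\max_{\tau}\int_0^1 G_1(\tau,s)\sigma(s)\,ds=\gamma_1\|A_n(x,y)\|$. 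For the second condition, differentiating gives $(A_n(x,y))'(t)=\int_t^1\sigma(s)\,ds$, which is nonnegative and nonincreasing, so $\|(A_n(x,y))'\|=(A_n(x,y))'(0)=\int_0^1\sigma(s)\,ds=\max_{\tau}\int_\tau^1\sigma(s)\,ds$; the equality in Lemma \ref{lemmax2} then gives $A_n(x,y)(0)=\int_0^1 G_1(0,s)\sigma(s)\,ds=\tfrac{b_1}{a_1}\|(A_n(x,y))'\|$, so $A_n(x,y)\in P_1$. The argument for $B_n(x,y)\in P_2$ is identical, with $G_2$ and $(\mathbf{B}_2)$.

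For boundedness and equicontinuity I would exploit that on $C^1[0,1]$ the derivative is controlled directly by the norm, so no analogue of Lemma \ref{lemspace1} is required. Given $(x,y)\in\overline{\mathcal{O}}_r\cap(P_1\times P_2)$, the third argument of $f$ is bounded below by $\tfrac1n$; using $(\mathbf{B}_3)$, the monotonicity of $k_1,u_1,v_1$, and $\|x'\|\le r$, the integrand $\sigma(s)$ is dominated by $k_1(r+\tfrac1n(1+\tfrac{b_2}{a_2}))\,p(s)\,(u_1(\tfrac1n)+v_1(r+\tfrac1n))$, an $L^1$ function by $(\mathbf{B}_1)$. Uniform boundedness of $\|A_n(x,y)\|$ and $\|(A_n(x,y))'\|$ then follows from the boundedness of $G_1$ and of $\int_0^1 p$, giving uniform boundedness in $\|\cdot\|_3$. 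Equicontinuity of $A_n(x,y)$ follows from the uniform continuity of $G_1$ on $[0,1]\times[0,1]$, while equicontinuity of $(A_n(x,y))'$ follows from $|(A_n(x,y))'(t_1)-(A_n(x,y))'(t_2)|\le\int_{t_1}^{t_2}\sigma(s)\,ds$ together with the absolute continuity of the integral of the $L^1$ dominating function. Hence $T_n(\overline{\mathcal{O}}_r\cap(P_1\times P_2))$ is relatively compact.

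Finally, for continuity, take $(x_m,y_m)\to(x,y)$ in $\|\cdot\|_5$. The integrands converge pointwise by continuity of $f$ and are dominated uniformly in $m$ by the same $L^1$ bound as above, so the Lebesgue dominated convergence theorem gives $\|A_n(x_m,y_m)-A_n(x,y)\|\to 0$ and $\|(A_n(x_m,y_m))'-(A_n(x,y))'\|\to 0$, hence convergence in $\|\cdot\|_3$; the same holds for $B_n$, and then $\|T_n(x_m,y_m)-T_n(x,y)\|_5\to 0$ via \eqref{3.4.7}. I expect the main obstacle to be the cone-invariance step, namely matching each $G_i$ to the kernel of Lemma \ref{lemmax2} and extracting the boundary equality $A_n(x,y)(0)=\tfrac{b_1}{a_1}\|(A_n(x,y))'\|$; once the singularity in the derivative argument is neutralized by the $\tfrac1n$ regularization, the boundedness, equicontinuity, and continuity estimates are routine.
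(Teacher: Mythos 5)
Your proposal is correct and follows essentially the same route as the paper's own proof: cone invariance via Lemma \ref{lemmax2} (including the boundary identity $A_{n}(x,y)(0)=\tfrac{b_{1}}{a_{1}}\|A_{n}(x,y)'\|$ obtained from the monotone, nonnegative derivative $\int_{t}^{1}\sigma(s)\,ds$), uniform boundedness and equicontinuity in $\|\cdot\|_{3}$ from $(\mathbf{B}_{1})$, $(\mathbf{B}_{3})$ and the $\tfrac1n$ regularization, compactness by Theorem \ref{arzela}, and continuity by the Lebesgue dominated convergence theorem. There are no gaps to report.
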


\begin{proof} Firstly, we show that $T_{n}(P_{1}\times P_{2})\subseteq P_{1}\times P_{2}$. For $(x,y)\in P_{1}\times P_{2}$, $t\in[0,1]$, using
\eqref{3.4.8} and Lemma \ref{lemmax2}, we obtain
\begin{equation}\label{3.4.9}\begin{split}
A_{n}(x,y)(t)&=\int_{0}^{1}G_{1}(t,s)p(s)f(s,y(s)+\frac{1}{n}(s+\frac{b_{2}}{a_{2}}),|x'(s)|+\frac{1}{n})ds\\
&\geq\gamma_{1}\max_{\tau\in[0,1]}\int_{0}^{1}G_{1}(\tau,s)p(s)f(s,y(s)+\frac{1}{n}(s+\frac{b_{2}}{a_{2}}),|x'(s)|+\frac{1}{n})ds\\
&=\gamma_{1}\|A_{n}(x,y)\|\end{split}\end{equation}
and
\begin{equation}\label{3.4.10}\begin{split}
A_{n}(x,y)(0)&=\int_{0}^{1}G_{1}(0,s)p(s)f(s,y(s)+\frac{1}{n}(s+\frac{b_{2}}{a_{2}}),|x'(s)|+\frac{1}{n})ds\\
&=\frac{b_{1}}{a_{1}}\max_{\tau\in[0,1]}\int_{\tau}^{1}p(s)f(s,y(s)+\frac{1}{n}(s+\frac{b_{2}}{a_{2}}),|x'(s)|+\frac{1}{n})ds\\
&=\frac{b_{1}}{a_{1}}\max_{\tau\in[0,1]}|A_{n}(x,y)'(\tau)|\\
&=\frac{b_{1}}{a_{1}}\|A_{n}(x,y)'\|.
\end{split}\end{equation}
From \eqref{3.4.9} and \eqref{3.4.10}, $A_{n}(x,y)\in P_{1}$ for
every $(x,y)\in P_{1}\times P_{2}$, that is, $A_{n}(P_{1}\times
P_{2})\subseteq P_{1}$. Similarly, by using \eqref{3.4.8} and Lemma
\ref{lemmax2}, we can show that $B_{n}(P_{1}\times P_{2})\subseteq
P_{2}$. Hence, $T_{n}(P_{1}\times P_{2})\subseteq P_{1}\times
P_{2}$.

\vskip 0.5em

Now, we show that $T_{n}:\overline{\mathcal{O}}_{r}\cap(P_{1}\times
P_{2})\rightarrow P_{1}\times P_{2}$ is uniformly bounded. For any
$(x,y)\in\overline{\mathcal{O}}_{r}\cap(P_{1}\times P_{2})$, using
\eqref{3.4.8}, Lemma \ref{lemmax2}, $(\mathbf{B}_{1})$ and
$(\mathbf{B}_{3})$, we have
\begin{equation}\label{3.4.11}\begin{split}
&\|A_{n}(x,y)\|=\max_{t\in[0,1]}\left|\int_{0}^{1}G_{1}(t,s)p(s)f(s,y(s)+\frac{1}{n}(s+\frac{b_{2}}{a_{2}}),|x'(s)|+\frac{1}{n})ds\right|\\
&\leq\frac{1}{a_{1}}\int_{0}^{1}(a_{1}s+b_{1})p(s)k_{1}(y(s)+\frac{1}{n}(s+\frac{b_{2}}{a_{2}}))(u_{1}(|x'(s)|+\frac{1}{n})+v_{1}(|x'(s)|+\frac{1}{n}))ds\\
&\leq\frac{1}{a_{1}}k_{1}(r+\frac{1}{n}(1+\frac{b_{2}}{a_{2}}))(u_{1}(\frac{1}{n})+v_{1}(r+\frac{1}{n}))\int_{0}^{1}(a_{1}s+b_{1})p(s)ds<+\infty,
\end{split}\end{equation}
\begin{equation}\label{3.4.12}\begin{split}
&\|A_{n}(x,y)'\|=\max_{\tau\in[0,1]}|A_{n}(x,y)'(\tau)|\\
&=\max_{\tau\in[0,1]}\int_{\tau}^{1}p(s)f(s,y(s)+\frac{1}{n}(s+\frac{b_{2}}{a_{2}}),|x'(s)|+\frac{1}{n})ds\\
&=\int_{0}^{1}p(s)f(s,y(s)+\frac{1}{n}(s+\frac{b_{2}}{a_{2}}),|x'(s)|+\frac{1}{n})ds\\
&\leq\int_{0}^{1}p(s)k_{1}(y(s)+\frac{1}{n}(s+\frac{b_{2}}{a_{2}}))(u_{1}(|x'(s)|+\frac{1}{n})+v_{1}(|x'(s)|+\frac{1}{n}))ds\\
&\leq k_{1}(r+\frac{1}{n}(1+\frac{b_{2}}{a_{2}}))(u_{1}(\frac{1}{n})+v_{1}(r+\frac{1}{n}))\int_{0}^{1}p(s)ds<+\infty.
\end{split}\end{equation}
From \eqref{3.4.11} and \eqref{3.4.12}, it follows that
$A_{n}(\overline{\mathcal{O}}_{r}\cap(P_{1}\times P_{2}))$ is
uniformly bounded under $\|\cdot\|_{3}$. Similarly, by using
\eqref{3.4.8}, Lemma \ref{lemmax2}, $(\mathbf{B}_{1})$ and
$(\mathbf{B}_{3})$, we can show that
$B_{n}(\overline{\mathcal{O}}_{r}\cap(P_{1}\times P_{2}))$ is
uniformly bounded under $\|\cdot\|_{3}$. Hence,
$T_{n}(\overline{\mathcal{O}}_{r}\cap(P_{1}\times P_{2}))$ is
uniformly bounded.

\vskip 0.5em

Now, we show that $T_{n}(\overline{\mathcal{O}}_{r}\cap(P_{1}\times
P_{2}))$ is equicontinuous. For any
$(x,y)\in\overline{\mathcal{O}}_{r}\cap(P_{1}\times P_{2})$ and
$t_{1},t_{2}\in[0,1]$, using \eqref{3.4.8} and $(\mathbf{B}_{3})$,
we have
\begin{equation}\label{3.4.13}\begin{split}
&|A_{n}(x,y)(t_{1})-A_{n}(x,y)(t_{2})|\\
&=\left|\int_{0}^{1}(G_{1}(t_{1},s)-G_{1}(t_{2},s))p(s)f(s,y(s)+\frac{1}{n}(s+\frac{b_{2}}{a_{2}}),|x'(s)|+\frac{1}{n})ds\right|\\
&\leq\int_{0}^{1}|G_{1}(t_{1},s)-G_{1}(t_{2},s)|p(s)f(s,y(s)+\frac{1}{n}(s+\frac{b_{2}}{a_{2}}),|x'(s)|+\frac{1}{n})ds\\
&\leq\int_{0}^{1}|G_{1}(t_{1},s)-G_{1}(t_{2},s)|p(s)k_{1}(y(s)+\frac{1}{n}(s+\frac{b_{2}}{a_{2}}))(u_{1}(|x'(s)|+\frac{1}{n})+v_{1}(|x'(s)|+\frac{1}{n}))ds\\
&\leq
k_{1}(r+\frac{1}{n}(1+\frac{b_{2}}{a_{2}}))(u_{1}(\frac{1}{n})+v_{1}(r+\frac{1}{n}))\int_{0}^{1}|G(t_{1},s)-G(t_{2},s)|p(s)ds,
\end{split}\end{equation}
\begin{equation}\label{3.4.14}\begin{split}
&|A_{n}(x,y)'(t_{1})-A_{n}(x,y)'(t_{2})|=\left|\int_{t_{1}}^{t_{2}}p(s)f(s,y(s)+\frac{1}{n}(s+\frac{b_{2}}{a_{2}}),|x'(s)|+\frac{1}{n})ds\right|\\
&\leq\int_{t_{1}}^{t_{2}}p(s)k_{1}(y(s)+\frac{1}{n}(s+\frac{b_{2}}{a_{2}}))(u_{1}(|x'(s)|+\frac{1}{n})+v_{1}(|x'(s)|+\frac{1}{n}))ds\\
&\leq
k_{1}(r+\frac{1}{n}(1+\frac{b_{2}}{a_{2}}))(u_{1}(\frac{1}{n})+v_{1}(r+\frac{1}{n}))\int_{t_{1}}^{t_{2}}p(s)ds.
\end{split}\end{equation}
From \eqref{3.4.13}, \eqref{3.4.14} and $(\mathbf{B}_{1})$, it
follows that $A_{n}(\overline{\mathcal{O}}_{r}\cap(P_{1}\times
P_{2}))$ is equicontinuous under the norm $\|\cdot\|_{3}$.
Similarly, using \eqref{3.4.8} and $(\mathbf{B}_{3})$, we can show
that $B_{n}(\overline{\mathcal{O}}_{r}\cap(P_{1}\times P_{2}))$ is
equicontinuous under $\|\cdot\|_{3}$. Consequently,
$T_{n}(\overline{\mathcal{O}}_{r}\cap(P_{1}\times P_{2}))$ is
equicontinuous. Hence, by Theorem \ref{arzela},
$T_{n}(\overline{\mathcal{O}}_{r}\cap(P_{1}\times P_{2}))$ is
relatively compact which implies that $T_{n}$ is a compact map. Further, we show that $T_{n}$ is continuous. Let $(x_{m},y_{m}),(x,y)\in\overline{\mathcal{O}}_{r}\cap(P_{1}\times
P_{2})$ such that
\begin{align*}
\|(x_{m},y_{m})-(x,y)\|_{5}\rightarrow0\text{ as
}m\rightarrow+\infty.
\end{align*}
Using $(\mathbf{B}_{3})$, we have
\begin{align*}\begin{split}
&\left|f(t,y_{m}(t)+\frac{1}{n}(t+\frac{b_{2}}{a_{2}}),|x_{m}'(t)|+\frac{1}{n})\right|\leq
k_{1}(y_{m}(t)+\frac{1}{n}(t+\frac{b_{2}}{a_{2}}))\\
&(u_{1}(|x_{m}'(t)|+\frac{1}{n})+v_{1}(|x_{m}'(t)|+\frac{1}{n}))\leq
k_{1}(r+\frac{1}{n}(1+\frac{b_{2}}{a_{2}}))(u_{1}(\frac{1}{n})+v_{1}(r+\frac{1}{n})).
\end{split}\end{align*}
Using \eqref{3.4.8} and Lemma \ref{lemmax2}, we have
\begin{equation}\label{3.4.15}\begin{split}
&\|A_{n}(x_{m},y_{m})-A_{n}(x,y)\|\\
&=\max_{t\in[0,1]}\left|\int_{0}^{1}G_{1}(t,s)p(s)(f(s,y_{m}(s)+\frac{1}{n}(s+\frac{b_{2}}{a_{2}}),|x_{m}'(s)|+\frac{1}{n})-f(s,y(s)+\frac{1}{n}(s+\frac{b_{2}}{a_{2}}),|x'(s)|+\frac{1}{n}))ds\right|\\
&\leq\frac{1}{a_{1}}\int_{0}^{1}(a_{1}s+b_{1})p(s)\left|f(s,y_{m}(s)+\frac{1}{n}(s+\frac{b_{2}}{a_{2}}),|x_{m}'(s)|+\frac{1}{n})-f(s,y(s)+\frac{1}{n}(s+\frac{b_{2}}{a_{2}}),|x'(s)|+\frac{1}{n})\right|ds
\end{split}\end{equation}
and
\begin{equation}\label{3.4.16}\begin{split}
&\|A_{n}(x_{m},y_{m})'-A_{n}(x,y)'\|\\
&=\max_{\tau\in[0,1]}\left|\int_{\tau}^{1}p(s)(f(s,y_{m}(s)+\frac{1}{n}(s+\frac{b_{2}}{a_{2}}),|x_{m}'(s)|+\frac{1}{n})-f(s,y(s)+\frac{1}{n}(s+\frac{b_{2}}{a_{2}}),|x'(s)|+\frac{1}{n}))ds\right|\\
&\leq\int_{0}^{1}p(s)\left|f(s,y_{m}(s)+\frac{1}{n}(s+\frac{b_{2}}{a_{2}}),|x_{m}'(s)|+\frac{1}{n})-f(s,y(s)+\frac{1}{n}(s+\frac{b_{2}}{a_{2}}),|x'(s)|+\frac{1}{n})\right|ds.
\end{split}\end{equation}
From \eqref{3.4.15} and \eqref{3.4.16}, using Lebesgue dominated
convergence theorem, it follows that
\begin{align*}
\|A_{n}(x_{m},y_{m})-A_{n}(x,y)\|\rightarrow0,\hspace{0.2cm}\|A_{n}(x_{m},y_{m})'-A_{n}(x,y)'\|\rightarrow0,\text{
as }m\rightarrow+\infty.\end{align*} Hence,
$\|A_{n}(x_{m},y_{m})-A_{n}(x,y)\|_{3}\rightarrow0$ as
$m\rightarrow\infty$. Similarly, we can show that $\|B_{n}(x_{m},y_{m})-B_{n}(x,y)\|_{3}\rightarrow0$ as $m\rightarrow\infty$. Consequently, $\|T_{n}(x_{m},y_{m})-T_{n}(x,y)\|_{5}\rightarrow0$ as
$m\rightarrow+\infty$, that is, $T_{n}:\overline{\mathcal{O}}_{r}\cap(P_{1}\times P_{2})\rightarrow
P_{1}\times P_{2}$ is continuous. Hence, $T_{n}:\overline{\mathcal{O}}_{r}\cap(P_{1}\times P_{2})\rightarrow P_{1}\times P_{2}$ is completely continuous.
\end{proof}

Assume that
\begin{description}
\item[$(\mathbf{B}_{13})$] for any real constant $C>0$, $\int_{0}^{1}p(t)u_{1}(C\int_{t}^{1}p(s)\varphi_{E}(s)ds)dt<+\infty$
and\\
$\int_{0}^{1}q(t)u_{2}(C\int_{t}^{1}q(s)\psi_{E}(s)ds)dt<+\infty$.
\end{description}

\begin{thm}\label{th2.2}
Assume that $(\mathbf{B}_{1})-(\mathbf{B}_{3})$,
$(\mathbf{B}_{5})$, $(\mathbf{B}_{8})$, $(\mathbf{B}_{10})$,
$(\mathbf{B}_{11})$ and $(\mathbf{B}_{13})$ hold. Then the system of SBVPs \eqref{3.0.4} has at least two $C^{1}$-positive solutions.
\end{thm}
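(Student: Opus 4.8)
The plan is to run the regularization-and-index scheme used for Theorem \ref{th4.2}, but carried out in the $C^{1}$-framework with the general boundary operators exactly as in Theorem \ref{th2.1}. For each fixed $n\geq n_{0}$ the operator $T_{n}$ of \eqref{3.4.7}--\eqref{3.4.8} is completely continuous on $\overline{\mathcal{O}}_{r}\cap(P_{1}\times P_{2})$ by Lemma \ref{2.1}, so I can compute its fixed point index on two nested product neighborhoods and produce two fixed points of $T_{n}$; afterwards I pass to the limit $n_{k}\to\infty$ twice, once in each region, to recover two distinct $C^{1}$-positive solutions of \eqref{3.0.4}.

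First I would establish that the inner index is $1$. Set $R_{0}=R_{3}+R_{4}$ with $R_{3},R_{4}$ as in \eqref{3.4.1}--\eqref{3.4.4}, put $\mathcal{O}_{R_{0}}=\Omega_{R_{3}}\times\Omega_{R_{4}}$ with $\Omega_{R_{3}}=\{x\in C^{1}[0,1]:\|x\|_{3}<R_{3}\}$ and similarly for $\Omega_{R_{4}}$, and show $(x,y)\neq\lambda T_{n}(x,y)$ for every $\lambda\in(0,1]$ and $(x,y)\in\partial\mathcal{O}_{R_{0}}\cap(P_{1}\times P_{2})$. Supposing otherwise, $(\mathbf{B}_{2})$ gives $x_{0}''\leq0$, $y_{0}''\leq0$; dividing the differential inequalities coming from $(\mathbf{B}_{3})$ by $u_{1}+v_{1}$ (resp. $u_{2}+v_{2}$), integrating from $t$ to $1$ and using the definitions of $I$, $J$ bounds $x_{0}'$, $y_{0}'$; integrating once more and invoking the cone estimate of Lemma \ref{lemcone2} (which supplies the factor $1+b_{i}/a_{i}$) yields $R_{3}\leq(1+\frac{b_{1}}{a_{1}})I^{-1}(\cdots)$ and $R_{4}\leq(1+\frac{b_{2}}{a_{2}})J^{-1}(\cdots)$, which combine to contradict \eqref{3.4.3}--\eqref{3.4.4}. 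By Lemma \ref{lemindexone}, $\ind(T_{n},\mathcal{O}_{R_{0}}\cap(P_{1}\times P_{2}),P_{1}\times P_{2})=1$. Next I would fix $t_{0}\in(0,1)$, define constants $N_{1},N_{2}$ as suitable reciprocals of $\min_{t\in[t_{0},1]}\int_{t_{0}}^{1}G_{i}(t,s)p(s)\,ds$ (resp. with $q$), in the spirit of \eqref{3.2.22}, use the superlinearity in $(\mathbf{B}_{10})$ to choose $R_{3}^{*}>R_{3}$, $R_{4}^{*}>R_{4}$ with $h_{i}(x,y)\geq N_{i}x$ for large $x$, and on the larger product neighborhood $\mathcal{O}_{R^{*}}$ show $T_{n}(x,y)\npreceq(x,y)$: the cone lower bound $x(t)\geq\gamma_{1}\|x\|$ on $[t_{0},1]$ forces the regularized arguments to exceed $R_{3}^{*}$, so $h_{1}\geq N_{1}x$ pushes $\|x_{0}\|_{3}$ beyond its boundary value, a contradiction. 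Lemma \ref{lemindexzero} then gives index $0$, and additivity $(\mathbf{F_{2}})$ produces index $-1$ on the annulus; hence $T_{n}$ has a fixed point $(x_{n,1},y_{n,1})$ in $\mathcal{O}_{R_{0}}\cap(P_{1}\times P_{2})$ and another, $(x_{n,2},y_{n,2})$, in the annular region.

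The limiting step mirrors Theorem \ref{th2.1}. For the inner sequence I would use $(\mathbf{B}_{8})$ to obtain $\varphi_{R_{4}+\varepsilon}$, $\psi_{R_{3}+\varepsilon}$, and from the integral relation for $x_{n,1}$ (the analogue of \eqref{3.3.22}, evaluated at $t=0$) derive lower bounds $x_{n,1}'(t)\geq C_{6}^{\delta_{1}}\int_{t}^{1}p(s)\varphi_{R_{4}+\varepsilon}(s)\,ds$ and $y_{n,1}'(t)\geq C_{5}^{\delta_{2}}\int_{t}^{1}q(s)\psi_{R_{3}+\varepsilon}(s)\,ds$ with constants $C_{5},C_{6}$ of the same shape as in Theorem \ref{th2.1}. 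Feeding these back through $(\mathbf{B}_{3})$ and the integrability hypothesis $(\mathbf{B}_{13})$ dominates $|x_{n,1}''|$, $|y_{n,1}''|$ by fixed integrable functions, so $\{(x_{n,1}^{(j)},y_{n,1}^{(j)})\}$ $(j=0,1)$ is uniformly bounded and equicontinuous; Theorem \ref{arzela} extracts a subsequence converging in $C^{1}[0,1]\times C^{1}[0,1]$, and the Lebesgue dominated convergence theorem passes \eqref{3.4.6} to the limit, giving a $C^{1}$-positive solution $(x_{0,1},y_{0,1})$ of \eqref{3.0.4} with $\|(x_{0,1},y_{0,1})\|_{5}<R_{0}$. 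Repeating the argument on the annular sequence gives $(x_{0,2},y_{0,2})$ with $R_{0}<\|(x_{0,2},y_{0,2})\|_{5}<R^{*}$, so the two solutions are distinct.

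I expect the main obstacle to be the $n$-uniform control of the derivatives. The regularized nonlinearities carry the singular factor $u_{i}(|x_{n,j}'|+\frac{1}{n})$, which blows up as the derivative approaches zero, so one must bound $x_{n,j}'$, $y_{n,j}'$ below by functions independent of $n$ before any domination is available; this is precisely what the lower estimates from $(\mathbf{B}_{8})$, combined with $(\mathbf{B}_{13})$, are designed to furnish, and securing equicontinuity of the derivative sequences through the singular $u_{i}$ term, separately on the inner and on the annular families, is the delicate part of the argument.
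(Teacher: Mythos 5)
Your proposal follows essentially the same route as the paper's own proof: regularize to $T_{n}$, get index $1$ on the product neighborhood $\mathcal{O}_{R_{5}}$ via the estimate $(x,y)\neq\lambda T_{n}(x,y)$ and Lemma \ref{lemindexone}, get index $0$ on a larger product neighborhood via $(\mathbf{B}_{10})$ and Lemma \ref{lemindexzero}, use additivity to obtain two fixed points for each $n$, and then perform two Arzel\`{a}-Ascoli/dominated-convergence limit passages sustained by the derivative lower bounds from $(\mathbf{B}_{8})$ and the integrability in $(\mathbf{B}_{13})$. The only cosmetic deviations are that in the paper the factor $1+\tfrac{b_{i}}{a_{i}}$ of the index-one estimate comes from integrating the boundary relation $a_{i}x(0)-b_{i}x'(0)=0$ (Lemma \ref{lemcone2} is instead invoked in the index-zero step), and the paper builds its constants $N_{3},N_{4}$ and $C_{7},C_{8}$ from the global cone bound $x(t)\geq\gamma_{i}\varrho_{i}\|x\|_{3}$ and $\max_{\tau\in[0,1]}\int_{0}^{1}G_{i}(\tau,s)\,ds$-type integrals, rather than from a $t_{0}$-truncated minimum as in Theorem \ref{th4.2} or from evaluation at $t=0$ as in Theorem \ref{th2.1}; both variants are valid.
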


\begin{proof}
Let $R_{5}=R_{3}+R_{4}$ and define
$\mathcal{O}_{R_{5}}=\Omega_{R_{3}}\times\Omega_{R_{4}}$ where
\begin{align*}\Omega_{R_{3}}=\{x\in
E:\|x\|_{3}<R_{3}\},\,\Omega_{R_{4}}=\{x\in
E:\|x\|_{3}<R_{4}\}.\end{align*} We claim that
\begin{equation}\label{3.4.17}(x,y)\neq\lambda T_{n}(x,y), \text{ for }\lambda\in(0,1],
(x,y)\in\partial\mathcal{O}_{R_{5}}\cap (P_{1}\times
P_{2}).\end{equation} Suppose there exist
$(x_{0},y_{0})\in\partial\mathcal{O}_{R_{5}}\cap(P_{1}\times P_{2})$
and $\lambda_{0}\in(0,1]$ such that
\begin{align*}(x_{0},y_{0})=\lambda_{0} T_{n}(x_{0},y_{0}).\end{align*}
Then,
\begin{equation}\label{3.4.18}\begin{split}
-x_{0}''(t)&=\lambda_{0}p(t)f(t,y_{0}(t)+\frac{1}{n}(t+\frac{b_{2}}{a_{2}}),|x_{0}'(t)|+\frac{1}{n}),\hspace{0.4cm}t\in(0,1),\\
-y_{0}''(t)&=\lambda_{0}q(t)g(t,x_{0}(t)+\frac{1}{n}(t+\frac{b_{1}}{a_{1}}),|y_{0}'(t)|+\frac{1}{n}),\hspace{0.4cm}t\in(0,1),\\
a_{1}x(0)&-b_{1}x'(0)=a_{2}y(0)-b_{2}y'(0)=x'(1)=y'(1)=0.
\end{split}\end{equation}
From \eqref{3.4.18} and $(\mathbf{B}_{2})$, we have $x_{0}''\leq
0$ and $y_{0}''\leq 0$ on $(0,1)$, integrating from $t$ to $1$,
using the BCs \eqref{3.4.18}, we obtain $x_{0}'(t)\geq0$ and
$y_{0}'(t)\geq0$ for $t\in[0,1]$. From \eqref{3.4.18} and
$(\mathbf{B}_{3})$, we have
\begin{align*}-x_{0}''(t)&\leq
p(t)k_{1}(y_{0}(t)+\frac{1}{n}(t+\frac{b_{2}}{a_{2}}))(u_{1}(x_{0}'(t)+\frac{1}{n})+v_{1}(x_{0}'(t)+\frac{1}{n})),\hspace{0.4cm}t\in(0,1),\\
-y_{0}''(t)&\leq
q(t)k_{2}(x_{0}(t)+\frac{1}{n}(t+\frac{b_{1}}{a_{1}}))(u_{2}(y_{0}'(t)+\frac{1}{n})+v_{2}(y_{0}'(t)+\frac{1}{n})),\hspace{0.4cm}t\in(0,1),\end{align*}
which implies that
\begin{align*}\frac{-x_{0}''(t)}{u_{1}(x_{0}'(t)+\frac{1}{n})+v_{1}(x_{0}'(t)+\frac{1}{n})}\leq p(t)k_{1}(y_{0}(t)+\frac{1}{n}(t+\frac{b_{2}}{a_{2}}))&\leq
k_{1}(R_{4}+\varepsilon)p(t),\hspace{0.4cm}t\in(0,1),\\
\frac{-y_{0}''(t)}{u_{2}(y_{0}'(t)+\frac{1}{n})+v_{2}(y_{0}'(t)+\frac{1}{n})}\leq
q(t)k_{2}(x_{0}(t)+\frac{1}{n}(t+\frac{b_{1}}{a_{1}}))&\leq
k_{2}(R_{3}+\varepsilon)q(t),\hspace{0.4cm}t\in(0,1).\end{align*}
Integrating from $t$ to $1$, using the BCs \eqref{3.4.18}, we obtain
\begin{align*}I(x_{0}'(t)+\frac{1}{n})-I(\frac{1}{n})&\leq k_{1}(R_{4}+\varepsilon)\int_{t}^{1}p(s)ds,\hspace{0.4cm}t\in[0,1],\\
J(y_{0}'(t)+\frac{1}{n})-J(\frac{1}{n})&\leq
k_{2}(R_{3}+\varepsilon)\int_{t}^{1}q(s)ds,\hspace{0.4cm}t\in[0,1],\end{align*}
which implies that
\begin{equation}\label{3.4.19}x_{0}'(t)\leq I^{-1}(k_{1}(R_{4}+\varepsilon)\int_{0}^{1}p(s)ds+I(\varepsilon)),\hspace{0.4cm}t\in[0,1],\end{equation}
\begin{equation}\label{3.4.20}y_{0}'(t)\leq J^{-1}(k_{2}(R_{3}+\varepsilon)\int_{0}^{1}q(s)ds+J(\varepsilon)),\hspace{0.4cm}t\in[0,1].\end{equation}
Integrating from $0$ to $t$, using the BCs \eqref{3.4.18}, leads to
\begin{align*}x_{0}(t)\leq \frac{b_{1}}{a_{1}}
x_{0}'(0)+&I^{-1}(k_{1}(R_{4}+\varepsilon)\int_{0}^{1}p(s)ds+I(\varepsilon)),\hspace{0.4cm}t\in[0,1],\\
y_{0}(t)\leq
\frac{b_{2}}{a_{2}}y_{0}'(0)+&J^{-1}(k_{2}(R_{3}+\varepsilon)\int_{0}^{1}q(s)ds+J(\varepsilon)),\hspace{0.4cm}t\in[0,1].
\end{align*}
Using \eqref{3.4.19} and \eqref{3.4.20}, we have
\begin{equation}\label{3.4.21}x_{0}(t)\leq (1+\frac{b_{1}}{a_{1}})I^{-1}(k_{1}(R_{4}+\varepsilon)\int_{0}^{1}p(s)ds+I(\varepsilon)),\hspace{0.4cm}t\in[0,1],\end{equation}
\begin{equation}\label{3.4.22}y_{0}(t)\leq (1+\frac{b_{2}}{a_{2}})J^{-1}(k_{2}(R_{3}+\varepsilon)\int_{0}^{1}q(s)ds+J(\varepsilon)),\hspace{0.4cm}t\in[0,1].\end{equation}
From \eqref{3.4.19}-\eqref{3.4.22}, it follows that
\begin{equation}\label{3.4.23}R_{3}\leq (1+\frac{b_{1}}{a_{1}})I^{-1}(k_{1}(R_{4}+\varepsilon)\int_{0}^{1}p(s)ds+I(\varepsilon)),\end{equation}
\begin{equation}\label{3.4.24}R_{4}\leq (1+\frac{b_{2}}{a_{2}})J^{-1}(k_{2}(R_{3}+\varepsilon)\int_{0}^{1}q(s)ds+J(\varepsilon)).\end{equation}
Now, using \eqref{3.4.24} in \eqref{3.4.23} together with increasing
property of $k_{1}$ and $I^{-1}$, we have
\begin{align*}\frac{R_{3}}{(1+\frac{b_{1}}{a_{1}})I^{-1}(k_{1}((1+\frac{b_{2}}{a_{2}})J^{-1}(k_{2}(R_{3}+\varepsilon)\int_{0}^{1}q(s)ds+J(\varepsilon))+\varepsilon)\int_{0}^{1}p(s)ds+I(\varepsilon))}\leq 1,\end{align*} a
contradiction to \eqref{3.4.3}. Similarly, using \eqref{3.4.23} in
\eqref{3.4.24} together with increasing property of $k_{2}$ and
$J^{-1}$, we have
\begin{align*}\frac{R_{4}}{(1+\frac{b_{2}}{a_{2}})J^{-1}(k_{2}((1+\frac{b_{1}}{a_{1}})I^{-1}(k_{1}(R_{4}+\varepsilon)\int_{0}^{1}p(s)ds+I(\varepsilon))+\varepsilon)\int_{0}^{1}q(s)ds+J(\varepsilon))}\leq 1,\end{align*} a
contradiction to \eqref{3.4.4}. Hence, \eqref{3.4.17} is true and by
Lemma \ref{lemindexone}, the fixed point index
\begin{equation}\label{3.4.25}\ind(T_{n},\mathcal{O}_{R_{5}}\cap(P_{1}\times P_{2}),P_{1}\times P_{2})=1.\end{equation}
Now, choose a $t_{0}\in(0,1)$ and define
\begin{equation}\label{3.4.26}N_{3}=\frac{1+\gamma_{2}^{-1}\varrho_{2}^{-1}}{\max_{t\in[0,1]}\int_{0}^{1}G_{1}(t,s)p(s)ds}\text{
and
}N_{4}=\frac{1+\gamma_{1}^{-1}\varrho_{1}^{-1}}{\max_{t\in[0,1]}\int_{0}^{1}G_{2}(t,s)q(s)ds}.\end{equation}
By $(\mathbf{B}_{10})$, there exist real constants with
$R_{3}^{*}>R_{3}$ and $R_{4}^{*}>R_{4}$ such that
\begin{equation}\label{3.4.27}\begin{split}
h_{1}(x,y)&\geq N_{3}x,\text{ for }x\geq
R_{3}^{*},y\in(0,\infty),\\
h_{2}(x,y)&\geq N_{4}x,\text{ for }x\geq
R_{4}^{*},y\in(0,\infty).\end{split}\end{equation} Let
$R^{**}=\frac{R_{3}^{*}}{\gamma_{1}\varrho_{1}}+\frac{R_{4}^{*}}{\gamma_{2}\varrho_{2}}$
and define $\mathcal{O}_{R^{**}}=\Omega_{R_{3}^{*}}\times
\Omega_{R_{4}^{*}}$, where
\begin{align*}
\Omega_{R_{3}^{*}}=\{x\in
C^{1}[0,1]:\|x\|_{3}<\frac{R_{3}^{*}}{\gamma_{1}\varrho_{1}}\},\,
\Omega_{R_{4}^{*}}=\{x\in
C^{1}[0,1]:\|x\|_{3}<\frac{R_{4}^{*}}{\gamma_{2}\varrho_{2}}\}.
\end{align*}
We show that
\begin{equation}\label{3.4.28}
T_{n}(x,y)\npreceq(x,y),\text{ for }(x,y)\in\partial
\mathcal{O}_{R^{**}}\cap(P_{1}\times P_{2}).
\end{equation}
Suppose $T_{n}(x_{0},y_{0})\preceq(x_{0},y_{0})$ for some
$(x_{0},y_{0})\in\partial\mathcal{O}_{R^{**}}\cap(P_{1}\times
P_{2})$. Then,
\begin{equation}\label{3.4.29}x_{0}(t)\geq A_{n}(x_{0},y_{0})(t)\text{ and }y_{0}(t)\geq B_{n}(x_{0},y_{0})(t)\text{ for }t\in[0,1].\end{equation}
By Lemma \ref{lemcone2}, we have
\begin{align*}x_{0}(t)\geq\gamma_{1}\varrho_{1}\|x_{0}\|_{3}=R_{3}^{*},\hspace{0.4cm}t\in[0,1].\end{align*}
Similarly, $y_{0}(t)\geq R_{4}^{*}$ for $t\in[0,1]$. Hence,
\begin{align*}|x_{0}(t)|+\frac{1}{n}(t+\frac{b_{1}}{a_{1}})\geq R_{3}^{*}\text{ and
}|y_{0}(t)|+\frac{1}{n}(t+\frac{b_{2}}{a_{2}})\geq R_{4}^{*}\text{
for }t\in[0,1].\end{align*} Now, using \eqref{3.4.29},
$(\mathbf{B}_{10})$ and \eqref{3.4.27}, we have
\begin{align*}\begin{split}
&\|x_{0}\|\geq x_{0}(t)\\
&\geq A_{n}(x_{0},y_{0})(t)\\
&=\int_{0}^{1}G_{1}(t,s)p(s)f(s,y_{0}(s)+\frac{1}{n}(s+\frac{b_{2}}{a_{2}}),|x_{0}'(s)|+\frac{1}{n})ds\\
&\geq\int_{0}^{1}G_{1}(t,s)p(s)h_{1}(y_{0}(s)+\frac{1}{n}(s+\frac{b_{2}}{a_{2}}),|x_{0}'(s)|+\frac{1}{n})ds\\
&\geq
N_{3}\int_{0}^{1}G_{1}(t,s)p(s)(y_{0}(s)+\frac{1}{n}(s+\frac{b_{2}}{a_{2}}))ds\\
&\geq
N_{3}R_{4}^{*}\int_{0}^{1}G_{1}(t,s)p(s)ds,\hspace{0.4cm}t\in[0,1],
\end{split}\end{align*} in view of \eqref{3.4.27} we have
\begin{align*}\begin{split}
\|x_{0}\|&\geq
N_{3}R_{4}^{*}\max_{t\in[0,1]}\int_{0}^{1}G_{1}(t,s)p(s)ds>\frac{R_{4}^{*}}{\gamma_{2}\varrho_{2}}.
\end{split}\end{align*}
Thus
$\|x_{0}\|_{3}\geq\|x_{0}\|>\frac{R_{4}^{*}}{\gamma_{2}\varrho_{2}}$.
Similarly, using \eqref{3.4.29}, $(\mathbf{B}_{10})$,
\eqref{3.4.27} and \eqref{3.4.26}, we have
$\|y_{0}\|_{3}>\frac{R_{3}^{*}}{\gamma_{1}\varrho_{1}}$.
Consequently, it follows that,
$\|(x_{0},y_{0})\|_{5}=\|x_{0}\|_{3}+\|y_{0}\|_{3}>R^{**}$, a
contradiction. Hence, \eqref{3.4.28} is true and by Lemma
\ref{lemindexzero}, the fixed point index
\begin{equation}\label{3.4.30}\ind(T_{n},\mathcal{O}_{R^{**}}\cap(P_{1}\times P_{2}),P_{1}\times P_{2})=0.\end{equation}
From \eqref{3.4.25} and \eqref{3.4.30}, it follows that
\begin{equation}\label{3.4.31}\ind(T_{n},(\mathcal{O}_{R^{**}}\setminus\overline{\mathcal{O}}_{R})\cap(P_{1}\times P_{2}),P_{1}\times P_{2})=-1.\end{equation}
Thus, in view of \eqref{3.4.25} and \eqref{3.4.31}, there exist
$(x_{n,1},y_{n,1})\in\mathcal{O}_{R}\cap(P_{1}\times P_{2})$ and
$(x_{n,2},y_{n,2})\in(\mathcal{O}_{R^{**}}\setminus\overline{\mathcal{O}}_{R})\cap(P_{1}\times
P_{2})$ such that
$(x_{n,j},y_{n,j})=T_{n}(x_{n,j},y_{n,j}),\,(j=1,2)$ which implies
that
\begin{equation}\label{3.4.32}\begin{split}
x_{n,j}(t)&=\int_{0}^{1}G_{1}(t,s)p(s)f(t,y_{n,j}(s)+\frac{1}{n}(s+\frac{b_{2}}{a_{2}}),|x_{n,j}'(s)|+\frac{1}{n})ds,\hspace{0.4cm}t\in[0,1],\\
y_{n,j}(t)&=\int_{0}^{1}G_{2}(t,s)q(s)g(s,x_{n,j}(s)+\frac{1}{n}(s+\frac{b_{1}}{a_{1}}),|y_{n,j}'(s)|+\frac{1}{n})ds,\hspace{0.4cm}t\in[0,1],\,j=1,2.
\end{split}\end{equation}
Using $(\mathbf{B}_{8})$ there exist continuous functions
$\varphi_{R_{4}+\varepsilon}$ and $\psi_{R_{3}+\varepsilon}$ defined
on $[0,1]$ and positive on $(0,1)$ and real constants
$0\leq\delta_{1},\delta_{2}<1$ such that
\begin{equation}\label{3.4.33}\begin{split}f(t,x,y)&\geq \varphi_{R_{4}+\varepsilon}(t)x^{\delta_{1}},\hspace{0.4cm}(t,x,y)\in[0,1]\times[0,R_{4}+\varepsilon]\times[0,R_{4}+\varepsilon],\\
g(t,x,y)&\geq\psi_{R_{3}+\varepsilon}(t)x^{\delta_{2}},\hspace{0.4cm}(t,x,y)\in[0,1]\times[0,R_{3}+\varepsilon]\times[0,R_{3}+\varepsilon].\end{split}\end{equation}
By the Lemma \ref{lemmax2}, we have
$x_{n,1}(t)\geq\gamma_{1}\|x_{n,1}\|$ and
$y_{n,1}(t)\geq\gamma_{2}\|y_{n,1}\|$ for $t\in[0,1]$. We show that
\begin{equation}\label{3.4.34}x_{n,1}'(t)\geq C_{8}^{\delta_{1}}\gamma_{2}^{\delta_{1}}\int_{t}^{1}p(s)\varphi_{R_{4}+\varepsilon}(s)ds,\hspace{0.4cm}t\in[0,1],\end{equation}
\begin{equation}\label{3.4.35}y_{n,1}'(t)\geq C_{7}^{\delta_{2}}\gamma_{1}^{\delta_{2}}\int_{t}^{1}q(s)\psi_{R_{3}+\varepsilon}(s)ds,\hspace{0.4cm}t\in[0,1],\end{equation}
where
\begin{align*}\begin{split}
C_{7}&=\gamma_{1}^{\frac{1+\delta_{1}\delta_{2}}{1-\delta_{1}\delta_{2}}}\gamma_{2}^{\frac{2\delta_{1}}{1-\delta_{1}\delta_{2}}}
\left(\max_{\tau\in[0,1]}\int_{0}^{1}G_{1}(\tau,s)p(s)\varphi_{R_{4}+\varepsilon}(s)ds\right)^{\frac{1}{1-\delta_{1}\delta_{2}}}\\
&\left(\max_{\tau\in[0,1]}\int_{0}^{1}G_{2}(\tau,s)q(s)\psi_{R_{3}+\varepsilon}(s)ds\right)^{\frac{\delta_{1}}{1-\delta_{1}\delta_{2}}},\\
C_{8}&=\gamma_{1}^{\frac{2\delta_{2}}{1-\delta_{1}\delta_{2}}}\gamma_{2}^{\frac{1+\delta_{1}\delta_{2}}{1-\delta_{1}\delta_{2}}}
\left(\max_{\tau\in[0,1]}\int_{0}^{1}G_{1}(\tau,s)p(s)\varphi_{R_{4}+\varepsilon}(s)ds\right)^{\frac{\delta_{2}}{1-\delta_{1}\delta_{2}}}\\
&\left(\max_{\tau\in[0,1]}\int_{0}^{1}G_{2}(\tau,s)q(s)\psi_{R_{3}+\varepsilon}(s)ds\right)^{\frac{1}{1-\delta_{1}\delta_{2}}}.
\end{split}\end{align*}
To prove \eqref{3.4.34}, using \eqref{3.4.32} and \eqref{3.4.33}, we
have
\begin{align*}\begin{split}
x_{n,1}(t)&=\int_{0}^{1}G_{1}(t,s)p(s)f(s,y_{n,1}(s)+\frac{1}{n}(s+\frac{b_{2}}{a_{2}}),|x_{n,1}'(s)|+\frac{1}{n})ds\\
&\geq\int_{0}^{1}G_{1}(t,s)p(s)\varphi_{R_{4}+\varepsilon}(s)(y_{n,1}(s)+\frac{1}{n}(s+\frac{b_{2}}{a_{2}}))^{\delta_{1}}ds\\
&\geq\gamma_{2}^{\delta_{1}}\|y_{n,1}\|^{\delta_{1}}\int_{0}^{1}G_{1}(t,s)p(s)\varphi_{R_{4}+\varepsilon}(s)ds,\hspace{0.4cm}t\in[0,1],
\end{split}\end{align*}
which implies that
\begin{align*}
x_{n,1}(t)\geq\gamma_{1}\gamma_{2}^{\delta_{1}}\|y_{n,1}\|^{\delta_{1}}\max_{\tau\in[0,1]}\int_{0}^{1}G_{1}(\tau,s)p(s)
\varphi_{R_{4}+\varepsilon}(s)ds,\hspace{0.4cm}t\in[0,1].
\end{align*}
Hence,
\begin{equation}\label{3.4.36}
\|x_{n,1}\|\geq\gamma_{1}\gamma_{2}^{\delta_{1}}\|y_{n,1}\|^{\delta_{1}}\max_{\tau\in[0,1]}\int_{0}^{1}G_{1}(\tau,s)p(s)\varphi_{R_{4}+\varepsilon}(s)ds.
\end{equation}
Similarly, from \eqref{3.4.32} and \eqref{3.4.33}, we have
\begin{equation}\label{3.4.37}
\|y_{n,1}\|\geq\gamma_{1}^{\delta_{2}}\gamma_{2}\|x_{n,1}\|^{\delta_{2}}\max_{\tau\in[0,1]}\int_{0}^{1}G_{2}(\tau,s)q(s)\psi_{R_{3}+\varepsilon}(s)ds.
\end{equation}
Using \eqref{3.4.36} in \eqref{3.4.37}, we have
\begin{align*}
\|y_{n,1}\|&\geq\gamma_{1}^{2\delta_{2}}\gamma_{2}^{1+\delta_{1}\delta_{2}}\|y_{n,1}\|^{\delta_{1}\delta_{2}}
\left(\max_{\tau\in[0,1]}\int_{0}^{1}G_{1}(\tau,s)p(s)\varphi_{R_{4}+\varepsilon}(s)ds\right)^{\delta_{2}}\\
&\max_{\tau\in[0,1]}\int_{0}^{1}G_{2}(\tau,s)q(s)\psi_{R_{3}+\varepsilon}(s)ds,
\end{align*}
which implies that
\begin{equation}\label{3.4.38}\begin{split}
\|y_{n,1}\|&\geq\gamma_{1}^{\frac{2\delta_{2}}{1-\delta_{1}\delta_{2}}}\gamma_{2}^{\frac{1+\delta_{1}\delta_{2}}{1-\delta_{1}\delta_{2}}}
\left(\max_{\tau\in[0,1]}\int_{0}^{1}G_{1}(\tau,s)p(s)\varphi_{R_{4}+\varepsilon}(s)ds\right)^{\frac{\delta_{2}}{1-\delta_{1}\delta_{2}}}\\
&\left(\max_{\tau\in[0,1]}\int_{0}^{1}G_{2}(\tau,s)q(s)\psi_{R_{3}+\varepsilon}(s)ds\right)^{\frac{1}{1-\delta_{1}\delta_{2}}}=C_{8}.
\end{split}\end{equation} Using \eqref{3.4.33} and \eqref{3.4.38} in the
following relation
\begin{align*}
x_{n,1}'(t)=&\int_{t}^{1}p(s)f(s,y_{n,1}(s)+\frac{1}{n}(s+\frac{b_{2}}{a_{2}}),|x_{n,1}'(s)|+\frac{1}{n})ds
\end{align*}
we obtain \eqref{3.4.34}. Similarly, we can prove \eqref{3.4.35}.

\vskip 0.5em

Now, differentiating \eqref{3.4.32}, using $(\mathbf{B}_{3})$,
\eqref{3.4.34} and \eqref{3.4.35}, we have
\begin{equation}\label{3.4.39}\begin{split}
0\leq-x_{n,1}''(t)&\leq
p(t)k_{1}(R_{4}+\varepsilon)(u_{1}(C_{8}^{\delta_{1}}\gamma_{2}^{\delta_{1}}\int_{t}^{1}p(s)\varphi_{R_{4}+\varepsilon}(s)ds)
+v_{1}(R_{3}+\varepsilon)),\hspace{0.4cm}t\in(0,1),\\
0\leq-y_{n,1}''(t)&\leq q(t)
k_{2}(R_{3}+\varepsilon)(u_{2}(C_{7}^{\delta_{2}}\gamma_{1}^{\delta_{2}}\int_{t}^{1}q(s)\psi_{R_{3}+\varepsilon}(s)ds)+v_{2}(R_{4}+\varepsilon)),\hspace{0.4cm}t\in(0,1),
\end{split}\end{equation}
which on integration from $t$ to $1$, using the BCs \eqref{3.4.5},
leads to
\begin{align*}
x_{n,1}'(t)&\leq
k_{1}(R_{4}+\varepsilon)\int_{t}^{1}p(s)(u_{1}(C_{8}^{\delta_{1}}\gamma_{2}^{\delta_{1}}\int_{s}^{1}p(\tau)\varphi_{R_{4}+\varepsilon}(\tau)d\tau)
+v_{1}(R_{3}+\varepsilon))ds,\hspace{0.4cm}t\in[0,1],\\
y_{n,1}'(t)&\leq k_{2}(R_{3}+\varepsilon)\int_{t}^{1}q(s)
(u_{2}(C_{7}^{\delta_{2}}\gamma_{1}^{\delta_{2}}\int_{s}^{1}q(\tau)\psi_{R_{3}+\varepsilon}(\tau)d\tau)+v_{2}(R_{4}+\varepsilon))ds,\hspace{0.4cm}t\in[0,1],
\end{align*}
which implies that
\begin{equation}\label{3.4.40}\begin{split}
x_{n,1}'(t)&\leq
k_{1}(R_{4}+\varepsilon)\int_{0}^{1}p(s)(u_{1}(C_{8}^{\delta_{1}}\gamma_{2}^{\delta_{1}}\int_{s}^{1}p(\tau)\varphi_{R_{4}+\varepsilon}(\tau)d\tau)
+v_{1}(R_{3}+\varepsilon))ds,\hspace{0.4cm}t\in[0,1],\\
y_{n,1}'(t)&\leq
k_{2}(R_{3}+\varepsilon)\int_{0}^{1}q(s)(u_{2}(C_{7}^{\delta_{2}}\gamma_{1}^{\delta_{2}}\int_{s}^{1}q(\tau)\psi_{R_{3}+\varepsilon}(\tau)d\tau)+v_{2}(R_{4}+\varepsilon))ds,\hspace{0.4cm}t\in[0,1].
\end{split}\end{equation}
In view of \eqref{3.4.34}, \eqref{3.4.35}, \eqref{3.4.40},
\eqref{3.4.39}, $(\mathbf{B}_{1})$ and $(\mathbf{B}_{13})$, the
sequences $\{(x_{n,1}^{(j)},y_{n,1}^{(j)})\}$ $(j=0,1)$ are
uniformly bounded and equicontinuous on $[0,1]$. Thus, by Theorem
\ref{arzela}, there exist subsequences
$\{(x_{n_{k},1}^{(j)},y_{n_{k},1}^{(j)})\}$ $(j=0,1)$ of
$\{(x_{n,1}^{(j)},y_{n,1}^{(j)})\}$ and functions $(x_{0,1},y_{0,1})\in C^{1}[0,1]\times C^{1}[0,1]$ such that $(x_{n_{k},1}^{(j)},y_{n_{k},1}^{(j)})$ converges uniformly to
$(x_{0,1}^{(j)},y_{0,1}^{(j)})$ on $[0,1]$. Also,
$a_{1}x_{0,1}(0)-b_{1}x_{0,1}'(0)=a_{2}y_{0,1}(0)-b_{2}y_{0,1}'(0)=x_{0,1}'(1)=y_{0,1}'(1)=0$.
Moreover, from \eqref{3.4.34} and \eqref{3.4.35}, with $n_{k}$ in
place of $n$ and taking $\lim_{n_k\rightarrow+\infty}$, we have
\begin{align*}
x_{0,1}'(t)&\geq C_{8}^{\delta_{1}}\gamma_{2}^{\delta_{1}} \int_{t}^{1}p(s)\varphi_{R_{4}+\varepsilon}(s)ds,\\
y_{0,1}'(t)&\geq
C_{7}^{\delta_{2}}\gamma_{1}^{\delta_{2}}\int_{t}^{1}q(s)\psi_{R_{3}+\varepsilon}(s)ds,
\end{align*}
which implies that $x_{0,1}'>0$ and $y_{0,1}'>0$ on $[0,1)$,
$x_{0,1}>0$ and $y_{0,1}>0$ on $[0,1]$. Further,
\begin{equation}\label{3.4.41}\begin{split}
&\left|f(t,y_{n_{k},1}(t)+\frac{1}{n_{k}}(t+\frac{b_{2}}{a_{2}}),x_{n_{k},1}'(t)+\frac{1}{n_{k}})\right|\\
&\leq p(t)k_{1}(R_{4}+\varepsilon)(u_{1}(C_{8}^{\delta_{1}}\gamma_{2}^{\delta_{1}}\int_{t}^{1}p(s)\varphi_{R_{4}+\varepsilon}(s)ds)+v_{1}(R_{3}+\varepsilon)),\\
&\left|g(t,x_{n_{k},1}(t)+\frac{1}{n_{k}}(t+\frac{b_{1}}{a_{1}}),y_{n_{k},1}'(t)+\frac{1}{n_{k}})\right|\\
&\leq
q(t)k_{2}(R_{3}+\varepsilon)(u_{2}(C_{7}^{\delta_{2}}\gamma_{1}^{\delta_{2}}\int_{t}^{1}q(s)\psi_{R_{3}+\varepsilon}(s)ds)+v_{2}(R_{4}+\varepsilon)),
\end{split}\end{equation}

\begin{equation}\label{3.4.42}\begin{split}
\lim_{n_k\rightarrow\infty}f(t,y_{n_{k},1}(t)+\frac{1}{n_{k}}(t+\frac{b_{2}}{a_{2}}),x_{n_{k},1}'(t)+\frac{1}{n_{k}})&=f(t,y_{0,1}(t),x_{0,1}'(t)),\hspace{0.4cm}t\in(0,1),\\
\lim_{n_k\rightarrow\infty}g(t,x_{n_{k},1}(t)+\frac{1}{n_{k}}(t+\frac{b_{1}}{a_{1}}),y_{n_{k},1}'(t)+\frac{1}{n_{k}})&=g(t,x_{0,1}(t),y_{0,1}'(t)),\hspace{0.4cm}t\in(0,1).
\end{split}\end{equation}
Moreover, $(x_{n_{k},1},y_{n_{k},1})$ satisfies
\begin{align*}
x_{n_{k},1}(t)&=\int_{0}^{1}G_{1}(t,s)p(s)f(s,y_{n_{k},1}(s)+\frac{1}{n_{k}}(s+\frac{b_{2}}{a_{2}}),x_{n_{k},1}'(s)+\frac{1}{n_{k}})ds,\hspace{0.4cm}t\in[0,1],\\
y_{n_{k},1}(t)&=\int_{0}^{1}G_{2}(t,s)q(s)g(s,x_{n_{k},1}(s)+\frac{1}{n_{k}}(s+\frac{b_{1}}{a_{1}}),y_{n_{k},1}'(s)+\frac{1}{n_{k}})ds,\hspace{0.4cm}t\in[0,1],
\end{align*}
in view of \eqref{3.4.41}, $(\mathbf{B}_{13})$, \eqref{3.4.42}, the
Lebesgue dominated convergence theorem and taking
$\lim_{n_k\rightarrow+\infty}$, we have
\begin{align*}
x_{0,1}(t)&=\int_{0}^{1}G_{1}(t,s)p(s)f(s,y_{0,1}(s),x_{0,1}'(s))ds,\hspace{0.4cm}t\in[0,1],\\
y_{0,1}(t)&=\int_{0}^{1}G_{2}(t,s)q(s)g(s,x_{0,1}(s),y_{0,1}'(s))ds,\hspace{0.4cm}t\in[0,1],\end{align*}
which implies that $(x_{0,1},y_{0,1})\in C^{2}(0,1)\times
C^{2}(0,1)$ and
\begin{align*}
-x_{0,1}''(t)=p(t)f(t,y_{0,1}(t),x_{0,1}'(t)),\hspace{0.4cm}t\in(0,1),\\
-y_{0,1}''(t)=q(t)g(t,x_{0,1}(t),y_{0,1}'(t)),\hspace{0.4cm}t\in(0,1).
\end{align*}
Moreover, by \eqref{3.4.1} and \eqref{3.4.2}, we have
$\|x_{0,1}\|_{3}<R_{3}$ and $\|y_{0,1}\|_{3}<R_{4}$, that is,
$\|(x_{0,1},y_{0,1})\|_{5}<R_{5}$. By a similar proof the sequence
$\{(x_{n,2},y_{n,2})\}$ has a convergent subsequence
$\{(x_{n_{k},2},y_{n_{k},2})\}$ converging uniformly to
$(x_{0,2},y_{0,2})\in C^{1}[0,1]\times C^{1}[0,1]$ on $[0,1]$.
Moreover, $(x_{0,2},y_{0,2})$ is a solution to the system of SBVPs \eqref{3.0.4} with $x_{0,2}>0$ and $y_{0,2}>0$ on
$[0,1]$, $x'_{0,2}>0$ and $y'_{0,2}>0$ on $[0,1)$,
$R_{5}<\|(x_{0,2},y_{0,2})\|_{5}<R^{**}$.
\end{proof}

%
%
\begin{ex}Consider the following coupled system of SBVPs
\begin{equation}\label{3.4.43}\begin{split}
-&x''(t)=\mu_{1}(1+(y(t))^{\delta_{1}}+(y(t))^{\eta_{1}})(1+(x'(t))^{\alpha_{1}}+(x'(t))^{-\beta_{1}}),\hspace{0.4cm}t\in(0,1),\\
-&y''(t)=\mu_{2}(1+(x(t))^{\delta_{2}}+(x(t))^{\eta_{2}})(1+(y'(t))^{\alpha_{2}}+(y'(t))^{-\beta_{2}}),\hspace{0.4cm}t\in(0,1),\\
&x(0)-x'(0)=y(0)-y'(0)=x'(1)=y'(1)=0,
\end{split}\end{equation}
where $0\leq\delta_{i}<1$, $\eta_{i}>1$, $0<\alpha_{i}<1$,
$0<\beta_{i}<1$, and $\mu_{i}>0$, $i=1,2$.

\vskip 0.5em

Choose $p(t)=\mu_{1}$, $q(t)=\mu_{2}$,
$k_{i}(x)=1+x^{\delta_{i}}+x^{\eta_{i}}$, $u_{i}(x)=x^{-\beta_{i}}$
and $v_{i}(x)=1+x^{\alpha_{i}}$, $i=1,2$. Assume that $\mu_{1}$ is
arbitrary and
\begin{equation*}\small{\begin{split}
\mu_{2}<\min\{\inf_{c\in(0,\infty)}\frac{J(2^{-1}(\mu_{1}^{-1}I(\frac{c}{2}))^{\delta_{1}^{-1}})}{k_{2}(c)},\,\inf_{c\in(0,\infty)}\frac{J(2^{-1}(\mu_{1}^{-1}I(\frac{c}{2}))^{\eta_{1}^{-1}})}{k_{2}(c)},\,\inf_{c\in(0,\infty)}\frac{J(\frac{c}{2})}{k_{2}(2I^{-1}(\mu_{1}k_{1}(c)))}\}.
\end{split}}\end{equation*}

We choose $\varphi_{E}(t)=\mu_{1}$, $\psi_{E}(t)=\mu_{2}$ and
$h_{i}(x,y)=\mu_{i}(1+x^{\eta_{i}})$, $i=1,2$. Then,
\begin{align*}
\lim_{x\rightarrow+\infty}\frac{h_{i}(x,y)}{x}=\lim_{x\rightarrow+\infty}\frac{\mu_{i}(1+x^{\eta_{i}})}{x}=+\infty,\,i=1,2.
\end{align*}

Moreover,
\begin{align*}\begin{split}
&\sup_{c\in(0,\infty)}\frac{c}{(1+\frac{b_{1}}{a_{1}})I^{-1}(k_{1}((1+\frac{b_{2}}{a_{2}})J^{-1}(k_{2}(c)\int_{0}^{1}q(t)dt))\int_{0}^{1}p(t)dt)}\\
&=\sup_{c\in(0,\infty)}\frac{c}{2I^{-1}(\mu_{1}k_{1}(2J^{-1}(\mu_{2}k_{2}(c))))}\\
&\geq\frac{c}{2I^{-1}(\mu_{1}k_{1}(2J^{-1}(\mu_{2}k_{2}(c))))},\hspace{4.8cm}c\in(0,\infty)\\
&=\frac{c}{2I^{-1}(\mu_{1}(1+(2J^{-1}(\mu_{2}k_{2}(c)))^{\delta_{1}}+(2J^{-1}(\mu_{2}k_{2}(c)))^{\eta_{1}}))},\hspace{1cm}c\in(0,\infty)\\
&>1,
\end{split}\end{align*}
and
\begin{align*}\begin{split}
&\sup_{c\in(0,\infty)}\frac{c}{(1+\frac{b_{2}}{a_{2}})J^{-1}(k_{2}((1+\frac{b_{1}}{a_{1}})I^{-1}(k_{1}(c)\int_{0}^{1}p(t)dt))\int_{0}^{1}q(t)dt)}\\
&=\sup_{c\in(0,\infty)}\frac{c}{2J^{-1}(\mu_{2}k_{2}(2I^{-1}(\mu_{1}k_{1}(c))))}\\
&=\frac{c}{2J^{-1}(\mu_{2}k_{2}(2I^{-1}(\mu_{1}k_{1}(c))))},\hspace{0.4cm}c\in(0,\infty)\\
&>1.\end{split}\end{align*}

Further,
\begin{align*}
&\int_{0}^{1}p(t)u_{1}(C\int_{t}^{1}p(s)\varphi_{E}(s)ds)dt=\mu_{1}^{1-2\beta_{1}}C^{-\beta_{1}}\int_{0}^{1}(1-t)^{-\beta_{1}}dt=\frac{\mu_{1}^{1-2\beta_{1}}C^{-\beta_{1}}}{1-\beta_{1}},\\
&\int_{0}^{1}q(t)u_{2}(C\int_{t}^{1}q(s)\psi_{E}(s)ds)dt=\mu_{2}^{1-2\beta_{2}}C^{-\beta_{2}}\int_{0}^{1}(1-t)^{-\beta_{2}}dt=\frac{\mu_{1}^{1-2\beta_{2}}C^{-\beta_{2}}}{1-\beta_{2}}.
\end{align*}
Clearly, $(\mathbf{B}_{1})-(\mathbf{B}_{3})$,
$(\mathbf{B}_{5})$, $(\mathbf{B}_{8})$, $(\mathbf{B}_{10})$,
$(\mathbf{B}_{11})$ and $(\mathbf{B}_{13})$ are satisfied. Hence,
by Theorem \ref{th2.2}, the system of BVPs \eqref{3.4.43} has at
least two $C^{1}$-positive solutions.
\end{ex}


\section{System of ODEs with two-point coupled BCs}\label{sec-couple-two-point}

In this section, we establish existence of at least one
$C^{1}$-positive solution for the system of BVPs \eqref{4.0.2}. By a
$C^{1}$-positive solution to the system of BVPs \eqref{4.0.2}, we
mean that $(x,y)\in (C^{1}[0,1]\cap C^{2}(0,1))\times
(C^{1}[0,1]\cap C^{2}(0,1))$, $(x,y)$ satisfies \eqref{4.0.2}, $x>0$
and $y>0$ on $[0,1]$, $x'>0$ and $y'>0$ on $[0,1)$.

Assume that
\begin{description}
\item[$(\mathbf{B}_{14})$]
\begin{align*}
\sup_{c\in(0,\infty)}\frac{c}{(1+\frac{b_{1}}{a_{1}})I^{-1}(h_{1}(c)k_{1}(c)\int_{0}^{1}p(t)dt)+(1+\frac{b_{2}}{a_{2}})J^{-1}(h_{2}(c)k_{2}(c)\int_{0}^{1}q(t)dt)}>1,
\end{align*}
where $I(\mu)=\int_{0}^{\mu}\frac{d\tau}{u_{1}(\tau)+v_{1}(\tau)}$,
$J(\mu)=\int_{0}^{\mu}\frac{d\tau}{u_{2}(\tau)+v_{2}(\tau)}$, for
$\mu>0$;
\item[$(\mathbf{B}_{16})$] for real constants $M>0$ and $L>0$ there exist continuous functions $\varphi_{ML}$ and $\psi_{ML}$ defined on
$[0,1]$ and positive on $(0,1)$, and constants
$0\leq\gamma_{1},\delta_{1},\gamma_{2},\delta_{2}<1$ satisfying
$(1-\gamma_{1})(1-\gamma_{2})\neq\delta_{1}\delta_{2}$, such that
$f(t,x,y,z)\geq\varphi_{ML}(t)x^{\gamma_{1}}y^{\delta_{1}}$ and
$g(t,x,y,z)\geq\psi_{ML}(t)x^{\gamma_{2}}y^{\delta_{2}}$ on
$[0,1]\times[0,M]\times[0,M]\times[0,L]$;
\item[$(\mathbf{B}_{17})$] $\int_{0}^{1}p(t)u_{1}(C\int_{t}^{1}s^{\delta_{1}}p(s)\varphi_{ML}(s)ds)dt<+\infty$ and
$\int_{0}^{1}q(t)u_{2}(C\int_{t}^{1}s^{\delta_{2}}q(s)\psi_{ML}(s)ds)dt<+\infty$
for any real constant $C>0$.
\end{description}
\begin{thm}\label{th5.2}
Under the hypothesis $(\mathbf{B}_{1})-(\mathbf{B}_{17})$, the
system of BVPs \eqref{4.0.2} has at least one $C^{1}$-positive
solution.
\end{thm}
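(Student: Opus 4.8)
The plan is to adapt the regularization-and-sequential scheme of Theorem \ref{th2.1} to the coupled boundary data of \eqref{4.0.2}. In view of $(\mathbf{B}_{14})$ I would first fix a constant $M>0$ realizing the supremum bound and, using $(\mathbf{B}_5)$ so that $I^{-1},J^{-1}$ are defined on all of $[0,\infty)$, choose an auxiliary $L>0$ with $I(L),J(L)$ large; then pick $\varepsilon>0$ and $n_0$ with $1/n_0<\varepsilon$ and introduce the retractions $\theta_i(x)=\max\{0,\min\{x,M\}\}$ and $\rho_i(x)=\max\{1/n,\min\{x,L\}\}$. For each fixed $n\ge n_0$ I consider the modified non-singular system obtained by composing $f,g$ with these retractions and replacing the boundary conditions by $a_1y(0)-b_1x'(0)=a_2x(0)-b_2y'(0)=0$, $x'(1)=y'(1)=1/n$. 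Since the truncated right-hand sides are continuous and bounded on $[0,1]\times\R^3$, integrating twice and using the coupling (so that $y(0)=\tfrac{b_1}{a_1}x'(0)$ and $x(0)=\tfrac{b_2}{a_2}y'(0)$ become explicit constants once $(x,y)$ is given) exhibits a completely continuous self-map of a bounded ball in $C^1[0,1]\times C^1[0,1]$, whence Theorem \ref{schauder} produces a solution $(x_n,y_n)$.

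Next I would establish the a priori bounds that make the sequence usable. From $(\mathbf{B}_2)$ the solutions are concave, $x_n''\le0$ and $y_n''\le0$; integrating from $t$ to $1$ with $x_n'(1)=y_n'(1)=1/n$ gives $x_n'(t)\ge1/n$ and $y_n'(t)\ge1/n$, so $x_n,y_n$ are increasing, $\|x_n\|=x_n(1)$, $\|y_n\|=y_n(1)$, and both are bounded below by positive multiples of $1/n$. The deactivation of the truncations comes from the growth hypothesis underlying $(\mathbf{B}_{14})$: assuming $x_n'(t_1)\ge L$ or $x_n(t_2)\ge M$ at some point, I integrate the differential inequality $-x_n''/(u_1+v_1)\le k_1(\cdot)\,p$, apply the increasing functions $I,J$ and their inverses, and reach a contradiction with $(\mathbf{B}_{14})$ exactly as in the derivation of \eqref{3.3.1}. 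Hence $x_n'(t)<L$, $y_n'(t)<L$, $x_n(t)<M$, $y_n(t)<M$ on $[0,1]$, so $(x_n,y_n)$ actually solves the untruncated system carrying the shifted boundary conditions.

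The heart of the argument is the lower bound on the derivatives, which simultaneously forces positivity in the limit and drives the compactness estimate. Using $(\mathbf{B}_{16})$ together with $x_n(s)\ge x_n(0)$, $y_n(s)\ge y_n(0)$ and the coupling $y_n(0)=\tfrac{b_1}{a_1}\int_0^1 pf$, $x_n(0)=\tfrac{b_2}{a_2}\int_0^1 qg$, I bound the pair $(x_n(0),y_n(0))$ from below by a coupled system of power inequalities of the form $y_n(0)\ge A\,x_n(0)^{\gamma_1}y_n(0)^{\delta_1}$ and $x_n(0)\ge B\,x_n(0)^{\gamma_2}y_n(0)^{\delta_2}$. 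Taking logarithms turns this into a linear system whose solvability is governed by the quantity $(1-\gamma_1)(1-\gamma_2)-\delta_1\delta_2$, so the nondegeneracy clause $(1-\gamma_1)(1-\gamma_2)\neq\delta_1\delta_2$ in $(\mathbf{B}_{16})$ is exactly what lets me solve it and produce explicit positive constants, after which $x_n'(t)=\int_t^1 pf$ is bounded below by an inequality such as $x_n'(t)\ge C\int_t^1 s^{\delta_1}p(s)\varphi_{ML}(s)\,ds$ and likewise for $y_n'$. Carrying these exponents correctly through the coupled boundary data and verifying the algebraic solvability is where I expect the main difficulty to lie.

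Finally I would close by compactness and passage to the limit. Combining the upper bounds with the majorant of $-x_n''$, which by the growth and lower-derivative estimates is dominated by an $L^1$ function on account of $(\mathbf{B}_1)$ and $(\mathbf{B}_{17})$, shows that $\{(x_n,y_n)\}$ and $\{(x_n',y_n')\}$ are uniformly bounded and equicontinuous on $[0,1]$. Arzel\`a--Ascoli (Theorem \ref{arzela}) then extracts a subsequence with $(x_{n_k}^{(j)},y_{n_k}^{(j)})\to(x^{(j)},y^{(j)})$ uniformly for $j=0,1$. Passing to the limit in the integrated equations, via $(\mathbf{B}_{17})$ and the Lebesgue dominated convergence theorem, recovers $-x''=pf(t,x,y,x')$ and $-y''=qg(t,x,y,y')$ together with $a_1y(0)-b_1x'(0)=a_2x(0)-b_2y'(0)=x'(1)=y'(1)=0$; the surviving lower bounds give $x'>0$, $y'>0$ on $[0,1)$ and $x>0$, $y>0$ on $[0,1]$, so $(x,y)$ is the desired $C^1$-positive solution.
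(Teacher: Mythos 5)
Your proposal is correct and follows essentially the same route as the paper's proof: truncate with retractions and solve the modified coupled problem by Schauder's theorem, remove the truncations via the a priori bounds obtained by integrating the inequality $-x_n''/(u_1+v_1)\le h_1k_1p$ and contradicting $(\mathbf{B}_{14})$, extract uniform positive lower bounds on $x_n(0)$ and $y_n(0)$ from the coupled power inequalities that $(\mathbf{B}_{16})$ and the coupled BCs produce (the paper's constants $C_{9}$, $C_{10}$, whose exponents carry the nondegeneracy quantity $(1-\gamma_1)(1-\gamma_2)-\delta_1\delta_2$), and conclude by Arzel\`a--Ascoli and dominated convergence using $(\mathbf{B}_{1})$ and $(\mathbf{B}_{17})$. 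The only cosmetic deviation is that the paper bounds the sum $\|x_n\|+\|y_n\|<M_5$ --- the form that the single supremum in $(\mathbf{B}_{14})$ naturally yields --- rather than each norm separately, which is equivalent for the purpose of deactivating the truncation $\theta_5$.
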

\begin{proof}
In view of $(\mathbf{B}_{14})$, we can choose real constant
$M_{5}>0$ such that
\begin{align*}
\frac{M_{5}}{(1+\frac{b_{1}}{a_{1}})I^{-1}(h_{1}(M_{5})k_{1}(M_{5})\int_{0}^{1}p(s)ds)
+(1+\frac{b_{2}}{a_{2}})J^{-1}(h_{2}(M_{5})k_{2}(M_{5})\int_{0}^{1}q(s)ds)}>1.
\end{align*}
From the continuity of $I$ and $J$, we choose $\varepsilon>0$ small
enough such that
\begin{equation}\label{4.2.1}
\small{\frac{M_{5}}{(1+\frac{b_{1}}{a_{1}})I^{-1}(h_{1}(M_{5})k_{1}(M_{5})\int_{0}^{1}p(s)ds+I(\varepsilon))+(1+\frac{b_{2}}{a_{2}})J^{-1}(h_{2}(M_{5})k_{2}(M_{5})\int_{0}^{1}q(s)ds+J(\varepsilon))}>1.}
\end{equation}
Choose a real constant $L_{5}>0$ such that
\begin{equation}\label{4.2.2}
L_{5}>\max\{I^{-1}(h_{1}(M_{5})k_{1}(M_{5})\int_{0}^{1}p(t)dt+I(\varepsilon)),J^{-1}(h_{2}(M_{5})k_{2}(M_{5})\int_{0}^{1}q(t)dt+J(\varepsilon))\}
\end{equation}

Choose $n_{0}\in\{1,2,\cdots\}$ such that
$\frac{1}{n_{0}}<\varepsilon$. For each
$n\in\{n_{0},n_{0}+1,\cdots\}$, define retractions
$\theta_{5}:\R\rightarrow[0,M_{5}]$ and
$\rho_{5}:\R\rightarrow[\frac{1}{n},L_{5}]$ by
\begin{align*}\theta_{5}(x)=\max\{0,\min\{x,M_{5}\}\}\text{ and
}\rho_{5}(x)=\max\{\frac{1}{n},\min\{x,L_{5}\}\}.\end{align*}
Consider the modified system of BVPs
\begin{equation}\label{4.2.3}\begin{split}
-&x''(t)=p(t)f(t,\theta_{5}(x(t)),\theta_{5}(y(t)),\rho_{5}(x'(t))),\hspace{0.4cm}t\in(0,1),\\
-&y''(t)=q(t)g(t,\theta_{5}(x(t)),\theta_{5}(y(t)),\rho_{5}(y'(t))),\hspace{0.4cm}t\in(0,1),\\
&a_{1}y(0)-b_{1}x'(0)=0,\,y'(1)=\frac{1}{n}\\
&a_{2}x(0)-b_{2}y'(0)=0,\,x'(1)=\frac{1}{n}.
\end{split}\end{equation}
Since
$f(t,\theta_{5}(x(t)),\theta_{5}(y(t)),\rho_{5}(x'(t))),\,g(t,\theta_{5}(x(t)),\theta_{5}(y(t)),\rho_{5}(y'(t)))$
are continuous and bounded on $[0,1]\times \R^{3}$, by Theorem
\ref{schauder}, it follows that the modified system of BVPs
\eqref{4.2.3} has a solution $(x_{n},y_{n})\in(C^{1}[0,1]\cap
C^{2}(0,1))\times(C^{1}[0,1]\cap C^{2}(0,1))$.

\vskip 0.5em

Using \eqref{4.2.3} and $(\mathbf{B}_{2})$, we obtain
\begin{align*}x_{n}''(t)\leq0\text{ and }y_{n}''(t)\leq 0\text{ for
}t\in(0,1),\end{align*} which on  integration from $t$ to $1$, using
the BCs \eqref{4.2.3}, implies that
\begin{equation}\label{4.2.4}
x_{n}'(t)\geq\frac{1}{n}\text{ and }y_{n}'(t)\geq\frac{1}{n}\text{
for }t\in[0,1].\end{equation} Integrating \eqref{4.2.4}  from $0$ to
$t$, using the BCs \eqref{4.2.3} and \eqref{4.2.4}, we have
\begin{equation}\label{4.2.5}
x_{n}(t)\geq(t+\frac{b_{2}}{a_{2}})\frac{1}{n}\text{ and
}y_{n}(t)\geq(t+\frac{b_{1}}{a_{1}})\frac{1}{n}\text{ for
}t\in[0,1].
\end{equation} From \eqref{4.2.4} and \eqref{4.2.5}, it
follows that
\begin{equation}\label{4.2.6}
\|x_{n}\|=x_{n}(1)\text{ and }\|y_{n}\|=y_{n}(1).
\end{equation}

Now, we show that
\begin{equation}\label{4.2.7}x_{n}'(t)<L_{5},\,\, y_{n}'(t)<L_{5},\hspace{0.4cm}t\in[0,1].\end{equation}
First, we prove $x_{n}'(t)<L_{5}$ for $t\in[0,1]$. Suppose
$x_{n}'(t_{1})\geq L_{5}$ for some $t_{1}\in[0,1]$. Using
\eqref{4.2.3} and $(\mathbf{B}_{3})$, we have
\begin{align*}-x_{n}''(t)\leq p(t)h_{1}(\theta_{5}(x_{n}(t)))k_{1}(\theta_{5}(y_{n}(t)))(u_{1}(\rho_{5}(x_{n}'(t)))+v_{1}(\rho_{5}(x_{n}'(t)))),\hspace{0.4cm}t\in(0,1),\end{align*}
which implies that
\begin{align*}\frac{-x_{n}''(t)}{u_{1}(\rho_{5}(x_{n}'(t)))+v_{1}(\rho_{5}(x_{n}'(t)))}\leq h_{1}(M_{5})k_{1}(M_{5})p(t),\hspace{0.4cm}t\in(0,1).\end{align*}
Integrating from $t_{1}$ to $1$, using the BCs \eqref{4.2.3}, we
obtain
\begin{align*}\int_{\frac{1}{n}}^{x_{n}'(t_{1})}\frac{dz}{u_{1}(\rho_{5}(z))+v_{1}(\rho_{5}(z))}\leq h_{1}(M_{5})k_{1}(M_{5})\int_{t_{1}}^{1}p(t)dt,\end{align*}
which can also be written as
\begin{align*}\int_{\frac{1}{n}}^{L_{5}}\frac{dz}{u_{1}(z)+v_{1}(z)}+\int_{L_{5}}^{x_{n}'(t_{1})}\frac{dz}{u_{1}(L_{5})+v_{1}(L_{5})}
\leq h_{1}(M_{5})k_{1}(M_{5})\int_{0}^{1}p(t)dt.\end{align*} Using
the increasing property of $I$, we obtain
\begin{align*}I(L_{5})+\frac{x_{n}'(t_{1})-L_{5}}{u_{1}(L_{5})+v_{1}(L_{5})}
\leq
h_{1}(M_{5})k_{1}(M_{5})\int_{0}^{1}p(t)dt+I(\varepsilon),\end{align*}
and using the increasing property of $I^{-1}$, leads to
\begin{align*}L_{5}\leq I^{-1}(h_{1}(M_{5})k_{1}(M_{5})\int_{0}^{1}p(t)dt+I(\varepsilon)).\end{align*}
Which is a contradiction to \eqref{4.2.2}. Hence, $x_{n}'(t)<L_{5}$
for $t\in[0,1]$. Similarly, we can show that $y_{n}'(t)<L_{5}$ for $t\in[0,1]$.

\vskip 0.5em

Now, we show that
\begin{equation}\label{4.2.8}\|x_{n}\|+\|y_{n}\|<M_{5}.\end{equation}
Suppose $\|x_{n}\|+\|y_{n}\|\geq M_{5}$. From \eqref{4.2.3},
\eqref{4.2.4}, \eqref{4.2.7} and $(\mathbf{B}_{3})$, it follows
that
\begin{align*}
-x_{n}''(t)&\leq p(t)h_{1}(\theta_{5}(x_{n}(t)))k_{1}(\theta_{5}(y_{n}(t)))(u_{1}(x_{n}'(t))+v_{1}(x_{n}'(t))),\hspace{0.4cm}t\in(0,1),\\
-y_{n}''(t)&\leq
q(t)h_{2}(\theta_{5}(x_{n}(t)))k_{2}(\theta_{5}(y_{n}(t)))(u_{2}(y_{n}'(t))+v_{2}(y_{n}'(t))),\hspace{0.4cm}t\in(0,1),\end{align*}
which implies that
\begin{align*}
\frac{-x_{n}''(t)}{u_{1}(x_{n}'(t))+v_{1}(x_{n}'(t))}&\leq h_{1}(M_{5})k_{1}(M_{5})p(t),\hspace{0.4cm}t\in(0,1),\\
\frac{-y_{n}''(t)}{u_{2}(y_{n}'(t))+v_{2}(y_{n}'(t))}&\leq
h_{2}(M_{5})k_{2}(M_{5})q(t),\hspace{0.4cm}t\in(0,1).\end{align*}
Integrating from $t$ to $1$, using the BCs \eqref{4.2.3}, we obtain
\begin{align*}
\int_{\frac{1}{n}}^{x_{n}'(t)}\frac{dz}{u_{1}(z)+v_{1}(z)}&\leq
h_{1}(M_{5})k_{1}(M_{5})\int_{t}^{1}p(s)ds,\hspace{0.4cm}t\in[0,1],\\
\int_{\frac{1}{n}}^{y_{n}'(t)}\frac{dz}{u_{2}(z)+v_{2}(z)}&\leq
h_{2}(M_{5})k_{2}(M_{5})\int_{t}^{1}q(s)ds,\hspace{0.4cm}t\in[0,1],
\end{align*}
which can also be written as
\begin{align*}
I(x_{n}'(t))-I(\frac{1}{n})&\leq
h_{1}(M_{5})k_{1}(M_{5})\int_{0}^{1}p(s)ds,\hspace{0.4cm}t\in[0,1],\\
J(y_{n}'(t))-J(\frac{1}{n})&\leq
h_{2}(M_{5})k_{2}(M_{5})\int_{0}^{1}q(s)ds,\hspace{0.4cm}t\in[0,1].
\end{align*}
The increasing property of $I$ and $J$ leads to
\begin{equation}\label{4.2.9}\begin{split}
x_{n}'(t)&\leq
I^{-1}(h_{1}(M_{5})k_{1}(M_{5})\int_{0}^{1}p(s)ds+I(\varepsilon)),\hspace{0.4cm}t\in[0,1],\\
y_{n}'(t)&\leq
J^{-1}(h_{2}(M_{5})k_{2}(M_{5})\int_{0}^{1}q(s)ds+J(\varepsilon)),\hspace{0.35cm}t\in[0,1].
\end{split}\end{equation}
Integrating from $0$ to $t$, using the BCs \eqref{4.2.3} and
\eqref{4.2.9}, we obtain
\begin{equation}\label{4.2.10}\begin{split}
&x_{n}(t)\leq
I^{-1}(h_{1}(M_{5})k_{1}(M_{5})\int_{0}^{1}p(s)ds+I(\varepsilon))\\
&+\frac{b_{2}}{a_{2}}J^{-1}(h_{2}(M_{5})k_{2}(M_{5})\int_{0}^{1}q(s)ds+J(\varepsilon)),\hspace{0.4cm}t\in[0,1],\\
&y_{n}(t)\leq\frac{b_{1}}{a_{1}}I^{-1}(h_{1}(M_{5})k_{1}(M_{5})\int_{0}^{1}p(s)ds+I(\varepsilon))\\
&+J^{-1}(h_{2}(M_{5})k_{2}(M_{5})\int_{0}^{1}q(s)ds+J(\varepsilon)),\hspace{0.8cm}t\in[0,1].
\end{split}\end{equation}
From \eqref{4.2.10} and \eqref{4.2.6}, it follows that
\begin{align*}&M_{5}\leq\|x_{n}\|+\|y_{n}\|\leq(1+\frac{b_{1}}{a_{1}})I^{-1}(h_{1}(M_{5})k_{1}(M_{5})\int_{0}^{1}p(s)ds+I(\varepsilon))\\
&+(1+\frac{b_{2}}{a_{2}})J^{-1}(h_{2}(M_{5})k_{2}(M_{5})\int_{0}^{1}q(s)ds+J(\varepsilon)),\end{align*}
which implies that
\begin{equation*}\small{\begin{split}
\frac{M_{5}}{(1+\frac{b_{1}}{a_{1}})I^{-1}(h_{1}(M_{5})k_{1}(M_{5})\int_{0}^{1}p(s)ds+I(\varepsilon))
+(1+\frac{b_{2}}{a_{2}})J^{-1}(h_{2}(M_{5})k_{2}(M_{5})\int_{0}^{1}q(s)ds+J(\varepsilon))}\leq1,
\end{split}}\end{equation*}
a contradiction to \eqref{4.2.1}. Hence,
$\|x_{n}\|+\|y_{n}\|<M_{5}$.

\vskip 0.5em

Thus, in view of \eqref{4.2.3}-\eqref{4.2.8}, $(x_{n},y_{n})$ is a
solution of the following coupled system of BVPs
 \begin{equation}\label{4.2.11}\begin{split}
-&x''(t)=p(t)f(t,x(t),y(t),x'(t)),\hspace{0.4cm}t\in(0,1),\\
-&y''(t)=q(t)g(t,x(t),y(t),y'(t)),\hspace{0.4cm}t\in(0,1),\\
&a_{1}y(0)-b_{1}x'(0)=0,\,x'(1)=\frac{1}{n},\\
&a_{2}x(0)-b_{2}y'(0)=0,\,x'(1)=\frac{1}{n},
\end{split}\end{equation}
satisfying
\begin{equation}\label{4.2.12}\begin{split}
(t+\frac{b_{2}}{a_{2}})\frac{1}{n}&\leq
x_{n}(t)<M_{5},\,\frac{1}{n}\leq
x_{n}'(t)<L_{5},\hspace{0.4cm}t\in[0,1],\\
(t+\frac{b_{1}}{a_{1}})\frac{1}{n}&\leq
y_{n}(t)<M_{5},\,\frac{1}{n}\leq
y_{n}'(t)<L_{5},\hspace{0.4cm}t\in[0,1].\end{split}\end{equation} We
claim that
\begin{equation}\label{4.2.13}x_{n}'(t)\geq C_{9}^{\gamma_{1}} C_{10}^{\delta_{1}}\int_{t}^{1}p(s)\varphi_{M_{5}L_{5}}(s)ds,\end{equation}
\begin{equation}\label{4.2.14}y_{n}'(t)\geq
C_{9}^{\gamma_{2}}C_{10}^{\delta_{2}}\int_{t}^{1}q(s)\psi_{M_{5}L_{5}}(s)ds,\end{equation}
where
\begin{align*}\begin{split}
C_{9}=&\left(\frac{b_{1}}{a_{1}}\int_{0}^{1}p(s)\varphi_{M_{5}L_{5}}(s)ds\right)^{\frac{\delta_{1}}{(1-\gamma_{1})(1-\gamma_{2})-\delta_{1}\delta_{2}}}\left(\frac{b_{2}}{a_{2}}\int_{0}^{1}q(s)\psi_{M_{5}L_{5}}(s)ds\right)^{\frac{1-\gamma_{2}}{(1-\gamma_{1})(1-\gamma_{2})-\delta_{1}\delta_{2}}},\\
C_{10}=&\left(\frac{b_{1}}{a_{1}}\int_{0}^{1}p(s)\varphi_{M_{5}L_{5}}(s)ds\right)^{\frac{1-\gamma_{1}}{(1-\gamma_{1})(1-\gamma_{2})-\delta_{1}\delta_{2}}}\left(\frac{b_{2}}{a_{2}}\int_{0}^{1}q(s)\psi_{M_{5}L_{5}}(s)ds\right)^{\frac{\delta_{2}}{(1-\gamma_{1})(1-\gamma_{2})-\delta_{1}\delta_{2}}}.
\end{split}\end{align*}
To prove \eqref{4.2.13}, consider the following relation
\begin{equation}\label{4.2.15}\begin{split}
&x_{n}(t)=(t+\frac{b_{2}}{a_{2}})\frac{1}{n}+\int_{0}^{t}sp(s)f(s,x_{n}(s),y_{n}(s),x_{n}'(s))ds\\
&+\int_{t}^{1}tp(s)f(s,x_{n}(s),y_{n}(s),x_{n}'(s))ds+\frac{b_{2}}{a_{2}}\int_{t}^{1}q(s)g(s,x_{n}(s),y_{n}(s),y_{n}'(s))ds,
\end{split}\end{equation} which implies that
\begin{align*}
x_{n}(0)=\frac{b_{2}}{a_{2}}\frac{1}{n}+\frac{b_{2}}{a_{2}}\int_{0}^{1}q(s)g(s,x_{n}(s),y_{n}'(s))ds.
\end{align*}
Using $(\mathbf{B}_{16})$ and \eqref{4.2.12}, we obtain
\begin{equation*}\small{\begin{split}
x_{n}(0)\geq\frac{b_{2}}{a_{2}}\int_{0}^{1}q(s)\psi_{M_{5}L_{5}}(s)(x_{n}(s))^{\gamma_{1}}(y_{n}(s))^{\delta_{1}}ds\geq(x_{n}(0))^{\gamma_{1}}(y_{n}(0))^{\delta_{1}}\frac{b_{2}}{a_{2}}\int_{0}^{1}q(s)\psi_{M_{5}L_{5}}(s)ds,
\end{split}}\end{equation*}
which implies that
\begin{equation}\label{4.2.16}
x_{n}(0)\geq(y_{n}(0))^{\frac{\delta_{1}}{1-\gamma_{1}}}\left(\frac{b_{2}}{a_{2}}\int_{0}^{1}q(s)\psi_{M_{5}L_{5}}(s)ds\right)^{\frac{1}{1-\gamma_{1}}}.
\end{equation}
Similarly, using $(\mathbf{B}_{16})$ and \eqref{4.2.12}, we obtain
\begin{equation}\label{4.2.17}
y_{n}(0)\geq(x_{n}(0))^{\frac{\delta_{2}}{1-\gamma_{2}}}\left(\frac{b_{1}}{a_{1}}\int_{0}^{1}p(s)\varphi_{M_{5}L_{5}}(s)ds\right)^{\frac{1}{1-\gamma_{2}}}.
\end{equation}
Now, using \eqref{4.2.17} in \eqref{4.2.16}, we have
\begin{align*}
x_{n}(0)\geq&(x_{n}(0))^{\frac{\delta_{1}\delta_{2}}{(1-\gamma_{1})(1-\gamma_{2})}}\left(\frac{b_{1}}{a_{1}}\int_{0}^{1}p(s)\varphi_{M_{5}L_{5}}(s)ds\right)^{\frac{\delta_{1}}{(1-\gamma_{1})(1-\gamma_{2})}}\left(\frac{b_{2}}{a_{2}}\int_{0}^{1}q(s)\psi_{M_{5}L_{5}}(s)ds\right)^{\frac{1}{1-\gamma_{1}}}.
\end{align*}
Hence,
\begin{equation}\label{4.2.18}
x_{n}(0) \geq C_{9}.
\end{equation}
Similarly, using \eqref{4.2.16} in \eqref{4.2.17}, we obtain
\begin{equation}\label{4.2.19}
y_{n}(0) \geq C_{10}.
\end{equation}
Now, from \eqref{4.2.15}, it follows that
\begin{align*}
x_{n}'(t)\geq\int_{t}^{1}p(s)f(s,x_{n}(s),y_{n}(s),x_{n}'(s))ds.
\end{align*}
and using $(\mathbf{B}_{16})$, \eqref{4.2.12}, \eqref{4.2.18} and
\eqref{4.2.19}, we obtain \eqref{4.2.13}. Similarly, we can prove \eqref{4.2.14}.

\vskip 0.5em

Now, using \eqref{4.2.11}, $(\mathbf{B}_{3})$, \eqref{4.2.12},
\eqref{4.2.13} and \eqref{4.2.14}, we have
\begin{equation}\label{4.2.20}\begin{split}
0\leq-x_{n}''(t)&\leq h_{1}(M_{5})k_{1}(M_{5})p(t)(u_{1}(C_{9}^{\gamma_{1}}C_{10}^{\delta_{1}}\int_{t}^{1}p(s)\varphi_{M_{5}L_{5}}(s)ds)+v_{1}(L_{5})),\hspace{0.4cm}t\in(0,1),\\
0\leq-y_{n}''(t)&\leq
h_{2}(M_{5})k_{2}(M_{5})q(t)(u_{2}(C_{9}^{\gamma_{2}}C_{10}^{\delta_{2}}\int_{t}^{1}q(s)\psi_{M_{5}L_{5}}(s)ds)+v_{2}(L_{5})),\hspace{0.4cm}t\in(0,1).
\end{split}\end{equation}
In view of \eqref{4.2.12}, \eqref{4.2.20}, $(\mathbf{B}_{1})$ and
$(\mathbf{B}_{17})$, it follows that the sequences
$\{(x_{n}^{(j)},y_{n}^{(j)})\}$ $(j=0,1)$ are uniformly bounded and
equicontinuous on $[0,1]$. Hence, by Theorem \eqref{arzela}, there
exist subsequences $\{(x_{n_{k}}^{(j)},y_{n_{k}}^{(j)})\}$ $(j=0,1)$
of $\{(x_{n}^{(j)},y_{n}^{(j)})\}$ $(j=0,1)$ and $(x,y)\in
C^{1}[0,1]\times C^{1}[0,1]$ such that
$(x_{n_{k}}^{(j)},y_{n_{k}}^{(j)})$ converges uniformly to
$(x^{(j)},y^{(j)})$ on $[0,1]$  $(j=0,1)$. Also,
$a_{2}x(0)-b_{2}y'(0)=a_{1}y(0)-b_{1}x'(0)=x'(1)=y'(1)=0$. Moreover,
from \eqref{4.2.13} and \eqref{4.2.14}, with $n_{k}$ in place of $n$
and taking $\lim_{n_{k}\rightarrow+\infty}$, we have
\begin{align*}x'(t)\geq C_{9}^{\gamma_{1}}C_{10}^{\delta_{1}}\int_{t}^{1}p(s)\varphi_{M_{5}L_{5}}(s)ds,\\
y'(t)\geq
C_{9}^{\gamma_{2}}C_{10}^{\delta_{2}}\int_{t}^{1}q(s)\psi_{M_{5}L_{5}}(s)ds,\end{align*}
which shows that $x'>0$ and $y'>0$ on $[0,1)$, $x>0$ and $y>0$ on
$[0,1]$. Further, $(x_{n_{k}},y_{n_{k}})$ satisfy
\begin{align*}
x_{n_{k}}'(t)&=x_{n_{k}}'(0)-\int_{0}^{t}p(s)f(s,x_{n_{k}}(s),y_{n_{k}}(s),x_{n_{k}}'(s))ds,\hspace{0.4cm}t\in[0,1],\\
y_{n_{k}}'(t)&=y_{n_{k}}'(0)-\int_{0}^{t}q(s)g(s,x_{n_{k}}(s),y_{n_{k}}(s),y_{n_{k}}'(s))ds,\hspace{0.4cm}t\in[0,1].
\end{align*}
Passing to the limit as $n_{k}\rightarrow\infty$, we obtain
\begin{align*}
x'(t)&=x'(0)-\int_{0}^{t}p(s)f(s,x(s),y(s),x'(s))ds,\hspace{0.4cm}t\in[0,1],\\
y'(t)&=y'(0)-\int_{0}^{t}q(s)g(s,x(s),y(s),y'(s))ds,\hspace{0.4cm}t\in[0,1],
\end{align*}
which implies that
\begin{align*}
-x''(t)=p(t)f(t,x(t),y(t),x'(t)),\hspace{0.4cm}t\in(0,1),\\
-y''(t)=q(t)g(t,x(t),y(t),y'(t)),\hspace{0.4cm}t\in(0,1).
\end{align*}
Hence, $(x,y)$ is a $C^{1}$-positive solution of the system of BVPs
\eqref{4.0.2}.
\end{proof}

\begin{ex}Consider the following coupled system of SBVPs
\begin{equation}\begin{split}\label{4.2.21}
-&x''(t)=\nu^{\beta_{1}+1}(x(t))^{\gamma_{1}}(y(t))^{\delta_{1}}(x'(t))^{-\beta_{1}},\hspace{0.4cm}t\in(0,1),\\
-&y''(t)=\nu^{\beta_{2}+1}(x(t))^{\gamma_{2}}(y(t))^{\delta_{2}}(y'(t))^{-\beta_{2}},\hspace{0.4cm}t\in(0,1),\\
&x(0)-y'(0)=y(0)-x'(0)=x'(1)=y'(1)=0,
\end{split}\end{equation}
where $0\leq\gamma_{1},\gamma_{2},\delta_{1},\delta_{2}<1$
satisfying $(1-\gamma_{1})(1-\gamma_{2})\neq\delta_{1}\delta_{2}$,
$0<\beta_{1}<1$, $0<\beta_{2}<1$ and $\nu>0$ such that
\begin{align*}\nu<\sup_{c\in(0,\infty)}\frac{c}{2\sum_{i=1}^{2}(\beta_{i}+1)^{\frac{1}{\beta_{i}+1}}c^{\frac{{\gamma_{i}+\delta_{i}}}{\beta_{i}+1}}}.\end{align*}

Choose $p(t)=q(t)=1$, $h_{1}(x)=\nu^{\beta_{1}+1} x^{\gamma_{1}}$,
$h_{2}(x)=\nu^{\beta_{2}+1} x^{\gamma_{2}}$,
$k_{1}(x)=x^{\delta_{1}}$, $k_{2}(x)=x^{\delta_{2}}$,
$u_{1}(x)=x^{-\beta_{1}}$, $u_{2}(x)=x^{-\beta_{2}}$ and
$v_{1}(x)=v_{2}(x)=0$, $\varphi_{ML}(t)=L^{-\beta_{1}}$,
$\psi_{ML}(t)=L^{-\beta_{2}}$.

Then,
$I(\nu)=\frac{\nu^{\beta_{1}+1}}{\beta_{1}+1}$,
$J(\nu)=\frac{\nu^{\beta_{2}+1}}{\beta_{2}+1}$,
$I^{-1}(\nu)=(\beta_{1}+1)^{\frac{1}{\beta_{1}+1}}\nu^{\frac{1}{\beta_{1}+1}}$
and
$J^{-1}(\nu)=(\beta_{2}+1)^{\frac{1}{\beta_{2}+1}}z^{\frac{1}{\beta_{2}+1}}$. Also,
\begin{align*}
\sup_{c\in(0,\infty)}&\frac{c}{(1+\frac{b_{1}}{a_{1}})I^{-1}(h_{1}(c)k_{1}(c)\int_{0}^{1}p(t)dt)+(1+\frac{b_{2}}{a_{2}})J^{-1}(h_{2}(c)k_{2}(c)\int_{0}^{1}q(t)dt)}=\\
\sup_{c\in(0,\infty)}&\frac{c}{2\nu\sum_{i=1}^{2}(\beta_{i}+1)^{\frac{1}{\beta_{i}+1}}c^{\frac{{\gamma_{i}+\delta_{i}}}{\beta_{i}+1}}}>1.
\end{align*}
Clearly, $(\mathbf{B}_{1})-(\mathbf{B}_{17})$ are satisfied.
Hence, by Theorem \ref{th5.2}, the system of BVPs \eqref{4.2.21} has
at least one $C^{1}$-positive solution.
\end{ex}


\section{Existence of at least one $C^{1}$-positive
solution via lower and upper solutions}\label{finite}

In this section, we establish existence of at least one
$C^{1}$-positive solution of the system of BVPs \eqref{1.6} \cite{asif1}. By a
$C^{1}$-positive solution to the system of BVPs \eqref{1.6}, we
means $(x,y)\in (C^{1}[0,1]\cap C^{2}(0,1))\times(C^{1}[0,1]\cap
C^{2}(0,1))$ satisfying \eqref{1.6}, $x>0$ and $y>0$ on $(0,1)$.

\vskip 0.5em

Let $\{\rho_{n}\}_{n=1}^{\infty}$ be a nonincreasing sequence of
real constants such that $\lim_{n\rightarrow\infty}\rho_{n}=0$.
Assume that the following holds:
\begin{description}
\item[$(\mathbf{B}_{18})$] $p_{i}\in C(0,1)$, $p_{i}>0$ on $(0,1)$ and $\int_{0}^{1}p_{i}(t)dt<+\infty$, $i=1,2$;
\item[$(\mathbf{B}_{19})$] $f_{i}:[0,1]\times(0,\infty)\times(0,\infty)\times\mathbb{R}\rightarrow\mathbb{R}$ are
continuous, $i=1,2$;
\item[$(\mathbf{B}_{20})$] there exist $(\beta_{1},\beta_{2})\in
(C^{1}[0,1]\cap C^{2}(0,1))\times(C^{1}[0,1]\cap C^{2}(0,1))$ and
$n_{0}\in\{1,2,\cdots\}$ such that $\beta_{1}(t)\geq\rho_{n_{0}}$,
$\beta_{2}(t)\geq\rho_{n_{0}}$ for $t\in[0,1]$ and
\begin{align*}
-\beta_{1}''(t)&\geq p_{1}(t)f_{1}(t,\beta_{1}(t),\beta_{2}(t),\beta_{1}'(t)),\,\,\,\,t\in(0,1),\\
-\beta_{2}''(t)&\geq
p_{2}(t)f_{2}(t,\beta_{1}(t),\beta_{2}(t),\beta_{2}'(t)),\,\,\,\,t\in(0,1);
\end{align*}
\item[$(\mathbf{B}_{21})$] there exist $(\alpha_{1},\alpha_{2})\in (C^{1}[0,1]\cap C^{2}(0,1))\times(C^{1}[0,1]\cap C^{2}(0,1))$ with
$\alpha_{1}(0)=\alpha_{1}(1)=\alpha_{2}(0)=\alpha_{2}(1)=0$,
$\alpha_{1}>0$ and $\alpha_{2}>0$ on $(0,1)$ such that for
$(t,x,y)\in(0,1)\times\{x\in(0,\infty):x<\alpha_{1}(t)\}\times\{y\in(0,\infty):y\leq\beta_{2}(t)\}$,
\begin{align*}
-\alpha_{1}''(t)&<p_{1}(t)f_{1}(t,x,y,\alpha_{1}'(t)),
\end{align*}
for
$(t,x,y)\in(0,1)\times\{x\in(0,\infty):x\leq\beta_{1}(t)\}\times\{y\in(0,\infty):y<\alpha_{2}(t)\}$,
\begin{align*}
-\alpha_{2}''(t)&<p_{2}(t)f_{2}(t,x,y,\alpha_{2}'(t));
\end{align*}
\item[$(\mathbf{B}_{22})$] for each $n\in\{n_{0},n_{0}+1,\cdots\}$, $0\leq t\leq 1$, $\rho_{n}\leq x\leq \beta_{1}(t)$, $\rho_{n}\leq y\leq
\beta_{2}(t)$, we have $f_{1}(t,\rho_{n},y,0)\geq 0$ and
$f_{2}(t,x,\rho_{n},0)\geq 0$;
\item[$(\mathbf{B}_{23})$] $|f_{i}(t,x,y,z)|\leq (h_{i}(x)+k_{i}(x))(u_{i}(y)+v_{i}(y))\psi_{i}(|z|)$, where $h_{i},u_{i}>0$ are continuous and nonincreasing
on $(0,\infty)$, $k_{i},v_{i}\geq 0$, $\psi_{i}>0$ are continuous on
$[0,\infty)$ with $\frac{k_{i}}{h_{i}},\frac{v_{i}}{u_{i}}$
nondecreasing on $(0,\infty)$, $i=1,2$;
\item[$(\mathbf{B}_{24})$]$\int_{0}^{1}p_{i}(t)h_{i}(\alpha_{1}(t))u_{i}(\alpha_{2}(t))dt<+\infty$, $i=1,2$;
\item[$(\mathbf{B}_{25})$]
\begin{align*}
\int_{0}^{\infty}\frac{du}{\psi_{i}(u)}>\left[1+\frac{k_{i}(b_{1})}{h_{i}(b_{1})}\right]\left[1+\frac{v_{i}(b_{2})}{u_{i}(b_{2})}\right]\int_{0}^{1}p_{i}(t)h_{i}(\alpha_{1}(t))u_{i}(\alpha_{2}(t))dt,
\end{align*}
where $b_{i}=\max\{\beta_{i}(t):t\in[0,1]\}$, $i=1,2$.
\end{description}
\begin{thm}\label{th1}
Assume that $(\mathbf{B}_{18})-(\mathbf{B}_{25})$ hold. Then the system of BVPs \eqref{1.6} has a $C^{1}$-positive solution.
\end{thm}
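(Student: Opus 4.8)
The plan is to establish existence of a $C^{1}$-positive solution to the system \eqref{1.6} by the regularization-and-sequential-limit method used throughout this chapter, now combined with the lower/upper solution pair furnished by $(\mathbf{B}_{20})$ and $(\mathbf{B}_{21})$. First I would fix $n \in \{n_{0}, n_{0}+1, \dots\}$ and regularize the singular system by replacing the singular arguments with truncations bounded below by $\rho_{n}$. Concretely, I would introduce the modified nonlinearities in which the first two (spatial) slots are pinned into the order interval determined by the lower solutions $\alpha_{i}$ and upper solutions $\beta_{i}$ via a truncation operator, and the singular slot is cut off at $\rho_{n}$, yielding a system
\begin{align*}
-x''(t) &= p_{1}(t)\,f_{1}^{*}\!\left(t,x(t),y(t),x'(t)\right), \quad t\in(0,1),\\
-y''(t) &= p_{2}(t)\,f_{2}^{*}\!\left(t,x(t),y(t),y'(t)\right), \quad t\in(0,1),
\end{align*}
with Dirichlet data $x(0)=x(1)=y(0)=y(1)=0$. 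Because $(\mathbf{B}_{19})$ gives continuity and the truncations make $f_{i}^{*}$ bounded, Schauder's fixed point theorem (Theorem \ref{schauder}) applied to the integral operator built from the Green's function $G(t,s)$ for $-u''$ on $[0,1]$ yields a solution $(x_{n}, y_{n})$ of the regularized problem.

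Next I would verify that each regularized solution in fact lies in the order interval, i.e. $\alpha_{1}(t) \le x_{n}(t) \le \beta_{1}(t)$ and $\alpha_{2}(t) \le y_{n}(t) \le \beta_{2}(t)$, together with the lower bound $x_{n}(t), y_{n}(t) \ge \rho_{n}$ coming from the regularization. The upper estimates follow from $(\mathbf{B}_{20})$: comparing $\beta_{i}$ with $x_{n}, y_{n}$, a standard maximum-principle argument on the difference (using that $-(\beta_{1}-x_{n})''$ has the right sign wherever the constraint is active, and that the truncation switches off exactly there) rules out any interior crossing. The lower estimates use $(\mathbf{B}_{21})$ in the same comparison style, invoking the strict inequalities there to prevent $x_{n}$ from dipping below $\alpha_{1}$ (respectively $y_{n}$ below $\alpha_{2}$); the sign condition $(\mathbf{B}_{22})$ guarantees the truncated problem behaves monotonically near the floor $\rho_{n}$ so that $x_{n}, y_{n} \ge \rho_{n}$ is consistent. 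Once these bounds hold, the truncations are inactive and $(x_{n}, y_{n})$ solves the genuine (un-truncated, but still $\rho_{n}$-regularized) system on $[0,1]$.

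The third stage is the compactness argument producing the limit. I would use the growth control $(\mathbf{B}_{23})$ together with the integrability hypotheses $(\mathbf{B}_{24})$ and the Bernstein-type bound $(\mathbf{B}_{25})$ to obtain a uniform a priori bound on $\|x_{n}'\|$ and $\|y_{n}'\|$ independent of $n$. The inequality in $(\mathbf{B}_{25})$ is precisely the quantity needed to integrate the differential inequality $|x_{n}''|/\psi_{1}(|x_{n}'|) \le (\text{integrable})$ and conclude that $\int_{0}^{|x_{n}'|} du/\psi_{1}(u)$ stays below its critical value, hence $x_{n}'$ is uniformly bounded; likewise for $y_{n}'$. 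Combined with the order-interval bounds on $x_{n}, y_{n}$ themselves, this makes $\{(x_{n}^{(j)}, y_{n}^{(j)})\}_{j=0,1}$ uniformly bounded and equicontinuous on $[0,1]$. By the Arzel\`{a}-Ascoli theorem (Theorem \ref{arzela}) I extract a subsequence $(x_{n_{k}}, y_{n_{k}})$ converging in $C^{1}[0,1]\times C^{1}[0,1]$ to some $(x,y)$. Passing to the limit in the integral representations, using $(\mathbf{B}_{18})$, $(\mathbf{B}_{23})$–$(\mathbf{B}_{24})$ and the Lebesgue dominated convergence theorem, shows $(x,y)$ solves \eqref{1.6}, and the retained lower bounds $x \ge \alpha_{1} > 0$, $y \ge \alpha_{2} > 0$ on $(0,1)$ give positivity.

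The main obstacle I anticipate is the limit step rather than the existence of the regularized solutions. The nonlinearities $f_{i}$ are singular at $x=0$ and $y=0$, so as $\rho_{n} \to 0$ the integrands can blow up near the endpoints; the entire force of $(\mathbf{B}_{24})$ is to keep $\int_{0}^{1} p_{i}(t)h_{i}(\alpha_{1}(t))u_{i}(\alpha_{2}(t))\,dt$ finite, and I must be careful that the dominating function for the dominated-convergence passage is built from the lower-solution bounds $\alpha_{1}, \alpha_{2}$ (which are uniform lower barriers for every $x_{n}, y_{n}$) so that $h_{i}(x_{n}) \le h_{i}(\alpha_{1})$ and $u_{i}(y_{n}) \le u_{i}(\alpha_{2})$ by the nonincreasing property in $(\mathbf{B}_{23})$. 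A secondary delicate point is the simultaneous (coupled) nature of the comparison argument for the order interval: since $f_{1}$ depends on $y$ and $f_{2}$ on $x$, the upper/lower solution inequalities must be read with the coupling slots restricted as in $(\mathbf{B}_{20})$–$(\mathbf{B}_{21})$, and I would handle both inequalities together on the set where either constraint is first violated, which is exactly why those hypotheses are stated with the cross-variable confined to its own order interval.
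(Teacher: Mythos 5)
Your overall strategy (truncate, apply Schauder, establish barrier bounds from the upper/lower pair, get uniform derivative bounds from the Bernstein--Nagumo condition $(\mathbf{B}_{25})$, then Arzel\`a--Ascoli and dominated convergence with the integrable majorant $p_{i}h_{i}(\alpha_{1})u_{i}(\alpha_{2})$ supplied by $(\mathbf{B}_{24})$) is the right one, but your regularized problems carry the wrong boundary data, and this is not cosmetic. You impose the zero Dirichlet conditions $x(0)=x(1)=y(0)=y(1)=0$ for every $n$ and simultaneously claim the floor bounds $x_{n},y_{n}\geq\rho_{n}$; these are incompatible, since $x_{n}(0)=0<\rho_{n}$. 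The regularization used throughout this monograph (compare the modified problems \eqref{3.1.5}, \eqref{3.3.5}, \eqref{t2.5}, whose boundary data are lifted to $1/n$) poses the truncated system with boundary values $\rho_{n}$, i.e. $x(0)=x(1)=y(0)=y(1)=\rho_{n}$, and recovers the zero data only in the limit $\rho_{n}\to0$. The bound $x_{n},y_{n}\geq\rho_{n}$ is then proved by an interior-minimum argument, which is exactly what $(\mathbf{B}_{22})$ is for: at an interior minimum $x_{n}'=0$, and $f_{1}(t,\rho_{n},y,0)\geq0$ forbids dipping below the floor. This bound is the linchpin of the whole chain: it makes the lower truncation inactive, so that at an interior positive maximum $t^{*}$ of $\alpha_{1}-x_{n}$ the equation involves the genuine value $x_{n}(t^{*})\in[\rho_{n},\alpha_{1}(t^{*}))$, and only then can the strict inequality $(\mathbf{B}_{21})$ --- assumed solely for arguments $x<\alpha_{1}(t)$ --- be invoked to get the contradiction $(\alpha_{1}-x_{n})''(t^{*})>0$. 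In your setup, near the endpoints one has both $x_{n}(t)<\rho_{n}$ and $\alpha_{1}(t)<\rho_{n}$, so the truncated argument equals $\rho_{n}\geq\alpha_{1}(t)$ (or equals $\alpha_{1}(t)$ itself, if you truncate at the lower solution), which lies outside the set where $(\mathbf{B}_{21})$ asserts anything; the comparison collapses precisely where it is needed, $(\mathbf{B}_{22})$ is never usable with boundary values already below $\rho_{n}$, and your claim that ``once these bounds hold, the truncations are inactive'' is unobtainable.

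There is a second genuine gap in the existence step: you justify Schauder by asserting that ``the truncations make $f_{i}^{*}$ bounded.'' They do not. The truncation pins only the two spatial slots; by $(\mathbf{B}_{23})$ the modified nonlinearity is still of size $\psi_{i}(|z|)$ in the derivative variable, and $\psi_{i}$ is only assumed continuous on $[0,\infty)$ --- it may grow without bound, which is the entire reason the Nagumo condition $(\mathbf{B}_{25})$ is imposed. So you must either additionally truncate the derivative slot at the a priori level $M_{i}$ determined by $(\mathbf{B}_{25})$ (with $\int_{0}^{M_{i}}du/\psi_{i}(u)$ equal to the right-hand side there) and remove that truncation a posteriori by the Bernstein integration you sketch (Rolle's theorem supplies the needed zero of $x_{n}'$ because $x_{n}(0)=x_{n}(1)$), or replace Schauder by a continuation argument built on that a priori bound. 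Finally, note that $(\mathbf{B}_{20})$ is assumed only at the exact pair $(\beta_{1}(t),\beta_{2}(t))$, so your ``standard maximum-principle argument'' for $x_{n}\leq\beta_{1}$ does not go through with a naive cross-slot truncation: at a positive maximum of $x_{n}-\beta_{1}$ the cross argument need not equal $\beta_{2}(t^{*})$, and no monotonicity in $y$ is assumed; the modified nonlinearity must be designed so that whenever the own variable exceeds its upper solution, the cross slot is pinned to $\beta_{2}(t)$ (with a penalization term), which your outline does not provide. With these repairs --- boundary data $\rho_{n}$ in place of $0$, a bounded modification in the derivative slot, and the more careful truncation just described --- the remainder of your outline (estimates in the order $\geq\rho_{n}$, then $\leq\beta_{i}$, then $\geq\alpha_{i}$; uniform $C^{1}$ bounds; Arzel\`a--Ascoli; dominated convergence; positivity from $x\geq\alpha_{1}$, $y\geq\alpha_{2}$ on $(0,1)$) is sound and matches the method this theorem's source follows.
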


\chapter[Singular Systems of Second-Order ODEs on Infinite Intervals]{Singular Systems of BVPs on Infinite Intervals}\label{ch5}

Recently, the theory on existence of solutions to nonlinear BVPs on unbounded
domain has attracted the attention of many authors, see for example
\cite{agarwalinfinite,eloe,kw1,liange,liu1,rma,yl} and the
references therein. For BVPs defined on half-line, an excellent
resource is produced by Agarwal and O'Regan \cite{agarwalinfinite}
that have been received considerable attentions.

\vskip 0.5em

Agarwal and O'Regan \cite[Section~2.15]{ao} studied the existence of positive
solutions to the following BVP
\begin{equation}\begin{split}\label{s1.3}
-x''(t)&=\phi(t)f(t,x(t)),\hspace{0.4cm}t\in(0,\infty),\\
x(0)&=0,\,\lim_{t\rightarrow\infty}x'(t)=0,
\end{split}\end{equation}
where $f(t,x)$ is singular at $x=0$. Further, in
\cite[Section~1.11]{agarwalinfinite} they establish the existence
results for \eqref{s1.3} when $f$ includes first derivative also. However in
\cite{wei,cheng}, it was assumed that the nonlinearities are
positive which leads to concave solutions.

\vskip 0.5em

In Sections \ref{infinite-one} and \ref{infinite-two}, we
study the existence of $C^{1}$-positive solutions to the following
coupled systems of SBVPs
\begin{equation}\label{bc3}\begin{split}
-&x''(t)=p_{1}(t)f_{1}(t,x(t),y(t),x'(t)),\hspace{0.4cm}t\in\R_{0}^{+},\\
-&y''(t)=p_{2}(t)f_{2}(t,x(t),y(t),y'(t)),\hspace{0.4cm}t\in\R_{0}^{+},\\
&x(0)=y(0)=\lim_{t\rightarrow\infty}y'(t)=\lim_{t\rightarrow\infty}x'(t)=0
\end{split}\end{equation}
and
\begin{equation}\label{s1.6}\begin{split}
-&x''(t)=p_{1}(t)f_{1}(t,x(t),y(t),x'(t)),\hspace{0.4cm}t\in\R_{0}^{+},\\
-&y''(t)=p_{2}(t)f_{2}(t,x(t),y(t),y'(t)),\hspace{0.4cm}t\in\R_{0}^{+},\\
&a_{1}x(0)-b_{1}x'(0)=\lim_{t\rightarrow\infty}x'(t)=0,\\
&a_{2}y(0)-b_{2}y'(0)=\lim_{t\rightarrow\infty}y'(t)=0,
\end{split}\end{equation}
where the functions $f_{i}:\R^{+}\times\R^{2}\times
\R_{0}\rightarrow \R$ are continuous and allowed to change sign.
Further, the nonlinearities $f_{i}$ $(i=1,2)$ are allowed to be
singular at $x'=0$ and $y'=0$. Also, $p_{i}\in C(\R_{0}^{+})$,
$p_{i}(i=1,2)>0$ on $\R_{0}^{+}$ and the constants
$a_{i},b_{i}(i=1,2)>0$; here $\R=(-\infty,\infty)$,
$\R_{0}=\R\setminus\{0\}$, $\R^{+}=[0,\infty)$,
$\R_{0}^{+}=\R^{+}\setminus\{0\}$.


\section{Systems of BVPs on infinite intervals}\label{infinite-one}
In this section, we establish the existence of $C^{1}$-positive
solutions for the system of BVPs \eqref{bc3}. We say,
$(x,y)\in(C^{1}(\R^{+})\cap
C^{2}(\R_{0}^{+}))\times(C^{1}(\R^{+})\cap C^{2}(\R_{0}^{+}))$ is a
$C^{1}$-positive solution of the system of BVPs \eqref{bc3}, if $(x,y)$ satisfies \eqref{bc3},
$x>0$ and $y>0$ on $\mathbb{R}_{0}^{+}$, $x'>0$ and $y'>0$ on
$\R^{+}$. 

\vskip 0.5em

Assume that
\begin{description}
\item[$(\mathbf{C}_{1})$] $p_{i}\in C(\R_{0}^{+})$, $p_{i}>0$ on $\R_{0}^{+}$, $\int_{0}^{\infty}p_{i}(t)dt<+\infty$, $i=1,2$;
\item[$(\mathbf{C}_{2})$] $f_{i}:\R^{+}\times\R^{2}\times\R_{0}\rightarrow\R$ is continuous, $i=1,2$;
\item[$(\mathbf{C}_{3})$] $|f_{i}(t,x,y,z)|\leq h_{i}(|x|)k_{i}(|y|)(u_{i}(|z|)+v_{i}(|z|))$, where
$u_{i}>0$ is continuous
 and nonincreasing on $\R_{0}^{+}$, $h_{i},\,k_{i},\,v_{i}\geq 0$ are continuous and nondecreasing on
 $\R^{+}$, $i=1,2$;
\item[$(\mathbf{C}_{4})$] there exist a constant $M>0$ such that
$\frac{M}{\omega(M)}>1$, where
$\omega(M)=\lim_{\varepsilon\rightarrow 0}\omega_{\varepsilon}(M)$,
\begin{align*}\omega_{\varepsilon}(M)&=\sum_{i=1}^{2}\int_{0}^{\infty}[J_{i}^{-1}\big(h_{i}(M)k_{i}(M)\int_{t}^{\infty}p_{i}(s)ds+J_{i}(\varepsilon)\big)]dt\\
&\hspace{0.4cm}+\sum_{i=1}^{2}J_{i}^{-1}\big(h_{i}(M)k_{i}(M)\int_{0}^{\infty}p_{i}(s)ds+J_{i}(\varepsilon)\big),\\
J_{i}(\mu)&=\int_{0}^{\mu}\frac{d\tau}{u_{i}(\tau)+v_{i}(\tau)},\text{
for }\mu>0,\, i=1,2;
\end{align*}
\item[$(\mathbf{C}_{5})$] $J_{1}(\infty)=\infty$ and $J_{2}(\infty)=\infty$;
\item[$(\mathbf{C}_{6})$] $f_{i}$ is positive on $\R^{+}\times(0,M]^{3}$,
$i=1,2$;
\item[$(\mathbf{C}_{7})$] there exist continuous functions $\varphi_{M}$ and $\psi_{M}$ defined on
$\R^{+}$ and positive on $\R_{0}^{+}$, and constants
$0\leq\gamma_{1},\gamma_{2},\delta_{1},\delta_{2}<1$ satisfying
$(1-\gamma_{1})(1-\gamma_{2})\neq\delta_{1}\delta_{2}$, such that
$f_{1}(t,x,y,z)\geq\varphi_{M}(t)x^{\gamma_{1}}y^{\delta_{1}}$ and
$f_{2}(t,x,y,z)\geq\psi_{M}(t)x^{\gamma_{2}}y^{\delta_{2}}$ on
$\R^{+}\times[0,M]^{3}$.
\end{description}
\subsection{Existence of positive solutions on finite intervals} Choose
$m\in N_{0}\setminus\{0\}$, where $N_{0}:=\{0,1,\cdots\}$, and
consider the following system of BVPs on finite interval
\begin{equation}\label{t2.1}\begin{split}
-&x''(t)=p_{1}(t)f_{1}(t,x(t),y(t),x'(t)),\hspace{0.4cm}t\in(0,m),\\
-&y''(t)=p_{2}(t)f_{2}(t,x(t),y(t),y'(t)),\hspace{0.4cm}t\in(0,m),\\
&x(0)=y(0)=x'(m)=y'(m)=0.
\end{split}\end{equation} First, we show
that system of BVPs \eqref{t2.1} has a $C^{1}$-positive solution. We
say, $(x,y)\in(C^{1}[0,m]\cap C^{2}(0,m))\times(C^{1}[0,m]\cap
C^{2}(0,m))$, a $C^{1}$-positive solution of the system of BVPs
\eqref{t2.1}, if $(x,y)$ satisfies \eqref{t2.1}, $x>0$ and $y>0$ on
$(0,m]$, $x'>0$ and $y'>0$ on $[0,m)$.
\begin{thm}\label{th11}
Assume that $(\mathbf{C}_{1})-(\mathbf{C}_{7})$ hold. Then the system of BVPs \eqref{t2.1} has a $C^{1}$-positive solution.
\end{thm}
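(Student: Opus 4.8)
The plan is to adapt the regularization-and-truncation scheme used for Theorem \ref{th5.2} to the finite interval $[0,m]$: solve a sequence of non-singular auxiliary problems by Schauder's theorem, derive $n$-independent \emph{a priori} bounds, and pass to a limit by Arzel\`{a}--Ascoli. First I would use $(\mathbf{C}_4)$ to fix $M>0$ with $M/\omega(M)>1$ and, since $\omega(M)=\lim_{\varepsilon\to0}\omega_\varepsilon(M)$, choose $\varepsilon>0$ so small that $M/\omega_\varepsilon(M)>1$; then pick $L>0$ exceeding both $J_i^{-1}\bigl(h_i(M)k_i(M)\int_0^\infty p_i(s)\,ds+J_i(\varepsilon)\bigr)$, and an integer $n_0$ with $1/n_0<\varepsilon$. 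For each fixed $n\ge n_0$ I introduce the retractions $\theta(x)=\max\{0,\min\{x,M\}\}$ and $\rho(x)=\max\{1/n,\min\{x,L\}\}$ and consider the non-singular system $-x''=p_1(t)f_1(t,\theta(x),\theta(y),\rho(x'))$, $-y''=p_2(t)f_2(t,\theta(x),\theta(y),\rho(y'))$ on $(0,m)$ with the perturbed data $x(0)=y(0)=0$, $x'(m)=y'(m)=1/n$. Because the truncated right-hand sides are continuous and bounded on $[0,m]\times\R^3$, the associated integral operator (with Green's function $G(t,s)=\min\{t,s\}$) is completely continuous, so Theorem \ref{schauder} yields a solution $(x_n,y_n)\in(C^1[0,m]\cap C^2(0,m))^2$.

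Next I would extract the \emph{a priori} bounds that make the retractions inactive. Since $(\mathbf{C}_6)$ gives $f_i>0$ on the admissible range, we get $x_n''\le0$, $y_n''\le0$, and integration from $t$ to $m$ using $x'_n(m)=1/n$ yields $x_n'(t)\ge1/n$, $y_n'(t)\ge1/n$, whence $x_n,y_n$ are increasing and $\|x_n\|=x_n(m)$, $\|y_n\|=y_n(m)$. To bound the derivatives, I assume $x_n'(t_1)\ge L$ for some $t_1$, use $(\mathbf{C}_3)$ to write $-x_n''/(u_1(\rho(x_n'))+v_1(\rho(x_n')))\le h_1(M)k_1(M)p_1$, integrate, and invoke the monotonicity of $J_1$ to contradict the choice of $L$; this gives $x_n'(t)<L$ and, symmetrically, $y_n'(t)<L$. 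An analogous integration of the same inequality, followed by integration of $x_n'\le J_1^{-1}(\cdots)$ over $[0,m]$ and the majorization $\int_t^m p_i\le\int_t^\infty p_i$, bounds $\|x_n\|+\|y_n\|$ by $\omega_\varepsilon(M)$; were $\|x_n\|+\|y_n\|\ge M$ this would contradict $M/\omega_\varepsilon(M)>1$, so in fact $x_n,y_n<M$ throughout. Consequently $\theta$ and $\rho$ act as the identity and $(x_n,y_n)$ solves the genuine system \eqref{t2.1} with $x'(m)=y'(m)=1/n$.

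The structural heart of the argument is the $n$-uniform \emph{lower} bound on the derivatives, obtained from $(\mathbf{C}_7)$. Writing $x_n(t)=\tfrac{t}{n}+\int_0^m\min\{t,s\}\,p_1(s)f_1\,ds$ and using concavity in the form $x_n(s)\ge\tfrac{s}{m}x_n(m)$, $y_n(s)\ge\tfrac{s}{m}y_n(m)$ (the finite-interval analogue of Lemma \ref{lemaconcave}), the inequalities $f_1\ge\varphi_M x^{\gamma_1}y^{\delta_1}$, $f_2\ge\psi_M x^{\gamma_2}y^{\delta_2}$ give $x_n(m)^{1-\gamma_1}\ge y_n(m)^{\delta_1}A_1$ and $y_n(m)^{1-\gamma_2}\ge x_n(m)^{\delta_2}A_2$ for explicit positive constants $A_1,A_2$. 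Here the condition $(1-\gamma_1)(1-\gamma_2)\ne\delta_1\delta_2$ is exactly what lets this log-linear system be solved for $x_n(m),y_n(m)$, producing $n$-independent lower bounds $x_n(m)\ge C_9$, $y_n(m)\ge C_{10}$; substituting back into $x_n'(t)=\tfrac1n+\int_t^m p_1f_1\,ds\ge C_9^{\gamma_1}C_{10}^{\delta_1}\int_t^m(s/m)^{\gamma_1+\delta_1}p_1(s)\varphi_M(s)\,ds$ and its counterpart yields strictly positive lower envelopes $\phi_1,\phi_2$ on $[0,m)$.

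Finally I would establish compactness and pass to the limit. Since $u_i$ is nonincreasing, the lower envelopes give $0\le-x_n''\le p_1 h_1(M)k_1(M)\bigl(u_1(\phi_1(t))+v_1(L)\bigr)$ and similarly for $y_n$; the resulting majorant is integrable on $[0,m]$ by $(\mathbf{C}_1)$ together with the integrability of $p_i\,u_i(\phi_i)$ carried by the choice of $\varphi_M,\psi_M$ in $(\mathbf{C}_7)$. Hence $\{(x_n^{(j)},y_n^{(j)})\}_{j=0,1}$ is uniformly bounded and equicontinuous, and Theorem \ref{arzela} furnishes a subsequence with $(x_{n_k}^{(j)},y_{n_k}^{(j)})\to(x^{(j)},y^{(j)})$ uniformly on $[0,m]$. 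Passing to the limit in $x_{n_k}'(t)=x_{n_k}'(0)-\int_0^t p_1 f_1(s,x_{n_k},y_{n_k},x_{n_k}')\,ds$ via dominated convergence (legitimate on $(0,m]$, where $x',y'>0$ keeps $f_1$ away from its singularity) recovers $-x''=p_1f_1(t,x,y,x')$, $-y''=p_2f_2(t,x,y,y')$ on $(0,m)$, the data $x(0)=y(0)=0$, $x'(m)=y'(m)=0$, and, from the persistence of $\phi_1,\phi_2$, the positivity $x,y>0$ on $(0,m]$ and $x',y'>0$ on $[0,m)$. The main obstacle I anticipate is precisely this last step: controlling the singular factor $u_i(x_n'(t))$ uniformly in $n$ near $t=m$, where the envelope $\phi_i$ vanishes, so as to produce a common integrable majorant for $\{x_n''\}$, $\{y_n''\}$ — this is the point where the lower bound from $(\mathbf{C}_7)$ and the integrability built into $\varphi_M,\psi_M$ must cooperate.
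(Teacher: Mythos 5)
Most of your scheme tracks the paper's proof: the same truncation $\theta,\rho$, the same Schauder step, the same contradiction arguments giving $x_n',y_n'<M$ (your separate $L$ versus the paper's $L=M$ is immaterial), and the same use of $(\mathbf{C}_{7})$ plus concavity to solve the log-linear system for $x_n(\cdot),y_n(\cdot)$ via $(1-\gamma_1)(1-\gamma_2)\neq\delta_1\delta_2$ and extract $n$-independent positive lower envelopes. The genuine gap is in your compactness step. You dominate $-x_n''$ by $p_1h_1(M)k_1(M)\bigl(u_1(\phi_1(t))+v_1(L)\bigr)$ and assert this is integrable ``by the integrability built into $\varphi_M,\psi_M$ in $(\mathbf{C}_7)$.'' No such integrability is built in: conditions of the form $\int_0^1 p_1(t)\,u_1\bigl(C\int_t^1\cdots\bigr)dt<+\infty$ are exactly the Chapter-\ref{ch3} hypotheses $(\mathbf{B}_{7})$, $(\mathbf{B}_{9})$, $(\mathbf{B}_{17})$, and they are deliberately absent from $(\mathbf{C}_{1})-(\mathbf{C}_{7})$. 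Concretely, take $u_1(z)=z^{-\alpha_1}$ with $\alpha_1\geq 1$ (allowed by $(\mathbf{C}_{3})$ and occurring in the paper's own example following Theorem \ref{tht1}): near $t=m$ the envelope behaves like $\phi_1(t)\sim c\,(m-t)$, so your majorant behaves like $(m-t)^{-\alpha_1}$, which is not integrable. So under the stated hypotheses your equicontinuity argument, and the dominated-convergence passage to the limit that relies on the same majorant, genuinely fail.

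The paper's proof avoids bounding $x_n''$ altogether. Dividing the differential inequality by $u_1(x_{m,n}')+v_1(x_{m,n}')$ \emph{before} integrating gives
\begin{align*}
|J_{1}(x_{m,n}'(t_{1}))-J_{1}(x_{m,n}'(t_{2}))|\leq h_{1}(M)k_{1}(M)\left|\int_{t_{1}}^{t_{2}}p_{1}(t)\,dt\right|,
\end{align*}
so $\{J_1(x_{m,n}')\}$ is equicontinuous by $(\mathbf{C}_{1})$ alone; since $J_1^{-1}$ is uniformly continuous on the compact interval $[0,J_1(M)]$ and $x_{m,n}'=J_1^{-1}(J_1(x_{m,n}'))$, equicontinuity transfers to $\{x_{m,n}'\}$ with no extra integrability assumption. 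For the final limit passage you likewise do not need a global majorant: for each fixed $t_0<m$ the lower bound \eqref{t2.15} keeps all $x_{m,n}'$ in $[\phi_1(t_0),M]$ on $[0,t_0]$, where $f_1$ is uniformly continuous, so the integral equation passes to the limit on every compact subinterval of $[0,m)$, and that suffices to get the ODE on the open interval $(0,m)$. Incorporating this $J$-transform device would close the gap in your argument.
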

\begin{proof}
In view of $(\mathbf{C}_{4})$, we choose $\varepsilon>0$ small
enough such that
\begin{equation}\label{t2.3}\frac{M}{\omega_{\varepsilon}(M)}>1.\end{equation}
Choose $n_{0}\in\{1,2,\cdots\}$ such that
$\frac{1}{n_{0}}<\varepsilon$. For each $n\in
N:=\{n_{0},n_{0}+1,\cdots\}$, define retractions
$\theta:\R\rightarrow[0,M]$ and $\rho:\R\rightarrow[\frac{1}{n},M]$
as
\begin{align*}\theta(x)=\max\{0,\min\{x,M\}\}\text{ and
}\rho(x)=\max\{\frac{1}{n},\min\{x,M\}\}.\end{align*} Consider the
modified system of BVPs
\begin{equation}\label{t2.5}\begin{split}
-&x''(t)=p_{1}(t)f_{1}^{*}(t,x(t),y(t),x'(t)),\hspace{0.4cm}t\in(0,m),\\
-&y''(t)=p_{2}(t)f_{2}^{*}(t,x(t),y(t),x'(t)),\hspace{0.4cm}t\in(0,m),\\
&x(0)=y(0)=0,\,x'(m)=y'(m)=\frac{1}{n},
\end{split}\end{equation}where
$f_{1}^{*}(t,x,y,x')=f_{1}(t,\theta(x),\theta(y),\rho(x'))$ and
$f_{2}^{*}(t,x,y,y')=f_{2}(t,\theta(x),\theta(y),\rho(y'))$.
Clearly, $f_{i}^{*}\,(i=1,2)$ are continuous and bounded on
$[0,m]\times \R^{3}$. Hence, by Theorem \ref{schauder}, the modified
system of BVPs \eqref{t2.5} has a solution
$(x_{m,n},y_{m,n})\in(C^{1}[0,m]\cap
C^{2}(0,m))\times(C^{1}[0,m]\cap C^{2}(0,m))$.

\vskip 0.5em

Using \eqref{t2.5}, $(\mathbf{C}_{1})$ and $(\mathbf{C}_{6})$,
we obtain
\begin{align*}
x_{m,n}''\leq0 \text{ and }y_{m,n}''\leq 0\text{ on }\in(0,m).
\end{align*}
Integrating from $t$ to $m$ and using the BCs \eqref{t2.5}, we
obtain
\begin{equation}\label{t2.6}
x_{m,n}'(t)\geq\frac{1}{n}\text{ and
}y_{m,n}'(t)\geq\frac{1}{n}\text{ for }t\in[0,m].
\end{equation}
Integrating \eqref{t2.6} from $0$ to $t$, using the BCs
\eqref{t2.5}, we have
\begin{equation}\label{t2.7}
x_{m,n}(t)\geq \frac{t}{n}\text{ and }y_{m,n}(t)\geq
\frac{t}{n}\text{ for }t\in[0,m].
\end{equation}
From \eqref{t2.6} and \eqref{t2.7}, it follows that
\begin{align*}\|x_{m,n}\|_{{}_{7,m}}=x_{m,n}(m)\text{ and
}\|y_{m,n}\|_{{}_{7,m}}=y_{m,n}(m),\text{ where
}\|u\|_{{}_{7,m}}=\max_{t\in[0,m]}|u(t)|.
\end{align*}
Now, we show that the following hold
\begin{equation}\label{t2.8}\|x_{m,n}'\|_{{}_{7,m}}<M\text{ and }\|y_{m,n}'\|_{{}_{7,m}}<M.\end{equation}
 Suppose $x_{m,n}'(t_{1})\geq M$ for some $t_{1}\in[0,m]$. Using
\eqref{t2.5} and $(\mathbf{C}_{3})$, we have
\begin{align*}-x_{m,n}''(t)\leq p_{1}(t)h_{1}(\theta(x_{m,n}(t)))k_{1}(\theta(y_{m,n}(t)))(u_{1}(\rho(x_{m,n}'(t)))
+v_{1}(\rho(x_{m,n}'(t)))),\hspace{0.1cm}t\in(0,m),\end{align*}
which implies that
\begin{align*}\frac{-x_{m,n}''(t)}{u_{1}(\rho(x_{m,n}'(t)))+v_{1}(\rho(x_{m,n}'(t)))}\leq h_{1}(M)k_{1}(M)p_{1}(t),\hspace{0.4cm}t\in(0,m).\end{align*}
Integrating from $t_{1}$ to $m$, using the BCs \eqref{t2.5}, we
obtain
\begin{align*}\int_{\frac{1}{n}}^{x_{m,n}'(t_{1})}\frac{dz}{u_{1}(\rho(z))+v_{1}(\rho(z))}\leq h_{1}(M)k_{1}(M)\int_{t_{1}}^{m}p_{1}(t)dt,\end{align*}
which can also be written as
\begin{align*}\int_{\frac{1}{n}}^{M}\frac{dz}{u_{1}(z)+v_{1}(z)}+\int_{M}^{x_{m,n}'(t_{1})}\frac{dz}{u_{1}(M)+v_{1}(M)}
\leq h_{1}(M)k_{1}(M)\int_{0}^{\infty}p_{1}(t)dt.\end{align*} Using
the increasing property of $J_{1}$, we obtain
\begin{align*}J_{1}(M)+\frac{x_{m,n}'(t_{1})-M}{u_{1}(M)+v_{1}(M)}\leq h_{1}(M)k_{1}(M)
\int_{0}^{\infty}p_{1}(t)dt+J_{1}(\varepsilon),\end{align*} and the
increasing property of $J_{1}^{-1}$ yields
\begin{align*}M\leq
J_{1}^{-1}(h_{1}(M)k_{1}(M)\int_{0}^{\infty}p_{1}(t)dt+J_{1}(\varepsilon))\leq\omega_{\varepsilon}(M),\end{align*}
a contradiction to \eqref{t2.3}. Hence, $\|x_{m,n}'\|_{{}_{7,m}}<M$.

\vskip 0.5em

Similarly, we can show that $\|y_{m,n}'\|_{{}_{7,m}}<M$.

\vskip 0.5em

Now, we show that
\begin{equation}\label{t2.9}\|x_{m,n}\|_{{}_{7,m}}<M\text{ and }\|y_{m,n}\|_{{}_{7,m}}<M.\end{equation}
Suppose $\|x_{m,n}\|_{{}_{7,m}}\geq M$. From \eqref{t2.5},
\eqref{t2.6}, \eqref{t2.8} and $(\mathbf{C}_{3})$, it follows that
\begin{align*}
-x_{m,n}''(t)\leq
p_{1}(t)h_{1}(\theta(x_{m,n}(t)))k_{1}(\theta(y_{m,n}(t)))(u_{1}(x_{m,n}'(t))+v_{1}(x_{m,n}'(t))),
\end{align*}
which implies that
\begin{align*}\frac{-x_{m,n}''(t)}{u_{1}(x_{m,n}'(t))+v_{1}(x_{m,n}'(t))}&\leq h_{1}(M)k_{1}(M)p_{1}(t),\hspace{0.4cm}t\in(0,m).\end{align*}
Integrating from $t$ to $m$, using the BCs \eqref{t2.5}, we obtain
\begin{align*}\int_{\frac{1}{n}}^{x_{m,n}'(t)}\frac{dz}{u_{1}(z)+v_{1}(z)}\leq
h_{1}(M)k_{1}(M)\int_{t}^{m}p_{1}(s)ds,\hspace{0.4cm}t\in[0,m],\end{align*}
which can also be written as
\begin{align*}J_{1}(x_{m,n}'(t))-J_{1}(\frac{1}{n})\leq
h_{1}(M)k_{1}(M)\int_{t}^{\infty}p_{1}(s)ds,\hspace{0.4cm}t\in[0,m].\end{align*}
The increasing property of $J_{1}$ and $J_{1}^{-1}$, leads to
\begin{align*}
x_{m,n}'(t)\leq
J_{1}^{-1}(h_{1}(M)k_{1}(M)\int_{t}^{\infty}p_{1}(s)ds+J_{1}(\varepsilon)),\hspace{0.4cm}t\in[0,m].
\end{align*}
Now, integrating from $0$ to $m$, using the BCs \eqref{t2.5}, we
obtain
\begin{align*}
M\leq\|x_{m,n}\|_{{}_{7,m}}\leq
\int_{0}^{m}[J_{1}^{-1}(h_{1}(M)k_{1}(M)\int_{t}^{\infty}p_{1}(s)ds+J_{1}(\varepsilon))]dt,
\end{align*}
which implies that
\begin{align*}
M\leq\int_{0}^{\infty}[J_{1}^{-1}(h_{1}(M)k_{1}(M)\int_{t}^{\infty}p_{1}(s)ds+J_{1}(\varepsilon))]dt\leq\omega_{\varepsilon}(M),
\end{align*}
a contradiction to \eqref{t2.3}. Therefore,
$\|x_{m,n}\|_{{}_{7,m}}<M$.

\vskip 0.5em

Similarly, we can show that $\|y_{m,n}\|_{{}_{7,m}}<M$.

\vskip 0.5em

Hence, in view of \eqref{t2.5}-\eqref{t2.9}, $(x_{m,n},y_{m,n})$ is
a solution of the following coupled system of BVPs
\begin{equation}\label{t2.11}\begin{split}
-&x''(t)=p_{1}(t)f_{1}(t,x(t),y(t),x'(t)),\hspace{0.4cm}t\in(0,m),\\
-&y''(t)=p_{2}(t)f_{2}(t,x(t),y(t),y'(t)),\hspace{0.4cm}t\in(0,m),\\
&x(0)=y(0)=0,\,x'(m)=y'(m)=\frac{1}{n},
\end{split}\end{equation}
satisfying
\begin{equation}\label{t2.12}\begin{split}
\frac{t}{n}&\leq x_{m,n}(t)<M,\,\frac{1}{n}\leq
x_{m,n}'(t)<M,\hspace{0.4cm}t\in[0,m],\\
\frac{t}{n}&\leq y_{m,n}(t)<M,\,\frac{1}{n}\leq
y_{m,n}'(t)<M,\hspace{0.4cm}t\in[0,m].
\end{split}\end{equation}
Now, we show that
\begin{equation}\label{t2.13}
\{x_{m,n}'\}_{n\in N}\text{ and }\{y_{m,n}'\}_{n\in N} \text{ are
equicontinuous on }[0,m].
\end{equation}
From \eqref{t2.11}, \eqref{t2.12} and $(\mathbf{C}_{3})$, it
follows that
\begin{align*}
-x_{m,n}''(t)\leq&p_{1}(t)h_{1}(M)k_{1}(M)(u_{1}(x_{m,n}'(t))+v_{1}(x_{m,n}'(t))),\hspace{0.4cm}t\in(0,m),\\
-y_{m,n}''(t)\leq&p_{2}(t)h_{2}(M)k_{2}(M)(u_{2}(y_{m,n}'(t))+v_{2}(y_{m,n}'(t))),\hspace{0.4cm}t\in(0,m),
\end{align*}
which implies that
\begin{align*}
\frac{-x_{m,n}''(t)}{u_{1}(x_{m,n}'(t))+v_{1}(x_{m,n}'(t))}&\leq h_{1}(M)k_{1}(M)p_{1}(t),\hspace{0.4cm}t\in(0,m),\\
\frac{-y_{m,n}''(t)}{u_{2}(y_{m,n}'(t))+v_{2}(y_{m,n}'(t))}&\leq
h_{2}(M)k_{2}(M)p_{2}(t),\hspace{0.4cm}t\in(0,m).
\end{align*}
Thus for $t_{1},t_{2}\in[0,m]$, we have
\begin{equation}\label{t2.14}\begin{split}
|J_{1}(x_{m,n}'(t_{1}))-J_{1}(x_{m,n}'(t_{2}))|&\leq h_{1}(M)k_{1}(M)\left|\int_{t_{1}}^{t_{2}}p_{1}(t)dt\right|,\\
|J_{2}(y_{m,n}'(t_{1}))-J_{2}(y_{m,n}'(t_{2}))|&\leq
h_{2}(M)k_{2}(M)\left|\int_{t_{1}}^{t_{2}}p_{2}(t)dt\right|.
\end{split}\end{equation}
In view of \eqref{t2.14}, $(\mathbf{C}_{1})$, uniform continuity
of $J_{i}^{-1}$ over $[0,J_{i}(M)]$ $(i=1,2)$ and
\begin{align*}
|x_{m,n}'(t_{1})-x_{m,n}'(t_{2})|&=|J_{1}^{-1}(J_{1}(x_{m,n}'(t_{1})))-J_{1}^{-1}(J_{1}(x_{m,n}'(t_{2})))|,\\
|y_{m,n}'(t_{1})-y_{m,n}'(t_{2})|&=|J_{2}^{-1}(J_{2}(y_{m,n}'(t_{1})))-J_{2}^{-1}(J_{2}(y_{m,n}'(t_{2})))|,
\end{align*}
we obtain \eqref{t2.13}.

\vskip 0.5em

From \eqref{t2.12} and \eqref{t2.13}, it follows that the sequences
$\{(x_{m,n}^{(j)},y_{m,n}^{(j)})\}_{n\in N}$ $(j=0,1)$ are uniformly
bounded and equicontinuous on $[0,m]$. Hence, by Theorem
\eqref{arzela}, there exist subsequence $N_{*}$ of $N$ and
$(x_{m},y_{m})\in C^{1}[0,m]\times C^{1}[0,m]$ such that for each
$j=0,1$, the sequences $(x_{m,n}^{(j)},y_{m,n}^{(j)})$ converges
uniformly to $(x_{m}^{(j)},y_{m}^{(j)})$ on $[0,m]$ as
$n\rightarrow\infty$ through $N_{*}$. From the BCs \eqref{t2.11}, we
have $x_{m}(0)=y_{m}(0)=x_{m}'(m)=y_{m}'(m)=0$. Next, we show that
$x_{m}>0$ and $y_{m}>0$ on $(0,m]$, $x_{m}'>0$ and $y_{m}'>0$ on
$[0,m)$.

\vskip 0.5em

We claim that
\begin{equation}\label{t6.1.12}
x_{m,n}(t)\geq
C_{11}^{\gamma_{1}}C_{12}^{\delta_{1}}\int_{0}^{\min\{t,1\}}\tau^{1+\gamma_{1}+\delta_{1}}p_{1}(\tau)\varphi_{M}(\tau)d\tau\equiv\Phi_{M}(t),\hspace{0.2cm}t\in[0,m],
\end{equation}
\begin{equation}\label{t6.1.13}
y_{m,n}(t)\geq
C_{11}^{\gamma_{2}}C_{12}^{\delta_{2}}\int_{0}^{\min\{t,1\}}\tau^{1+\gamma_{2}+\delta_{2}}p_{2}(\tau)\psi_{M}(\tau)d\tau\equiv\Psi_{M}(t),\hspace{0.2cm}t\in[0,m],
\end{equation}
\begin{equation}\label{t2.15}
x_{m,n}'(t)\geq
\int_{t}^{m}p_{1}(s)\varphi_{M}(s)(\Phi_{M}(s))^{\gamma_{1}}(\Psi_{M}(s))^{\delta_{1}}ds,\hspace{2cm}t\in[0,m],
\end{equation}
\begin{equation}\label{t2.16}
y_{m,n}'(t)\geq
\int_{t}^{m}p_{2}(s)\psi_{M}(s)(\Phi_{M}(s))^{\gamma_{2}}(\Psi_{M}(s))^{\delta_{2}}ds,\hspace{2cm}t\in[0,m],
\end{equation}
where
\begin{equation}\nonumber\small{\begin{split}
&C_{11}=\left(\int_{0}^{1}\tau^{1+\gamma_{1}+\delta_{1}}p_{1}(\tau)\varphi_{M}(\tau)d\tau\right)^{\frac{1-\gamma_{2}}{(1-\gamma_{1})(1-\gamma_{2})-\delta_{1}\delta_{2}}}\left(\int_{0}^{1}\tau^{1+\gamma_{2}+\delta_{2}}p_{2}(\tau)\psi_{M}(\tau)d\tau\right)^{\frac{\delta_{1}}{(1-\gamma_{1})(1-\gamma_{2})-\delta_{1}\delta_{2}}},\\
&C_{12}=\left(\int_{0}^{1}\tau^{1+\gamma_{1}+\delta_{1}}p_{1}(\tau)\varphi_{M}(\tau)d\tau\right)^{\frac{\delta_{2}}{(1-\gamma_{1})(1-\gamma_{2})-\delta_{1}\delta_{2}}}\left(\int_{0}^{1}\tau^{1+\gamma_{2}+\delta_{2}}p_{2}(\tau)\psi_{M}(\tau)d\tau\right)^{\frac{1-\gamma_{1}}{(1-\gamma_{1})(1-\gamma_{2})-\delta_{1}\delta_{2}}}.
\end{split}}\end{equation}
First we prove \eqref{t6.1.12}. Let $z(t)=x_{m,n}(t)-tx_{m,n}(1)$
for $t\in[0,1]$. Then, $z(0)=z(1)=0$, $z''(t)\leq0$ for $t\in[0,1]$.
So, $z(t)\geq0$ for $t\in[0,1]$, that is
\begin{equation}\label{t115}x_{m,n}(t)\geq tx_{m,n}(1),\hspace{0.4cm}t\in[0,1].\end{equation}
Similarly,
\begin{equation}\label{t116}y_{m,n}(t)\geq ty_{m,n}(1),\hspace{0.4cm}t\in[0,1].\end{equation}
Now, consider the following relation
\begin{equation}\label{t2.17}\begin{split}
x_{m,n}(t)&=\frac{t}{n}+\int_{0}^{t}sp_{1}(s)f_{1}(s,x_{m,n}(s),y_{m,n}(s),x_{m,n}'(s))ds\\
&+\int_{t}^{m}tp_{1}(s)f_{1}(s,x_{m,n}(s),y_{m,n}(s),x_{m,n}'(s))ds,\hspace{0.4cm}t\in[0,m].
\end{split}\end{equation}
In view of $(\mathbf{C}_{7})$, using \eqref{t115} and
\eqref{t116}, for $t\in[0,m]$, we have
\begin{equation}\label{t2115}\begin{split}
x_{m,n}(t)&\geq\int_{0}^{t}sp_{1}(s)f_{1}(s,x_{m,n}(s),y_{m,n}(s),x_{m,n}'(s))ds\\
&\geq\int_{0}^{\min\{t,1\}}sp_{1}(s)f_{1}(s,x_{m,n}(s),y_{m,n}(s),x_{m,n}'(s))ds\\
&\geq\int_{0}^{\min\{t,1\}}sp_{1}(s)(x_{m,n}(s))^{\gamma_{1}}(y_{m,n}(s))^{\delta_{1}}\varphi_{M}(s)ds\\
&\geq(x_{m,n}(1))^{\gamma_{1}}(y_{m,n}(1))^{\delta_{1}}\int_{0}^{\min\{t,1\}}s^{1+\gamma_{1}+\delta_{1}}p_{1}(s)\varphi_{M}(s)ds,
\end{split}\end{equation} which implies that
\begin{equation}\label{t2.18}
x_{m,n}(1)\geq(y_{m,n}(1))^{\frac{\delta_{1}}{1-\gamma_{1}}}\left(\int_{0}^{1}s^{1+\gamma_{1}+\delta_{1}}p_{1}(s)\varphi_{M}(s)ds\right)^{\frac{1}{1-\gamma_{1}}}.
\end{equation} Similarly,
\begin{equation}\label{t2.19}
y_{m,n}(1)\geq(x_{m,n}(1))^{\frac{\delta_{2}}{1-\gamma_{2}}}\left(\int_{0}^{1}s^{1+\gamma_{2}+\delta_{2}}p_{2}(s)\psi_{M}(s)ds\right)^{\frac{1}{1-\gamma_{2}}}.
\end{equation}
Now, using \eqref{t2.19} in \eqref{t2.18}, we have
\begin{align*}
(x_{m,n}(1))^{1-\frac{\delta_{1}\delta_{2}}{(1-\gamma_{1})(1-\gamma_{2})}}&\geq\left(\int_{0}^{1}s^{1+\gamma_{1}+\delta_{1}}p_{1}(s)\varphi_{M}(s)ds\right)^{\frac{1}{1-\gamma_{1}}}\\
&\hspace{0.4cm}\left(\int_{0}^{1}s^{1+\gamma_{2}+\delta_{2}}p_{2}(s)\psi_{M}(s)ds\right)^{\frac{\delta_{1}}{(1-\gamma_{1})(1-\gamma_{2})}}.
\end{align*}
Hence,
\begin{equation}\label{t2.20}
x_{m,n}(1)\geq C_{11}.
\end{equation}
Similarly, using \eqref{t2.18} in \eqref{t2.19}, we obtain
\begin{equation}\label{t2.21}
y_{m,n}(1)\geq C_{12}.
\end{equation}
Thus, from \eqref{t2115}, using \eqref{t2.20} and \eqref{t2.21}, we
get \eqref{t6.1.12}.

\vskip 0.5em

Similarly, we can prove \eqref{t6.1.13}.

\vskip 0.5em

Now, we prove \eqref{t2.15}. From \eqref{t2.17}, it follows that
\begin{align*}
x_{m,n}'(t)\geq\int_{t}^{m}p_{1}(s)f_{1}(s,x_{m,n}(s),y_{m,n}(s),x_{m,n}'(s))ds.
\end{align*}
Using $(\mathbf{C}_{7})$, \eqref{t6.1.12} and \eqref{t6.1.13}, we
obtain \eqref{t2.15}.

\vskip 0.5em

Similarly, we can prove \eqref{t2.16}.

\vskip 0.5em

From \eqref{t6.1.12}-\eqref{t2.16}, passing to the limit
$n\rightarrow\infty$ through $N_{*}$, we obtain
\begin{equation}\label{t2.22}\begin{split}
x_{m}(t)&\geq\Phi_{M}(t),\,\,y_{m}(t)\geq\Psi_{M}(t),\hspace{3cm}t\in[0,m],\\
x_{m}'(t)&\geq\int_{t}^{m}p_{1}(s)\varphi_{M}(s)(\Phi_{M}(s))^{\gamma_{1}}(\Psi_{M}(s))^{\delta_{1}}ds,\hspace{0.4cm}t\in[0,m],\\
y_{m}'(t)&\geq\int_{t}^{m}p_{2}(s)\psi_{M}(s)(\Phi_{M}(s))^{\gamma_{2}}(\Psi_{M}(s))^{\delta_{2}}ds,\hspace{0.4cm}t\in[0,m].
\end{split}\end{equation} Consequently, $x_{m}>0$ and $y_{m}>0$ on $(0,m]$, $x_{m}'>0$ and $y_{m}'>0$ on
$[0,m)$.

\vskip 0.5em

Moreover, $(x_{m,n},y_{m,n})$ satisfy
\begin{align*}
x_{m,n}'(t)&=x_{m,n}'(0)-\int_{0}^{t}p_{1}(s)f_{1}(s,x_{m,n}(s),y_{m,n}(s),x_{m,n}'(s))ds,\hspace{0.4cm}t\in[0,m],\\
y_{m,n}'(t)&=y_{m,n}'(0)-\int_{0}^{t}p_{2}(s)f_{2}(s,x_{m,n}(s),y_{m,n}(s),y_{m,n}'(s))ds,\hspace{0.4cm}t\in[0,m].
\end{align*}
Letting $n\rightarrow\infty$ through $N_{*}$, we obtain
\begin{align*}
x_{m}'(t)&=x_{m}'(0)-\int_{0}^{t}p_{1}(s)f_{1}(s,x_{m}(s),y_{m}(s),x_{m}'(s))ds,\hspace{0.4cm}t\in[0,m],\\
y_{m}'(t)&=y_{m}'(0)-\int_{0}^{t}p_{2}(s)f_{2}(s,x_{m}(s),y_{m}(s),y_{m}'(s))ds,\hspace{0.4cm}t\in[0,m],
\end{align*}
which imply that
\begin{equation}\label{t2.23}\begin{split}
-x_{m}''(t)&=p_{1}(t)f_{1}(t,x_{m}(t),y_{m}(t),x_{m}'(t)),\hspace{0.4cm}t\in(0,m),\\
-y_{m}''(t)&=p_{2}(t)f_{2}(t,x_{m}(t),y_{m}(t),y_{m}'(t)),\hspace{0.4cm}t\in(0,m).
\end{split}\end{equation}Hence, $(x_{m},y_{m})$ is a $C^{1}$-positive solution of the system of BVPs \eqref{t2.1}.
\end{proof}

\subsection{Existence of positive solutions on an infinite interval}
\begin{thm}\label{tht1}
Assume that $(\mathbf{C}_{1})-(\mathbf{C}_{7})$ hold. Then the system of BVPs \eqref{bc3} has a $C^{1}$-positive solution.
\end{thm}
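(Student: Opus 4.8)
The plan is to realize the solution on $\R_{0}^{+}$ as a limit of the finite-interval solutions furnished by Theorem \ref{th11}. For each integer $m\ge 1$, Theorem \ref{th11} yields a $C^{1}$-positive solution $(x_{m},y_{m})$ of the truncated system \eqref{t2.1} on $[0,m]$ obeying the a priori bounds \eqref{t2.12} and the lower estimates \eqref{t2.22}. The decisive feature is that the constant $M$ from $(\mathbf{C}_{4})$, the functions $\Phi_{M},\Psi_{M}$, and the majorant $h_{i}(M)k_{i}(M)p_{i}$ of $-x_{m}''$ and $-y_{m}''$ are all independent of $m$, so that $0\le x_{m},y_{m}<M$ and $0<x_{m}',y_{m}'<M$ uniformly in $m$; it is this uniform ceiling $M$ on the solutions, combined with concavity, that will ultimately deliver the decay of the derivatives at infinity.

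Fix $T>0$ and restrict to $m\ge T+1$. By \eqref{t2.12} and \eqref{t2.22}, on $[0,T]$ the functions $x_{m},y_{m},x_{m}',y_{m}'$ are uniformly bounded by $M$ and the derivatives are bounded below by the positive constant $\int_{T}^{T+1}p_{1}(s)\varphi_{M}(s)(\Phi_{M}(s))^{\gamma_{1}}(\Psi_{M}(s))^{\delta_{1}}\,ds$ (respectively its $\psi_{M}$-analogue). Repeating the argument that produced \eqref{t2.13}, the differential inequality $-x_{m}''/(u_{1}(x_{m}')+v_{1}(x_{m}'))\le h_{1}(M)k_{1}(M)p_{1}$ gives $|J_{1}(x_{m}'(t_{1}))-J_{1}(x_{m}'(t_{2}))|\le h_{1}(M)k_{1}(M)\left|\int_{t_{1}}^{t_{2}}p_{1}(t)\,dt\right|$, and the uniform continuity of $J_{1}^{-1}$ on $[0,J_{1}(M)]$ together with $(\mathbf{C}_{1})$ yields equicontinuity of $\{x_{m}'\}$ (and of $\{y_{m}'\}$) on $[0,T]$. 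Thus $\{(x_{m}^{(j)},y_{m}^{(j)})\}$ $(j=0,1)$ is uniformly bounded and equicontinuous on every compact subinterval, so by the Arzel\`{a}-Ascoli theorem (Theorem \ref{arzela}) and a diagonal extraction over $T=1,2,3,\dots$ there is a subsequence $(x_{m_{k}},y_{m_{k}})$ and a pair $(x,y)$ with $(x_{m_{k}}^{(j)},y_{m_{k}}^{(j)})\to(x^{(j)},y^{(j)})$ $(j=0,1)$ uniformly on compact subsets of $\R^{+}$.

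Next I would pass to the limit. For fixed $t$ and all large $k$, integrating the first equation in \eqref{t2.11} over $[0,t]$ gives $x_{m_{k}}'(t)=x_{m_{k}}'(0)-\int_{0}^{t}p_{1}(s)f_{1}(s,x_{m_{k}}(s),y_{m_{k}}(s),x_{m_{k}}'(s))\,ds$; on $[0,t]$ the derivatives remain bounded below by a positive constant, so the nonlinearity is non-singular and, by $(\mathbf{C}_{3})$, the monotonicity of $u_{1}$ and $(\mathbf{C}_{1})$, is dominated there by a fixed integrable function, whence the Lebesgue dominated convergence theorem yields $x'(t)=x'(0)-\int_{0}^{t}p_{1}(s)f_{1}(s,x(s),y(s),x'(s))\,ds$ and, on differentiating, $-x''=p_{1}f_{1}(t,x,y,x')$ on $\R_{0}^{+}$ with $(x,y)\in(C^{2}(\R_{0}^{+}))^{2}$; the same applies to $y$. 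The conditions $x(0)=y(0)=0$ pass to the limit directly, and letting $m_{k}\to\infty$ in \eqref{t2.22} gives $x(t)\ge\Phi_{M}(t)>0$, $y(t)\ge\Psi_{M}(t)>0$ on $\R_{0}^{+}$ and $x'(t),y'(t)>0$ on $\R^{+}$.

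The step requiring care is the behavior at infinity, where compactness is lost. Here I would argue directly: since $x_{m_{k}}(t)<M$ for every $k$, the limit satisfies $0\le x(t)\le M$; and since $x''=-p_{1}f_{1}\le 0$ with $f_{1}>0$ on $\R^{+}\times(0,M]^{3}$ by $(\mathbf{C}_{6})$, the derivative $x'$ is nonincreasing and nonnegative, so $\lim_{t\to\infty}x'(t)=\ell\ge 0$ exists. If $\ell>0$ then $x'(t)\ge\ell$ for all $t$, forcing $x(t)\ge\ell t\to\infty$ and contradicting $x(t)\le M$; hence $\ell=0$, that is, $\lim_{t\to\infty}x'(t)=0$, and likewise $\lim_{t\to\infty}y'(t)=0$. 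Thus $(x,y)$ satisfies all the boundary conditions of \eqref{bc3} and is a $C^{1}$-positive solution. The singular dependence of $f_{i}$ on $z$ at $z=0$ never causes trouble because $(\mathbf{C}_{7})$ and the estimates \eqref{t2.22} keep $x_{m_{k}}'$ and $y_{m_{k}}'$ bounded below by a strictly positive constant on each compact interval, which is precisely what legitimizes the dominated-convergence passage above.
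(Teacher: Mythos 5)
Your proposal is correct, and its skeleton---truncation to $[0,m]$ via Theorem \ref{th11}, the $m$-independent ceiling $0\le x_{m},y_{m},x_{m}',y_{m}'<M$ from \eqref{t2.12}, equicontinuity of the derivatives through the substitution $J_{1}$, Arzel\`{a}--Ascoli with diagonal extraction over compact intervals, and passage to the limit in the integral equations (legitimized by the uniform positive lower bounds on $x_{m}',y_{m}'$ from \eqref{t2.22})---is exactly the paper's diagonalization argument; the paper merely organizes the compact exhaustion differently, extending each $(x_{m},y_{m})$ by constants to $[0,\infty)$ and extracting nested subsequences $N_{1}\supseteq N_{2}\supseteq\cdots$ on $[0,1],[0,2],\dots$ rather than restricting to $[0,T]$ as you do. The one step where you genuinely depart from the paper is the verification that $\lim_{t\rightarrow\infty}x'(t)=\lim_{t\rightarrow\infty}y'(t)=0$. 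The paper argues by contradiction through the right-endpoint conditions $x_{m}'(m)=0$ of the approximating problems \eqref{t2.1}: it writes $x'(k)=\lim_{m\in N_{k}}x_{m}'(k)$ and then says ``without loss of generality assume $m^{*}=k$'' so as to conclude $|x'(k)|=|x_{k}'(k)-x'(k)|<\varepsilon$, a step that silently requires the evaluation point $k$ to be an admissible index in $N_{k}$, which the nested construction does not guarantee. Your argument sidesteps this entirely: the limit $x$ is positive, concave (since $x''=-p_{1}f_{1}<0$ on $\R_{0}^{+}$ by $(\mathbf{C}_{6})$) and bounded above by the same constant $M$, so $x'$ is nonincreasing and nonnegative, hence has a limit $\ell\ge 0$, and $\ell>0$ would force $x(t)\ge \ell t\rightarrow\infty$, contradicting $x\le M$. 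This is more elementary, uses only the uniform bound $M$ rather than any boundary data of the approximants, and in fact repairs the loose point in the paper's own proof; the paper's route, by contrast, ties the decay at infinity to the structure of the truncated boundary conditions, which costs it the extra bookkeeping about where the subsequences live.
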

\begin{proof}
By Theorem \ref{th11}, for each $m\in N_{0}\setminus\{0\}$, the
system of BVPs \eqref{t2.1} has a $C^{1}$-positive solution
$(x_{m},y_{m})$ defined on $[0,m]$. By applying diagonalization
argument we will show that the system of BVPs \eqref{bc3} has a $C^{1}$-positive solution. For this purpose we
define a continuous extension $(\overline{x}_{m},\overline{y}_{m})$
of $(x_{m},y_{m})$ by
\begin{equation}\label{t2.24}
\overline{x}_{m}(t)=
\begin{cases}
x_{m}(t),\,&t\in[0,m],\\
x_{m}(m),\,&t\in[m,\infty),
\end{cases}
\hspace{0.5cm}\overline{y}_{m}(t)=
\begin{cases}
y_{m}(t),\,&t\in[0,m],\\
y_{m}(m),\,&t\in[m,\infty).
\end{cases}
\end{equation}
Clearly, $\overline{x}_{m},\overline{y}_{m}\in C^{1}[0,\infty)$ and
satisfy
\begin{equation}\label{t2.25}\begin{split}
0&\leq \overline{x}_{m}(t)<M,\,0\leq
\overline{x}_{m}'(t)<M,\hspace{0.4cm}t\in[0,\infty),\\
0&\leq \overline{y}_{m}(t)<M,\,0\leq
\overline{y}_{m}'(t)<M,\hspace{0.4cm}t\in[0,\infty).
\end{split}\end{equation}
We claim that
\begin{equation}\label{t2.26}
\{\overline{x}_{m}'\}_{m\in N_{0}\setminus\{0\}}\text{ and
}\{\overline{y}_{m}'\}_{m\in N_{0}\setminus\{0\}}\text{ are
equicontinuous on }[0,1].
\end{equation}
Using \eqref{t2.23}, \eqref{t2.24}, \eqref{t2.25} and
$(\mathbf{C}_{3})$, we obtain
\begin{align*}
-\overline{x}_{m}''(t)&\leq p_{1}(t)h_{1}(M)k_{1}(M)(u_{1}(\overline{x}_{m}'(t))+v_{1}(\overline{x}_{m}'(t))),\hspace{0.4cm}t\in(0,1),\\
-\overline{y}_{m}''(t)&\leq
p_{2}(t)h_{2}(M)k_{2}(M)(u_{2}(\overline{y}_{m}'(t))+v_{2}(\overline{y}_{m}'(t))),\hspace{0.4cm}t\in(0,1),
\end{align*}
which implies that
\begin{align*}
\frac{-\overline{x}_{m}''(t)}{u_{1}(\overline{x}_{m}'(t))+v_{1}(\overline{x}_{m}'(t))}&\leq h_{1}(M)k_{1}(M)p_{1}(t),\hspace{0.4cm}t\in(0,1),\\
\frac{-\overline{y}_{m}''(t)}{u_{2}(\overline{y}_{m}'(t))+v_{2}(\overline{y}_{m}'(t))}&\leq
h_{2}(M)k_{2}(M)p_{2}(t),\hspace{0.4cm}t\in(0,1).
\end{align*}
Hence, for $t_{1},t_{2}\in[0,1]$, we have
\begin{equation}\label{t2.27}\begin{split}
|J_{1}(\overline{x}_{m}'(t_{1}))-J_{1}(\overline{x}_{m}'(t_{2}))|&\leq h_{1}(M)k_{1}(M)\left|\int_{t_{1}}^{t_{2}}p_{1}(t)dt\right|,\\
|J_{2}(\overline{y}_{m}'(t_{1}))-J_{2}(\overline{y}_{m}'(t_{2}))|&\leq
h_{2}(M)k_{2}(M)\left|\int_{t_{1}}^{t_{2}}p_{2}(t)dt\right|.
\end{split}\end{equation}
In view of \eqref{t2.27}, $(\mathbf{C}_{1})$, uniform continuity
of $J_{i}^{-1}$ over $[0,J_{i}(L)]$ $(i=1,2)$ and
\begin{align*}
|\overline{x}_{m}'(t_{1})-\overline{x}_{m}'(t_{2})|&=|J_{1}^{-1}(J_{1}(\overline{x}_{m}'(t_{1})))-J_{1}^{-1}(J_{1}(\overline{x}_{m}'(t_{2})))|,\\
|\overline{y}_{m}'(t_{1})-\overline{y}_{m}'(t_{2})|&=|J_{2}^{-1}(J_{2}(y_{m}'(t_{1})))-J_{2}^{-1}(J_{2}(y_{m}'(t_{2})))|,
\end{align*}
we establish \eqref{t2.26}.

\vskip 0.5em

From \eqref{t2.25} and \eqref{t2.26}, it follows that the sequences
$\{(\overline{x}_{m}^{(j)},\overline{y}_{m}^{(j)})\}$ $(j=0,1)$ are
uniformly bounded and equicontinuous on $[0,1]$. Hence, by Theorem
\ref{arzela}, there exist subsequence $N_{1}$ of
$N_{0}\setminus\{0\}$ and $(u_{1},v_{1})\in C^{1}[0,1]\times
C^{1}[0,1]$ such that for each $j=0,1$, the sequence
$(\overline{x}_{m}^{(j)},\overline{y}_{m}^{(j)})$ converges
uniformly to $(u_{1}^{(j)},v_{1}^{(j)})$ on $[0,1]$ as
$m\rightarrow\infty$ through $N_{1}$. Also from BCs \eqref{t2.1}, we
have $u_{1}(0)=v_{1}(0)=0$.

\vskip 0.5em

Moreover, from \eqref{t2.22} and \eqref{t2.24}, for each $m\in
N_{0}\setminus\{0\}$, we have
\begin{align*}\begin{split}
\overline{x}_{m}(t)&\geq\Phi_{M}(t),\,\,\overline{y}_{m}(t)\geq\Psi_{M}(t),\hspace{2.85cm}t\in[0,1],\\
\overline{x}_{m}'(t)&\geq\int_{t}^{1}p_{1}(s)\varphi_{M}(s)(\Phi_{M}(s))^{\gamma_{1}}(\Psi_{M}(s))^{\delta_{1}}ds,\hspace{0.4cm}t\in[0,1],\\
\overline{y}_{m}'(t)&\geq\int_{t}^{1}p_{2}(s)\psi_{M}(s)(\Phi_{M}(s))^{\gamma_{2}}(\Psi_{M}(s))^{\delta_{2}}ds,\hspace{0.4cm}t\in[0,1].
\end{split}\end{align*}
Passing to the limit $m\rightarrow\infty$ through $N_{1}$, we obtain
\begin{align*}\begin{split}
u_{1}(t)&\geq\Phi_{M}(t),\,\,v_{1}(t)\geq\Psi_{M}(t),\hspace{3cm}t\in[0,1],\\
u_{1}'(t)&\geq\int_{t}^{1}p_{1}(s)\varphi_{M}(s)(\Phi_{M}(s))^{\gamma_{1}}(\Psi_{M}(s))^{\delta_{1}}ds,\hspace{0.4cm}t\in[0,1],\\
v_{1}'(t)&\geq\int_{t}^{1}p_{2}(s)\psi_{M}(s)(\Phi_{M}(s))^{\gamma_{2}}(\Psi_{M}(s))^{\delta_{2}}ds,\hspace{0.4cm}t\in[0,1],
\end{split}\end{align*} which shows that $u_{1}>0$ and $v_{1}>0$ on $(0,1]$, $u_{1}'>0$ and $v_{1}'>0$ on
$[0,1)$.

\vskip 0.5em

By the same process as above,  we can show that
\begin{equation}\label{t2.30a}
\{\overline{x}_{m}'\}_{m\in N_{1}\setminus\{1\}}\text{ and
}\{\overline{y}_{m}'\}_{m\in N_{1}\setminus\{1\}}\text{ are
equicontinuous families on }[0,2].
\end{equation}
Further, in view of \eqref{t2.25} and \eqref{t2.30a}, it follows
that the sequences
$\{(\overline{x}_{m}^{(j)},\overline{y}_{m}^{(j)})\}$ $(j=0,1)$ are
uniformly bounded and equicontinuous on $[0,2]$. Hence, by Theorem
\ref{arzela}, there exist subsequence $N_{2}$ of
$N_{1}\setminus\{1\}$ and $(u_{2},v_{2})\in C^{1}[0,2]\times
C^{1}[0,2]$ such that for each $j=0,1$, the sequence
$(\overline{x}_{m}^{(j)},\overline{y}_{m}^{(j)})$ converges
uniformly to $(u_{2}^{(j)},v_{2}^{(j)})$ on $[0,2]$ as
$m\rightarrow\infty$ through $N_{2}$. Also from BCs \eqref{t2.1},
$u_{2}(0)=v_{2}(0)=0$. Moreover, in view of \eqref{t2.22} and
\eqref{t2.24}, for each $m\in N_{1}\setminus\{1\}$, we have
\begin{align*}\begin{split}
\overline{x}_{m}(t)&\geq\Phi_{M}(t),\,\,\overline{y}_{m}(t)\geq\Psi_{M}(t),\hspace{2.85cm}t\in[0,2],\\
\overline{x}_{m}'(t)&\geq\int_{t}^{2}p_{1}(s)\varphi_{M}(s)(\Phi_{M}(s))^{\gamma_{1}}(\Psi_{M}(s))^{\delta_{1}}ds,\hspace{0.4cm}t\in[0,2],\\
\overline{y}_{m}'(t)&\geq\int_{t}^{2}p_{2}(s)\psi_{M}(s)(\Phi_{M}(s))^{\gamma_{2}}(\Psi_{M}(s))^{\delta_{2}}ds,\hspace{0.4cm}t\in[0,2].
\end{split}\end{align*}
Now, the $\lim_{m\rightarrow\infty}$ through $N_{2}$ leads to
\begin{align*}\begin{split}
u_{2}(t)&\geq\Phi_{M}(t),\,\,v_{2}(t)\geq\Psi_{M}(t),\hspace{3cm}t\in[0,2],\\
u_{2}'(t)&\geq\int_{t}^{2}p_{1}(s)\varphi_{M}(s)(\Phi_{M}(s))^{\gamma_{1}}(\Psi_{M}(s))^{\delta_{1}}ds,\hspace{0.4cm}t\in[0,2],\\
v_{2}'(t)&\geq\int_{t}^{2}p_{2}(s)\psi_{M}(s)(\Phi_{M}(s))^{\gamma_{2}}(\Psi_{M}(s))^{\delta_{2}}ds,\hspace{0.4cm}t\in[0,2],
\end{split}\end{align*} which shows that $u_{2}>0$ and $v_{2}>0$ on $(0,2]$, $u_{2}'>0$ and $v_{2}'>0$ on
$[0,2)$. Note that, $u_{2}=u_{1}$ and $v_{2}=v_{1}$ on $[0,1]$ as
$N_{2}\subseteq N_{1}$.

\vskip 0.5em

In general, for each $k\in N_{0}\setminus\{0\}$, there exists a
subsequence $N_{k}$ of $N_{k-1}\setminus\{k-1\}$ and
$(u_{k},v_{k})\in C^{1}[0,k]\times C^{1}[0,k]$ such that
$(\overline{x}_{m}^{(j)},\overline{y}_{m}^{(j)})$ converges
uniformly to $(u_{k}^{(j)},v_{k}^{(j)})$ $(j=0,1)$ on $[0,k]$, as
$m\rightarrow\infty$ through $N_{k}$. Also, $u_{k}(0)=v_{k}(0)=0$,
$u_{k}=u_{k-1}$ and $v_{k}=v_{k-1}$ on $[0,k-1]$ as $N_{k}\subseteq
N_{k-1}$. Moreover,
\begin{align*}\begin{split}
u_{k}(t)&\geq\Phi_{M}(t),\,\,v_{k}(t)\geq\Psi_{M}(t),\hspace{3cm}t\in[0,k],\\
u_{k}'(t)&\geq\int_{t}^{k}p_{1}(s)\varphi_{M}(s)(\Phi_{M}(s))^{\gamma_{1}}(\Psi_{M}(s))^{\delta_{1}}ds,\hspace{0.4cm}t\in[0,k],\\
v_{k}'(t)&\geq\int_{t}^{k}p_{2}(s)\psi_{M}(s)(\Phi_{M}(s))^{\gamma_{1}}(\Psi_{M}(s))^{\delta_{1}}ds,\hspace{0.4cm}t\in[0,k],
\end{split}\end{align*} which shows that $u_{k}>0$ and $v_{k}>0$ on $(0,k]$, $u_{k}'>0$ and $v_{k}'>0$ on
$[0,k)$.

\vskip 0.5em

Define functions $x,y:\R^{+}\rightarrow\R^{+}$ as:

\vskip 0.5em

For fixed $\tau\in\R_{0}^{+}$ and $k\in N_{0}\setminus\{0\}$ with
$\tau\leq k$, $x(\tau)=u_{k}(\tau)$ and $y(\tau)=v_{k}(\tau)$. Then,
$x$ and $y$ are well defined as, $x(t)=u_{k}(t)>0$ and
$y(t)=v_{k}(t)>0$ for $t\in(0,k]$. We can do this for each
$\tau\in\R_{0}^{+}$. Thus, $(x,y)\in C^{1}(\R^{+})\times
C^{1}(\R^{+})$ with $x>0$ and $y>0$ on $\R_{0}^{+}$, $x'>0$ and
$y'>0$ on $\R^{+}$.

\vskip 0.5em

Now, we show that $(x,y)$ is a solution of system of BVPs \eqref{bc3}. Choose a fixed $\tau\in\R^{+}$ and $k\in
N_{0}\setminus\{0\}$ such that $k\geq\tau$. Then,
$(\overline{x}_{m}(\tau),\overline{y}_{m}(\tau))$ where $m\in
N_{k}$, satisfy
\begin{align*}
\overline{x}_{m}'(\tau)&=\overline{x}_{m}'(0)-\int_{0}^{\tau}p_{1}(s)f_{1}(s,\overline{x}_{m}(s),\overline{y}_{m}(s),\overline{x}_{m}'(s))ds,\\
\overline{y}_{m}'(\tau)&=\overline{y}_{m}'(0)-\int_{0}^{\tau}p_{2}(s)f_{2}(s,\overline{x}_{m}(s),\overline{y}_{m}(s),\overline{y}_{m}'(s))ds.
\end{align*}
Passing to the limit $m\rightarrow\infty$ through $N_{k}$, we obtain
\begin{align*}
u_{k}'(\tau)&=u_{k}'(0)-\int_{0}^{\tau}p_{1}(s)f_{1}(s,u_{k}(s),v_{k}(s),u_{k}'(s))ds,\\
v_{k}'(\tau)&=v_{k}'(0)-\int_{0}^{\tau}p_{2}(s)f_{2}(s,u_{k}(s),v_{k}(s),v_{k}'(s))ds.
\end{align*}
Hence,
\begin{align*}
x'(\tau)&=x'(0)-\int_{0}^{\tau}p_{1}(s)f_{1}(s,x(s),y(s),x'(s))ds,\\
y'(\tau)&=y'(0)-\int_{0}^{\tau}p_{2}(s)f_{2}(s,x(s),y(s),y'(s))ds,
\end{align*}
which implies that
\begin{align*}
-x''(\tau)&=p_{1}(\tau)f_{1}(\tau,x(\tau),y(\tau),x'(\tau)),\\
-y''(\tau)&=p_{2}(\tau)f_{2}(\tau,x(\tau),y(\tau),y'(\tau)).
\end{align*}
We can do this for each $\tau\in\R^{+}$. Consequently,
\begin{align*}
-x''(t)&=p_{1}(t)f_{1}(t,x(t),y(t),x'(t)),\hspace{0.4cm}t\in\R_{0}^{+},\\
-y''(t)&=p_{2}(t)f_{2}(t,x(t),y(t),y'(t)),\hspace{0.4cm}t\in\R_{0}^{+}.
\end{align*} Thus, $(x,y)\in C^{2}(\R_{0}^{+})\times C^{2}(\R_{0}^{+})$,
$x(0)=y(0)=0$.

\vskip 0.5em

It remains to show that
\begin{align*}\lim_{t\rightarrow\infty}x'(t)=\lim_{t\rightarrow\infty}y'(t)=0.\end{align*}
First, we show that $\lim_{t\rightarrow\infty}x'(t)=0$. Suppose
$\lim_{t\rightarrow\infty}x'(t)=\varepsilon_{0}$, for some
$\varepsilon_{0}>0$. Then, $x'(t)\geq\varepsilon_{0}$ for all
$t\in[0,\infty)$. Choose $k\in N_{0}\setminus\{0\}$, then for $m\in
N_{k}$, in view of \eqref{t2.24}, we have
\begin{align*}x'(t)=u_{k}'(t)=\lim_{m\rightarrow\infty}\overline{x}_{m}'(t)=\lim_{m\rightarrow\infty}x_{m}'(t),\hspace{0.4cm}t\in[0,k],\end{align*}
which leads to
\begin{align*}x'(k)=\lim_{m\rightarrow\infty}x_{m}'(k).\\ \end{align*}
Thus for every $\varepsilon>0$, there exist $m^{*}\in N_{k}$ such
that $|x_{m}'(k)-x'(k)|<\varepsilon$ for all $m\geq m^{*}$. Without
loss of generality assume that $m^{*}=k$, then
$|x_{k}'(k)-x'(k)|<\varepsilon$, that is, $|x'(k)|<\varepsilon$.
Which is a contradiction whenever $\varepsilon=\varepsilon_{0}$.
Hence, $\lim_{t\rightarrow\infty}x'(t)=0$. Similarly, we can prove
$\lim_{t\rightarrow\infty}y'(t)=0$. Thus, $(x,y)$ is a
$C^{1}$-positive solution of system of BVPs \eqref{bc3}.
\end{proof}

\begin{ex}Let
\begin{align*}f_{i}(t,x,y,z)=\nu^{\alpha_{i}+1} e^{-t}(M+1-x)(M+1-y)|x|^{\gamma_{i}}|y|^{\delta_{i}}|z|^{-\alpha_{i}},\,i=1,2,\end{align*}
 where $\nu>0$, $M>0$, $\alpha_{i}>0$,
$0\leq\gamma_{i},\delta_{i}<1$, $i=1,2$.

\vskip 0.5em

Assume that $(1-\gamma_{1})(1-\gamma_{2})\neq\delta_{1}\delta_{2}$
and
\begin{align*}
\nu<\frac{M}{\sum_{i=1}^{2}(\alpha_{i}+2)(\alpha_{i}+1)^{\frac{1}{\alpha_{i}+1}}(2M+1)^{\frac{2}{\alpha_{i}+1}}M^{\frac{\gamma_{i}+\delta_{i}}{\alpha_{i}+1}}}.
\end{align*}
Taking $p_{i}(t)=e^{-t}$,
$h_{i}(x)=\nu^{\alpha_{i}+1}(M+1+x)x^{\gamma_{i}}$,
$k_{i}(y)=(M+1+y)y^{\delta_{i}}$, $u_{i}(z)=z^{-\alpha_{i}}$ and
$v_{i}(z)=0$, $i=1,2$. Choose
$\varphi_{M}(t)=\nu^{\alpha_{1}+1}M^{-\alpha_{1}}e^{-t}$ and
$\psi_{M}(t)=\nu^{\alpha_{2}+1}M^{-\alpha_{2}}e^{-t}$. Then,
$J_{i}(\mu)=\frac{\mu^{\alpha_{i}+1}}{\alpha_{i}+1}$ and
$J_{i}^{-1}(\mu)=(\alpha_{i}+1)^{\frac{1}{\alpha_{i}+1}}\mu^{\frac{1}{\alpha_{i}+1}}$,
$i=1,2$.

\vskip 0.5em

Also,
\begin{equation*}\small{\begin{split}
\frac{M}{\omega(M)}&=\frac{M}{\sum_{i=1}^{2}\int_{0}^{\infty}J_{i}^{-1}\big(h_{i}(M)k_{i}(M)\int_{t}^{\infty}p_{i}(s)ds\big)dt+\sum_{i=1}^{2}J_{i}^{-1}\big(h_{i}(M)k_{i}(M)\int_{0}^{\infty}p_{i}(s)ds\big)}\\
&=\frac{M}{\sum_{i=1}^{2}\int_{0}^{\infty}J_{i}^{-1}\big(\nu^{\alpha_{i}+1}(2M+1)^{2}M^{\gamma_{i}+\delta_{i}}e^{-t}\big)dt+\sum_{i=1}^{2}J_{i}^{-1}\big(\nu^{\alpha_{i}+1}(2M+1)^{2}M^{\gamma_{i}+\delta_{i}}\big)}\\
&=\frac{M}{\nu\sum_{i=1}^{2}(\alpha_{i}+2)(\alpha_{i}+1)^{\frac{1}{\alpha_{i}+1}}(2M+1)^{\frac{2}{\alpha_{i}+1}}M^{\frac{\gamma_{i}+\delta_{i}}{\alpha_{i}+1}}}>1.
\end{split}}\end{equation*}
Clearly, $(\mathbf{C}_{1})-(\mathbf{C}_{7})$ are satisfied.
Hence, by Theorem \ref{tht1}, the system of BVPs \eqref{bc3} has at least one $C^{1}$-positive solution.
\end{ex}


\section{Systems of BVPs on infinite intervals with more general
BCs}\label{infinite-two}

We say, $(x,y)\in(C^{1}(\R^{+})\cap
C^{2}(\R_{0}^{+}))\times(C^{1}(\R^{+})\cap C^{2}(\R_{0}^{+}))$ is a
$C^{1}$-positive solution of the system of BVPs \eqref{s1.6}, if $(x,y)$ satisfies \eqref{s1.6},
$x>0$, $y>0$, $x'>0$ and $y'>0$ on $\R^{+}$.

\vskip 0.5em

Assume that

\begin{description}
\item[$(\mathbf{C}_{8})$] there exist a constant $M>0$ such that
$\frac{M}{\omega(M)}>1$, where
$\omega(M)=\lim_{\varepsilon\rightarrow 0}\omega_{\varepsilon}(M)$,
\begin{align*}\omega_{\varepsilon}(M)=&\sum_{i=1}^{2}\int_{0}^{\infty}[J_{i}^{-1}\big(h_{i}(M)k_{i}(M)\int_{t}^{\infty}p_{i}(s)ds+
J_{i}(\varepsilon)\big)]dt\\&+\sum_{i=1}^{2}\Big(1+\frac{b_{i}}{a_{i}}\Big)J_{i}^{-1}\big(h_{i}(M)k_{i}(M)\int_{0}^{\infty}p_{i}(s)ds+J_{i}(\varepsilon)\big),\\
J_{i}(\mu)=&\int_{0}^{\mu}\frac{dz}{u_{i}(z)+v_{i}(z)},\text{ for
}\mu>0,\, i=1,2.
\end{align*}
\end{description}

\subsection{Existence of positive solutions on finite intervals} Choose
$m\in N_{0}\setminus\{0\}$, where $N_{0}:=\{0,1,\cdots\}$, and
consider the system of BVPs on finite interval
\begin{equation}\label{s2.1}\begin{split}
-&x''(t)=p_{1}(t)f_{1}(t,x(t),y(t),x'(t)),\hspace{0.4cm}t\in(0,m),\\
-&y''(t)=p_{2}(t)f_{2}(t,x(t),y(t),y'(t)),\hspace{0.4cm}t\in(0,m),\\
&a_{1}x(0)-b_{1}x'(0)=x'(m)=0,\\
&a_{2}y(0)-b_{2}y'(0)=y'(m)=0.\end{split}\end{equation} First we
show that the system of BVPs \eqref{s2.1} has a $C^{1}$-positive
solution. We say, $(x,y)\in(C^{1}[0,m]\cap
C^{2}(0,m))\times(C^{1}[0,m]\cap C^{2}(0,m))$, a $C^{1}$-positive
solution of the system of BVPs \eqref{s2.1}, if $(x,y)$ satisfies
\eqref{s2.1}, $x>0$ and $y>0$ on $[0,m]$, $x'>0$ and $y'>0$ on
$[0,m)$.
\begin{thm}\label{ths11}
Assume that $(\mathbf{C}_{1})-(\mathbf{C}_{3})$ and $(\mathbf{C}_{5})-(\mathbf{C}_{8})$ hold. Then the system of BVPs \eqref{s2.1} has a $C^{1}$-positive solution.
\end{thm}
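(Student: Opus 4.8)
The plan is to reproduce, almost verbatim, the regularization-and-diagonalization scheme used for the Dirichlet problem in Theorem \ref{th11}, altering only those steps where the Robin condition $a_{i}x(0)-b_{i}x'(0)=0$ enters. Using $(\mathbf{C}_{8})$ I first fix $\varepsilon>0$ so small that $M/\omega_{\varepsilon}(M)>1$, choose $n_{0}$ with $1/n_{0}<\varepsilon$, and for each $n\geq n_{0}$ introduce the same retractions $\theta:\R\to[0,M]$ and $\rho:\R\to[\tfrac{1}{n},M]$ as before. I then consider the non-singular system obtained from \eqref{s2.1} by replacing $f_{i}$ with $f_{i}^{*}=f_{i}(t,\theta(\cdot),\theta(\cdot),\rho(\cdot))$ and the endpoint conditions $x'(m)=y'(m)=0$ with $x'(m)=y'(m)=\tfrac{1}{n}$, retaining the left Robin conditions. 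Since $f_{i}^{*}$ is continuous and bounded on $[0,m]\times\R^{3}$, Theorem \ref{schauder} produces a solution $(x_{m,n},y_{m,n})\in (C^{1}[0,m]\cap C^{2}(0,m))^{2}$.

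The first block of estimates is structurally identical to the Dirichlet case. From $(\mathbf{C}_{1})$ and $(\mathbf{C}_{6})$ the right-hand sides are nonnegative, so $x_{m,n}''\leq0$, $y_{m,n}''\leq0$; integrating from $t$ to $m$ and using $x'(m)=\tfrac{1}{n}$ gives $x_{m,n}'(t)\geq\tfrac{1}{n}$, and similarly for $y$. The one genuine change appears on the second integration: the Robin condition forces $x_{m,n}(0)=\tfrac{b_{1}}{a_{1}}x_{m,n}'(0)\geq\tfrac{b_{1}}{a_{1}}\tfrac{1}{n}$, so that $x_{m,n}(t)\geq(t+\tfrac{b_{1}}{a_{1}})\tfrac{1}{n}$ and $y_{m,n}(t)\geq(t+\tfrac{b_{2}}{a_{2}})\tfrac{1}{n}$. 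As $x_{m,n}'\geq0$, each function is nondecreasing, so $\|x_{m,n}\|=x_{m,n}(m)$ and $\|y_{m,n}\|=y_{m,n}(m)$.

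Next come the two a priori bounds. For $\|x_{m,n}'\|<M$ I argue as in Theorem \ref{th11}: assuming $x_{m,n}'(t_{1})\geq M$, dividing the inequality from $(\mathbf{C}_{3})$ by $u_{1}(\rho(x_{m,n}'))+v_{1}(\rho(x_{m,n}'))$, integrating from $t_{1}$ to $m$ and applying the increasing maps $J_{1}$, $J_{1}^{-1}$ yields $M\leq J_{1}^{-1}\!\big(h_{1}(M)k_{1}(M)\int_{0}^{\infty}p_{1}+J_{1}(\varepsilon)\big)\leq\omega_{\varepsilon}(M)$, a contradiction. The new point is the function bound $\|x_{m,n}\|<M$: here I use $x_{m,n}(m)=\tfrac{b_{1}}{a_{1}}x_{m,n}'(0)+\int_{0}^{m}x_{m,n}'(t)\,dt$ with the pointwise estimate $x_{m,n}'(t)\leq J_{1}^{-1}\!\big(h_{1}(M)k_{1}(M)\int_{t}^{\infty}p_{1}+J_{1}(\varepsilon)\big)$; the boundary term contributes $\tfrac{b_{1}}{a_{1}}J_{1}^{-1}(\cdots\int_{0}^{\infty}\cdots)$ and the integral term contributes $\int_{0}^{\infty}J_{1}^{-1}(\cdots\int_{t}^{\infty}\cdots)\,dt$, and these are exactly the two pieces of $\omega_{\varepsilon}(M)$ carrying the weight $(1+\tfrac{b_{i}}{a_{i}})$, giving $M\leq x_{m,n}(m)\leq\omega_{\varepsilon}(M)$. \textbf{This matching of the Robin boundary contribution against the $(1+b_{i}/a_{i})$ weight in $(\mathbf{C}_{8})$ is the crux of the proof and the step where the modified hypothesis is indispensable; the rest is bookkeeping.}

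Once the bounds hold the retractions are inactive, so $(x_{m,n},y_{m,n})$ solves the true right-shifted system with $\tfrac{1}{n}\leq x_{m,n}',y_{m,n}'<M$. Equicontinuity of $\{x_{m,n}'\}$, $\{y_{m,n}'\}$ on $[0,m]$ follows, as before, by estimating $|J_{i}(\cdot(t_{1}))-J_{i}(\cdot(t_{2}))|$ through $\int_{t_{1}}^{t_{2}}p_{i}$ and using the uniform continuity of $J_{i}^{-1}$ on $[0,J_{i}(M)]$; with $(\mathbf{C}_{1})$ and Theorem \ref{arzela} I pass to a subsequence with $(x_{m,n}^{(j)},y_{m,n}^{(j)})\to(x_{m}^{(j)},y_{m}^{(j)})$ for $j=0,1$, the limit meeting the Robin conditions and $x_{m}'(m)=y_{m}'(m)=0$. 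For strict positivity I mimic Theorem \ref{th2.1} rather than the Dirichlet argument: since $x_{m,n}(0)=\tfrac{b_{1}}{a_{1}}\tfrac{1}{n}+\tfrac{b_{1}}{a_{1}}\int_{0}^{m}p_{1}f_{1}$ and $x_{m,n},y_{m,n}$ are nondecreasing, $(\mathbf{C}_{7})$ gives $x_{m,n}(0)\geq(x_{m,n}(0))^{\gamma_{1}}(y_{m,n}(0))^{\delta_{1}}\tfrac{b_{1}}{a_{1}}\int_{0}^{1}p_{1}\varphi_{M}$ and the symmetric inequality for $y_{m,n}(0)$; solving this coupled pair under $(1-\gamma_{1})(1-\gamma_{2})\neq\delta_{1}\delta_{2}$ yields $m$- and $n$-independent constants $C_{9},C_{10}>0$ with $x_{m,n}(0)\geq C_{9}$, $y_{m,n}(0)\geq C_{10}$. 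Feeding these into $x_{m,n}'(t)\geq\int_{t}^{m}p_{1}f_{1}$ and letting $n\to\infty$ gives $x_{m}'(t)\geq C_{9}^{\gamma_{1}}C_{10}^{\delta_{1}}\int_{t}^{m}p_{1}\varphi_{M}$, and likewise for $y_{m}'$, so $x_{m},y_{m}>0$ on $[0,m]$ and $x_{m}',y_{m}'>0$ on $[0,m)$. Finally, passing to the limit in $x_{m,n}'(t)=x_{m,n}'(0)-\int_{0}^{t}p_{1}f_{1}$ shows that $(x_{m},y_{m})$ solves \eqref{s2.1}, which completes the proof.
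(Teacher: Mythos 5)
Your proposal is correct and follows essentially the same route as the paper's own proof: the same regularization with $\theta$, $\rho$ and Schauder's theorem, the same a priori bounds in which the Robin term contributes $\frac{b_{1}}{a_{1}}J_{1}^{-1}\big(h_{1}(M)k_{1}(M)\int_{0}^{\infty}p_{1}(s)ds+J_{1}(\varepsilon)\big)$ so that the sup-norm estimate lands below $\omega_{\varepsilon}(M)$ of $(\mathbf{C}_{8})$, and the same positivity argument via the coupled inequalities for $x_{m,n}(0)$, $y_{m,n}(0)$ (your $C_{9}$, $C_{10}$ are the paper's $C_{13}$, $C_{14}$). The only cosmetic difference is your phrasing that both pieces of the bound "carry the weight $(1+b_{i}/a_{i})$"; in fact only the constant piece does, and only the $b_{i}/a_{i}$ portion of it is used, but since this still sits below $\omega_{\varepsilon}(M)$ the argument is unaffected.
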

\begin{proof}
In view of $(\mathbf{C}_{8})$, we choose $\varepsilon>0$ small
enough such that
\begin{equation}\label{s2.3}\frac{M}{\omega_{\varepsilon}(M)}>1.\end{equation}
Choose $n_{0}\in\{1,2,\cdots\}$ such that
$\frac{1}{n_{0}}<\varepsilon$. For each $n\in
N:=\{n_{0},n_{0}+1,\cdots\}$, define retractions
$\theta:\R\rightarrow[0,M]$ and $\rho:\R\rightarrow[\frac{1}{n},M]$
as
\begin{align*}\theta(x)=\max\{0,\min\{x,M\}\}\text{ and
}\rho(x)=\max\{\frac{1}{n},\min\{x,M\}\}.\end{align*} Consider the
modified system of BVPs
\begin{equation}\label{s2.5}\begin{split}
-&x''(t)=p_{1}(t)f_{1}^{*}(t,x(t),y(t),x'(t)),\hspace{0.4cm}t\in(0,m),\\
-&y''(t)=p_{2}(t)f_{2}^{*}(t,x(t),y(t),x'(t)),\hspace{0.4cm}t\in(0,m),\\
&a_{1}x(0)-b_{1}x'(0)=0,\,x'(m)=\frac{1}{n},\\
&a_{2}y(0)-b_{2}y'(0)=0,\,y'(m)=\frac{1}{n},
\end{split}\end{equation}where
$f_{1}^{*}(t,x,y,x')=f_{1}(t,\theta(x),\theta(y),\rho(x'))$ and
$f_{2}^{*}(t,x,y,y')=f_{2}(t,\theta(x),\theta(y),\rho(y'))$.
Clearly, $f_{i}^{*}\,(i=1,2)$ are continuous and bounded on
$[0,m]\times \R^{3}$. Hence, by Theorem \ref{schauder}, the modified
system of BVPs \eqref{s2.5} has a solution
$(x_{m,n},y_{m,n})\in(C^{1}[0,m]\cap
C^{2}(0,m))\times(C^{1}[0,m]\cap C^{2}(0,m))$.

\vskip 0.5em

Using \eqref{s2.5}, $(\mathbf{C}_{1})$ and $(\mathbf{C}_{6})$,
we obtain
\begin{align*}
x_{m,n}''\leq0 \text{ and }y_{m,n}''\leq 0\text{ on }\in(0,m).
\end{align*}
Integrating from $t$ to $m$ and using the BCs \eqref{s2.5}, we
obtain
\begin{equation}\label{s2.6}
x_{m,n}'(t)\geq\frac{1}{n}\text{ and
}y_{m,n}'(t)\geq\frac{1}{n}\text{ for }t\in[0,m].
\end{equation}
Integrating \eqref{s2.6} from $0$ to $t$, using the BCs \eqref{s2.5}
and \eqref{s2.6}, we have
\begin{equation}\label{s2.7}
x_{m,n}(t)\geq(t+\frac{b_{1}}{a_{1}})\frac{1}{n}\text{ and
}y_{m,n}(t)\geq(t+\frac{b_{2}}{a_{2}})\frac{1}{n}\text{ for
}t\in[0,m].
\end{equation}
From \eqref{s2.6} and \eqref{s2.7}, it follows that
\begin{align*}\|x_{m,n}\|_{{}_{7,m}}=x_{m,n}(m)\text{ and
}\|y_{m,n}\|_{{}_{7,m}}=y_{m,n}(m).
\end{align*}
Now, we show that the following hold
\begin{equation}\label{s2.8}\|x_{m,n}'\|_{{}_{7,m}}<M\text{ and }\|y_{m,n}'\|_{{}_{7,m}}<M.\end{equation}
 Suppose $x_{m,n}'(t_{1})\geq M$ for some $t_{1}\in[0,m]$. Using
\eqref{s2.5} and $(\mathbf{C}_{3})$, we have
\begin{align*}-x_{m,n}''(t)\leq p_{1}(t)h_{1}(\theta(x_{m,n}(t)))k_{1}(\theta(y_{m,n}(t)))(u_{1}(\rho(x_{m,n}'(t)))
+v_{1}(\rho(x_{m,n}'(t)))),\hspace{0.1cm}t\in(0,m),\end{align*}
which implies that
\begin{align*}\frac{-x_{m,n}''(t)}{u_{1}(\rho(x_{m,n}'(t)))+v_{1}(\rho(x_{m,n}'(t)))}\leq h_{1}(M)k_{1}(M)p_{1}(t),\hspace{0.4cm}t\in(0,m).\end{align*}
Integrating from $t_{1}$ to $m$, using the BCs \eqref{s2.5}, we
obtain
\begin{align*}\int_{\frac{1}{n}}^{x_{m,n}'(t_{1})}\frac{dz}{u_{1}(\rho(z))+v_{1}(\rho(z))}\leq h_{1}(M)k_{1}(M)\int_{t_{1}}^{m}p_{1}(t)dt,\end{align*}
which can also be written as
\begin{align*}\int_{\frac{1}{n}}^{M}\frac{dz}{u_{1}(z)+v_{1}(z)}+\int_{M}^{x_{m,n}'(t_{1})}\frac{dz}{u_{1}(M)+v_{1}(M)}
\leq h_{1}(M)k_{1}(M)\int_{0}^{\infty}p_{1}(t)dt.\end{align*} Using
the increasing property of $J_{1}$, we obtain
\begin{align*}J_{1}(M)+\frac{x_{m,n}'(t_{1})-M}{u_{1}(M)+v_{1}(M)}\leq h_{1}(M)k_{1}(M)
\int_{0}^{\infty}p_{1}(t)dt+J_{1}(\varepsilon),\end{align*} and the
increasing property of $J_{1}^{-1}$ yields
\begin{align*}M\leq
J_{1}^{-1}(h_{1}(M)k_{1}(M)\int_{0}^{\infty}p_{1}(t)dt+J_{1}(\varepsilon))\leq\omega_{\varepsilon}(M)\end{align*}
a contradiction to \eqref{s2.3}. Hence, $\|x_{m,n}'\|_{{}_{7,m}}<M$.

\vskip 0.5em

Similarly, we can show that $\|y_{m,n}'\|_{{}_{7,m}}<M$.

\vskip 0.5em

Now, we show that
\begin{equation}\label{s2.9}\|x_{m,n}\|_{{}_{7,m}}<M\text{ and }\|y_{m,n}\|_{{}_{7,m}}<M.\end{equation}
Suppose $\|x_{m,n}\|_{{}_{7,m}}\geq M$. From \eqref{s2.5},
\eqref{s2.6}, \eqref{s2.8} and $(\mathbf{C}_{3})$, it follows that
\begin{align*}
-x_{m,n}''(t)\leq
p_{1}(t)h_{1}(\theta(x_{m,n}(t)))k_{1}(\theta(y_{m,n}(t)))(u_{1}(x_{m,n}'(t))+v_{1}(x_{m,n}'(t))),\hspace{0.4cm}t\in(0,m),
\end{align*}
which implies that
\begin{align*}\frac{-x_{m,n}''(t)}{u_{1}(x_{m,n}'(t))+v_{1}(x_{m,n}'(t))}&\leq h_{1}(M)k_{1}(M)p_{1}(t),\hspace{0.4cm}t\in(0,m).\end{align*}
Integrating from $t$ to $m$, using the BCs \eqref{s2.5}, we obtain
\begin{align*}\int_{\frac{1}{n}}^{x_{m,n}'(t)}\frac{dz}{u_{1}(z)+v_{1}(z)}\leq
h_{1}(M)k_{1}(M)\int_{t}^{m}p_{1}(s)ds,\hspace{0.4cm}t\in[0,m],\end{align*}
which can also be written as
\begin{align*}J_{1}(x_{m,n}'(t))-J_{1}(\frac{1}{n})\leq
h_{1}(M)k_{1}(M)\int_{t}^{\infty}p_{1}(s)ds,\hspace{0.4cm}t\in[0,m].\end{align*}
The increasing property of $J_{1}$ and $J_{1}^{-1}$, leads to
\begin{equation}\label{s2.10}x_{m,n}'(t)\leq J_{1}^{-1}(h_{1}(M)k_{1}(M)\int_{t}^{\infty}p_{1}(s)ds+J_{1}(\varepsilon))
,\hspace{0.4cm}t\in[0,m],\end{equation} Now, integrating from $0$ to
$m$, using the BCs \eqref{s2.5} and \eqref{s2.10}, we obtain
\begin{align*}
M\leq\|x_{m,n}\|_{{}_{7,m}}\leq
\int_{0}^{m}&[J_{1}^{-1}(h_{1}(M)k_{1}(M)\int_{t}^{\infty}p_{1}(s)ds+J_{1}(\varepsilon))]dt\\
+\frac{b_{1}}{a_{1}}&J_{1}^{-1}(h_{1}(M)k_{1}(M)\int_{0}^{\infty}p_{1}(s)ds+J_{1}(\varepsilon)),
\end{align*}
which implies that
\begin{align*}
M\leq&
\int_{0}^{\infty}[J_{1}^{-1}(h_{1}(M)k_{1}(M)\int_{t}^{\infty}p_{1}(s)ds+J_{1}(\varepsilon))]dt\\
&+\frac{b_{1}}{a_{1}}J_{1}^{-1}(h_{1}(M)k_{1}(M)\int_{0}^{\infty}p_{1}(s)ds+J_{1}(\varepsilon))\leq\omega_{\varepsilon}(M),
\end{align*}
a contradiction to \eqref{s2.3}. Therefore,
$\|x_{m,n}\|_{{}_{7,m}}<M$.

\vskip 0.5em

Similarly, we can show that $\|y_{m,n}\|_{{}_{7,m}}<M$.

\vskip 0.5em

Hence, in view of \eqref{s2.5}-\eqref{s2.9}, $(x_{m,n},y_{m,n})$ is
a solution of the following coupled system of BVPs
\begin{equation}\label{s2.11}\begin{split}
-&x''(t)=p_{1}(t)f_{1}(t,x(t),y(t),x'(t)),\hspace{0.4cm}t\in(0,m),\\
-&y''(t)=p_{2}(t)f_{2}(t,x(t),y(t),y'(t)),\hspace{0.4cm}t\in(0,m),\\
&a_{1}x(0)-b_{1}x'(0)=0,\,x'(m)=\frac{1}{n},\\
&a_{2}y(0)-b_{2}y'(0)=0,\,y'(m)=\frac{1}{n},\end{split}\end{equation}
satisfying
\begin{equation}\label{s2.12}\begin{split}
(t+\frac{b_{1}}{a_{1}})\frac{1}{n}&\leq
x_{m,n}(t)<M,\,\frac{1}{n}\leq
x_{m,n}'(t)<M,\hspace{0.4cm}t\in[0,m],\\
(t+\frac{b_{2}}{a_{2}})\frac{1}{n}&\leq
y_{m,n}(t)<M,\,\frac{1}{n}\leq
y_{m,n}'(t)<M,\hspace{0.4cm}t\in[0,m].
\end{split}\end{equation}
Now, we show that
\begin{equation}\label{s2.13}
\{x_{m,n}'\}_{n\in N}\text{ and }\{y_{m,n}'\}_{n\in N} \text{ are
equicontinuous on }[0,m].
\end{equation}
From \eqref{s2.11}, \eqref{s2.12} and $(\mathbf{C}_{3})$, it
follows that
\begin{align*}
-x_{m,n}''(t)\leq&p_{1}(t)h_{1}(M)k_{1}(M)(u_{1}(x_{m,n}'(t))+v_{1}(x_{m,n}'(t))),\hspace{0.4cm}t\in(0,m),\\
-y_{m,n}''(t)\leq&p_{2}(t)h_{2}(M)k_{2}(M)(u_{2}(y_{m,n}'(t))+v_{2}(y_{m,n}'(t))),\hspace{0.4cm}t\in(0,m),
\end{align*}
which implies that
\begin{align*}
\frac{-x_{m,n}''(t)}{u_{1}(x_{m,n}'(t))+v_{1}(x_{m,n}'(t))}&\leq h_{1}(M)k_{1}(M)p_{1}(t),\hspace{0.4cm}t\in(0,m),\\
\frac{-y_{m,n}''(t)}{u_{2}(y_{m,n}'(t))+v_{2}(y_{m,n}'(t))}&\leq
h_{2}(M)k_{2}(M)p_{2}(t),\hspace{0.4cm}t\in(0,m).
\end{align*}
Thus for $t_{1},t_{2}\in[0,m]$, we have
\begin{equation}\label{s2.14}\begin{split}
|J_{1}(x_{m,n}'(t_{1}))-J_{1}(x_{m,n}'(t_{2}))|&\leq h_{1}(M)k_{1}(M)\left|\int_{t_{1}}^{t_{2}}p_{1}(t)dt\right|,\\
|J_{2}(y_{m,n}'(t_{1}))-J_{2}(y_{m,n}'(t_{2}))|&\leq
h_{2}(M)k_{2}(M)\left|\int_{t_{1}}^{t_{2}}p_{2}(t)dt\right|.
\end{split}\end{equation}
In view of \eqref{s2.14}, $(\mathbf{C}_{1})$, uniform continuity
of $J_{i}^{-1}$ over $[0,J_{i}(M)]$ $(i=1,2)$ and
\begin{align*}
|x_{m,n}'(t_{1})-x_{m,n}'(t_{2})|&=|J_{1}^{-1}(J_{1}(x_{m,n}'(t_{1})))-J_{1}^{-1}(J_{1}(x_{m,n}'(t_{2})))|,\\
|y_{m,n}'(t_{1})-y_{m,n}'(t_{2})|&=|J_{2}^{-1}(J_{2}(y_{m,n}'(t_{1})))-J_{2}^{-1}(J_{2}(y_{m,n}'(t_{2})))|,
\end{align*}
we obtain \eqref{s2.13}.

\vskip 0.5em

From \eqref{s2.12} and \eqref{s2.13}, it follows that the sequences
$\{(x_{m,n}^{(j)},y_{m,n}^{(j)})\}_{n\in N}$ $(j=0,1)$ are uniformly
bounded and equicontinuous on $[0,m]$. Hence, by Theorem
\eqref{arzela}, there exist subsequence $N_{*}$ of $N$ and
$(x_{m},y_{m})\in C^{1}[0,m]\times C^{1}[0,m]$ such that for each
$j=0,1$ the sequence $(x_{m,n}^{(j)},y_{m,n}^{(j)})$ converges
uniformly to $(x_{m}^{(j)},y_{m}^{(j)})$ on $[0,m]$ as
$n\rightarrow\infty$ through $N_{*}$. From the BCs \eqref{s2.11}, we
have
$a_{1}x_{m}(0)-b_{1}x_{m}'(0)=a_{2}y_{m}(0)-b_{2}y_{m}'(0)=x_{m}'(m)=y_{m}'(m)=0$.
Next, we show that $x_{m}>0$ and $y_{m}>0$ on $[0,m]$, $x_{m}'>0$
and $y_{m}'>0$ on $[0,m)$.

\vskip 0.5em

We claim that
\begin{equation}\label{s2.15}
x_{m,n}'(t)\geq
C_{13}^{\gamma_{1}}C_{14}^{\delta_{1}}\int_{t}^{m}p_{1}(s)\varphi_{M}(s)ds,\hspace{0.4cm}t\in[0,m],
\end{equation}
\begin{equation}\label{s2.16}
y_{m,n}'(t)\geq
C_{13}^{\gamma_{2}}C_{14}^{\delta_{2}}\int_{t}^{m}p_{2}(s)\psi_{M}(s)ds,\hspace{0.4cm}t\in[0,m],
\end{equation}
where
\begin{align*}\begin{split}
&C_{13}=\left(\frac{b_{1}}{a_{1}}\int_{0}^{1}p_{1}(s)\varphi_{M}(s)ds\right)^{\frac{1-\gamma_{2}}{(1-\gamma_{1})(1-\gamma_{2})-\delta_{1}\delta_{2}}}\left(\frac{b_{2}}{a_{2}}\int_{0}^{1}p_{2}(s)\psi_{M}(s)ds\right)^{\frac{\delta_{1}}{(1-\gamma_{1})(1-\gamma_{2})-\delta_{1}\delta_{2}}},\\
&C_{14}=\left(\frac{b_{1}}{a_{1}}\int_{0}^{1}p_{1}(s)\varphi_{M}(s)ds\right)^{\frac{\delta_{2}}{(1-\gamma_{1})(1-\gamma_{2})-\delta_{1}\delta_{2}}}\left(\frac{b_{2}}{a_{2}}\int_{0}^{1}p_{2}(s)\psi_{M}(s)ds\right)^{\frac{1-\gamma_{1}}{(1-\gamma_{1})(1-\gamma_{2})-\delta_{1}\delta_{2}}}.\end{split}\end{align*}
To prove \eqref{s2.15}, consider the following relation
\begin{equation}\label{s2.17}\begin{split}
x_{m,n}(t)&=(t+\frac{b_{1}}{a_{1}})\frac{1}{n}+\frac{1}{a_{1}}\int_{0}^{t}(a_{1}s+b_{1})p_{1}(s)f_{1}(s,x_{m,n}(s),y_{m,n}(s),x_{m,n}'(s))ds\\
&\hspace{0.4cm}+\frac{1}{a_{1}}\int_{t}^{m}(a_{1}t+b_{1})
p_{1}(s)f_{1}(s,x_{m,n}(s),y_{m,n}(s),x_{m,n}'(s))ds,\hspace{0.4cm}t\in[0,m],
\end{split}\end{equation} which implies that
\begin{align*}
x_{m,n}(0)=\frac{b_{1}}{a_{1}}\frac{1}{n}+\frac{b_{1}}{a_{1}}\int_{0}^{m}p_{1}(s)f_{1}(s,x_{m,n}(s),y_{m,n}(s),x_{m,n}'(s))ds.
\end{align*}
Using $(\mathbf{C}_{7})$ and \eqref{s2.12}, we obtain
\begin{align*}\begin{split}
x_{m,n}(0)\geq&(x_{m,n}(0))^{\gamma_{1}}(y_{m,n}(0))^{\delta_{1}}\frac{b_{1}}{a_{1}}\int_{0}^{m}p_{1}(s)\varphi_{M}(s)ds\\
\geq&(x_{m,n}(0))^{\gamma_{1}}(y_{m,n}(0))^{\delta_{1}}\frac{b_{1}}{a_{1}}\int_{0}^{1}p_{1}(s)\varphi_{M}(s)ds,
\end{split}\end{align*}
which implies that
\begin{equation}\label{s2.18}
x_{m,n}(0)\geq(y_{m,n}(0))^{\frac{\delta_{1}}{1-\gamma_{1}}}\left(\frac{b_{1}}{a_{1}}\int_{0}^{1}p_{1}(s)\varphi_{M}(s)ds\right)^{\frac{1}{1-\gamma_{1}}}.
\end{equation} Similarly,
\begin{equation}\label{s2.19}
y_{m,n}(0)\geq(x_{m,n}(0))^{\frac{\delta_{2}}{1-\gamma_{2}}}\left(\frac{b_{2}}{a_{2}}\int_{0}^{1}p_{2}(s)\psi_{M}(s)ds\right)^{\frac{1}{1-\gamma_{2}}}.
\end{equation}
Now, using \eqref{s2.19} in \eqref{s2.18}, we have
\begin{align*}
(x_{m,n}(0))^{1-\frac{\delta_{1}\delta_{2}}{(1-\gamma_{1})(1-\gamma_{2})}}\geq\left(\frac{b_{1}}{a_{1}}\int_{0}^{1}p_{1}(s)\varphi_{M}(s)ds\right)^{\frac{1}{1-\gamma_{1}}}\left(\frac{b_{2}}{a_{2}}\int_{0}^{1}p_{2}(s)\psi_{M}(s)ds\right)^{\frac{\delta_{1}}{(1-\gamma_{1})(1-\gamma_{2})}}.
\end{align*}
Hence,
\begin{equation}\label{s2.20}
x_{m,n}(0)\geq C_{13}.
\end{equation}
Similarly, using \eqref{s2.18} in \eqref{s2.19}, we obtain
\begin{equation}\label{s2.21}
y_{m,n}(0)\geq C_{14}.
\end{equation}

Now, from \eqref{s2.17}, it follows that
\begin{align*}
x_{m,n}'(t)\geq\int_{t}^{m}p_{1}(s)f_{1}(s,y_{m,n}(s),x_{m,n}'(s))ds.
\end{align*}
Using $(\mathbf{C}_{6})$, \eqref{s2.12}, \eqref{s2.20} and
\eqref{s2.21}, we obtain \eqref{s2.15}.

\vskip 0.5em

Similarly, we can prove \eqref{s2.16}.

\vskip 0.5em

From \eqref{s2.15} and \eqref{s2.16}, passing to the limit
$n\rightarrow\infty$ through $N_{*}$, we obtain
\begin{equation}\label{s2.22}\begin{split}
x_{m}'(t)&\geq C_{13}^{\gamma_{1}}C_{14}^{\delta_{1}}\int_{t}^{m}p_{1}(s)\varphi_{M}(s)ds,\hspace{0.4cm}t\in[0,m],\\
y_{m}'(t)&\geq
C_{13}^{\gamma_{2}}C_{14}^{\delta_{2}}\int_{t}^{m}p_{2}(s)\psi_{M}(s)ds,\hspace{0.4cm}t\in[0,m].
\end{split}\end{equation} Consequently, $x_{m}'>0,\,y_{m}'>0$ on
$[0,m)$ and $x_{m}>0,\,y_{m}>0$ on $[0,m]$.

\vskip 0.5em

Moreover, $x_{m,n},y_{m,n}$ satisfy
\begin{align*}
x_{m,n}'(t)&=x_{m,n}'(0)-\int_{0}^{t}p_{1}(s)f_{1}(s,x_{m,n}(s),y_{m,n}(s),x_{m,n}'(s))ds,\hspace{0.4cm}t\in[0,m],\\
y_{m,n}'(t)&=y_{m,n}'(0)-\int_{0}^{t}p_{2}(s)f_{2}(s,x_{m,n}(s),y_{m,n}(s),y_{m,n}'(s))ds,\hspace{0.4cm}t\in[0,m].
\end{align*}
Letting $n\rightarrow\infty$ through $N_{*}$, we obtain
\begin{align*}
x_{m}'(t)&=x_{m}'(0)-\int_{0}^{t}p_{1}(s)f_{1}(s,x_{m}(s),y_{m}(s),x_{m}'(s))ds,\hspace{0.4cm}t\in[0,m],\\
y_{m}'(t)&=y_{m}'(0)-\int_{0}^{t}p_{2}(s)f_{2}(s,x_{m}(s),y_{m}(s),y_{m}'(s))ds,\hspace{0.4cm}t\in[0,m],
\end{align*}
which imply that
\begin{equation}\label{s2.23}\begin{split}
-x_{m}''(t)&=p_{1}(t)f_{1}(t,x_{m}(t),y_{m}(t),x_{m}'(t)),\hspace{0.4cm}t\in(0,m),\\
-y_{m}''(t)&=p_{2}(t)f_{2}(t,x_{m}(t),y_{m}(t),y_{m}'(t)),\hspace{0.4cm}t\in(0,m).
\end{split}\end{equation}Hence, $(x_{m},y_{m})$ is a $C^{1}$-positive solution of \eqref{2.1}.
\end{proof}

\subsection{Existence of positive solutions on an infinite interval}

\begin{thm}\label{ths1}
Assume that $(\mathbf{C}_{1})-(\mathbf{C}_{3})$ and $(\mathbf{C}_{5})-(\mathbf{C}_{8})$ hold. Then the system of BVPs \eqref{s1.6} has a $C^{1}$-positive solution.
\end{thm}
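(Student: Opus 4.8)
The plan is to reduce the half-line problem \eqref{s1.6} to the already-solved finite-interval problem \eqref{s2.1} via a diagonalization argument, following exactly the scheme used for Theorem \ref{tht1} but invoking Theorem \ref{ths11} in place of Theorem \ref{th11}. First, for each $m\in N_{0}\setminus\{0\}$, Theorem \ref{ths11} supplies a $C^{1}$-positive solution $(x_{m},y_{m})$ of \eqref{s2.1} on $[0,m]$, which in particular satisfies $x_{m}'(m)=y_{m}'(m)=0$ together with the bounds \eqref{s2.12} and the lower bounds \eqref{s2.22}. I would extend each pair to $[0,\infty)$ by setting $\overline{x}_{m}(t)=x_{m}(m)$ and $\overline{y}_{m}(t)=y_{m}(m)$ for $t\geq m$; because $x_{m}'(m)=y_{m}'(m)=0$, the extensions lie in $C^{1}[0,\infty)$ and inherit the uniform bounds $0\leq\overline{x}_{m}(t),\overline{x}_{m}'(t),\overline{y}_{m}(t),\overline{y}_{m}'(t)<M$.

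Next I would establish equicontinuity of $\{\overline{x}_{m}'\}$ and $\{\overline{y}_{m}'\}$ on each compact interval $[0,k]$. As in the derivation leading to \eqref{s2.14}, dividing the differential inequalities coming from $(\mathbf{C}_{3})$ by $u_{i}+v_{i}$ and integrating yields $|J_{i}(\overline{x}_{m}'(t_{1}))-J_{i}(\overline{x}_{m}'(t_{2}))|\leq h_{i}(M)k_{i}(M)\bigl|\int_{t_{1}}^{t_{2}}p_{i}(t)dt\bigr|$, and combining this with $(\mathbf{C}_{1})$ and the uniform continuity of $J_{i}^{-1}$ on $[0,J_{i}(M)]$ gives the desired equicontinuity. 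With the uniform bounds in hand, a diagonalization through nested subsequences $N_{0}\setminus\{0\}\supseteq N_{1}\supseteq N_{2}\supseteq\cdots$ (each extracted by Theorem \ref{arzela} on $[0,k]$ and consistent on overlaps) produces limit functions $(u_{k},v_{k})\in C^{1}[0,k]\times C^{1}[0,k]$ that agree on common domains, so that $(x,y)$ is well defined on $\R^{+}$ by $x(\tau)=u_{k}(\tau)$, $y(\tau)=v_{k}(\tau)$ for $\tau\leq k$. Passing to the limit in the lower bounds \eqref{s2.22} shows $x'>0$ and $y'>0$ on $\R^{+}$, whence $x,y>0$ on $\R^{+}$ as well (using $x(0)=\frac{b_{1}}{a_{1}}x'(0)>0$), and passing to the limit in the integral representations of $\overline{x}_{m}',\overline{y}_{m}'$ recovers the ODEs on $\R_{0}^{+}$ together with the boundary data $a_{1}x(0)-b_{1}x'(0)=0$ and $a_{2}y(0)-b_{2}y'(0)=0$.

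The main obstacle is verifying the decay conditions $\lim_{t\rightarrow\infty}x'(t)=\lim_{t\rightarrow\infty}y'(t)=0$, which carry the essential half-line content and are not automatic from the finite-interval construction. Here I would use that $x''\leq0$ forces $x'$ to be nonincreasing, so the limit $\lim_{t\rightarrow\infty}x'(t)=:\varepsilon_{0}$ exists; assuming $\varepsilon_{0}>0$ gives $x'(t)\geq\varepsilon_{0}$ for all $t$. For fixed $k$ and $m\in N_{k}$ one has $x'(k)=\lim_{m\rightarrow\infty}x_{m}'(k)$, and since the finite-interval boundary condition gives $x_{k}'(k)=0$, choosing the index $m^{*}=k$ in the convergence estimate forces $|x'(k)|<\varepsilon$ for every $\varepsilon>0$; taking $\varepsilon=\varepsilon_{0}$ contradicts $x'(k)\geq\varepsilon_{0}$. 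Hence $\varepsilon_{0}=0$, and the identical argument applies to $y'$, completing the proof that $(x,y)$ is a $C^{1}$-positive solution of \eqref{s1.6}. I expect the delicate points to be the $C^{1}$-consistency of the extensions across $t=m$ (guaranteed precisely by $x_{m}'(m)=y_{m}'(m)=0$) and the bookkeeping in the decay argument, rather than any genuinely new estimate beyond those supplied by Theorem \ref{ths11}.
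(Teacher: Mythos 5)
Your proposal is correct and follows essentially the same route as the paper's own proof: finite-interval solutions from Theorem \ref{ths11}, constant $C^{1}$ extensions (consistent because $x_{m}'(m)=y_{m}'(m)=0$), the $J_{i}$-based equicontinuity estimates, Arzel\`{a}-Ascoli diagonalization over nested subsequences, limit passage in the integral identities and in the lower bounds \eqref{s2.22}, and the same contradiction argument (via $x_{k}'(k)=0$) for the decay conditions at infinity. The only additions beyond the paper's text are minor clarifications the paper leaves implicit, such as monotonicity of $x'$ guaranteeing the limit $\varepsilon_{0}$ exists and the use of $x(0)=\frac{b_{1}}{a_{1}}x'(0)$ for positivity at $t=0$.
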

\begin{proof}
By Theorem \ref{ths11}, for each $m\in N_{0}\setminus\{0\}$, the
system of BVPs \eqref{s2.1} has a $C^{1}$-positive solution
$(x_{m},y_{m})$ defined on $[0,m]$. By applying diagonalization
argument we will show that the system of BVPs \eqref{s1.6} has a $C^{1}$-positive solution. For this we define a
continuous extension $(\overline{x}_{m},\overline{y}_{m})$ of
$(x_{m},y_{m})$ by
\begin{equation}\label{s2.24}
\overline{x}_{m}(t)=
\begin{cases}
x_{m}(t),\,&t\in[0,m],\\
x_{m}(m),\,&t\in[m,\infty),
\end{cases}
\hspace{0.5cm}\overline{y}_{m}(t)=
\begin{cases}
y_{m}(t),\,&t\in[0,m],\\
y_{m}(m),\,&t\in[m,\infty).
\end{cases}
\end{equation}
Clearly, $\overline{x}_{m},\overline{y}_{m}\in C^{1}[0,\infty)$ and
satisfy,
\begin{equation}\label{s2.25}\begin{split}
0&\leq \overline{x}_{m}(t)<M,\,0\leq
\overline{x}_{m}'(t)<M,\hspace{0.4cm}t\in[0,\infty),\\
0&\leq \overline{y}_{m}(t)<M,\,0\leq
\overline{y}_{m}'(t)<M,\hspace{0.4cm}t\in[0,\infty).
\end{split}\end{equation}
We claim that
\begin{equation}\label{s2.26}
\{\overline{x}_{m}'\}_{m\in N_{0}\setminus\{0\}}\text{ and
}\{\overline{y}_{m}'\}_{m\in N_{0}\setminus\{0\}}\text{ are
equicontinuous on }[0,1].
\end{equation}
Using \eqref{s2.23}, \eqref{s2.24}, \eqref{s2.25} and
$(\mathbf{C}_{3})$, we obtain
\begin{align*}
-\overline{x}_{m}''(t)&\leq p_{1}(t)h_{1}(M)k_{1}(M)(u_{1}(\overline{x}_{m}'(t))+v_{1}(\overline{x}_{m}'(t))),\hspace{0.4cm}t\in(0,1),\\
-\overline{y}_{m}''(t)&\leq
p_{2}(t)h_{2}(M)k_{2}(M)(u_{2}(\overline{y}_{m}'(t))+v_{2}(\overline{y}_{m}'(t))),\hspace{0.4cm}t\in(0,1),
\end{align*}
which implies that
\begin{align*}
\frac{-\overline{x}_{m}''(t)}{u_{1}(\overline{x}_{m}'(t))+v_{1}(\overline{x}_{m}'(t))}&\leq h_{1}(M)k_{1}(M)p_{1}(t),\hspace{0.4cm}t\in(0,1),\\
\frac{-\overline{y}_{m}''(t)}{u_{2}(\overline{y}_{m}'(t))+v_{2}(\overline{y}_{m}'(t))}&\leq
h_{2}(M)k_{2}(M)p_{2}(t),\hspace{0.4cm}t\in(0,1).
\end{align*}
Hence, for $t_{1},t_{2}\in[0,1]$, we have
\begin{equation}\label{s2.27}\begin{split}
|J_{1}(\overline{x}_{m}'(t_{1}))-J_{1}(\overline{x}_{m}'(t_{2}))|&\leq h_{1}(M)k_{1}(M)\left|\int_{t_{1}}^{t_{2}}p_{1}(t)dt\right|,\\
|J_{2}(\overline{y}_{m}'(t_{1}))-J_{2}(\overline{y}_{m}'(t_{2}))|&\leq
h_{2}(M)k_{2}(M)\left|\int_{t_{1}}^{t_{2}}p_{2}(t)dt\right|.
\end{split}\end{equation}
In view of \eqref{s2.27}, $(\mathbf{C}_{1})$, uniform continuity
of $J_{i}^{-1}$ over $[0,J_{i}(L)]$ $(i=1,2)$, and
\begin{align*}
|\overline{x}_{m}'(t_{1})-\overline{x}_{m}'(t_{2})|&=|J_{1}^{-1}(J_{1}(\overline{x}_{m}'(t_{1})))-J_{1}^{-1}(J_{1}(\overline{x}_{m}'(t_{2})))|,\\
|\overline{y}_{m}'(t_{1})-\overline{y}_{m}'(t_{2})|&=|J_{2}^{-1}(J_{2}(y_{m}'(t_{1})))-J_{2}^{-1}(J_{2}(y_{m}'(t_{2})))|,
\end{align*}
we establish \eqref{s2.26}.

\vskip 0.5em

From \eqref{s2.25} and \eqref{s2.26}, it follows that the sequences
$\{(\overline{x}_{m}^{(j)},\overline{y}_{m}^{(j)})\} \,(j=0,1)$ are
uniformly bounded and equicontinuous on $[0,1]$. Hence, by Theorem
\eqref{arzela}, there exist subsequence $N_{1}$ of
$N_{0}\setminus\{0\}$ and $(u_{1},v_{1})\in C^{1}[0,1]\times
C^{1}[0,1]$ such that for each $j=0,1$, the sequence
$(\overline{x}_{m}^{(j)},\overline{y}_{m}^{(j)})$ converges
uniformly to $(u_{1}^{(j)},v_{1}^{(j)})$ on $[0,1]$ as
$m\rightarrow\infty$ through $N_{1}$. Also from BCs \eqref{s2.1}, we
have $a_{1}u_{1}(0)-b_{1}u_{1}'(0)=a_{2}v_{1}(0)-b_{2}v_{1}'(0)=0$.

\vskip 0.5em

Moreover, from \eqref{s2.22} and \eqref{s2.24}, for each $m\in
N_{0}\setminus\{0\}$, we have
\begin{align*}\begin{split}
\overline{x}_{m}'(t)&\geq C_{13}^{\gamma_{1}}C_{14}^{\delta_{1}}\int_{t}^{1}p_{1}(s)\varphi_{M}(s)ds,\hspace{0.4cm}t\in[0,1],\\
\overline{y}_{m}'(t)&\geq
C_{13}^{\gamma_{2}}C_{14}^{\delta_{2}}\int_{t}^{1}p_{2}(s)\psi_{M}(s)ds,\hspace{0.4cm}t\in[0,1],
\end{split}\end{align*}
as limit $m\rightarrow\infty$ through $N_{1}$, we obtain
\begin{align*}\begin{split}
u_{1}'(t)&\geq C_{13}^{\gamma_{1}}C_{14}^{\delta_{1}}\int_{t}^{1}p_{1}(s)\varphi_{M}(s)ds,\hspace{0.4cm}t\in[0,1],\\
v_{1}'(t)&\geq
C_{13}^{\gamma_{2}}C_{14}^{\delta_{2}}\int_{t}^{1}p_{2}(s)\psi_{M}(s)ds,\hspace{0.4cm}t\in[0,1],
\end{split}\end{align*} which shows that $u_{1}'>0$ and $v_{1}'>0$ on
$[0,1)$, $u_{1}>0$ and $v_{1}>0$ on $[0,1]$.

\vskip 0.5em

By the same process as above,  we can show that
\begin{equation}\label{s2.30a}
\{\overline{x}_{m}'\}_{m\in N_{1}\setminus\{1\}}\text{ and
}\{\overline{y}_{m}'\}_{m\in N_{1}\setminus\{1\}}\text{ are
equicontinuous families on }[0,2].
\end{equation}
Now, in view of \eqref{s2.25} and \eqref{s2.30a}, it follows that
the sequences $\{(\overline{x}_{m}^{(j)},\overline{y}_{m}^{(j)})\}$
$(j=0,1)$ are uniformly bounded and equicontinuous on $[0,2]$.
Hence, by Theorem \eqref{arzela}, there exist subsequence $N_{2}$ of
$N_{1}\setminus\{1\}$ and $(u_{2},v_{2})\in C^{1}[0,2]\times
C^{1}[0,2]$ such that for each $j=0,1$, the sequence
$(\overline{x}_{m}^{(j)},\overline{y}_{m}^{(j)})$ converges
uniformly to $(u_{2}^{(j)},v_{2}^{(j)})$ on $[0,2]$ as
$m\rightarrow\infty$ through $N_{2}$. Also,
$a_{1}u_{2}(0)-b_{1}u_{2}'(0)=a_{2}v_{2}(0)-b_{2}v_{2}'(0)=0$.
Moreover, in view of \eqref{s2.22} and \eqref{s2.24}, for each $m\in
N_{1}\setminus\{1\}$, we have
\begin{align*}\begin{split}
\overline{x}_{m}'(t)&\geq C_{13}^{\gamma_{1}}C_{14}^{\delta_{1}}\int_{t}^{2}p_{1}(s)\varphi_{M}(s)ds,\hspace{0.4cm}t\in[0,2],\\
\overline{y}_{m}'(t)&\geq
C_{13}^{\gamma_{2}}C_{14}^{\delta_{2}}\int_{t}^{2}p_{2}(s)\psi_{M}(s)ds,\hspace{0.4cm}t\in[0,2].
\end{split}\end{align*}
Now, the $\lim_{m\rightarrow\infty}$ through $N_{2}$ leads to
\begin{align*}\begin{split}
u_{2}'(t)&\geq C_{13}^{\gamma_{1}}C_{14}^{\delta_{1}}\int_{t}^{2}p_{1}(s)\varphi_{M}(s)ds,\hspace{0.4cm}t\in[0,2],\\
v_{1}'(t)&\geq
C_{13}^{\gamma_{2}}C_{14}^{\delta_{2}}\int_{t}^{2}p_{2}(s)\psi_{M}(s)ds,\hspace{0.4cm}t\in[0,2],
\end{split}\end{align*} which shows that $u_{2}'>0$ and $v_{2}'>0$ on
$[0,2)$, $u_{2}>0$ and $v_{2}>0$ on $[0,2]$. Note that,
$u_{2}=u_{1}$ and $v_{2}=v_{1}$ on $[0,1]$ as $N_{2}\subseteq
N_{1}$.

\vskip 0.5em

In general, for each $k\in N_{0}\setminus\{0\}$, there exists a
subsequence $N_{k}$ of $N_{k-1}\setminus\{k-1\}$ and
$(u_{k},v_{k})\in C^{1}[0,k]\times C^{1}[0,k]$ such that
$(\overline{x}_{m}^{(j)},\overline{y}_{m}^{(j)})$ converges
uniformly to $(u_{k}^{(j)},v_{k}^{(j)})$ $(j=0,1)$ on $[0,k]$, as
$m\rightarrow\infty$ through $N_{k}$. Also,
$a_{1}u_{k}(0)-b_{1}u_{k}'(0)=a_{2}v_{k}(0)-b_{2}v_{k}'(0)=0$,
$u_{k}=u_{k-1}$ and $v_{k}=v_{k-1}$ on $[0,k-1]$ as $N_{k}\subseteq
N_{k-1}$. Moreover,
\begin{align*}\begin{split}
u_{k}'(t)&\geq C_{13}^{\gamma_{1}}C_{14}^{\delta_{1}}\int_{t}^{k}p_{1}(s)\varphi_{M}(s)ds,\hspace{0.4cm}t\in[0,k],\\
v_{k}'(t)&\geq
C_{13}^{\gamma_{2}}C_{14}^{\delta_{2}}\int_{t}^{k}p_{2}(s)\psi_{M}(s)ds,\hspace{0.4cm}t\in[0,k],
\end{split}\end{align*} which shows that $u_{k}'>0$ and $v_{k}'>0$ on
$[0,k)$, $u_{k}>0$ and $v_{k}>0$ on $[0,k]$.

\vskip 0.5em

Define functions $x,y:\R^{+}\rightarrow\R^{+}$ as:

\vskip 0.5em

For fixed $\tau\in\R_{0}^{+}$ and $k\in N_{0}\setminus\{0\}$ with
$\tau\leq k$, $x(\tau)=u_{k}(\tau)$ and $y(\tau)=v_{k}(\tau)$. Then,
$x$ and $y$ are well defined as, $x(t)=u_{k}(t)>0$ and
$y(t)=v_{k}(t)>0$ for $t\in[0,k]$. We can do this for each
$\tau\in\R_{0}^{+}$. Thus, $(x,y)\in C^{1}(\R^{+})\times
C^{1}(\R^{+})$ with $x>0$, $y>0$, $x'>0$ and $y'>0$ on $\R^{+}$.

\vskip 0.5em

Now, we show that $(x,y)$ is a solution of system of BVPs \eqref{s1.6}. Choose a fixed $\tau\in\R^{+}$ and $k\in
N_{0}\setminus\{0\}$ such that $k\geq\tau$. Then,
$(\overline{x}_{m}(\tau),\overline{y}_{m}(\tau))$ where $m\in
N_{k}$, satisfy
\begin{align*}
\overline{x}_{m}'(\tau)&=\overline{x}_{m}'(0)-\int_{0}^{\tau}p_{1}(s)f_{1}(s,\overline{x}_{m}(s),\overline{y}_{m}(s),\overline{x}_{m}'(s))ds,\\
\overline{y}_{m}'(\tau)&=\overline{y}_{m}'(0)-\int_{0}^{\tau}p_{2}(s)f_{2}(s,\overline{x}_{m}(s),\overline{y}_{m}(s),\overline{y}_{m}'(s))ds.
\end{align*}
Passing to the limit $m\rightarrow\infty$, we obtain
\begin{align*}
u_{k}'(\tau)&=u_{k}'(0)-\int_{0}^{\tau}p_{1}(s)f_{1}(s,u_{k}(s),v_{k}(s),u_{k}'(s))ds,\\
v_{k}'(\tau)&=v_{k}'(0)-\int_{0}^{\tau}p_{2}(s)f_{2}(s,u_{k}(s),v_{k}(s),v_{k}'(s))ds.
\end{align*}
Hence,
\begin{align*}
x'(\tau)&=x'(0)-\int_{0}^{\tau}p_{1}(s)f_{1}(s,x(s),y(s),x'(s))ds,\\
y'(\tau)&=y'(0)-\int_{0}^{\tau}p_{2}(s)f_{2}(s,x(s),y(s),y'(s))ds,
\end{align*}
which implies that
\begin{align*}
-x''(\tau)&=p_{1}(\tau)f_{1}(\tau,x(\tau),y(\tau),x'(\tau)),\\
-y''(\tau)&=p_{2}(\tau)f_{2}(\tau,x(\tau),y(\tau),y'(\tau)).
\end{align*}
We can do this for each $\tau\in\R^{+}$. Consequently,
\begin{align*}
-x''(t)&=p_{1}(t)f_{1}(t,x(t),y(t),x'(t)),\hspace{0.4cm}t\in\R_{0}^{+},\\
-y''(t)&=p_{2}(t)f_{2}(t,x(t),y(t),y'(t)),\hspace{0.4cm}t\in\R_{0}^{+}.
\end{align*} Thus, $(x,y)\in C^{2}(\R_{0}^{+})\times C^{2}(\R_{0}^{+})$,
$a_{1}x(0)-b_{1}x'(0)=a_{2}y(0)-b_{2}y'(0)=0$.

\vskip 0.5em

It remains to show that
\begin{align*}\lim_{t\rightarrow\infty}x'(t)=\lim_{t\rightarrow\infty}y'(t)=0.\end{align*}
First, we show that $\lim_{t\rightarrow\infty}x'(t)=0$. Suppose
$\lim_{t\rightarrow\infty}x'(t)=\varepsilon_{0}$, for some
$\varepsilon_{0}>0$. Then, $x'(t)\geq\varepsilon_{0}$ for all
$t\in[0,\infty)$. Choose $k\in N_{0}\setminus\{0\}$, then for $m\in
N_{k}$, in view of \eqref{s2.24}, we have
\begin{align*}x'(t)=u_{k}'(t)=\lim_{m\rightarrow\infty}\overline{x}_{m}'(t)=\lim_{m\rightarrow\infty}x_{m}'(t),\hspace{0.4cm}t\in[0,k],\end{align*}
which leads to
\begin{align*}x'(k)=\lim_{m\rightarrow\infty}x_{m}'(k).\end{align*}
Thus for every $\varepsilon>0$, there exist $m^{*}\in N_{k}$ such
that $|x_{m}'(k)-x'(k)|<\varepsilon$ for all $m\geq m^{*}$. Without
loss of generality assume that $m^{*}=k$, then
$|x_{k}'(k)-x'(k)|<\varepsilon$, that is, $|x'(k)|<\varepsilon$.
Which is a contradiction whenever $\varepsilon=\varepsilon_{0}$.
Hence, $\lim_{t\rightarrow\infty}x'(t)=0$. Similarly, we can prove
$\lim_{t\rightarrow\infty}y'(t)=0$. Thus, $(x,y)$ is a
$C^{1}$-positive solution of the system of BVPs \eqref{s1.6}.
\end{proof}

\begin{ex}Let
\begin{align*}f_{i}(t,x,y,z)=\nu^{\alpha_{i}+1} e^{-t}(M+1-x)(M+1-y)|x|^{\gamma_{i}}|y|^{\delta_{i}}|z|^{-\alpha_{i}},\,i=1,2,\end{align*}
 where $\nu>0$, $M>0$, $\alpha_{i}>0$,
$0\leq\gamma_{i},\delta_{i}<1$, $i=1,2$.

\vskip 0.5em

Assume that $(1-\gamma_{1})(1-\gamma_{2})\neq\delta_{1}\delta_{2}$
and
\begin{align*}
\nu<\frac{M}{\sum_{i=1}^{2}\Big(\frac{b_{i}}{a_{i}}+\alpha_{i}+2\Big)(\alpha_{i}+1)^{\frac{1}{\alpha_{i}+1}}(2M+1)^{\frac{2}{\alpha_{i}+1}}M^{\frac{\gamma_{i}+\delta_{i}}{\alpha_{i}+1}}}.
\end{align*}
Taking $p_{i}(t)=e^{-t}$,
$h_{i}(x)=\nu^{\alpha_{i}+1}(M+1+x)x^{\gamma_{i}}$,
$k_{i}(y)=(M+1+y)y^{\delta_{i}}$, $u_{i}(z)=z^{-\alpha_{i}}$ and
$v_{i}(z)=0$, $i=1,2$. Choose
$\varphi_{M}(t)=\nu^{\alpha_{1}+1}M^{-\alpha_{1}}e^{-t}$ and
$\psi_{M}(t)=\nu^{\alpha_{2}+1}M^{-\alpha_{2}}e^{-t}$. Then,
$J_{i}(\mu)=\frac{\mu^{\alpha_{i}+1}}{\alpha_{i}+1}$ and
$J_{i}^{-1}(\mu)=(\alpha_{i}+1)^{\frac{1}{\alpha_{i}+1}}\mu^{\frac{1}{\alpha_{i}+1}}$,
$i=1,2$.

\vskip 0.5em

Also,
\begin{equation*}\small{\begin{split}
&\frac{M}{\omega(M)}=\frac{M}{\sum_{i=1}^{2}\int_{0}^{\infty}J_{i}^{-1}(h_{i}(M)k_{i}(M)\int_{t}^{\infty}p_{i}(s)ds)dt+\sum_{i=1}^{2}(1+\frac{b_{i}}{a_{i}})J_{i}^{-1}(h_{i}(M)k_{i}(M)\int_{0}^{\infty}p_{i}(s)ds)}\\
&=\frac{M}{\sum_{i=1}^{2}\int_{0}^{\infty}J_{i}^{-1}(\nu^{\alpha_{i}+1}(2M+1)^{2}M^{\gamma_{i}+\delta_{i}}e^{-t})dt+\sum_{i=1}^{2}(1+\frac{b_{i}}{a_{i}})J_{i}^{-1}(\nu^{\alpha_{i}+1}(2M+1)^{2}M^{\gamma_{i}+\delta_{i}})}\\
&=\frac{M}{\nu\sum_{i=1}^{2}(\frac{b_{i}}{a_{i}}+\alpha_{i}+2)(\alpha_{i}+1)^{\frac{1}{\alpha_{i}+1}}(2M+1)^{\frac{2}{\alpha_{i}+1}}M^{\frac{\gamma_{i}+\delta_{i}}{\alpha_{i}+1}}}>1.
\end{split}}\end{equation*} Clearly, $(\mathbf{C}_{1})-(\mathbf{C}_{3})$ and
$(\mathbf{C}_{5})-(\mathbf{C}_{8})$ are satisfied. Hence, by
Theorem \ref{ths1}, the system of BVPs \eqref{s1.6}
has at least one $C^{1}$-positive solution.
\end{ex}

\chapter{Concluding Remarks}

In Chapter \ref{ch2}, Section \ref{existenceone}, we have established four different results (Theorem \ref{thmfirst}, Theorem \ref{thmsecond}, Theorem \ref{thmthird} and Theorem \ref{thmfourth}) for the existence of at least one positive solutions to the system of SBVPs \eqref{k203} under
the new assumption on the nonlinearities $f$ and $g$. In
Theorem \ref{thmfirst}, we provide the existence of at least one
positive solution for the system of SBVPs \eqref{k203} under the assumptions $(\mathbf{A}_{1})-(\mathbf{A}_{3})$, where
$(\mathbf{A}_{1})$ is about integrability of nonlinearities while
$(\mathbf{A}_{2})$ and $(\mathbf{A}_{3})$ are natural assumptions
satisfied by a class of singular nonlinearities. Our next result, Theorem \ref{thmsecond}, is obtained by replacing $(\mathbf{A}_{3})$ with $(\mathbf{A}_{4})$ in Theorem \ref{thmfirst}. Theorem \ref{thmthird} is obtained by replacing $(\mathbf{A}_{2})$ with $(\mathbf{A}_{5})$ in Theorem
\ref{thmfirst}. Moreover, Theorem \ref{thmfourth} can be obtained
either by replacing $(\mathbf{A}_{2})$ with $(\mathbf{A}_{5})$ in
Theorem \ref{thmsecond} or by replacing $(\mathbf{A}_{3})$ with
$(\mathbf{A}_{4})$ in Theorem \ref{thmthird}. Further in Section \ref{existencetwo}, Theorem \ref{tm232}, the existence of positive solutions to SBVPs \eqref{k204} is provided under the assumption $(\mathbf{A}_{6})-(\mathbf{A}_{8})$, where the assumption $(\mathbf{A}_{6})$ is integrability condition on nonlinearities while $(\mathbf{A}_{7})$ and $(\mathbf{A}_{8})$ are sublinear conditions. In Section \ref{sec-couple-four-point}, we discuss the four-point coupled system of SBVPs \eqref{4.0.1}. In Theorem \ref{thm-coupl-four-point}, by employing
the Guo-Krasnosel'skii fixed point theorem for a completely
continuous map on a positive cone, it is shown that the system
\eqref{4.0.1} has a positive solution under the assumptions
$(\mathbf{A}_{9})-(\mathbf{A}_{11})$, where $(\mathbf{A}_{9})$
is integrability condition while $(\mathbf{A}_{10})$ and
$(\mathbf{A}_{11})$ are sublinear conditions on nonlinearities $f$ and $g$.

\vskip 0.5em

In Chapter \ref{ch3}, Section \ref{existence-one}, we establish the
existence results for a coupled system of SBVPs \eqref{3.0.3}. In Theorem \ref{th3.2}, we prove the existence of at
least one $C^{1}$-positive solution for the system of SBVPs
\eqref{3.0.3} under the assumptions $(\mathbf{B}_{1})-(\mathbf{B}_{7})$. The assumptions $(\mathbf{B}_{1})$ and $(\mathbf{B}_{7})$ are some integrability
conditions, $(\mathbf{B}_{2})$ is necessary because, otherwise,
positive solution $(x,y)$ will not satisfy the condition $x'>0$ and
$y'>0$ on $[0,1)$, and therefore, $(x,y)\notin C^{2}(0,1)\cap
C^{2}(0,1)$, $(\mathbf{B}_{3})$ is a natural assumption when
$f(t,x,y)$ and $g(t,x,y)$ have singularity at $y=0$,
$(\mathbf{B}_{4})$ is required to bound the solution, whereas
$(\mathbf{B}_{5})$ is necessary for invertibility of the maps $I$
and $J$, and the solution is positive due to $(\mathbf{B}_{6})$.
By replacing the assumptions $(\mathbf{B}_{6})$ and
$(\mathbf{B}_{7})$ of Theorem \ref{th3.2} with
$(\mathbf{B}_{8})$ and $(\mathbf{B}_{9})$, and including $(\mathbf{B}_{10})$, we obtained the existence of at least two positive solutions for the system of SBVPs \eqref{3.0.3}, that is, Theorem
\ref{th4.2} of Section \ref{multiplicity-one}. The assumption $(\mathbf{B}_{10})$ is required for the existence of at least two
solutions. By replacing the assumption $(\mathbf{B}_{4})$ and
$(\mathbf{B}_{7})$ of Theorem \ref{th3.2} with
$(\mathbf{B}_{11})$ and $(\mathbf{B}_{12})$ we get our next
result, that is, Theorem \ref{th2.1} of Section \ref{existence-two},
which provide existence of at least one $C^{1}$-positive solutions
to the SBVPs \eqref{3.0.4}. Theorem \ref{th2.2} of
Section \ref{multiplicity-two} is obtained by replacing the assumptions $(\mathbf{B}_{4})$ and $(\mathbf{B}_{9})$ of Theorem
\ref{th4.2} with $(\mathbf{B}_{11})$ and $(\mathbf{B}_{13})$,
which is a criteria for the existence of at least two
$C^{1}$-positive solutions for the system of SBVPs \eqref{3.0.4}. Moreover in Section \ref{sec-couple-two-point}, Theorem
\ref{th5.2}, we studied the existence of $C^{1}$-positive solutions
to the system of SBVPs \eqref{4.0.2} under the assumptions $(\mathbf{B}_{1})-(\mathbf{B}_{3})$, $(\mathbf{B}_{5})$,
$(\mathbf{B}_{14})$, $(\mathbf{B}_{16})$ and $(\mathbf{B}_{17})$.
The assumption $(\mathbf{B}_{14})$ is a replacement of
$(\mathbf{B}_{11})$ in the case of two-point coupled BCs
\eqref{4.0.2}, $(\mathbf{B}_{16})$ is a generalization of
$(\mathbf{B}_{6})$ in case system of SBVPs \eqref{4.0.2} while
$(\mathbf{B}_{17})$ is just similar to $(\mathbf{B}_{7})$. In Section \ref{finite}, we develop the notion of upper and lower solutions for the system of SBVPs \eqref{1.6}. Theorem \ref{th1} guarantees the existence of $C^{1}$-positive solutions for the system \eqref{1.6} under the assumptions $(\mathbf{B}_{18})-(\mathbf{B}_{25})$, where $(\mathbf{B}_{18})$ is equivalent to $(\mathbf{B}_{1})$, $(\mathbf{B}_{19})$ is just a continuity condition on nonlinearities $f_{i}\ (i=1,2)$, $(\mathbf{B}_{20})$ and $(\mathbf{B}_{21})$ defines upper and
lower solutions, $(\mathbf{B}_{22})$ is a condition about the
concavity of solutions, $(\mathbf{B}_{23})$ is a natural assumption
when the functions $f$, $g$ are singular with respect to $x=0$ and
$y=0$, $(\mathbf{B}_{24})$ is about integrability condition and
$(\mathbf{B}_{25})$ is required to bound the derivative of solution.

\vskip 0.5em

In Chapter \ref{ch5}, Section \ref{infinite-one}, we establish the existence of
$C^{1}$-positive solutions to the coupled system of SBVPs \eqref{bc3}. Theorem \ref{tht1} offer $C^{1}$-positive solutions to the system of SBVPs \eqref{bc3} under the assumption $(\mathbf{C}_{1})-(\mathbf{C}_{7})$, where $(\mathbf{C}_{1})$ is some integrability condition,
$(\mathbf{C}_{2})$ and $(\mathbf{C}_{6})$ are weaker than $(\mathbf{B}_{2})$, $(\mathbf{C}_{3})$ is more general than $(\mathbf{B}_{3})$ when nonlinearities are sign-changing, $(\mathbf{C}_{4})$ is required to bound the solution which is much simpler than $(\mathbf{B}_{4})$, $(\mathbf{C}_{5})$ is just
$(\mathbf{B}_{5})$, and $(\mathbf{C}_{7})$ is required to prove that the solution is positive. By replacing the assumption $(\mathbf{C}_{4})$ in
Theorem \ref{tht1} with $(\mathbf{C}_{8})$ we obtain Theorem
\ref{ths1} of Section \ref{infinite-two}, which is a criteria for
the existence of at least one $C^{1}$-positive solutions to the
system of SBVPs \eqref{s1.6}.

\backmatter

\bibliographystyle{plain}


\end{document}